\declaretheorem[numberwithin=section]{theorem}
\declaretheorem[sibling=theorem]{exemple,definition,proposition,property,lemma,
corollary}
\theoremstyle{definition}
\declaretheorem[numberwithin=section]{remark}
\numberwithin{equation}{section}
\theoremstyle{definition}
\def\build#1_#2^#3{\mathrel{
\mathop{\kern 0pt#1}\limits_{#2}^{#3}}}
\begin{document} 

\title{Critical points of the one dimensional Ambrosio-Tortorelli functional with an obstacle condition}
\author{Martin Rakovsky \footnote{Université Paris-Saclay, CNRS, Laboratoire de mathématiques d’Orsay, 91405, Orsay, France. Email : martin.rakovsky@universite-paris-saclay.fr}}
\date{}
\maketitle

\textbf{Abstract :} We consider a family of critical points of the
Ambrosio-Tortorelli energy with an obstacle condition on the phase
field variable. This problem can be interpreted as a time discretization
of a quasistatic evolution problem where the obstacle at step $n$ is defined as the solution at step $n-1$. The obstacle condition
now reads as an irreversibility condition (the crack can only increase
in time). The questions tackled here are the regularity of the critical
points, the properties inherited from the obstacle sequence, the position
of the limit points and the equipartition of
the phase field energy. The limits of such critical points turn out to be critical points of the Mumford-Shah energy that inherit the possible discontinuities induced by the obstacle sequence.

\section{Introduction}

The Mumford-Shah functional, introduced by D.Mumford and J.Shah in $1989$ in \cite{MS89} in the context of image segmentation, is also used in fracture mechanics to describe brittle fractures \cite{FM98}. If $\Omega$ is a bounded open subset of $\mathbb{R}^N$, it is defined, for any $u\in SBV^2(\Omega)$, as  
\[MS(u) =  \int_\Omega |\nabla u|^2 + 2 \mathcal{H}^{N-1}(J_u).\]
In the spirit of the Allen-Cahn model for phase transition (\cite{M87}, \cite{S88}), Ambrosio and Tortorelli proposed in \cite{AT92} the following variational phase field regularization, defined for $(u,v) \in H^1(\Omega) \times [H^1(\Omega)\cap L^\infty(\Omega)]$ by 
\[AT_\varepsilon (u,v) = \int_\Omega (\eta_\varepsilon + v^2) |\nabla u|^2 dx + \int_\Omega \left(\varepsilon |\nabla v|^2 + \frac{(1-v)^2}{\varepsilon}\right)dx\]
where $0< \eta_\varepsilon << \varepsilon$ is a small parameter that ensures ellipticity. This regularization has also a mechanical interpretation. The phase field variable $v$ can be seen as a damage variable taking values in the interval $[0,1]$, between the state $\{v=1\}$ where the material is sane and the state $\{v=0\}$ where the material is broken. This regularized formulation is also at the basis of numerical simulations (see \cite{BFM08}).

\smallskip

In \cite{AT92}, a $\Gamma-$convergence result of $AT_\varepsilon$ to $MS$ is proved by suitably extending $MS$ as a two variable functional : 
\[MS(u,v) = \left\{
\begin{array}{ll}
MS(u) & \text{if } v\equiv 1 \; ,\\
+\infty & \text{otherwise.} \\
\end{array}
\right.\]
Under suitable boundary conditions, the existence of minimizers for $AT_\varepsilon$ for a fixed value of $\varepsilon$ is obtained via the direct method of the Calculus of Variation. The $\Gamma$-convergence then ensures that a converging sequence $(u_\varepsilon, v_\varepsilon)$ of minimizers of $AT_\varepsilon$ converges toward a minimizer of $MS$. 

\smallskip

Concerning the numerical implementation, the term $v^2 |\nabla u|^2$ implies a lack of convexity for the functional $AT_\varepsilon$. Consequently, numerical methods might fail to converge toward a minimizer of $AT_\varepsilon$. Using the fact that the functional remains separately strictly convex, \cite{BFM08} proposes to perform an alternate minimization algorithm. It is proven in \cite[Theorem 1]{B07} that the sequence of iterates converge toward a critical point of $AT_\varepsilon$, but not always to a minimizer. This motivates to investigate the convergence of critical points of the Ambrosio-Tortorelli functional as $\varepsilon \rightarrow 0$ and compare this convergence with the original goal to minimize the Mumford-Shah functional. 

\medskip

A critical point for the Ambrosio-Tortorelli functional is a pair $(u_\varepsilon, v_\varepsilon) \in H^1(\Omega)\times [H^1(\Omega) \cap L^\infty (\Omega)]$ such that 

\[\dfrac{d}{dt}\left|_{t=0} \right. AT_\varepsilon (u_\varepsilon + t\psi, v_\varepsilon + t\varphi) = 0 \;\;\; \forall \, (\psi, \varphi) \in \: H_0^1 (\Omega)^2.\]
with suitable conditions at the boundary $\partial \Omega$ (for example Dirichlet conditions, as in \cite{BMRa23}). 

\medskip

The $\Gamma$-convergence provides little help to study the convergence of critical points. The extension of the fundamental theorem of $\Gamma-$convergence to the convergence of critical points has been studied in various settings, see \cite{HT00}, \cite{T02}, \cite{T05} for the Allen-Cahn functional and \cite{BBH94}, \cite{BBO01}, \cite{LR01}, \cite{SS07} for the Ginzburg-Landau functional. For the Ambrosio-Tortorelli functional, the convergence has been established in all dimensions in \cite{BMRa23}, with the additional hypothesis of the convergence of the energy $AT_\varepsilon (u_\varepsilon, v_\varepsilon)$ to $MS(u,1)$. In dimension one, this hypothesis is no longer required, and it is proved in \cite{FLS09} and \cite{BMRb23} that a sequence of critical points for the one-dimensional Ambrosio-Tortorelli converges to critical points of $MS$. A similar result is proved in \cite{L10} for the one-dimensional Mumford-Shah functional adapted to image segmentation. (See also \cite{BI23} in cohesive fracture).

\medskip

In \cite{FL03}, a model for the quasi-static evolution of a brittle fracture using the Mumford-Shah functional is proposed. The model starts with a discretized version of the evolution of the crack, where the irreversibility through time is translated into the condition that the crack $K (t_i)$ is included in the crack $K (t_{i+1})$, and the Mumford-Shah functional is then minimized at time $t_{i+1}$ over all the cracks $K$ containing $K(t_i)$, and with a variable Dirichlet condition $g_i$ at the boundary $\partial \Omega$. In \cite{G05}, it is proved that a quasi-static evolution of $AT_\varepsilon$ approximates a quasi-static evolution for $MS$ using minimizers of $AT_\varepsilon$. The irreversibility condition translates into the minimization of $AT_\varepsilon$ at time $t_{i+1}$ among all couples $(u,v)$ such that $v\leqslant v_{t_i, \varepsilon}$, where $(u_{t_i,\varepsilon}, v_{t_i,\varepsilon})$ is the minimum of $AT_\varepsilon$ at time $t_i$. 

\smallskip

A natural question is whether the result of \cite{G05} can be extended to critical points. Since in high dimension, results on the convergence of critical points of $AT_\varepsilon$ require stronger hypothesis, such an extension can be investigated only in dimension one for the moment. This paper is the first step in that direction, namely, we study the convergence of the critical points of the functional $AT_\varepsilon$ defined on the pairs $(u,v)$ in $H^1([0,L])$ satisfying $v\leqslant v_{0,\varepsilon}$, where $(u_{0,\varepsilon}, v_{0,\varepsilon})$ is a critical point of $AT_\varepsilon$. We use the same setting as \cite{BMRb23} (Dirichlet conditions on the variables $(u,v)$).

\smallskip
 
In Section 2, we define the one-dimensional functional $\overline{AT}_\varepsilon$ at study, which corresponds to the functional $AT_\varepsilon$ restricted to the pairs $(u, v)$ satisfying $v\leqslant v_{0,\varepsilon}$, we recall the results established by \cite{BMRb23} and state the main result in \autoref{Theorem 1} : the critical points of $\overline{AT}_\varepsilon$ with the irreversibility condition $v\leqslant v_{0,\varepsilon}$ converge to critical point of $MS$. \autoref{Theorem 2}, proved in the appendix, deals with the $\Gamma-$convergence of the sequence $\overline{AT}_\varepsilon$. \autoref{Theorem 1} can then be seen as an extension of \autoref{Theorem 2} for critical points. 

\smallskip

When we study the convergence of a functional with a phase field term, a natural question is the validity of an equipartition of the energy principle, i.e whether the terms of the phase field energy share the same limit. This principle, as well as the convergence of the Dirichlet approximation term is also stated in \autoref{Theorem 1}. 

\smallskip

In Section 3, we establish the properties concerning the critical points of $\overline{AT_\varepsilon}$, namely the regularity and the variations of $v_\varepsilon$. In Section 4, we study the asymptotic behaviour of the sequence $(u_\varepsilon , v_\varepsilon)$ and prove \autoref{Theorem 1}. 

\smallskip

Similarly to \cite{FLS09}, \cite{BMRb23} and \cite{L10}, the key point to the main estimates is the study of the quantity called \textit{discrepancy} (\autoref{discrepancy}) associated to $\overline{AT}_\varepsilon$ (which can be interpreted as the Hamiltonian of the functional). In \cite{FLS09}, \cite{BMRb23}, this discrepancy is constant, which is no longer the case in our situation. However, we are able to control the variations of the discrepancy in \autoref{estimatediscrepancy} to obtain analogous estimates and results as in \cite{BMRb23}. 

\section{Notations and statement of the results}

\subsection{Notations and preliminary results}

Throughout the paper, $L$ is a positive number. Letters like $C,m$ and $M$ will denote constants that do not depend on any other parameter and that may change from line to line. We also denote by $I_r(x)$ the interval $(x-r,x+r)$.  

For any open subset $\Omega \subset (0,L)$, we denote by 

\[||f||_{\infty, \Omega} := \sup_{x\in \Omega} |f(x)| \; ,\]
\[[\,f\,]_{1/2, \Omega} :=\sup_{x\neq y \in \Omega} \dfrac{|f(x)-f(y)|}{|x-y|^{1/2}} \; ,\] 

\[||f||_{C^{0,1/2}, \Omega} := ||f||_{\infty, \Omega} + [\,f\,]_{1/2, \Omega}.\]
The one-dimensional Ambrosio-Tortorelli functional,  is defined as
\[AT_\varepsilon(u,v) :=  \int_0^L \left((\eta_\varepsilon + v^2)|u'|^2 + \frac{(1-v)^2}{\varepsilon} + \varepsilon |v'|^2 \right)dx, \;\;\;\;\; \forall (u,v) \in H^1(0,L)^2,\]
where $\eta_\varepsilon$ is a positive parameter such that $\eta_\varepsilon / \varepsilon \rightarrow 0$. Before defining the one-dimensional Mumford-Shah functional, we recall the definition of the space $SBV^2(0,L)$, which is the space of the functions $u\in L^1$ such that their distributional derivative is a Radon measure which  writes as 
\[Du = u' d\mathcal{L} + (u^+ - u^-) \nu_u \mathcal{H}^{0}_{|J_u} \;\; \text{with} \;\; \int_0^L |u'|^2 < \infty, \; \mathcal{H}^0 (J_u) < \infty.\]
The one-dimensional Mumford-Shah functional $MS$ defined on $SBV^2(0,L)\times L^2(0,L)$ reads as 
\[MS(u,v) := \left\{\begin{array}{lll}
\displaystyle \int_0^L |u'|^2 + 2 \mathcal{H}^0 (J_u) & \text{if } v \equiv 1 \; ,\\
+\infty & \text{otherwise}.\\
\end{array}\right.
.\]
We define a critical point of $AT_\varepsilon$ as follows 

\begin{definition}
Let $a_0 \in \mathbb{R}$.
A critical point of $AT_\varepsilon$ is a pair $(u,v)\in H^1(0,L)^2$ that satisfies for all $(\psi, \varphi)\in \mathcal{C}_c^\infty (0,L)$,

\[\left\{\begin{array}{l}
dAT_\varepsilon(u,v) [(\psi, \varphi)] := \displaystyle\lim\limits_{t\rightarrow 0} \frac{AT_\varepsilon (u+t\psi, v+t\varphi) - AT_\varepsilon (u,v)}{t}=0 \; ,\\
\\
u_{0,\varepsilon}(0)=0, \; u_{0,\varepsilon}(L)=a_0 \; ,\\
\\
v_{0,\varepsilon}(0)=v_{0,\varepsilon}(L)=1 . \\
\end{array}\right.\]
\end{definition}
A critical point $(u_{0,\varepsilon}, v_{0,\varepsilon})$ of $AT_\varepsilon$ satisfies the following equations on $(0,L)$
\begin{equation}\label{criticalpoint0}
\left\{\begin{array}{ll}
-\varepsilon v_{0,\varepsilon}'' + \frac{v_{0,\varepsilon}-1}{\varepsilon} + v_{0,\varepsilon} |u_{0,\varepsilon}'|^2=0 & \text{in } (0,L)\; ,\\
&\\
\left[u_{0,\varepsilon}' (\eta_\varepsilon + v_{0,\varepsilon}^2) \right]' = 0 & \text{in } (0,L) \; ,\\
&\\
u_{0,\varepsilon}(0)=0, \; u_{0,\varepsilon}(L)=a_0 \; , &\\
&\\
v_{0,\varepsilon}(0)=v_{0,\varepsilon}(L)=1. &\\
\end{array}\right.
\end{equation}
In the sequel, we consider a sequence $(u_{0,\varepsilon}, v_{0,\varepsilon})$ of critical points of $AT_\varepsilon$ satisfying the following energy bound  
\begin{equation}\label{energybound}
\limsup\limits_{\varepsilon >0} AT_\varepsilon (u_{0,\varepsilon}, v_{0,\varepsilon}) < C.
\end{equation}
In \cite[Theorem 1.1]{BMRb23}, the following has been established :

\begin{theorem}\label{theorempourtemps0}
Up to a subsequence, $(u_{0,\varepsilon} , v_{0,\varepsilon}) \rightarrow (u_0, 1)$ in $L^2(0,L)^2$, where either $u_0=u_{jump} \equiv a_0 1_{[L/2,L]}$ or $u_0(x)=u_{aff}(x) = a_0x/L$.
\end{theorem}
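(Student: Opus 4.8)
The plan is to extract the limit from the Euler--Lagrange system \eqref{criticalpoint0} together with the energy bound \eqref{energybound}. First I would use \eqref{energybound} to control $\int_0^L \varepsilon|v_{0,\varepsilon}'|^2 + \frac{(1-v_{0,\varepsilon})^2}{\varepsilon}$, and a maximum-principle argument on the first equation of \eqref{criticalpoint0} to get $0 \leqslant v_{0,\varepsilon} \leqslant 1$ on $[0,L]$ (testing with $(v_{0,\varepsilon}-1)^+$ and $(-v_{0,\varepsilon})^+$). The modica--Mortola trick $\int_0^L |(1-v_{0,\varepsilon})'|\,|v_{0,\varepsilon}| + \text{lower order} \leqslant \frac12 \int_0^L (\varepsilon|v_{0,\varepsilon}'|^2 + \frac{(1-v_{0,\varepsilon})^2}{\varepsilon})$ then shows that, up to a subsequence, $v_{0,\varepsilon} \to 1$ in $L^1$, and since $0\leqslant v_{0,\varepsilon}\leqslant 1$ also in $L^2(0,L)$; this identifies the phase-field limit as $v_0 \equiv 1$. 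Next, the second equation of \eqref{criticalpoint0} says $u_{0,\varepsilon}'(\eta_\varepsilon + v_{0,\varepsilon}^2) \equiv c_\varepsilon$ is constant on $(0,L)$; the energy bound gives $\int_0^L (\eta_\varepsilon + v_{0,\varepsilon}^2)|u_{0,\varepsilon}'|^2 = c_\varepsilon \int_0^L \frac{dx}{\eta_\varepsilon + v_{0,\varepsilon}^2} \leqslant C$, hence also $|c_\varepsilon| \leqslant C$ (since that denominator integral is bounded below using $v_{0,\varepsilon}\leqslant 1$, $\eta_\varepsilon$ small), and up to a subsequence $c_\varepsilon \to c$.

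The heart of the argument is then a dichotomy on the behaviour of $\min_{[0,L]} v_{0,\varepsilon}$. The discrepancy (Hamiltonian) for this autonomous-in-$x$ problem — which in the unconstrained case is exactly constant along $x$ — should read $D_\varepsilon := \varepsilon |v_{0,\varepsilon}'|^2 - \frac{(1-v_{0,\varepsilon})^2}{\varepsilon} + c_\varepsilon u_{0,\varepsilon}'$ (or the analogous combination coming from first integrating the ODE system), and I would use its constancy plus the boundary values $v_{0,\varepsilon}(0)=v_{0,\varepsilon}(L)=1$, $v_{0,\varepsilon}'$ controlled at the endpoints, to pin down the constant. Two regimes occur: either $\min v_{0,\varepsilon}$ stays bounded away from $0$, in which case $v_{0,\varepsilon}\to 1$ uniformly, $u_{0,\varepsilon}' \to c$ uniformly, and the Dirichlet conditions $u_{0,\varepsilon}(0)=0$, $u_{0,\varepsilon}(L)=a_0$ force $c = a_0/L$ and $u_0 = u_{aff}$; or $\min v_{0,\varepsilon} \to 0$, and then a quantized amount $2$ of surface energy concentrates near the point(s) where $v_{0,\varepsilon}$ dips, the energy bound \eqref{energybound} allows only one such point, $u_{0,\varepsilon}$ develops a jump of total size $a_0$ there while $u_{0,\varepsilon}'$ vanishes in $L^2$ off a shrinking neighbourhood (so $c = 0$), giving $u_0 = u_{jump}$. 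A symmetry/reflection argument together with the fact that the profile must be even about the jump point pins the jump location at $L/2$, matching $u_{jump}\equiv a_0 1_{[L/2,L]}$.

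For the compactness needed to even pass to these limits, I would first upgrade $L^2$-convergence of $v_{0,\varepsilon}$ to pointwise/uniform control away from the bad set using the ODE: on any interval where $v_{0,\varepsilon}$ is bounded below, \eqref{criticalpoint0} gives uniform $H^2$, hence $C^{1}$, bounds, and an Ascoli--Arzel\`a argument applies; on $u_{0,\varepsilon}$, the bound $\int_0^L |u_{0,\varepsilon}'|^2 \leqslant C$ (from \eqref{energybound} together with $\eta_\varepsilon \leqslant \eta_\varepsilon + v_{0,\varepsilon}^2$) gives, after the jump is localized, weak-$H^1_{loc}$ convergence off the jump point and an SBV-compactness conclusion globally. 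The main obstacle I anticipate is the rigidity step: ruling out intermediate scenarios (e.g.\ $v_{0,\varepsilon}$ touching $0$ at several points, or limiting profiles other than the affine and single-jump ones) requires quantitative lower bounds on the surface energy produced by each descent of $v_{0,\varepsilon}$ toward $0$ — this is precisely where the discrepancy estimate does the work, converting the pointwise ODE information into the ``cost $\geqslant 2$ per jump'' statement that, combined with \eqref{energybound}, closes the classification.
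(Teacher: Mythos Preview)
The paper does not prove this theorem itself; it quotes it from \cite[Theorem~1.1]{BMRb23}. Your sketch has the right ingredients (constant discrepancy, ODE structure, Modica--Mortola lower bound), but two steps do not hold as stated.

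First, your dichotomy on $\min_{[0,L]} v_{0,\varepsilon}$ does not cleanly separate the two limits. The implication ``$\min v_{0,\varepsilon}\to 0 \Rightarrow c=0 \Rightarrow u_0 = u_{jump}$'' is false: as remarked at the end of Section~\ref{section 4.7} of the present paper, the phase-field energy can concentrate at $L/2$ (hence $\min v_{0,\varepsilon}\to 0$) while $u_0 = u_{aff}$ and $c=a_0/L$. The actual rigidity step (\cite[Lemma~2.4]{BMRb23}, or Proposition~\ref{slope} here for the constrained analogue) bypasses $\min v_{0,\varepsilon}$ and proves directly that $c\in\{0,a_0/L\}$, by splitting $a_0=\int_0^L u_{0,\varepsilon}'$ over the three regimes $\{v_{0,\varepsilon}\leqslant A\sqrt\varepsilon\}$, $\{A\sqrt\varepsilon\leqslant v_{0,\varepsilon}\leqslant 1/2\}$, $\{v_{0,\varepsilon}\geqslant 1/2\}$ and using the discrepancy bounds. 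Second, ``the energy bound \eqref{energybound} allows only one such point'' is unjustified: each dip costs at least~$2$, so \eqref{energybound} only bounds the number of dips by $C/2$. Both uniqueness of the dip and its location at $L/2$ come instead from the autonomous ODE obtained by substituting $u_{0,\varepsilon}' = c_{0,\varepsilon}/(\eta_\varepsilon + v_{0,\varepsilon}^2)$ into the first line of \eqref{criticalpoint0}: Cauchy--Lipschitz forces $v_{0,\varepsilon}$ to be symmetric about any interior critical point, so with $v_{0,\varepsilon}(0)=v_{0,\varepsilon}(L)=1$ there is exactly one minimum, at $L/2$ (this is \cite[Proposition~2.1]{BMRb23}, invoked repeatedly in the present paper). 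Your ``symmetry/reflection argument'' is thus the tool that delivers uniqueness, not merely the location. A smaller point: the claimed bound $\int_0^L |u_{0,\varepsilon}'|^2\leqslant C$ does not follow from $\eta_\varepsilon\leqslant \eta_\varepsilon+v_{0,\varepsilon}^2$, which only gives $\int_0^L |u_{0,\varepsilon}'|^2\leqslant C/\eta_\varepsilon\to\infty$; the correct global compactness for $u_{0,\varepsilon}$ is in $BV$, using that $u_{0,\varepsilon}'$ has constant sign and $|Du_{0,\varepsilon}|(0,L)=|a_0|$ (cf.\ Proposition~\ref{convergenceuv}).
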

When integrating the second equation of \eqref{criticalpoint0}, we obtain a constant $c_{0,\varepsilon}$ such that 
\[u_{0,\varepsilon}' (\eta_\varepsilon + v_{0,\varepsilon}^2) = c_{0,\varepsilon}.\]
\cite[Lemma 2.4]{BMRb23} states that 
\begin{lemma}
Up to a subsequence, the sequence 
\begin{equation}\label{defc0}
c_{0,\varepsilon} \rightarrow c_0;
\end{equation}
with $c_0$ a real number satisfying $c_0 \in \{0, a_0/L\}$.
\end{lemma}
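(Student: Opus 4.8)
The plan is to work with the first integral of \eqref{criticalpoint0}, $u_{0,\varepsilon}'(\eta_\varepsilon + v_{0,\varepsilon}^2)=c_{0,\varepsilon}$, together with the quantity $A_\varepsilon:=\int_0^L(\eta_\varepsilon+v_{0,\varepsilon}^2)^{-1}\,dx$. First I would record the identity obtained by integrating $u_{0,\varepsilon}'=c_{0,\varepsilon}/(\eta_\varepsilon+v_{0,\varepsilon}^2)$ and using $u_{0,\varepsilon}(0)=0$, $u_{0,\varepsilon}(L)=a_0$, namely $c_{0,\varepsilon}A_\varepsilon=a_0$. For boundedness I would note that $(\eta_\varepsilon+v_{0,\varepsilon}^2)|u_{0,\varepsilon}'|^2=c_{0,\varepsilon}^2/(\eta_\varepsilon+v_{0,\varepsilon}^2)$, so \eqref{energybound} gives $c_{0,\varepsilon}^2A_\varepsilon\le C$; since the first equation of \eqref{criticalpoint0} yields $0\le v_{0,\varepsilon}\le 1$ by a maximum principle (a positive interior minimum or a maximum above $1$ would contradict the sign of $-\varepsilon v_{0,\varepsilon}''$), one has $A_\varepsilon\ge L/(1+\eta_\varepsilon)\ge L/2$, hence $|c_{0,\varepsilon}|\le\sqrt{2C/L}$ and, up to a subsequence, $c_{0,\varepsilon}\to c_0$. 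If $a_0=0$ the identity forces $c_{0,\varepsilon}=0$ and there is nothing more to prove, so from now on I assume $a_0\ne 0$, hence $c_{0,\varepsilon}\ne 0$ and $A_\varepsilon=a_0/c_{0,\varepsilon}$.

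Next I would introduce the Hamiltonian (the discrepancy). Multiplying the first equation of \eqref{criticalpoint0} by $v_{0,\varepsilon}'$ and using $(\eta_\varepsilon+v_{0,\varepsilon}^2)|u_{0,\varepsilon}'|^2=c_{0,\varepsilon}^2/(\eta_\varepsilon+v_{0,\varepsilon}^2)$ shows that
\[
H_\varepsilon:=\varepsilon|v_{0,\varepsilon}'|^2+(\eta_\varepsilon+v_{0,\varepsilon}^2)|u_{0,\varepsilon}'|^2-\frac{(1-v_{0,\varepsilon})^2}{\varepsilon}
\]
is constant on $(0,L)$. Evaluating at $x=0$, where $v_{0,\varepsilon}=1$, gives $H_\varepsilon=\varepsilon|v_{0,\varepsilon}'(0)|^2+c_{0,\varepsilon}^2/(1+\eta_\varepsilon)\ge 0$. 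If $v_{0,\varepsilon}\not\equiv 1$, its minimum $m_\varepsilon<1$ is attained at an interior point $x_\varepsilon$, and evaluating $H_\varepsilon$ there gives $c_{0,\varepsilon}^2/(\eta_\varepsilon+m_\varepsilon^2)=H_\varepsilon+(1-m_\varepsilon)^2/\varepsilon$, whence
\[
(1-m_\varepsilon)^2(\eta_\varepsilon+m_\varepsilon^2)\le c_{0,\varepsilon}^2\,\varepsilon\longrightarrow 0 .
\]
Because the left-hand side tends to $0$, along a further subsequence either $m_\varepsilon\to 1$ or $m_\varepsilon\to 0$.

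In the regime $m_\varepsilon\to 1$ the inequality $m_\varepsilon\le v_{0,\varepsilon}\le 1$ forces $v_{0,\varepsilon}\to 1$ uniformly, hence $\eta_\varepsilon+v_{0,\varepsilon}^2\to 1$ uniformly, $A_\varepsilon\to L$, and therefore $c_0=a_0/L$. In the regime $m_\varepsilon\to 0$ the phase field develops a well around $x_\varepsilon$, and the goal is to show that either $A_\varepsilon\to+\infty$ (forcing $c_0=0$) or $A_\varepsilon\to L$ (forcing $c_0=a_0/L$), ruling out any intermediate finite limit. For this I would use the first integral of the previous step on the transition layer: on each side of $x_\varepsilon$ one has $\varepsilon|v_{0,\varepsilon}'|^2=H_\varepsilon+(1-v_{0,\varepsilon})^2/\varepsilon-c_{0,\varepsilon}^2/(\eta_\varepsilon+v_{0,\varepsilon}^2)$, which is an explicit first-order ODE for $v_{0,\varepsilon}$; integrating it and changing variables to $v$ one estimates the contribution of the layer to $\int(\eta_\varepsilon+v_{0,\varepsilon}^2)^{-1}$ and finds that it is $o(1)$ when $c_{0,\varepsilon}$ stays bounded away from $0$ and diverges when $c_{0,\varepsilon}\to 0$ — this is exactly the profile analysis of \cite{FLS09} and \cite{BMRb23}. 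Equivalently, when $m_\varepsilon\to 0$ \autoref{theorempourtemps0} forces $u_0=u_{jump}$, which is incompatible with a bounded sequence $A_\varepsilon$. In all cases $c_0\in\{0,a_0/L\}$, and a diagonal extraction over the successive subsequences closes the argument.

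The main obstacle is the last step, $m_\varepsilon\to 0$: one must quantify the width and depth of the phase-field well precisely enough to decide the limit of $\int_0^L(\eta_\varepsilon+v_{0,\varepsilon}^2)^{-1}$. Energy bounds alone do not suffice, since the set $\{v_{0,\varepsilon}\le 1/2\}$ only has measure $O(\varepsilon)$ while the integrand there may be as large as $\eta_\varepsilon^{-1}$ and $\varepsilon/\eta_\varepsilon\to\infty$; it is the constancy of $H_\varepsilon$ that turns this into a tractable ODE problem, uniformly in the rate $\eta_\varepsilon/\varepsilon\to 0$. In the present (un-obstacled) setting $H_\varepsilon$ is genuinely constant, which is what makes the scheme run; the contribution of the paper is precisely to perform the analogous analysis when the obstacle destroys this conservation law and only a controlled variation of the discrepancy is available.
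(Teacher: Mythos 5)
This statement is not proved in the present paper: it is imported verbatim from \cite[Lemma 2.4]{BMRb23}, so there is no in-house proof to compare your attempt against. The closest analogue inside this paper is \autoref{slope}, which establishes $c_1\in\{0,a_1/L\}$ for the obstacle problem, and your scheme is the right mirror of it. Your preliminary steps are all correct: $c_{0,\varepsilon}A_\varepsilon=a_0$ with $A_\varepsilon=\int_0^L(\eta_\varepsilon+v_{0,\varepsilon}^2)^{-1}$, boundedness of $c_{0,\varepsilon}$ from the energy bound, constancy of the Hamiltonian $H_\varepsilon$ in the unconstrained setting, nonnegativity of $H_\varepsilon$ by evaluation at $0$, and the dichotomy $m_\varepsilon\to 0$ or $m_\varepsilon\to 1$ from $(1-m_\varepsilon)^2(\eta_\varepsilon+m_\varepsilon^2)\le c_{0,\varepsilon}^2\varepsilon$.

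The gap you flag, however, is the whole content of the lemma and it is not filled. The assertion that the transition layer contributes $o(1)$ to $A_\varepsilon$ when $c_{0,\varepsilon}$ is bounded away from $0$ is exactly what has to be proved, and the hand-wave to ``the profile analysis of \cite{FLS09} and \cite{BMRb23}'' does not quantify the well. In particular the naive two-regime split $\{v_{0,\varepsilon}\ge 1/2\}\cup\{v_{0,\varepsilon}<1/2\}$, using only $v_{0,\varepsilon}\ge m\sqrt\varepsilon$ and $|\{v_{0,\varepsilon}<1/2\}|\lesssim\varepsilon$, gives a bounded but not vanishing layer contribution; one needs the finer measure estimate $|\{v_{0,\varepsilon}\le A\sqrt\varepsilon\}|\lesssim A^3\varepsilon^{3/2}$ (obtained by integrating the ODE over that sub-level set, as in Step 1 of the proof of \autoref{slope}) and then a three-regime decomposition in the spirit of Step 2 there. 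With those two ingredients the argument does close: on $\{v_{0,\varepsilon}<A\sqrt\varepsilon\}$ the integrand is $O(1/\varepsilon)$ and the measure is $O(A^3\varepsilon^{3/2})$, on $\{A\sqrt\varepsilon\le v_{0,\varepsilon}<1/2\}$ the contribution is $O(1/A^2)$, and on $\{v_{0,\varepsilon}\ge 1/2\}$ the contribution is $O(\int(1-v_{0,\varepsilon}))\to 0$, whence $A_\varepsilon\to L$ as $A\to\infty$. So the strategy is viable but the essential estimates are absent from your write-up.

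Two further remarks. First, your fallback ``equivalently, when $m_\varepsilon\to 0$ \autoref{theorempourtemps0} forces $u_0=u_{jump}$'' is not a valid shortcut: \autoref{theorempourtemps0} is \cite[Theorem 1.1]{BMRb23}, whose proof in that paper rests on this very lemma, so invoking it here would be circular. Second, note that once you know $v_{0,\varepsilon}\ge m\sqrt\varepsilon$ (which in turn requires $c_0>0$, handled by the very dichotomy you want to establish), the identity $A_\varepsilon=a_0/c_{0,\varepsilon}$ makes your ``$A_\varepsilon$'' route and the paper's ``decompose $\int u_\varepsilon'$'' route for \autoref{slope} algebraically equivalent; there is no genuine difference in approach, only in presentation.
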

\autoref{theorempourtemps0} enlightens a selection phenomenon : the accumulation points of the sequence $(u_{0,\varepsilon}, v_{0,\varepsilon})$ are specific critical points of $MS$. Indeed, critical points of $MS$ are either the affine function $u_{aff}$ or the piecewise constant functions. It is also proved in \cite[Theorem 1.2]{BMRb23} that $u_{jump}$ can be reached by a sequence of critical points of $AT_\varepsilon$. The crack of the material at time $0$ then corresponds to $J_{u_0}$, which is, according to \cite[Theorem 1.1]{BMRb23}

\begin{equation}\label{crack}
\Gamma_0 = J_{u_0} = \emptyset \;\;\;\;\; \text{ or } \;\;\;\;\; \Gamma_0= J_{u_0} = \left\{\frac{L}{2}\right\}.
\end{equation}

\medskip

The Ambrosio-Tortorelli functional with obstacle is defined as 
\begin{equation}\label{ATobstacle}
\overline{AT_\varepsilon}(u, v) := \left\{\begin{array}{ll}
AT_\varepsilon(u, v) & \text{ if } v \leqslant v_{0,\varepsilon} \; ,\\
+\infty & \text{ else,}
\end{array}\right. \;\;\; \forall\: (u,v) \in H^1(0,L).
\end{equation}
In order to account for the constraint on the phase field variable, we introduce the following definition for constrained critical points.   
\begin{definition}\label{criticobstacle}
A critical point of $\overline{AT}_\varepsilon$ is a pair $(u,v)\in H^1(0,L)^2$ that satisfies $v\leqslant v_{0,\varepsilon}$ and, for all $(\psi, \varphi)\in \mathcal{C}_c^\infty (0,L)$ with $\varphi\geqslant 0$,
\[\left\{\begin{array}{l}
d\overline{AT}_\varepsilon(u,v) [(\psi, \varphi)] := \displaystyle\lim\limits_{t\rightarrow 0^+} \frac{\overline{AT}_\varepsilon (u,v) - \overline{AT}_\varepsilon (u-t\psi, v-t\varphi)}{t}=0 \; ,\\
\\
u(0)=0, \; u(L)=a_1 \; ,\\
\\
v(0)=v(L)=1. \\
\end{array}\right.\]
\end{definition}
Note that the Dirichlet condition at $L$ is different and carry the evolution in time of the material. 

In the sequel, we denote 
\begin{equation}\label{Ovarepsilon and Fvarepsilon}
O_\varepsilon = \{v_\varepsilon < v_{0,\varepsilon}\} \;\;\;\; \text{and} \;\;\;\; F_\varepsilon = \{v_\varepsilon = v_{0,\varepsilon}\}.
\end{equation}
Since $v_\varepsilon , v_{0,\varepsilon}\in H^1(0,L)$, there difference is continuous by Sobolev embedding so $O_\varepsilon$ and $F_\varepsilon$ are respectively open and closed in $[0,L]$.

\subsection{Euler-Lagrange equations}

In this subsection, we establish the Euler-Lagrange equations for $\overline{AT}_\varepsilon$. 
Let $(u_\varepsilon , v_\varepsilon)$ be a critical point of $\overline{AT}_\varepsilon$. 
Take $\varphi =0$ in \autoref{criticobstacle}.  Similarly to \cite[Introduction]{FLS09}, the equation satisfied by $u_\varepsilon$ reads as  
\begin{equation}
\label{eq : eq1}
\left[u_\varepsilon ' (\eta_\varepsilon + v_\varepsilon ^2) \right]' =0.
\end{equation}
This means that there exists a constant $c_\varepsilon$ such that 
\begin{equation}
\label{eq : eq2}
u_\varepsilon ' = \frac{c_\varepsilon}{(\eta_\varepsilon + v_\varepsilon ^2)} \;\;\;\;\; \text{a.e in } (0,L).
\end{equation}
As $v_\varepsilon$ is continuous by Sobolev embedding, the left hand side is also continuous. Similarly to \cite{FLS09}, we obtain that $u_\varepsilon '$ has a constant sign, corresponding to the sign of $c_\varepsilon$, meaning that $u_\varepsilon$ is monotone on $[0,L]$. 

\bigskip

We next move on to the equation for $v_\varepsilon$. Taking $\psi =0$ in \autoref{criticobstacle},
we obtain the following inequality, where $v_\varepsilon ''$ denotes the distributional derivative of $v_\varepsilon'$, which exists as $v_\varepsilon \in H^1(0,L)$,  
\begin{equation}
\label{eq : eq3}
\left\langle\, \varphi\, , \,  v_\varepsilon |u_\varepsilon '|^2 - \varepsilon v_\varepsilon '' +\frac{v_\varepsilon -1}{\varepsilon} \,\right\rangle \leqslant 0
\end{equation}
which we rewrite in the following way : there exists a non positive Radon measure distribution $\mu_\varepsilon$ such that 
\begin{equation}
\label{eq : eq4}
-\varepsilon v_\varepsilon '' + \frac{v_\varepsilon -1}{\varepsilon} + v_\varepsilon |u_\varepsilon '|^2 = \mu_\varepsilon \leqslant 0.
\end{equation}
Taking $\varphi$ compactly supported on $\{v_\varepsilon < v_{0,\varepsilon}\}$ gives the reversed inequality of \eqref{eq : eq3}. This means that the measure $\mu_\varepsilon$ is supported on the closed set $F_\varepsilon$. 

\smallskip

Using \eqref{eq : eq4}, we see that $v_\varepsilon ''$ is also a Radon measure, so that $v_\varepsilon '\in BV(0,L)$, which is embedded in $L^\infty (0,L)$, and that $v_\varepsilon \in C^{0,1}(0,L)$. In section $3.1$, we provide a better regularity result on $v_\varepsilon$. This allows us to establish an explicit formula for $\mu_\varepsilon$ at the end of section $3.1$ (\autoref{exprmu}).

\smallskip

In summary, the equations satisfied by $(u_\varepsilon, v_\varepsilon)$ are    

\begin{equation}\label{criticalpoint1}
\left\{\begin{array}{ll}
-\varepsilon v_\varepsilon '' + \frac{v_\varepsilon-1}{\varepsilon} + v_\varepsilon |u_\varepsilon '|^2= \mu_\varepsilon \leqslant 0 & \text{in } (0,L)\\
&\\
\left[u_\varepsilon' (\eta_\varepsilon + v_\varepsilon^2) \right]' = 0 & \text{in } (0,L)\\
&\\
v_\varepsilon\leqslant v_{0,\varepsilon} &\text{in } (0,L)\; ,\\
&\\
u_\varepsilon(0)=0, \; u_\varepsilon(L)=a_0 \; ,&\\
&\\
v_\varepsilon(0)=v_\varepsilon(L)=1.&\\
\end{array}\right.
\end{equation}

\subsection{Statement of the theorems}

We consider a sequence $(u_\varepsilon, v_\varepsilon)$ of critical points of $\overline{AT}_\varepsilon$. 
We suppose that the sequence $(u_\varepsilon, v_\varepsilon)$ satisfies the following energy bound
\begin{equation}
\label{energyboundtemps1}
\limsup\limits_{\varepsilon \rightarrow 0} \overline{AT}_\varepsilon (u_\varepsilon, v_\varepsilon)\leqslant \mathcal{C}.
\end{equation}

Our first result concerns the regularity and is proved in section 3.

\begin{theorem}\label{Theorem regularity}
Let $(u_\varepsilon, v_\varepsilon)$ be a critical point of $\overline{AT}_\varepsilon$ in $H^1(0,L) \times H^1(0,L)$. Then $v_\varepsilon \in C^{1,1}([0,L])$.
\end{theorem}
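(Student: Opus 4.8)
The starting point is the observation already made: from \eqref{eq : eq4} we know $\mu_\varepsilon = -\varepsilon v_\varepsilon'' + (v_\varepsilon-1)/\varepsilon + v_\varepsilon|u_\varepsilon'|^2$ is a nonpositive Radon measure supported on the closed set $F_\varepsilon = \{v_\varepsilon = v_{0,\varepsilon}\}$, and consequently $v_\varepsilon' \in BV(0,L) \subset L^\infty$ and $v_\varepsilon \in C^{0,1}$. To bootstrap to $C^{1,1}$ I need to show that $v_\varepsilon''$, which a priori is only a measure, is in fact an $L^\infty$ function. The natural idea is to split the analysis on the open set $O_\varepsilon$ and on $F_\varepsilon$.

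On the open set $O_\varepsilon = \{v_\varepsilon < v_{0,\varepsilon}\}$ the measure $\mu_\varepsilon$ vanishes, so $v_\varepsilon$ solves the ODE $-\varepsilon v_\varepsilon'' + (v_\varepsilon-1)/\varepsilon + v_\varepsilon|u_\varepsilon'|^2 = 0$. Here $|u_\varepsilon'|^2 = c_\varepsilon^2/(\eta_\varepsilon + v_\varepsilon^2)^2$ by \eqref{eq : eq2}, which is a smooth (in fact real-analytic) function of $v_\varepsilon$ on $[0,1]$ since the denominator is bounded below by $\eta_\varepsilon^2 > 0$. So on each connected component of $O_\varepsilon$, $v_\varepsilon$ is a classical solution of a second-order ODE with smooth right-hand side, hence $v_\varepsilon \in C^\infty$ there, and in particular $v_\varepsilon'' = \varepsilon^{-1}\big[(v_\varepsilon-1)/\varepsilon + v_\varepsilon c_\varepsilon^2/(\eta_\varepsilon+v_\varepsilon^2)^2\big]$ is bounded on $O_\varepsilon$ by the $L^\infty$ bounds on $v_\varepsilon$ (which lies in $[0,1]$). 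On the set $F_\varepsilon$, since $v_\varepsilon = v_{0,\varepsilon}$ there and $v_{0,\varepsilon}$ is a critical point of the unconstrained $AT_\varepsilon$, it satisfies the classical equation from \eqref{criticalpoint0}: $v_{0,\varepsilon}$ is smooth (same ODE argument, no measure), so $v_{0,\varepsilon}'' = \varepsilon^{-1}\big[(v_{0,\varepsilon}-1)/\varepsilon + v_{0,\varepsilon}|u_{0,\varepsilon}'|^2\big]$ is bounded. The key point is then that on the interior of $F_\varepsilon$, $v_\varepsilon'' = v_{0,\varepsilon}''$ is bounded, and at points of $F_\varepsilon$ that are limits of $O_\varepsilon$ we must check that no atomic (Dirac) part of $\mu_\varepsilon$ appears and that the one-sided second derivatives match up to give an $L^\infty$ function overall.

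The main obstacle is exactly the behaviour of $v_\varepsilon''$ across $\partial O_\varepsilon = \partial F_\varepsilon$: I must rule out jumps in $v_\varepsilon'$ (which would make $v_\varepsilon'' $ contain a Dirac mass, contradicting $C^{1,1}$, indeed contradicting even $C^1$). The argument is that at a contact point $x_0 \in \partial O_\varepsilon$, both $v_\varepsilon$ and $v_{0,\varepsilon}$ are $C^1$ up to $x_0$ from their respective sides, $v_\varepsilon \le v_{0,\varepsilon}$ everywhere with equality at $x_0$, so $x_0$ is an interior maximum of $v_\varepsilon - v_{0,\varepsilon}$; since $v_\varepsilon - v_{0,\varepsilon} \in C^1$ (as $v_\varepsilon \in C^{0,1}$ with $v_\varepsilon' \in BV$, and one needs to verify continuity of $v_\varepsilon'$ at $x_0$), its derivative vanishes at $x_0$, forcing the one-sided derivatives of $v_\varepsilon$ from inside $O_\varepsilon$ and the derivative of $v_{0,\varepsilon}$ to agree — hence $v_\varepsilon' $ is continuous at $x_0$. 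More carefully: write $w = v_{0,\varepsilon} - v_\varepsilon \ge 0$; then $-\varepsilon w'' = -\mu_\varepsilon + (\text{bounded terms involving } w)$, so $w'' \le (\text{bounded})/\varepsilon$ as a measure, i.e. $w'' $ has no positive singular part; combined with $w \ge 0$, $w(x_0) = 0$ and $w \in C^{0,1}$, a maximum-principle / barrier argument shows $w'(x_0^-) \ge 0 \ge w'(x_0^+)$ while the absence of a positive jump in $w'$ gives $w'(x_0^-) \le w'(x_0^+)$, so $w'(x_0) = 0$ and $w'$ is continuous at $x_0$. This gives $v_\varepsilon' \in C^0([0,L])$, hence $v_\varepsilon \in C^1$; and then $v_\varepsilon'' = \mathbf{1}_{O_\varepsilon} v_\varepsilon'' + \mathbf{1}_{F_\varepsilon^\circ} v_{0,\varepsilon}'' $ (the boundary $\partial O_\varepsilon$ being Lebesgue-null after we know $\mu_\varepsilon$ has no atoms there, or one argues that $v_\varepsilon'$ is Lipschitz directly from the two-sided bounds on the difference quotients) is bounded, yielding $v_\varepsilon \in C^{1,1}([0,L])$. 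I would also record the explicit formula for $\mu_\varepsilon$ promised in the statement, namely $\mu_\varepsilon = \mathbf{1}_{F_\varepsilon}\big( -\varepsilon v_{0,\varepsilon}'' + (v_{0,\varepsilon}-1)/\varepsilon + v_{0,\varepsilon}|u_\varepsilon'|^2 \big)\, d\mathcal{L}$, which by the computation above is an $L^\infty$ density and is $\le 0$ on $F_\varepsilon$.
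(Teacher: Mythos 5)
Your route is genuinely different from the paper's. The paper applies the classical obstacle-problem machinery: first a quadratic growth estimate $0\leqslant w_\varepsilon(x)\leqslant C\,d(x,\partial O_\varepsilon)^2$ for $w_\varepsilon=v_{0,\varepsilon}-v_\varepsilon$ via super/subharmonic mean-value comparisons, then interior Schauder estimates on the components of $O_\varepsilon$ to get $|w_\varepsilon'(x_1)|\leqslant C\,d(x_1,\partial O_\varepsilon)$, which makes $w_\varepsilon'$ continuous (and then Lipschitz) across $\partial O_\varepsilon$. You instead exploit the one-dimensional structure directly: $v_\varepsilon''$ is bounded on $O_\varepsilon$ by reading it off the ODE, $v_\varepsilon=v_{0,\varepsilon}$ is smooth on $F_\varepsilon^\circ$, and at contact points $x_0\in\partial O_\varepsilon$ you use that $x_0$ is a minimum of $w_\varepsilon\geqslant 0$ together with the sign of the singular part of $w_\varepsilon''$ (inherited from $\mu_\varepsilon\leqslant 0$) to force $w_\varepsilon'(x_0^-)=w_\varepsilon'(x_0^+)=0$. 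This is more elementary and avoids Schauder entirely; the paper's route is the one that transfers to higher dimensions.

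A few things need fixing before the argument is watertight. First, the inequalities in the minimum-principle step are written with the wrong orientation: since $x_0$ is a \emph{minimum} of $w_\varepsilon$, the one-sided limits satisfy $w_\varepsilon'(x_0^-)\leqslant 0\leqslant w_\varepsilon'(x_0^+)$, and the absence of a \emph{positive} atom in $w_\varepsilon''$ gives $w_\varepsilon'(x_0^+)\leqslant w_\varepsilon'(x_0^-)$ (not the reverse). As stated, your two sign errors cancel and still give $w_\varepsilon'(x_0^\pm)=0$, but the intermediate inequalities should be corrected. Second, the alternative you propose, that ``$\partial O_\varepsilon$ is Lebesgue-null,'' is not justified: $F_\varepsilon$ is an arbitrary closed set and its boundary can have positive measure, and even once atoms are ruled out you still need to exclude a Cantor-type singular part of $\mu_\varepsilon$ on $\partial O_\varepsilon$. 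The correct closing step is the one you only sketch: having shown that $w_\varepsilon'$ is continuous, equals $0$ on all of $F_\varepsilon$ (interior points trivially, boundary points by the minimum principle), and that $|w_\varepsilon''|$ is bounded on $O_\varepsilon$, one proves the Lipschitz estimate $|w_\varepsilon'(x)-w_\varepsilon'(y)|\leqslant C|x-y|$ by a case analysis on whether $x,y$ lie in $F_\varepsilon$, in one component of $O_\varepsilon$, or are separated by points of $F_\varepsilon$ (in the last case insert the nearest points of $F_\varepsilon$, where $w_\varepsilon'$ vanishes). This case analysis should be written out, since it is what replaces the ``Lebesgue-null'' claim. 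Finally, you do not discuss the endpoints $0$ and $L$; the paper devotes a short paragraph to extending the regularity to the closed interval, and you should too (though here it is easy since $w_\varepsilon'$ is uniformly continuous on $(0,L)$).
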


We now state the main result of this work concerning the limit points of $(u_\varepsilon, v_\varepsilon)$.


\begin{theorem}[Convergence of the critical points]\label{Theorem 1}
Let $(u_\varepsilon , v_\varepsilon)$ be a family of critical points of $\overline{AT}_\varepsilon$ satisfying \eqref{energyboundtemps1}. Then there exists a real number $x_{min}\in \, (0,L)$ and a subsequence such that 

\begin{itemize}
\item[$(i)$] $(u_\varepsilon , v_\varepsilon) \rightarrow (u_1 , 1)$ in $L^2(0,L)$, with either $u_1(x)=u_{jump}(x)= a_1 1_{[x_{min}, L]}(x)$ or $u_1(x) = u_{aff} (x) = \dfrac{a_1x}{L}$. Moreover, if $|F_\varepsilon | \neq 0$ and $|a_1|\geqslant a_0$, then $u_1 = u_{jump}$.

\item[$(ii)$] $\left(\frac{(1-v_\varepsilon)^2}{\varepsilon} + \varepsilon (v_\varepsilon')^2\right) dx \overset{\star}{\rightharpoonup} \alpha \delta_{x_{min}}$ in $\mathcal{M}([0,L])$ with $\alpha = 0$ or $\alpha = 2$. Moreover, if $u_1= u_{jump}$ then $\alpha =2$. 

\item[$(iii)$] $(\eta_\varepsilon + v_\varepsilon ^2) |u_\varepsilon ' |^2 dx \overset{\star}{\rightharpoonup} |u_1'|^2 dx$ in $\mathcal{M}([0,L])$. 

\item[$(iv)$] (Equipartition of the energy principle) $\left|\dfrac{(1-v_\varepsilon )^2}{\varepsilon} - \varepsilon |v_\varepsilon '|^2\right| \rightarrow 0$ in $L^1 (0,L)$. 

\item[$(v)$]  $\varepsilon |v_\varepsilon '|^2 dx \overset{\star}{\rightharpoonup} \dfrac{\alpha}{2} \delta_{x_{min}},$ and $\dfrac{(1-v_\varepsilon)^2}{\varepsilon} dx  \overset{\star}{\rightharpoonup} \dfrac{\alpha}{2} \delta_{x_{min}}$ in $\mathcal{M}([0,L])$. 
\end{itemize}
\end{theorem}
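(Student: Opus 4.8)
The plan is to follow the strategy of \cite{BMRb23} and \cite{FLS09}, adapting it to the presence of the obstacle via the discrepancy estimate \autoref{estimatediscrepancy}. First I would introduce the discrepancy $\xi_\varepsilon := \varepsilon|v_\varepsilon'|^2 - \frac{(1-v_\varepsilon)^2}{\varepsilon} - (\eta_\varepsilon+v_\varepsilon^2)|u_\varepsilon'|^2$ (the Hamiltonian of the first-order ODE for $v_\varepsilon$), which in the unconstrained case is constant but here has controlled variations thanks to the sign of $\mu_\varepsilon$ and its support in $F_\varepsilon$. Using \eqref{eq : eq2} to write $(\eta_\varepsilon+v_\varepsilon^2)|u_\varepsilon'|^2 = c_\varepsilon^2/(\eta_\varepsilon+v_\varepsilon^2)$ and the energy bound \eqref{energyboundtemps1}, I would extract a subsequence so that $c_\varepsilon \to c_1$, $v_\varepsilon \to 1$ in $L^2$ (equivalently the phase-field energy $\to 0$ off a finite set of points, by the standard $\Gamma$-liminf argument: $\frac{2(1-v_\varepsilon)|v_\varepsilon'|}{\cdot}\le \varepsilon|v_\varepsilon'|^2+\frac{(1-v_\varepsilon)^2}{\varepsilon}$, so the measures $\frac{(1-v_\varepsilon)^2}{\varepsilon}+\varepsilon|v_\varepsilon'|^2$ are tight and their weak-$\star$ limit dominates $2\mathcal H^0$ on the "bad set" where $v_\varepsilon \not\to 1$ uniformly). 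The energy bound forces this bad set to be a single point $x_{min}$ (or empty) with mass $\alpha \in \{0,2\}$, because each transition of $v_\varepsilon$ from near $1$ down toward $0$ and back costs at least the "cut-off" cost $2$; this is where I would quantify, via the ODE $\varepsilon v_\varepsilon'' = \frac{v_\varepsilon-1}{\varepsilon}+v_\varepsilon|u_\varepsilon'|^2$ on $O_\varepsilon$ and the obstacle $v_\varepsilon\le v_{0,\varepsilon}$ (which itself concentrates only at $\emptyset$ or $\{L/2\}$ by \eqref{crack}), that a "half-dip" costs $\ge 1$ and a full dip costs $\ge 2$, and that only one full dip is affordable. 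Item $(i)$ then follows: $u_\varepsilon' = c_\varepsilon/(\eta_\varepsilon+v_\varepsilon^2) \to c_1$ in $L^2_{loc}$ away from $x_{min}$, so $u_\varepsilon \to u_1$ with $u_1$ affine on $[0,x_{min})$ and $(x_{min},L]$ with slope $c_1$; matching the Dirichlet data $u_1(0)=0$, $u_1(L)=a_1$ gives either $c_1 = a_1/L$ (affine, $\alpha=0$, $x_{min}$ not a genuine jump) or $c_1=0$ (then $u_1 = a_1 1_{[x_{min},L]}$, $\alpha = 2$). The refinement "if $|F_\varepsilon|\ne 0$ and $|a_1|\ge a_0$ then $u_1=u_{jump}$" I would get by a competitor/energy-comparison argument: on $F_\varepsilon$ we have $v_\varepsilon = v_{0,\varepsilon}$ which already carries a near-jump of "size" controlled by $a_0$, and if $u_1$ were affine with $|a_1|\ge a_0$ the Dirichlet term $\int (\eta_\varepsilon+v_\varepsilon^2)|u_\varepsilon'|^2 \ge c_\varepsilon^2\int (\eta_\varepsilon+v_{0,\varepsilon}^2)^{-1}$ would blow up or exceed the jump energy, contradicting \eqref{energyboundtemps1} together with the lower bound on the phase-field part forced by $F_\varepsilon\ne\emptyset$.

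For the equipartition $(iv)$, I would use the discrepancy again. Multiply the $v_\varepsilon$-equation by $v_\varepsilon'$ and integrate: on $O_\varepsilon$ where $\mu_\varepsilon=0$, $\xi_\varepsilon' = 0$, so $\xi_\varepsilon$ is piecewise constant with jumps only across $F_\varepsilon$, and \autoref{estimatediscrepancy} bounds the total variation of $\xi_\varepsilon$ by something going to $0$ (tied to the smallness of $\mu_\varepsilon(F_\varepsilon)$ and boundary terms). Combined with $\xi_\varepsilon(0)\to 0$ (boundary values $v_\varepsilon(0)=1$ and the energy not concentrating at the endpoints), this gives $\|\xi_\varepsilon\|_{L^\infty}\to 0$, hence $\|\xi_\varepsilon\|_{L^1}\to 0$. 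Then $\varepsilon|v_\varepsilon'|^2 - \frac{(1-v_\varepsilon)^2}{\varepsilon} = \xi_\varepsilon + (\eta_\varepsilon+v_\varepsilon^2)|u_\varepsilon'|^2$, and since the Dirichlet term is bounded but I need it small in $L^1$ only on the concentration set — actually I would instead argue directly: $|\varepsilon|v_\varepsilon'|^2 - \frac{(1-v_\varepsilon)^2}{\varepsilon}| \le |\xi_\varepsilon| + (\eta_\varepsilon+v_\varepsilon^2)|u_\varepsilon'|^2$, the first term $\to 0$ in $L^1$, and the second converges in $\mathcal M$ to $|u_1'|^2 dx$ which is absolutely continuous (no atom at $x_{min}$ since $u_1$ is affine on each side with a bounded slope), so its $L^1$-norm near $x_{min}$ over shrinking neighborhoods is small — more carefully, I would split $[0,L]$ into a small neighborhood $I_r(x_{min})$ and its complement, on the complement $v_\varepsilon\to 1$ uniformly so $\varepsilon|v_\varepsilon'|^2$ and $\frac{(1-v_\varepsilon)^2}{\varepsilon}$ are both small by the discrepancy-free ODE analysis, and on $I_r(x_{min})$ use that $\int_{I_r}(\eta_\varepsilon+v_\varepsilon^2)|u_\varepsilon'|^2 \to \int_{I_r}|u_1'|^2 \le C r$. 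This yields $(iv)$.

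Items $(ii)$, $(iii)$, $(v)$ are then consequences. For $(iii)$: $(\eta_\varepsilon+v_\varepsilon^2)|u_\varepsilon'|^2\,dx = c_\varepsilon^2 (\eta_\varepsilon+v_\varepsilon^2)^{-1}\,dx$; away from $x_{min}$ it converges uniformly to $c_1^2\,dx = |u_1'|^2\,dx$, and near $x_{min}$ I must rule out an atom — in the jump case $c_1=0$ so the whole density is $o(1)$ there, and in the affine case there is no concentration at all, giving weak-$\star$ convergence to $|u_1'|^2\,dx$ with no Dirac part. For $(ii)$: the total phase-field measure is tight (energy bound), concentrates only at $x_{min}$ by the dip-counting above, with mass $\alpha\in\{0,2\}$; and $u_1=u_{jump}$ forces a genuine transition of $v_\varepsilon$ near $x_{min}$ (otherwise $u_\varepsilon'=c_\varepsilon/(\eta_\varepsilon+v_\varepsilon^2)$ stays bounded away from giving a jump), which costs exactly $2$, so $\alpha=2$. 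Finally $(v)$ follows by combining $(ii)$ and $(iv)$: the sum of the two nonnegative measures converges weak-$\star$ to $\alpha\delta_{x_{min}}$ and their difference goes to $0$ in $L^1$ hence weak-$\star$ to $0$, so each converges to $\frac{\alpha}{2}\delta_{x_{min}}$.

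The main obstacle I anticipate is the dip-counting / concentration step: showing that the bad set is exactly one point with mass $\alpha\in\{0,2\}$, and not, say, two points with mass $2$ each or one point with non-integer mass. In the unconstrained case \cite{BMRb23} this uses the constancy of the discrepancy to pin down the profile of $v_\varepsilon$ across a transition (essentially the optimal Modica–Mortola profile, giving cost exactly $2$); here the discrepancy only has small total variation, so I would need to show that near a transition, where $\mu_\varepsilon$ may be active because $v_\varepsilon$ touches the obstacle $v_{0,\varepsilon}$, the energy is still at least $2-o(1)$ — this requires understanding how the obstacle $v_{0,\varepsilon}$ (which, by \autoref{theorempourtemps0} and \eqref{crack}, itself has a transition only at $L/2$) interacts with $v_\varepsilon$: either $v_\varepsilon$ dips on its own in $O_\varepsilon$ (clean Modica–Mortola cost $2$) or it follows $v_{0,\varepsilon}$ on $F_\varepsilon$ and inherits at least the cost of $v_{0,\varepsilon}$'s transition. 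Handling the possibility that a single transition region mixes both behaviors, and that the obstacle transition at $L/2$ and a free transition of $v_\varepsilon$ could a priori sit at different points, is the delicate bookkeeping; the energy bound $\limsup \overline{AT}_\varepsilon \le \mathcal C$ must be leveraged to forbid more than one unit-cost or two half-cost events, and to force $x_{min}\in(0,L)$ strictly (no concentration at the endpoints, since $v_\varepsilon(0)=v_\varepsilon(L)=1$ kills any half-dip there at a cost too small to be a full transition).
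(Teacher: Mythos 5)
Your high-level roadmap follows the paper's (discrepancy, concentration at a single point, equipartition), but at least two of your steps would fail as written, and two others are asserted without the key ingredient.

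\textbf{The equipartition step is wrong.} You claim $\|\xi_\varepsilon\|_{L^\infty}\to 0$ (hence $\|\xi_\varepsilon\|_{L^1}\to 0$) for the discrepancy $\xi_\varepsilon = -d_\varepsilon$. This is false. In the affine case $c_1=a_1/L\neq 0$, away from $x_{min}$ one has $v_\varepsilon\to 1$ uniformly, so $(1-v_\varepsilon)^2/\varepsilon$ and $\varepsilon|v_\varepsilon'|^2$ both tend to $0$ while $(\eta_\varepsilon+v_\varepsilon^2)|u_\varepsilon'|^2\to c_1^2$; thus $d_\varepsilon\to -c_1^2\neq 0$ on all of $[0,L]\setminus\{x_{min}\}$. \autoref{estimatediscrepancy} only gives $\|d_\varepsilon\|_\infty\leqslant C/\varepsilon$, not $o(1)$, and $\|d_\varepsilon\|_{L^1}$ is only bounded by a constant. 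The paper's equipartition proof instead starts from the identity
\[\frac{(1-v_\varepsilon)^2}{\varepsilon}-\varepsilon|v_\varepsilon'|^2 = d_\varepsilon + c_\varepsilon u_\varepsilon',\]
where both terms on the right are of order $c_1^2$ but with opposite sign, and the nontrivial content is the cancellation. That requires (a) the sharp boundary estimate $\varepsilon|v_\varepsilon'|^2\leqslant C\varepsilon^{1/3}$ on compacts away from $x_{min}$ (\autoref{grosse estimée sur v' avec bord}), which gives $d_\varepsilon(0)+c_\varepsilon u_\varepsilon'(0)=-\varepsilon|v_\varepsilon'(0)|^2\to 0$; (b) the monotonicity of $d_\varepsilon$ on $[0,L/2]$ and $[L/2,L]$ (\autoref{dprime}) and of $c_\varepsilon u_\varepsilon'$ across $x_\varepsilon$; and (c) the key cancellation $c_0(a_0-Lc_0)=0$ inherited from the time-0 dichotomy $c_0\in\{0,a_0/L\}$. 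Your proposal does not use any of this.

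\textbf{The dichotomy $c_1\in\{0,a_1/L\}$ is not proven.} You say "matching the Dirichlet data gives either $c_1=a_1/L$ or $c_1=0$", but a priori $u_1$ can be affine with slope $c_1\in[0,a_1/L]$ on each side of $x_{min}$ with a \emph{partial} jump $a_1-c_1L$. Ruling this out is the content of \autoref{slope}, which requires the measure estimate $|\{v_\varepsilon\leqslant A\sqrt\varepsilon\}|\leqslant CA^3\varepsilon^{3/2}$ and the three-regime decomposition of $\int_0^L u_\varepsilon'$. Relatedly, your proposed proof of "$|F_\varepsilon|\neq 0$ and $|a_1|\geqslant a_0\Rightarrow u_1=u_{jump}$" via a blow-up of the Dirichlet energy is off-target; the mechanism is simpler: $|F_\varepsilon|\neq 0$ forces $|c_\varepsilon|\leqslant|c_{0,\varepsilon}|$ because $\mu_\varepsilon\leqslant 0$ (\autoref{exprmu}), hence $|c_1|\leqslant c_0\leqslant a_0/L<|a_1|/L$, so $c_1=0$ by the dichotomy. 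Finally, the "dip-counting" to show a single concentration point is not an energy-budget argument (that cannot work for large $\mathcal C$): the paper derives it from the fact that $v_\varepsilon$ has a \emph{unique} global minimum and is monotone on either side of it (\autoref{variations of v}, \autoref{variationveps}), which in turn rests on the Cauchy--Lipschitz symmetry argument in $O_\varepsilon$ and the shape of $v_{0,\varepsilon}$. You correctly flag this as the hard part but do not identify the tool that closes it.
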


\smallskip

\begin{remark} \autoref{Theorem 1} extends the following 
$\Gamma-$ convergence result, which we prove in the appendix :

\begin{theorem}[$\Gamma$ convergence of $\overline{AT}_\varepsilon$]\label{Theorem 2}
The functional $\overline{AT_\varepsilon}$ $\Gamma$-converges to the functional $\overline{MS}$,  defined for every $(u,v) \in L^2(0,L)^2$ as

\[\overline{MS}(u, v) := \left\{\begin{array}{ll}
\displaystyle \int_0^L |u'|^2 + 2\mathcal{H}^0(J_u \cup \Gamma_0) & \text{ if } v= 1 \; ,\\
+\infty & \text{ else} \; .
\end{array}\right.\]
where $\Gamma_0$ is defined by \eqref{crack}. 
\end{theorem}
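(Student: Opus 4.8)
We establish the two $\Gamma$-inequalities in $L^2(0,L)^2$. It is convenient to abbreviate the phase-field part of the energy by
\[
P_\varepsilon(w;A):=\int_A\left(\varepsilon\,|w'|^2+\frac{(1-w)^2}{\varepsilon}\right)dx,\qquad P_\varepsilon(w):=P_\varepsilon(w;(0,L)),
\]
and to record the elementary a.e.\ inequalities $\left(1-\min(f,g)\right)^2=\max\!\left((1-f)^2,(1-g)^2\right)\le(1-f)^2+(1-g)^2$ and $|(\min(f,g))'|^2\le|f'|^2+|g'|^2$ (valid for $f,g\le1$), which yield the subadditivity $P_\varepsilon(\min(f,g))\le P_\varepsilon(f)+P_\varepsilon(g)$. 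We shall use the following facts about the obstacle sequence, which come from \autoref{theorempourtemps0} together with the analysis of \cite{BMRb23} and the maximum principle applied to \eqref{criticalpoint0}: $0\le v_{0,\varepsilon}\le1$ with $v_{0,\varepsilon}(0)=v_{0,\varepsilon}(L)=1$; $v_{0,\varepsilon}\to1$ in $L^2$ and uniformly on compact subsets of $[0,L]\setminus\Gamma_0$; $P_\varepsilon(v_{0,\varepsilon})\to2\,\mathcal H^0(\Gamma_0)$, the measure $\left(\varepsilon|v_{0,\varepsilon}'|^2+\varepsilon^{-1}(1-v_{0,\varepsilon})^2\right)dx$ concentrating on $\Gamma_0$; and, when $\Gamma_0=\{L/2\}$, one has $a_0\neq0$ and $c_{0,\varepsilon}\to0$, so that, integrating $u_{0,\varepsilon}'(\eta_\varepsilon+v_{0,\varepsilon}^2)=c_{0,\varepsilon}$,
\[
\int_0^L\frac{dx}{\eta_\varepsilon+v_{0,\varepsilon}^2}=\frac{a_0}{c_{0,\varepsilon}}\;\xrightarrow[\varepsilon\to0]{}\;+\infty ,
\]
the divergence being concentrated near $L/2$ (since $v_{0,\varepsilon}\to1$ uniformly away from $L/2$); in particular there is $x_\varepsilon\to L/2$ with $v_{0,\varepsilon}(x_\varepsilon)\to0$.

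\textit{The liminf inequality.} Let $(u_\varepsilon,v_\varepsilon)\to(u,v)$ in $L^2(0,L)^2$. Since $\overline{AT_\varepsilon}\ge AT_\varepsilon$ pointwise, we may assume $\ell:=\liminf_\varepsilon AT_\varepsilon(u_\varepsilon,v_\varepsilon)<\infty$ and, along a subsequence realising $\ell$, that $v_\varepsilon\le v_{0,\varepsilon}$ and $v_\varepsilon\to1$ a.e. The classical one-dimensional Ambrosio–Tortorelli lower bound (\cite{AT92,FLS09,BMRb23}) gives $v\equiv1$, $u\in SBV^2(0,L)$, and, in localised form, for every open $A\subset(0,L)$,
\[
\liminf_{\varepsilon\to0}\left[\int_A(\eta_\varepsilon+v_\varepsilon^2)|u_\varepsilon'|^2\,dx+P_\varepsilon(v_\varepsilon;A)\right]\ \ge\ \int_A|u'|^2\,dx+2\,\mathcal H^0(J_u\cap A).
\]
If $\Gamma_0=\emptyset$, or if $\Gamma_0=\{L/2\}$ and $L/2\in J_u$, taking $A=(0,L)$ already gives $\ell\ge\int_0^L|u'|^2+2\mathcal H^0(J_u)=\int_0^L|u'|^2+2\mathcal H^0(J_u\cup\Gamma_0)=\overline{MS}(u,1)$. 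Otherwise $\Gamma_0=\{L/2\}$ and $L/2\notin J_u$. Fix $\delta>0$ and pick $y^-<\tfrac L2<y^+$ with $v_\varepsilon(y^\pm)\to1$, $[y^-,y^+]\cap J_u=\emptyset$ and $\int_{y^-}^{y^+}|u'|^2<\delta$. For $\varepsilon$ small $x_\varepsilon\in(y^-,y^+)$, and with $\Phi(t):=t-\tfrac{t^2}{2}$ (so $\Phi'\ge0$ on $[0,1]$, $\Phi(0)=0$, $\Phi(1)=\tfrac12$), Young's inequality and the triangle inequality at $x_\varepsilon$ give
\[
P_\varepsilon(v_\varepsilon;(y^-,y^+))\ \ge\ 2\int_{y^-}^{y^+}\!\!|(1-v_\varepsilon)v_\varepsilon'|\ \ge\ 2\left|\Phi(v_\varepsilon(x_\varepsilon))-\Phi(v_\varepsilon(y^-))\right|+2\left|\Phi(v_\varepsilon(y^+))-\Phi(v_\varepsilon(x_\varepsilon))\right|\ \longrightarrow\ 4\,\Phi(1)=2,
\]
because $v_\varepsilon(x_\varepsilon)\le v_{0,\varepsilon}(x_\varepsilon)\to0$. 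Adding the localised bound with $A=(0,L)\setminus[y^-,y^+]$ (which accounts for all of $J_u$) and discarding the nonnegative Dirichlet term on $(y^-,y^+)$, we get $\ell\ge\int_0^L|u'|^2-\delta+2\mathcal H^0(J_u)+2$; letting $\delta\to0$ and $y^\pm\to\tfrac L2$, and using $\mathcal H^0(J_u\cup\{L/2\})=\mathcal H^0(J_u)+1$, concludes.

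\textit{The limsup inequality.} Let $(u,1)$ with $\overline{MS}(u,1)<\infty$ (otherwise nothing is to be proved), so $u\in SBV^2(0,L)$ and $J_u$ is finite. Let $\hat v_\varepsilon$ be the usual one-variable Ambrosio–Tortorelli optimal profile associated to $J_u\setminus\Gamma_0$: it equals $1$ outside disjoint shrinking intervals $T_i$ around the points $x_i\in J_u\setminus\Gamma_0$, drops to $0$ and back along the optimal Modica–Mortola profile inside each $T_i$, and satisfies $\hat v_\varepsilon\to1$ in $L^2$, $P_\varepsilon(\hat v_\varepsilon)\to2\mathcal H^0(J_u\setminus\Gamma_0)$. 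Set $v_\varepsilon:=\min(\hat v_\varepsilon,v_{0,\varepsilon})\in H^1(0,L)$, so that $v_\varepsilon\le v_{0,\varepsilon}$, $v_\varepsilon\to1$ in $L^2$, and by subadditivity $\limsup_\varepsilon P_\varepsilon(v_\varepsilon)\le\lim_\varepsilon\left(P_\varepsilon(\hat v_\varepsilon)+P_\varepsilon(v_{0,\varepsilon})\right)=2\mathcal H^0(J_u\setminus\Gamma_0)+2\mathcal H^0(\Gamma_0)=2\mathcal H^0(J_u\cup\Gamma_0)$, the union being disjoint since $\Gamma_0\subseteq\{L/2\}$. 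Define $u_\varepsilon$ as follows. On each $T_i$ (where $v_\varepsilon=\hat v_\varepsilon$ on $\{v_\varepsilon\le\delta_\varepsilon\}$ for $\varepsilon$ small, as $v_{0,\varepsilon}\to1$ uniformly there) we interpolate the jump of $u$ across $\{v_\varepsilon\le\delta_\varepsilon\}$ exactly as in the classical construction, which using $\eta_\varepsilon/\varepsilon\to0$ contributes $o(1)$ to the Dirichlet term on $T_i$. If $L/2\in J_u$ (hence $\Gamma_0=\{L/2\}$ and $v_\varepsilon=v_{0,\varepsilon}$ near $L/2$), choose $\sigma_\varepsilon\to0$ slowly enough that $x_\varepsilon\in(\tfrac L2-\sigma_\varepsilon,\tfrac L2+\sigma_\varepsilon)$ and $\int_{|x-L/2|<\sigma_\varepsilon}(\eta_\varepsilon+v_{0,\varepsilon}^2)^{-1}\to+\infty$, set $u_\varepsilon=u$ outside that interval, and on it $u_\varepsilon'=c_\varepsilon(\eta_\varepsilon+v_{0,\varepsilon}^2)^{-1}$ with $c_\varepsilon$ fixed by continuity of $u_\varepsilon$; then $c_\varepsilon\to0$ and, writing $\Delta_\varepsilon:=u(\tfrac L2+\sigma_\varepsilon)-u(\tfrac L2-\sigma_\varepsilon)$ (bounded),
\[
\int_{|x-L/2|<\sigma_\varepsilon}(\eta_\varepsilon+v_{0,\varepsilon}^2)\,|u_\varepsilon'|^2\,dx\ =\ c_\varepsilon^2\int_{|x-L/2|<\sigma_\varepsilon}\frac{dx}{\eta_\varepsilon+v_{0,\varepsilon}^2}\ =\ c_\varepsilon\,\Delta_\varepsilon\ \longrightarrow\ 0.
\]
Everywhere else $u_\varepsilon=u$ and $v_\varepsilon=v_{0,\varepsilon}$, so $u_\varepsilon\in H^1(0,L)$ is continuous, $u_\varepsilon\to u$ in $L^2$, and by dominated convergence ($\eta_\varepsilon+v_{0,\varepsilon}^2\to1$ a.e., dominated by $2|u'|^2$) the Dirichlet term tends to $\int_0^L|u'|^2$. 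Hence $\limsup_\varepsilon\overline{AT_\varepsilon}(u_\varepsilon,v_\varepsilon)=\limsup_\varepsilon AT_\varepsilon(u_\varepsilon,v_\varepsilon)\le\int_0^L|u'|^2+2\mathcal H^0(J_u\cup\Gamma_0)=\overline{MS}(u,1)$.

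\textit{Main difficulty.} The crux is the case $\Gamma_0=\{L/2\}$, where the obstacle itself carries a phase transition at $L/2$. For the lower bound one needs that this transition is felt by every admissible $v_\varepsilon\le v_{0,\varepsilon}$, which is exactly the input $v_{0,\varepsilon}(x_\varepsilon)\to0$, $x_\varepsilon\to L/2$. For the upper bound the delicate point is to let $u_\varepsilon$ cross a possible jump of $u$ at $L/2$ at asymptotically zero Dirichlet cost \emph{while keeping $v_\varepsilon=v_{0,\varepsilon}$ there} — i.e. without stacking a second Modica–Mortola profile on top of the one already supplied by the obstacle, which would otherwise yield a surface contribution $4$ instead of $2$; this is made possible by the divergence of $\int(\eta_\varepsilon+v_{0,\varepsilon}^2)^{-1}$ near $L/2$, equivalently $c_{0,\varepsilon}\to0$, the signature of the jump at the level of the obstacle. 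Organising the bookkeeping so that the transition at $L/2$ is realised by $v_{0,\varepsilon}$ and the ones on $J_u\setminus\Gamma_0$ by $\hat v_\varepsilon$, on disjoint sets, together with the subadditivity of $P_\varepsilon$ under $\min$, is what keeps the total surface energy equal to $2\mathcal H^0(J_u\cup\Gamma_0)$; verifying precisely these properties of $v_{0,\varepsilon}$ (in particular the localisation of the divergence) is the technically heaviest ingredient.
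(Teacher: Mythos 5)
Your proof is correct and takes a genuinely different route from the paper on both inequalities, most notably on the $\Gamma$-limsup. For the $\Gamma$-liminf, the paper runs a coarea-formula argument directly on the sublevel sets $X_\varepsilon=\{v_\varepsilon<t_\varepsilon\}$ and $Y_\varepsilon=\{v_{0,\varepsilon}<t_\varepsilon\}\subset X_\varepsilon$, tracking $\Gamma_0$ through the boundary count of $X_\varepsilon$; you instead invoke the standard localised Ambrosio--Tortorelli lower bound away from $L/2$ and produce the extra $+2$ through an explicit Modica--Mortola transition estimate pinned at a point $x_\varepsilon\to L/2$ where $v_\varepsilon(x_\varepsilon)\le v_{0,\varepsilon}(x_\varepsilon)\to 0$. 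Both work; yours modularises the argument and makes the role of the obstacle's $v$-jump transparent, while the paper's is self-contained in the coarea framework it already sets up.

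The genuinely different step is the $\Gamma$-limsup. The paper, following Giacomini, forces $v_\varepsilon$ to vanish near $\Gamma_0$ by inserting a carefully tuned auxiliary profile $w_{2,\varepsilon}$ (so that $v_\varepsilon^2|u_\varepsilon'|^2$ is free there) and then constructs $u_\varepsilon=\varphi_\varepsilon z_\varepsilon$ with a cut-off $\varphi_\varepsilon$ chosen, via a pigeonhole/averaging argument on the level sets of $v_{0,\varepsilon}$, so that $\int\eta_\varepsilon|\varphi_\varepsilon'|^2\to 0$. You avoid all of this by keeping $v_\varepsilon=v_{0,\varepsilon}$ near $L/2$ and exploiting the \emph{critical point} structure of the obstacle: from $u_{0,\varepsilon}'(\eta_\varepsilon+v_{0,\varepsilon}^2)=c_{0,\varepsilon}$ with $c_{0,\varepsilon}\to 0$ you get $\int(\eta_\varepsilon+v_{0,\varepsilon}^2)^{-1}\to\infty$, and you pass $u$'s jump across $L/2$ by solving $u_\varepsilon'=c_\varepsilon(\eta_\varepsilon+v_{0,\varepsilon}^2)^{-1}$, obtaining Dirichlet cost $c_\varepsilon\Delta_\varepsilon\to 0$. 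The $\min$-subadditivity of $P_\varepsilon$, which the paper also uses (for $w_{1,\varepsilon}$ on a neighbourhood of $J_u\setminus\Gamma_0$), then closes the surface bookkeeping directly, with no need for the $w_{2,\varepsilon}$ construction or \autoref{w2eps}. Your version buys a shorter and more conceptual proof, at the price of relying on the specific critical-point ODE for the obstacle (in particular $c_{0,\varepsilon}\to 0$); the paper's Giacomini-style route requires only the energy-level properties of $v_{0,\varepsilon}$, so it would survive if one replaced the obstacle by a non-critical sequence with the same $\Gamma$-limit behaviour.

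Two minor points to tidy. First, your a.e.\ identity $\left(1-\min(f,g)\right)^2=\max((1-f)^2,(1-g)^2)$ and the gradient inequality hold as stated since $0\le f,g\le 1$, but it is worth saying that $v_{0,\varepsilon}\le 1$ (maximum principle for \eqref{criticalpoint0}) before invoking it, exactly as the paper does at the start of its liminf proof. Second, in the limsup it would be cleaner to simply say $v_{0,\varepsilon}(L/2)\to 0$ (available from \cite[Theorem 1.1]{BMRb23} together with the monotonicity of $v_{0,\varepsilon}$) rather than deriving the existence of $x_\varepsilon$ from the divergence of $\int(\eta_\varepsilon+v_{0,\varepsilon}^2)^{-1}$; this also keeps the liminf and limsup arguments independent. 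Note finally that both your proof and the paper's rest on the asserted fact $P_\varepsilon(v_{0,\varepsilon})\to 2\mathcal H^0(\Gamma_0)$ with $\Gamma_0=J_{u_0}$, so any subtlety there (cf.\ the last remark of Section~4 about $v$-jumps of $v_{0,\varepsilon}$ when $u_0=u_{aff}$) affects both arguments equally and is not a flaw of your approach.
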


While the $\Gamma$-convergence implies that a converging sequence of minimizers of $\overline{AT}_\varepsilon$ converges to a minimizer of $\overline{MS}(u)$, \autoref{Theorem 1} acts as a selection criteria : the limit of critical points of $\overline{AT}_\varepsilon$ are critical points of $\overline{MS}$ that have at most one discontinuity point. Observe that we recover a similar selection phenomenon as in \cite[Theorem 1.2]{BMRb23}.

Considering \eqref{Theorem 2}, it is natural to define the crack at time $1$ by 

\begin{equation}\label{crack1}
\Gamma_1 = \Gamma_0 \cup J_{u_1}
\end{equation}
The different possible values for $\Gamma_1$ according to \autoref{Theorem 1} are represented in 
\autoref{figure}.

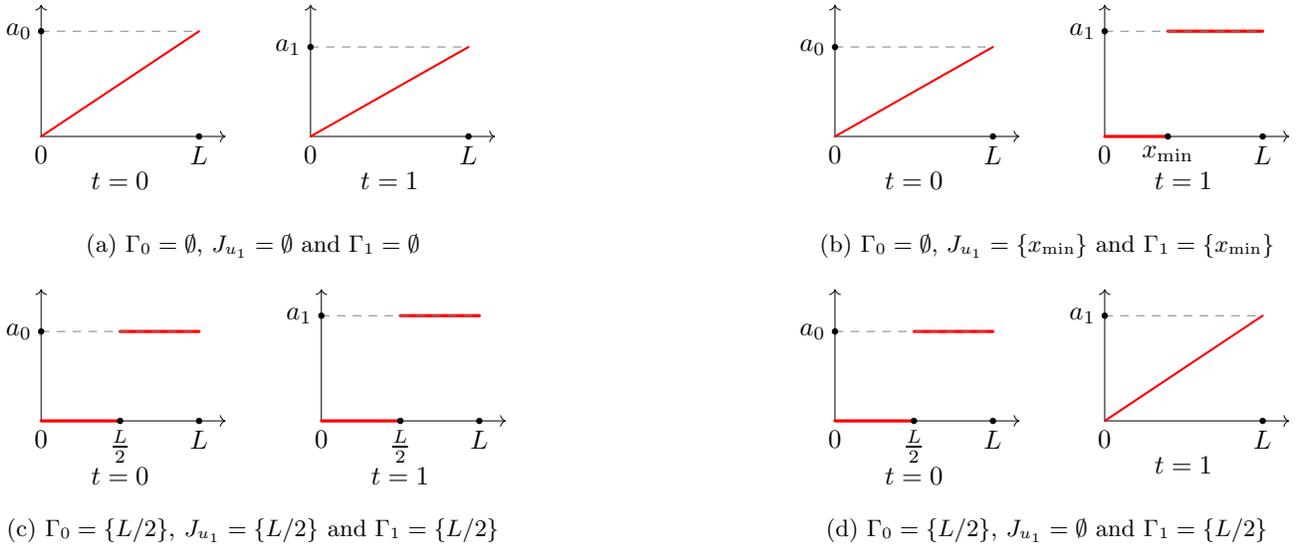
\begin{figure}[!h]
\begin{subfigure}[!h]{0.4\linewidth}
\begin{tikzpicture}
[scale=0.7]
\tkzInit[ymin=-1.5,ymax=3,xmin=-1,xmax=3.5]
\tkzClip
\tkzDefPoint(0,0){B}
\tkzDefPoint(3,0){B1}
\tkzDefPoint(3.5,0){C}
\tkzDefPoint(0,2){D}
\tkzDefPoint(0,2.5){E}
\tkzDefPoint(3,2){F}
\tkzDefPoint(0,2){H}

\tikzset{segment/.style={->}}
\tkzDrawSegment[segment](B,C)
\tkzDrawSegment[segment](B,E)
\tkzDrawSegment[color=red,thick](B,F)
\tkzDrawSegment[dashed,color=black!50](H,F)
\tkzDrawPoints(H,B1)

\tkzLabelPoint(B){$0$}
\tkzLabelPoint[below](B1){$L$}
\tkzLabelPoint[left](D){$a_0$}
\node[below] at (1.5,-0.5){$t=0$};
\end{tikzpicture}
\hspace{2mm}
\begin{tikzpicture}
[scale=0.7]
\tkzInit[ymin=-1.5,ymax=3,xmin=-1,xmax=3.5]
\tkzClip
\tkzDefPoint(0,0){B}
\tkzDefPoint(3,0){B1}
\tkzDefPoint(3.5,0){C}
\tkzDefPoint(0,1.7){D}
\tkzDefPoint(0,2.5){E}
\tkzDefPoint(3,1.7){F}
\tkzDefPoint(0,1.7){H}

\tikzset{segment/.style={->}}
\tkzDrawSegment[segment](B,C)
\tkzDrawSegment[segment](B,E)
\tkzDrawSegment[color=red,thick](B,F)
\tkzDrawSegment[dashed,color=black!50](H,F)
\tkzDrawPoints(H,B1)

\tkzLabelPoint(B){$0$}
\tkzLabelPoint[below](B1){$L$}
\tkzLabelPoint[left](D){$a_1$}
\node[below] at (1.5,-0.5){$t=1$};
\end{tikzpicture}
\caption{$\Gamma_0= \emptyset$, $J_{u_1}=\emptyset$ and $\Gamma_1 = \emptyset$}
\end{subfigure}
\hfill
\begin{subfigure}[!h]{0.4\linewidth}
\begin{tikzpicture}
[scale=0.7]
\tkzInit[ymin=-1.5,ymax=3,xmin=-1,xmax=3.5]
\tkzClip
\tkzDefPoint(0,0){B}
\tkzDefPoint(3,0){B1}
\tkzDefPoint(3.5,0){C}
\tkzDefPoint(0,1.7){D}
\tkzDefPoint(0,2.5){E}
\tkzDefPoint(3,1.7){F}
\tkzDefPoint(0,1.7){H}

\tikzset{segment/.style={->}}
\tkzDrawSegment[segment](B,C)
\tkzDrawSegment[segment](B,E)
\tkzDrawSegment[color=red,thick](B,F)
\tkzDrawSegment[dashed,color=black!50](H,F)
\tkzDrawPoints(H,B1)

\tkzLabelPoint(B){$0$}
\tkzLabelPoint[below](B1){$L$}
\tkzLabelPoint[left](D){$a_0$}
\node[below] at (1.5,-0.5){$t=0$};
\end{tikzpicture}
\hspace{2mm}
\begin{tikzpicture}
[scale=0.7]
\tkzInit[ymin=-1.5,ymax=3,xmin=-1,xmax=3.5]
\tkzClip
\tkzDefPoint(0,0){B}
\tkzDefPoint(3,0){B1}
\tkzDefPoint(3.5,0){C}
\tkzDefPoint(0,2){D}
\tkzDefPoint(0,2.5){E}
\tkzDefPoint(3,2){F}
\tkzDefPoint(1.2,0){G}
\tkzDefPoint(1.2,2){H}

\tikzset{segment/.style={->}}
\tkzDrawSegment[segment](B,C)
\tkzDrawSegment[segment](B,E)
\tkzDrawSegment[color=red, very thick](B,G)
\tkzDrawSegment[color=red,very thick](H,F)
\tkzDrawSegment[dashed,color=black!50](D,F)
\tkzDrawPoints(D,G,B1)

\tkzLabelPoint(B){$0$}
\tkzLabelPoint[below](G){$x_{\min}$}
\tkzLabelPoint[below](B1){$L$}
\tkzLabelPoint[left](D){$a_1$}
\node[below] at (1.5,-0.5){$t=1$};
\end{tikzpicture}
\caption{$\Gamma_0= \emptyset$, $J_{u_1}=\{x_{\min}\}$ and $\Gamma_1 = \{x_{\min}\}$}
\end{subfigure}
\hfill
\begin{subfigure}[!h]{0.4\linewidth}
\begin{tikzpicture}
[scale=0.7]
\tkzInit[ymin=-1.5,ymax=3,xmin=-1,xmax=3.5]
\tkzClip
\tkzDefPoint(0,0){B}
\tkzDefPoint(3,0){B1}
\tkzDefPoint(3.5,0){C}
\tkzDefPoint(0,1.7){D}
\tkzDefPoint(0,2.5){E}
\tkzDefPoint(3,1.7){F}
\tkzDefPoint(1.5,0){G}
\tkzDefPoint(1.5,1.7){H}

\tikzset{segment/.style={->}}
\tkzDrawSegment[segment](B,C)
\tkzDrawSegment[segment](B,E)
\tkzDrawSegment[color=red, very thick](B,G)
\tkzDrawSegment[color=red,very thick](H,F)
\tkzDrawSegment[dashed,color=black!50](D,F)
\tkzDrawPoints(D,G,B1)

\tkzLabelPoint(B){$0$}
\tkzLabelPoint[below](G){$\frac{L}{2}$}
\tkzLabelPoint[below](B1){$L$}
\tkzLabelPoint[left](D){$a_0$}
\node[below] at (1.5,-0.7){$t=0$};
\end{tikzpicture}
\hspace{2mm}
\begin{tikzpicture}
[scale=0.7]
\tkzInit[ymin=-1.5,ymax=3,xmin=-1.2,xmax=3.5]
\tkzClip
\tkzDefPoint(0,0){B}
\tkzDefPoint(3,0){B1}
\tkzDefPoint(3.5,0){C}
\tkzDefPoint(0,2){D}
\tkzDefPoint(0,2.5){E}
\tkzDefPoint(3,2){F}
\tkzDefPoint(1.5,0){G}
\tkzDefPoint(1.5,2){H}

\tikzset{segment/.style={->}}
\tkzDrawSegment[segment](B,C)
\tkzDrawSegment[segment](B,E)
\tkzDrawSegment[color=red, very thick](B,G)
\tkzDrawSegment[color=red,very thick](H,F)
\tkzDrawSegment[dashed,color=black!50](D,F)
\tkzDrawPoints(D,G,B1)

\tkzLabelPoint(B){$0$}
\tkzLabelPoint[below](G){$\frac{L}{2}$}
\tkzLabelPoint[below](B1){$L$}
\tkzLabelPoint[left](D){$a_1$}
\node[below] at (1.5,-0.7){$t=1$};
\end{tikzpicture}
\caption{$\Gamma_0= \{L/2\}$, $J_{u_1}=\{L/2\}$ and $\Gamma_1 = \{L/2\}$}
\end{subfigure}
\hfill
\begin{subfigure}[!h]{0.4\linewidth}
\begin{tikzpicture}
[scale=0.7]
\tkzInit[ymin=-1.5,ymax=3,xmin=-1,xmax=3.5]
\tkzClip
\tkzDefPoint(0,0){B}
\tkzDefPoint(3,0){B1}
\tkzDefPoint(3.5,0){C}
\tkzDefPoint(0,1.7){D}
\tkzDefPoint(0,2.5){E}
\tkzDefPoint(3,1.7){F}
\tkzDefPoint(1.5,0){G}
\tkzDefPoint(1.5,1.7){H}

\tikzset{segment/.style={->}}
\tkzDrawSegment[segment](B,C)
\tkzDrawSegment[segment](B,E)
\tkzDrawSegment[color=red, very thick](B,G)
\tkzDrawSegment[color=red,very thick](H,F)
\tkzDrawSegment[dashed,color=black!50](D,F)
\tkzDrawPoints(D,G,B1)

\tkzLabelPoint(B){$0$}
\tkzLabelPoint[below](G){$\frac{L}{2}$}
\tkzLabelPoint[below](B1){$L$}
\tkzLabelPoint[left](D){$a_0$}
\node[below] at (1.5,-0.7){$t=0$};
\end{tikzpicture}
\hspace{2mm}
\begin{tikzpicture}
[scale=0.7]
\tkzInit[ymin=-1.5,ymax=3,xmin=-1,xmax=3.5]
\tkzClip
\tkzDefPoint(0,0){B}
\tkzDefPoint(3,0){B1}
\tkzDefPoint(3.5,0){C}
\tkzDefPoint(0,2){D}
\tkzDefPoint(0,2.5){E}
\tkzDefPoint(3,2){F}
\tkzDefPoint(0,2){H}
\tkzDefPoint(1.5,1){X}

\tikzset{segment/.style={->}}
\tkzDrawSegment[segment](B,C)
\tkzDrawSegment[segment](B,E)
\tkzDrawSegment[color=red,thick](B,F)
\tkzDrawSegment[dashed,color=black!50](H,F)
\tkzDrawPoints(H,B1)

\tkzLabelPoint(B){$0$}
\tkzLabelPoint[below](B1){$L$}
\tkzLabelPoint[left](D){$a_1$}
\node[below] at (1.5,-0.5){$t=1$};
\end{tikzpicture}
\caption{$\Gamma_0=\{L/2\}$, $J_{u_1}= \emptyset$ and $\Gamma_1 = \{L/2\}$}
\end{subfigure}
\caption{Possible values of $\Gamma_0$, $J_{u_1}$ and $\Gamma_1$.}\label{figure}
\end{figure}
\end{remark}

\begin{remark}
The item $(iii)$ of \autoref{Theorem 1} is called the \textit{equipartition of energy principle} and is to be compared with the equality case of the arithmetico-geometric inequality 
\[\dfrac{(1-v)^2}{\varepsilon} + \varepsilon |v'|^2 \geqslant 2 |1-v| |v'|\]
used in the proof of Gamma-convergence of $AT_\varepsilon$ to $MS$.
\end{remark}

\begin{remark}
Let us stress that the implication in $(ii)$ is not an equivalence, and that the converse is still an open question. However, the fact that $\alpha >0$ is equivalent to the existence of a so-called $v-$jump for the sequence $(v_\varepsilon)$ defined as follows : 

\begin{definition}\label{v-jump}
A point $x \in [0,L]$ is a $v-$jump if there exists a sequence $(x_\varepsilon)$ such that $x_\varepsilon \rightarrow x$ and $v_\varepsilon (x_\varepsilon) \rightarrow 0$. 
\end{definition}

It turns out that a $v-$jump does not necessarily lead to a discontinuity point in the limit. 
The variations and the symmetry of the function $v_{0,\varepsilon}$ around $L/2$ imply that the only possible $v-$jump for $(v_{0,\varepsilon})$ is $L/2$. This symmetry disappears at time $1$, as $x_{min}$ is not necessarily equal to $L/2$.  
\end{remark}

\begin{remark}
A similar result holds for a sequence $(u_\varepsilon, v_\varepsilon)$ of critical points of $\overline{AT}_\varepsilon$ with Neumann conditions on $v_\varepsilon$, which is the framework of \cite[Theorem 2.2]{FLS09}. More precisely, if $\Gamma_0$ and $\Gamma_1$ are defined as in \eqref{crack} and \eqref{crack1}, \cite[Theorem 2.2]{FLS09}	states that $\Gamma_0 = \emptyset$ or $\Gamma_0$ is a regular subdivision of $(-L,L)$. It can be proved that 

\begin{itemize}
\item[$\bullet$] in the former case, $\Gamma_1= \{x_0, \ldots , x_{n+1}\}$, where $x_0=0, x_{n+1} = L$ and $x_i \in \left [\dfrac{(2i-1)L}{2n} , \dfrac{(2i+1)L}{2n}\right ]$ for $1\leqslant i \leqslant n$ or $\{x_1, \ldots , x_n\}$ where $x_i \in  \left [\dfrac{(i-1)L}{n} , \dfrac{iL}{n}\right ]$ for $1 \leqslant i \leqslant n$. Observe that, as for \autoref{Theorem 1}, the symmetry of the ``perfect staircase'' observed in \cite[Remark 2.3]{FLS09} is lost at time $1$. 
\item[$\bullet$] in the latter case, $\Gamma_1= \Gamma_0$.  
\end{itemize} 
\end{remark}


We would like to close this section by explaining what happens if we intend to iterate \autoref{Theorem 1} for $t=2$. Denote by $(u_{1,\varepsilon}, v_{1,\varepsilon})$ a sequence of critical points at time $1$, i.e satisfying $v_{1,\varepsilon} \leqslant v_{0,\varepsilon}$, $\mu_{1, \varepsilon}$ the right-hand side of the first equation of \eqref{criticalpoint1} and $(u_{2,\varepsilon},v_{2,\varepsilon})$ a sequence of critical points at time $2$, i.e satisfying $v_{2,\varepsilon} \leqslant v_{1,\varepsilon}$. The right hand-side in the first equation of \eqref{criticalpoint1} corresponding to $v_{2,\varepsilon}$ is a non positive measure $\mu_{2,\varepsilon}$. The expression of $\mu_{2,\varepsilon}$ now depends both on $\{v_{0,\varepsilon} = v_{1,\varepsilon}\}$ and $\{v_{1,\varepsilon} = v_{2,\varepsilon}\}$, so that we do not always have a similar result as \autoref{exprmu} in the case $|\{v_{1,\varepsilon} = v_{2,\varepsilon}\}| \neq 0$. As a consequence, the behavior of $(u_{2,\varepsilon},v_{2,\varepsilon})$ is less predictable, in the sense that we do not have an analogous of item (i) of \autoref{Theorem 1}. A possible reason to that phenomenon is that, as we consider critical points and not only minimizers, we do not have an energy balance at our disposal, which is crucial in the iteration argument proposed in \cite[Theorem 2.2]{G05}.

\section{Critical points of $\overline{AT}_\varepsilon$}

\subsection{$C^{1,1}$ regularity of the critical points} 

In this subsection, we investigate the regularity of $v_\varepsilon$. More precisely, we establish the following proposition : 

\begin{proposition}\label{regularityveps}
Let $(u_\varepsilon, v_\varepsilon)$ be a critical point of $\overline{AT}_\varepsilon$ in $H^1(0,L) \times H^1(0,L)$. Then $v_\varepsilon \in C^{1,1}([0,L])$. 
\end{proposition}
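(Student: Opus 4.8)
The plan is to bootstrap regularity from the Euler--Lagrange equation \eqref{eq : eq4}, using the structure of the obstacle problem and the already-established regularity of $v_{0,\varepsilon}$. We know from the discussion preceding \eqref{criticalpoint1} that $v_\varepsilon\in C^{0,1}(0,L)$, that $v_\varepsilon''$ is a (signed) Radon measure, and that the nonpositive measure $\mu_\varepsilon$ is supported on the closed set $F_\varepsilon=\{v_\varepsilon=v_{0,\varepsilon}\}$. The key observation is that on the open set $O_\varepsilon=\{v_\varepsilon<v_{0,\varepsilon}\}$ the measure $\mu_\varepsilon$ vanishes, so there $v_\varepsilon$ solves the classical ODE $-\varepsilon v_\varepsilon''+\frac{v_\varepsilon-1}{\varepsilon}+v_\varepsilon|u_\varepsilon'|^2=0$; since $u_\varepsilon'=c_\varepsilon/(\eta_\varepsilon+v_\varepsilon^2)$ is a smooth function of $v_\varepsilon$ (by \eqref{eq : eq2} and $\eta_\varepsilon>0$), a standard ODE bootstrap gives $v_\varepsilon\in C^\infty(O_\varepsilon)$, and in particular $v_\varepsilon\in C^{1,1}$ locally on $O_\varepsilon$ with the right-hand side bounded. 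On the interior of $F_\varepsilon$, we have $v_\varepsilon=v_{0,\varepsilon}$ which is smooth by \eqref{criticalpoint0} (same ODE bootstrap, $\mu\equiv0$ for the unconstrained critical point), so $v_\varepsilon\in C^{1,1}$ there too.

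The only issue is thus regularity across the free boundary $\partial O_\varepsilon=\partial F_\varepsilon$. First I would show $v_\varepsilon\in C^1([0,L])$: at a free-boundary point $x_0$, $v_\varepsilon$ and $v_{0,\varepsilon}$ agree and $v_\varepsilon\le v_{0,\varepsilon}$ with both in $C^{0,1}$ and $v_{0,\varepsilon}\in C^1$; the standard obstacle-problem argument is that a one-sided tangency forces $v_\varepsilon'(x_0)=v_{0,\varepsilon}'(x_0)$ (the left and right derivatives of $v_\varepsilon$ at $x_0$ must sandwich $v_{0,\varepsilon}'(x_0)$ from the contact side, while the interior ODE on the $O_\varepsilon$ side pins them), hence $v_\varepsilon'$ has no jump at $x_0$. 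Combined with smoothness on each side and continuity of $v_\varepsilon'$, this gives $v_\varepsilon\in C^1([0,L])$. Then to upgrade to $C^{1,1}$ I would use \eqref{eq : eq4}: rewrite it as $\varepsilon v_\varepsilon'' = \frac{v_\varepsilon-1}{\varepsilon}+v_\varepsilon|u_\varepsilon'|^2-\mu_\varepsilon$. Away from $F_\varepsilon$ and on $\mathrm{int}(F_\varepsilon)$ this shows $v_\varepsilon''\in L^\infty$ with a bound in terms of $\|v_\varepsilon\|_\infty$, $\|u_\varepsilon'\|_\infty$ and $\|v_{0,\varepsilon}''\|_\infty$; it remains to control the measure part $\mu_\varepsilon$ near $\partial F_\varepsilon$ and show it has no singular (atomic) part, i.e. that $\mu_\varepsilon$ is absolutely continuous with $L^\infty$ density. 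This is exactly where the explicit formula for $\mu_\varepsilon$ announced as \autoref{exprmu} comes in: on $F_\varepsilon$, $v_\varepsilon=v_{0,\varepsilon}$ so $v_\varepsilon''=v_{0,\varepsilon}''$ there (in the a.e./density sense, once we know $v_\varepsilon\in C^1$ so the measure $v_\varepsilon''$ restricted to the contact set coincides with $v_{0,\varepsilon}''$), and plugging into \eqref{eq : eq4} gives
\[
\mu_\varepsilon = \mathbbm{1}_{F_\varepsilon}\left(-\varepsilon v_{0,\varepsilon}''+\frac{v_{0,\varepsilon}-1}{\varepsilon}+v_{0,\varepsilon}|u_\varepsilon'|^2\right) = \mathbbm{1}_{F_\varepsilon}\,v_{0,\varepsilon}\left(|u_\varepsilon'|^2-|u_{0,\varepsilon}'|^2\right),
\]
using the first equation of \eqref{criticalpoint0}; this is a bounded function times $\mathbbm{1}_{F_\varepsilon}$, hence $\mu_\varepsilon\in L^\infty$. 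Consequently $v_\varepsilon''\in L^\infty(0,L)$, which is precisely $v_\varepsilon\in C^{1,1}([0,L])$; continuity up to the endpoints $0,L$ is immediate since near them $v_\varepsilon<v_{0,\varepsilon}=1$ is impossible only if... actually near the endpoints we just use the interior regularity on the relevant one-sided neighborhood plus the boundary values, so $C^{1,1}$ extends to the closed interval.

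The main obstacle I anticipate is the argument that $v_\varepsilon$ has no gradient jump across the free boundary — i.e. the $C^1$ step — because $\partial F_\varepsilon$ could a priori be a complicated closed set, and one must rule out the possibility that $v_\varepsilon''$ carries a positive atom at a contact point (a convex kink pointing away from the obstacle is consistent with $v_\varepsilon\le v_{0,\varepsilon}$ only if it points downward, but the sign constraint $\mu_\varepsilon\le0$ is the opposite sign, so such a kink is forbidden; still this needs to be written carefully since $\mu_\varepsilon$ could in principle contain a negative atom, which would correspond to a concave kink of $v_\varepsilon$, and one must argue that a concave downward kink at a point where $v_\varepsilon$ touches $v_{0,\varepsilon}$ from below while solving the ODE on both sides is incompatible with matching the $C^1$ data of $v_{0,\varepsilon}$). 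The cleanest route is probably: (a) establish $v_\varepsilon\in W^{2,\infty}_{loc}$ on $O_\varepsilon$ and on $\mathrm{int}F_\varepsilon$ separately with uniform two-sided derivative bounds; (b) at any $x_0\in\partial F_\varepsilon$, use $0\le v_{0,\varepsilon}-v_\varepsilon$ with a minimum $0$ at $x_0$ to get $\,v_\varepsilon'(x_0^-)\le v_{0,\varepsilon}'(x_0)\le v_\varepsilon'(x_0^+)$ whenever $x_0$ is approached by $O_\varepsilon$ from the corresponding side, and the reverse from $F_\varepsilon$ side where $v_\varepsilon'=v_{0,\varepsilon}'$, forcing equality of one-sided derivatives; (c) conclude $v_\varepsilon'\in C^0$, hence $v_\varepsilon''=\mu_\varepsilon$-free-part is in $L^\infty$ by the explicit formula above. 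I expect steps (a) and (c) to be routine; step (b), i.e. the precise handling of the free boundary and the sign of $\mu_\varepsilon$, is where the real care is needed.
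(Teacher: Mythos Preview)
Your approach is genuinely different from the paper's. The paper follows the classical obstacle-problem template: it sets $w_\varepsilon=v_{0,\varepsilon}-v_\varepsilon\ge0$, proves a quadratic-growth bound $w_\varepsilon(x)\le C\,d(x,\partial O_\varepsilon)^2$ via mean-value inequalities (\autoref{quadratic growth}), feeds this into interior Schauder estimates for the ODE satisfied by $w_\varepsilon$ on $O_\varepsilon$ to obtain $|w_\varepsilon'(x)|\le C\,d(x,\partial O_\varepsilon)$, hence $w_\varepsilon'\to0$ at $\partial O_\varepsilon$ and $w_\varepsilon\in C^1$; the $C^{1,1}$ upgrade then comes from the uniform $L^\infty$ bound on $w_\varepsilon''$ over $O_\varepsilon$. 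Your route instead exploits the one-dimensional $BV$ structure and the sign constraint $\mu_\varepsilon\le0$ directly, avoiding Schauder theory. The paper's method is the standard higher-dimensional machinery (it cites \cite[Theorem~5.5]{FO22}); yours is more elementary and intrinsically one-dimensional.

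There is, however, a real gap in your $C^{1,1}$ step. You invoke the explicit formula for $\mu_\varepsilon$ (the content of \autoref{exprmu}) to conclude $\mu_\varepsilon\in L^\infty$, but in the paper that formula is a \emph{corollary} of the $C^{1,1}$ regularity, not an input. Your parenthetical justification --- ``once we know $v_\varepsilon\in C^1$ so the measure $v_\varepsilon''$ restricted to the contact set coincides with $v_{0,\varepsilon}''$'' --- is precisely the nontrivial point: $v_\varepsilon\in C^1$ alone does not rule out that $\mu_\varepsilon$ carries a diffuse (Cantor-type) singular part on $\partial F_\varepsilon$. Your tangency argument in step~(b) does kill the atoms (incidentally, the inequalities are reversed: the minimum of $w_\varepsilon$ at $x_0$ gives $v_\varepsilon'(x_0^+)\le v_{0,\varepsilon}'(x_0)\le v_\varepsilon'(x_0^-)$, and it is the combination with $\mu_\varepsilon\le0$ --- which forces the jump of $v_\varepsilon'$ to be $\ge0$ --- that yields no atoms), but a separate argument is needed for the Cantor part. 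One clean fix, once $w_\varepsilon'\in C^0$ with $w_\varepsilon'=0$ on $F_\varepsilon$: write $O_\varepsilon=\bigcup_n(a_n,b_n)$, use $w_\varepsilon'(a_n)=w_\varepsilon'(b_n)=0$, and compute for any test $\phi$ that
\[
\langle w_\varepsilon'',\phi\rangle=-\int_{O_\varepsilon}w_\varepsilon'\phi'=\sum_n\int_{a_n}^{b_n}(w_\varepsilon'')_{\mathrm{cl}}\,\phi,
\]
where $(w_\varepsilon'')_{\mathrm{cl}}$ is the classical, uniformly bounded ODE right-hand side on each component; this exhibits $w_\varepsilon''$ as an $L^\infty$ function directly. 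Without such an argument your step~(c) is circular.
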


The methods of the proof are similar to those of the classical obstacle problem, see e.g. \cite[Theorem 5.5]{FO22}.
Set $w_\varepsilon =v_{0,\varepsilon}-v_\varepsilon$, which is a non-negative function on $[0,L]$. Note that by Sobolev embedding, $v_\varepsilon \in C^{0,1/2}([0,L])$, so that, using the equation \eqref{eq : eq2}, $u_\varepsilon \in C^{1,1/2}([0,L])$. In particular, $u_\varepsilon '$ is bounded on $[0,L]$. On $(0,L)$, $w_\varepsilon$ satisfies the following equation  

\[\begin{array}{lll}
\displaystyle\varepsilon w_\varepsilon '' &=& \displaystyle\varepsilon (v_{0,\varepsilon}''-v_\varepsilon'') = \frac{v_{0,\varepsilon}-1 - (v_\varepsilon-1)}{\varepsilon} + v_{0,\varepsilon}|u_{0,\varepsilon}'|^2 - v_\varepsilon |u_\varepsilon '|^2 +\mu_\varepsilon\\
&&\\
&=& \displaystyle\frac{w_\varepsilon}{\varepsilon} + (v_{0,\varepsilon}-v_\varepsilon)|u_{0,\varepsilon}'|^2 + v_\varepsilon (|u_{0,\varepsilon}'|^2 - |u_\varepsilon'|^2) +\mu_\varepsilon \\
&&\\
&=& \displaystyle\frac{w_\varepsilon}{\varepsilon} + w_\varepsilon |u_{0,\varepsilon}'|^2 + v_\varepsilon\left(\frac{c_{0,\varepsilon}^2}{(\eta_\varepsilon + v_{0,\varepsilon}^2)^2} - \frac{c_\varepsilon^2}{(\eta_\varepsilon + v_\varepsilon^2)^2}\right) + \mu_\varepsilon \\
\end{array}\]
Since $\mu_\varepsilon \leqslant 0$, we obtain the following expressions  
\begin{equation}
\label{eq : eq5}
\left\{\begin{array}{rlll}
\varepsilon w_\varepsilon'' &\leqslant & \displaystyle w_\varepsilon \left(\frac{1}{\varepsilon} +|u_{0,\varepsilon}'|^2\right) + v_\varepsilon\left(\frac{c_{0,\varepsilon}^2}{(\eta_\varepsilon + v_{0,\varepsilon}^2)^2} - \frac{c_\varepsilon^2}{(\eta_\varepsilon + v_\varepsilon^2)^2}\right) & \text{on } [0,L], \\
\varepsilon w_\varepsilon '' &=& \displaystyle w_\varepsilon \left(\frac{1}{\varepsilon} +|u_{0,\varepsilon}'|^2\right) + v_\varepsilon\left(\frac{c_{0,\varepsilon}^2}{(\eta_\varepsilon + v_{0,\varepsilon}^2)^2} - \frac{c_\varepsilon^2}{(\eta_\varepsilon + v_\varepsilon^2)^2}\right) & \text{on } O_\varepsilon .\\
\end{array}\right.
\end{equation}
Since $v_{0,\varepsilon}$ is smooth, it is sufficient to prove that $w_\varepsilon$ is in $C^{1,1}([0,L])$. We first need to prove that $w_\varepsilon$ grows at most quadratically around $\partial O_\varepsilon$ (\autoref{quadratic growth}). Then, we can use Schauder estimates on \eqref{eq : eq5} to ensure that $w_\varepsilon '$ is continuous at $\partial O_\varepsilon$.  

\begin{lemma}\label{quadratic growth}
There exists a constant $C >0$ only depending on $ \|u_{0,\varepsilon}' \|_{\infty, [0,L]},  \|u_\varepsilon ' \|_{\infty, [0,L]}$ and $\varepsilon$ such that for any $x$ such that $d(x, \partial O_\varepsilon) \leqslant \dfrac{1}{3} d(x, \{0,L\})$, one has 

\[0\leqslant w_\varepsilon(x) \leqslant C d(x,\partial O_\varepsilon)^2. \]
\end{lemma}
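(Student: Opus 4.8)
The plan is to establish the quadratic growth bound by comparison with explicit super- and sub-solutions, following the classical obstacle problem strategy. First I would fix a point $x_0$ with $d(x_0,\partial O_\varepsilon)\leqslant \tfrac13 d(x_0,\{0,L\})$; without loss of generality $x_0\in O_\varepsilon$ (otherwise $w_\varepsilon(x_0)=0$ and there is nothing to prove), and let $d=d(x_0,\partial O_\varepsilon)$. Let $B=I_d(x_0)=(x_0-d,x_0+d)$, which is contained in $O_\varepsilon$ by definition of $d$, and whose closure is contained in $(0,L)$ since $d(x_0,\{0,L\})\geqslant 3d$. On $B$, $w_\varepsilon$ satisfies the equality in \eqref{eq : eq5}. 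The key observation is that the right-hand side of that equation is \emph{bounded}: indeed $0\leqslant w_\varepsilon\leqslant 1$, $\|u_{0,\varepsilon}'\|_{\infty,[0,L]}<\infty$, $0\leqslant v_\varepsilon\leqslant 1$, and the terms $c_{0,\varepsilon}^2/(\eta_\varepsilon+v_{0,\varepsilon}^2)^2 = |u_{0,\varepsilon}'|^2$ and $c_\varepsilon^2/(\eta_\varepsilon+v_\varepsilon^2)^2=|u_\varepsilon'|^2$ are bounded by $\|u_{0,\varepsilon}'\|_{\infty}^2$ and $\|u_\varepsilon'\|_{\infty}^2$ respectively (using \eqref{eq : eq2} and the fact, recalled in the excerpt, that $u_\varepsilon,u_{0,\varepsilon}\in C^{1,1/2}$). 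So there is a constant $M=M(\|u_{0,\varepsilon}'\|_\infty,\|u_\varepsilon'\|_\infty,\varepsilon)$ with $|\varepsilon w_\varepsilon''|\leqslant M$ on $B$, i.e.\ $w_\varepsilon''$ is bounded in absolute value by $M/\varepsilon$ on $B$.

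Next I would use the maximum principle on $B$ together with the boundary behaviour of $w_\varepsilon$. The crucial point is that $\partial B\cap\partial O_\varepsilon\neq\emptyset$: there is a point $y_0\in\partial B$ with $y_0\in\partial O_\varepsilon$, so $w_\varepsilon(y_0)=0$ (since $w_\varepsilon$ is continuous and $w_\varepsilon=0$ on $F_\varepsilon=\overline{O_\varepsilon}^c\cup\partial O_\varepsilon$). Consider the comparison function $h(x)=\tfrac{M}{2\varepsilon}\big((x-y_0)^2\big)$ — more precisely, I would compare $w_\varepsilon$ on $B$ with a quadratic $P(x)$ chosen so that $\varepsilon P''\equiv M$, $P\geqslant 0$ on $\partial B$, and $P(y_0)$ is controlled; since $w_\varepsilon\geqslant 0$ everywhere and $w_\varepsilon\leqslant 1$, one can also use that $w_\varepsilon\leqslant$ the harmonic-type barrier. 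The cleanest route: on $B$, $(w_\varepsilon - \tfrac{M}{2\varepsilon}(x-x_0)^2)'' \leqslant 0$ (it is $\leqslant (M - M)/\varepsilon$... careful with signs) so that $w_\varepsilon - \tfrac{M}{2\varepsilon}|x-x_0|^2$ is concave, hence lies above the chord joining its boundary values on $B$; combined with $w_\varepsilon\geqslant 0$ and the vanishing at $y_0$, this forces $w_\varepsilon(x_0)\leqslant \tfrac{M}{2\varepsilon}d^2 + (\text{boundary terms bounded by } \tfrac{M}{2\varepsilon}d^2)$, giving $w_\varepsilon(x_0)\leqslant C d^2$ with $C=C(\|u_{0,\varepsilon}'\|_\infty,\|u_\varepsilon'\|_\infty,\varepsilon)$. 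One must be a little careful to extract the estimate at the center $x_0$ rather than merely at $y_0$; using concavity of $w_\varepsilon-\tfrac{M}{2\varepsilon}|x-x_0|^2$ and evaluating at the midpoint of $[x_0-d,x_0+d]$ versus the endpoints, or iterating a Harnack/barrier argument on dyadic annuli, handles this.

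An alternative and perhaps more robust route, which I would keep in reserve, is the classical ABP/barrier argument: build an explicit supersolution of the inequality in \eqref{eq : eq5} of the form $x\mapsto w_\varepsilon(y_0)+A|x-y_0|+\tfrac{M}{2\varepsilon}|x-y_0|^2$ vanishing (in the relevant sense) at $\partial O_\varepsilon$, and a subsolution $0$; the comparison principle for the operator $w\mapsto \varepsilon w'' - w(\tfrac1\varepsilon+|u_{0,\varepsilon}'|^2)$ (whose zeroth-order coefficient has the right sign for the maximum principle) then squeezes $w_\varepsilon$ between them on $B$, yielding the quadratic bound. The main obstacle I anticipate is purely bookkeeping: correctly identifying that $B\subset O_\varepsilon$ with $\partial B$ touching $\partial O_\varepsilon$ so that a homogeneous boundary value is available at one endpoint, and then transferring the estimate from that endpoint to the interior point $x_0$ at the right (quadratic) scale — this is exactly where the hypothesis $d(x_0,\partial O_\varepsilon)\leqslant\tfrac13 d(x_0,\{0,L\})$ is used, to guarantee the comparison ball stays inside $(0,L)$ where the equation holds and $\mu_\varepsilon$ does not interfere. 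No genuinely hard estimate is needed beyond the boundedness of the right-hand side, which is already in hand from the $C^{1,1/2}$ regularity of $u_\varepsilon$ and $u_{0,\varepsilon}$.
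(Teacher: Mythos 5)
The key step in your argument --- transferring the vanishing of $w_\varepsilon$ at $y_0\in\partial B\cap\partial O_\varepsilon$ to a quadratic bound at the center $x_0$ --- is not bookkeeping: it is precisely where the proposal breaks down. Concavity of $x\mapsto w_\varepsilon(x) - \frac{M}{2\varepsilon}(x-x_0)^2$ on $B$ gives a \emph{lower} bound on $w_\varepsilon(x_0)$ in terms of the two boundary values of $B$, not an upper bound; convexity of $x\mapsto w_\varepsilon(x)+\frac{M}{2\varepsilon}(x-x_0)^2$ gives $w_\varepsilon(x_0)\leqslant\max(w_\varepsilon(x_0-d),w_\varepsilon(x_0+d))+\frac{M}{2\varepsilon}d^2$, but only one endpoint of $B$ is controlled (namely $w_\varepsilon(y_0)=0$) while the other is only $O(1)$. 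The information ``$|w_\varepsilon''|\leqslant M/\varepsilon$ on $B\subset O_\varepsilon$ and $w_\varepsilon$ vanishes at one endpoint of $B$'' does not imply $w_\varepsilon(x_0)=O(d^2)$: taking $w$ linear on $B$ with slope $a$ and $w(y_0)=0$ saturates all of this data yet gives $w(x_0)=ad$, linear and not quadratic in $d$, with $a$ unconstrained. A barrier confined to $B$ hits the same obstruction, because there is no boundary datum available at the endpoint of $B$ opposite $y_0$.

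What closes the gap in the paper's proof is the \emph{global} one-sided inequality $\varepsilon w_\varepsilon''\leqslant C$ on all of $(0,L)$, coming from $\mu_\varepsilon\leqslant 0$ in \eqref{eq : eq5} --- an ingredient your proposal never actually uses. It allows one to center a second interval $I_{2r}(x')$ at the contact point $x'\in\partial O_\varepsilon$; this interval spills into $F_\varepsilon$ where $\mu_\varepsilon$ acts, but the superharmonic mean-value inequality for $\varepsilon w_\varepsilon - \frac{C}{2}|x-x'|^2$ still holds there, and since $w_\varepsilon(x')=0$ and $w_\varepsilon\geqslant 0$ this forces the average of $w_\varepsilon$ over $I_{2r}(x')$, hence over $I_r(x_0)\subset I_{2r}(x')$, to be $O(r^2/\varepsilon)$. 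The subharmonic inequality on $I_r(x_0)\subset O_\varepsilon$ (using the two-sided bound you correctly derived there) then converts that average bound into a pointwise bound at $x_0$. In one dimension the second step is even dispensable: $g(x):=\varepsilon w_\varepsilon(x)-\frac{C}{2}(x-x')^2$ is concave on $(0,L)$ with $g(x')=0$, so $g(x'+d)+g(x'-d)\leqslant 0$, which with $w_\varepsilon\geqslant 0$ gives $\varepsilon w_\varepsilon(x_0)\leqslant Cd^2$ directly. Either way, the quadratic estimate is extracted from the one-sided bound across the free boundary $\partial O_\varepsilon$, which your single-ball argument omits.
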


\begin{proof}
Since $0\leqslant v_\varepsilon \leqslant v_{0,\varepsilon}\leqslant 1$, $w_\varepsilon$ is bounded. As $u_\varepsilon '$ and $u_{0,\varepsilon}'$ are also bounded on $[0,L]$, the first inequality of the system \eqref{eq : eq5} gives
\begin{equation}
\label{boundw''}
\varepsilon w_\varepsilon '' \leqslant \frac{1}{\varepsilon} w_\varepsilon  + |u_{0,\varepsilon} '|^2 w_\varepsilon + v_\varepsilon (|u_{0,\varepsilon} '|^2 - |u_\varepsilon '|^2) \leqslant C,
\end{equation} 
where this inequality is written in the distributional sense. 

\smallskip

Fix any $z\in (0,L)$. The mean value formula applied to the superharmonic function $\varepsilon w_\varepsilon - C\dfrac{|z-\cdot |^2}{2}$ yields, for any $r$ such that $I_r(z) \subset [0,L]$, 
\begin{equation}
\label{eq : eq6}
\varepsilon w_\varepsilon (z) \geqslant \frac{1}{2r} \int_{I_r(z)} \left(\varepsilon w_\varepsilon (y) -  C \frac{|z-y|^2}{2}\right) dy\geqslant \frac{\varepsilon}{2r} \int_{I_r(z)} w_\varepsilon (y) dy -  C r^2
\end{equation}
Secondly, for any $y\in O_\varepsilon$, the second equation of \eqref{eq : eq5} yields 
\[\varepsilon w_\varepsilon ''(y) \geqslant - v_\varepsilon (y) \frac{c_\varepsilon ^2}{(\eta_\varepsilon + |v_\varepsilon |^2)^2} \geqslant - \frac{c_\varepsilon ^2 }{\eta_\varepsilon ^2} = - C'.\]
Fix then $z\in O_\varepsilon$. The mean value formula applied to the subharmonic function $\varepsilon w_\varepsilon + C' \dfrac{|z-\cdot |^2}{2}$ yields for any $r$ such $I_r(z) \subset O_\varepsilon$, 
\begin{equation}
\label{eq : eq7}
\varepsilon w_\varepsilon (z) \leqslant \frac{1}{2r} \int_{I_r(z)} \left(\varepsilon w_\varepsilon(y) + C' \frac{|z-y|^2}{2} \right) dy  \leqslant \frac{\varepsilon}{2r} \int_{I_r(z)} w_\varepsilon (y) dy + C' r^2.
\end{equation}
Now fix $x\in O_\varepsilon$ such that $r:= d(x, \partial O_\varepsilon)$ satisfies $r\leqslant \dfrac{1}{3} d(x, \{0, L\})$ and take any $x'\in \overline{I_r(x)} \cap \partial O_\varepsilon$. Then $I_{2r}(x') \subset I_{3r}(x)\subset (0,L)$ and $x'\in F_\varepsilon$ so, using \eqref{eq : eq6},  
\begin{equation}
\label{eq : eq8}
0 = \varepsilon w_\varepsilon (x') \geqslant \frac{1}{4r} \int_{I_{2r}(x')} \varepsilon w_\varepsilon(y) dy - Cr^2.
\end{equation}
Then, since $w_\varepsilon \geqslant 0$ and $I_r (x) \subset I_{2r}(x')$, we get 
\[\varepsilon w_\varepsilon(x) - C' r^2 \overset{\eqref{eq : eq7}}{\leqslant} \frac{\varepsilon}{2r} \int_{I_r(x)} w_\varepsilon(y) dy \leqslant \frac{\varepsilon}{2r} \int_{I_{2r}(x')} w_\varepsilon(y) dy \overset{\eqref{eq : eq8}}{\leqslant} 2Cr^2.\]
which is the desired inequality. 
\end{proof}

We are now ready to prove \autoref{regularityveps}.

\begin{proof}[Proof of \autoref{regularityveps}]
First, we focus on the regularity on $(0,L)$. 
Let $x_1 \in O_\varepsilon$ and take $x_0 \in \partial O_\varepsilon$ such that $r:=|x_0 -x_1| = d(x_1, \partial O_\varepsilon)$ and suppose that $r < \dfrac{1}{3} d(x_1 , \{0,L\})$. Notice that the right-hand side of the second equation of \eqref{eq : eq5} belongs to $C^{0,1/2}([0,L])$. Applying Schauder estimates (\cite[Theorem 4.6]{GT01}) on $I_r(x_1) \subset O_\varepsilon$, there exists a constant $C$ independent of $x_1, r$ and $w_\varepsilon$ such that
\begin{multline}
\label{schauder}
r  \|w_\varepsilon ' \|_{\infty, I_{r/2}(x_1)} + r^2  \|w_\varepsilon '' \|_{\infty, I_{r/2}(x_1)} \\
\leqslant C\left(\underbrace{ \|w_\varepsilon  \|_{\infty, I_r(x_1)}}_{:= I} + r^2 \underbrace{\left|\left|w_\varepsilon\left(\frac{1}{\varepsilon} + |u_\varepsilon '|^2\right)\right|\right|_{C^{0,1/2},I_r(x_1)}}_{:=II} + r^2\underbrace{ \|v_\varepsilon (|u_{0,\varepsilon} '|^2 - |u_\varepsilon '|^2) \|_{C^{0,1/2},I_r(x_1)}}_{:= III} \right).
\end{multline}
We now estimate the terms in the right hand side of \eqref{schauder}.

\medskip

\underline{$I$ :} Using \autoref{quadratic growth}, one has $ \|w_\varepsilon  \|_{\infty, I_r(x_1)} \leqslant Cd(x_1,\partial O_\varepsilon )^2 = C r^2$.

\medskip

\underline{$II$ :} We split the norm $ \|\, \cdot \, \|_{C^{0,1/2}}$ into the norm $ \|\, \cdot \,  \|_\infty$ and the semi-norm $[\,\cdot \,]_{1/2}$. Concerning the norm $ \|\, \cdot \, \|_\infty$, one has 

\[ \left|\left|w_\varepsilon\left(\frac{1}{\varepsilon} + |u_\varepsilon '|^2\right)\right|\right|_{\infty, I_r(x_1)} \leqslant 2\left(\frac{1}{\varepsilon} +  \|u_{\varepsilon} ' \|_{\infty, [0,L]}^2\right) = C,\]
where $C >0$ is a constant which depends on $ \|u_\varepsilon ' \|_{\infty, [0,L]}$. Concerning the semi-norm $[\, \cdot \,]$, we use the inequality $[|u_\varepsilon '|^2]_{1/2} \leqslant 2 \|u_\varepsilon ' \|_\infty [u_\varepsilon ']_{1/2}$. This yields  
\[\begin{array}{lll}
\displaystyle\left[w_\varepsilon\left(\frac{1}{\varepsilon} + |u_\varepsilon '|^2\right)\right]_{1/2, I_r(x_1)} &\leqslant & \displaystyle  \|w_\varepsilon \|_{\infty, I_r(x_1)} \left[\frac{1}{\varepsilon} + |u_\varepsilon '|^2\right]_{1/2, I_r(x_1)} +  [w_\varepsilon]_{1/2,I_r(x_1)}\left|\left|\frac{1}{\varepsilon} + |u_\varepsilon '|^2\right|\right|_{\infty, I_r(x_1)} \\
&&\\
&\leqslant & \displaystyle 2 \|u_\varepsilon ' \|_{\infty, [0,L]} [u_\varepsilon ']_{1/2, [0,L]} + [w_\varepsilon]_{1/2,[0,L]}\left|\left|\frac{1}{\varepsilon} + |u_\varepsilon '|^2\right|\right|_{\infty, [0,L]} \\
&&\\
&\leqslant & \displaystyle C\\
\end{array}\]
where $C>0$ is a constant depending on $ \|u_\varepsilon ' \|_{C^{0,1/2}, [0,L]}$ and $ \|w_\varepsilon  \|_{C^{0,1/2}, [0,L]}$.

\medskip
 
\underline{$III$ :} Similarly, one has 
\[ \|v_\varepsilon (|u_{0,\varepsilon} '|^2 - |u_\varepsilon '|^2) \|_{\infty, I_r(x_1)} \leqslant  \|u_{0,\varepsilon} ' \|_{\infty, [0,L]} ^2 +  \|u_\varepsilon ' \|_{\infty, [0,L]} ^2,\]
and 
\[\begin{array}{ll}
&[v_\varepsilon (|u_{0,\varepsilon} '|^2 - |u_\varepsilon '|^2)]_{1/2, I_r(x_1)}\\
&\\
\leqslant & \displaystyle  \|v_\varepsilon \|_{\infty, I_r(x_1)} ([|u_{0,\varepsilon} '|^2]_{1/2, I_r(x_1)} + |u_\varepsilon '|^2_{1/2, I_r(x_1)}] + [v_\varepsilon]_{1/2, I_r(x_1)}( \|u_{0,\varepsilon} ' \|^2_{\infty, I_r(x_1)} + \|u_\varepsilon ' \|^2_{\infty, I_r(x_1)}) \\
&\\
\leqslant & \displaystyle 2 \|u_{0,\varepsilon}' \|_{\infty,[0,L]} [u_{0,\varepsilon} ']_{1/2, [0,L]} + 2 \|u_{\varepsilon}' \|_{\infty,[0,L]} [u_{\varepsilon} ']_{1/2, [0,L]} + [v_\varepsilon]_{1/2, [0,L]}( \|u_{0,\varepsilon} ' \|^2_{\infty, [0,L]} + \|u_\varepsilon ' \|^2_{\infty, [0,L]}) \\
&\\
\leqslant & \displaystyle C\\
\end{array},\]
where $C>0$ is a constant depending only on $ \|u_{0,\varepsilon} ' \|_{C^{0,1/2}, [0,L]},  \|u_{\varepsilon} ' \|_{C^{0,1/2}, [0,L]}$ and $ \|v_{\varepsilon} ' \|_{C^{0,1/2}, [,L]}$. 
It follows from \eqref{schauder} that
\[r  \|w_\varepsilon ' \|_{\infty, I_{r/2}(x_1)} + r^2  \|w_\varepsilon '' \|_{\infty, I_{r/2}(x_1)} \\
\leqslant C r^2.\]
In particular, $|w_\varepsilon '(x_1)| \leqslant  \|w_\varepsilon ' \|_{\infty, I_{r/2} (x_1)} \leqslant C d(x_1, \partial O_\varepsilon)$. This means that $w_\varepsilon ' (x_1)$ tends to $0$ as $d(x_1,\partial O_\varepsilon) \rightarrow 0$. 
Moreover, as $w_\varepsilon$ vanishes on $F_\varepsilon$, $w_\varepsilon '=0$ a.e on $F_\varepsilon$ (see \cite[Lemma 7.7]{GT01}). 
So $w_\varepsilon '$ admits a representative on $(0,L)$ that is zero on $F_\varepsilon$ and is continuous on $(0,L)$, so that $w_\varepsilon \in C^1(0,L)$.  

\smallskip

From \eqref{boundw''}, we also have
\[ \|w_\varepsilon '' \|_{\infty,(0,L)} \leqslant C_2,\]
where $C_2 >0$ is a constant depending on $ \|u_{0,\varepsilon}' \|_{\infty, [0,L]}$, from which we deduce that $w_\varepsilon '$ is Lipschitz on $(0,L)$. 

\smallskip

To recover the regularity at the boundary points $0$ and $L$, we see that $w_\varepsilon '$ is uniformly continuous on $(0,L)$(because it is Lipschitz) so it admits a continuous extension on $[0,L]$. This implies that $w_\varepsilon$ is differentiable at $0$ and $L$ with $w_\varepsilon '(0) = \lim\limits_{x\rightarrow 0^+} w_\varepsilon '(x)$ and  $w_\varepsilon '(L) = \lim\limits_{x\rightarrow L^-} w_\varepsilon '(x)$. We deduce that $w_\varepsilon \in \, C^1([0,L])$. Then, as $w_\varepsilon '$ is Lipschitz on $(0,L)$, its extension to $[0,L]$ is Lipschitz as well so that $w_\varepsilon \in C^{1,1}([0,L])$. Hence $v_\varepsilon \in C^{1,1}([0,L])$. 
%
%
%
%
%
\end{proof}
 
The following result gives an explicit pointwise formula for $\mu_\varepsilon$.

\begin{corollary}\label{exprmu}
Let $\mu_\varepsilon$ be the right-hand side in \eqref{eq : eq4}. We have the following expression for $\mu_\varepsilon$,
\begin{equation}
\label{eq : mu_e}
\mu_\varepsilon = \frac{v_{0,\varepsilon}}{[(v_{0,\varepsilon})^2+\eta_\varepsilon]^2} (( c_\varepsilon)^2 - (c_{0,\varepsilon})^2) 1_{F_\varepsilon} = v_{0,\varepsilon}(|u_{\varepsilon}'|^2 - |u_{0,\varepsilon}'|^2)1_{F_\varepsilon}
\end{equation} 
In particular, if $|F_\varepsilon|\neq 0$, then $|c_{\varepsilon}| \leqslant c_{0,\varepsilon}$. 
\end{corollary}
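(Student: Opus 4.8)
The plan is to cash in the improved regularity just established. By \autoref{regularityveps}, $v_\varepsilon\in C^{1,1}([0,L])$, hence $w_\varepsilon:=v_{0,\varepsilon}-v_\varepsilon\in C^{1,1}([0,L])$ and the left-hand side of \eqref{eq : eq4} is a genuine $L^\infty$ function; thus $\mu_\varepsilon$ is (represented by) a bounded, a.e.\ non-positive function. Since $\mu_\varepsilon$ is supported on the closed set $F_\varepsilon$, as recalled after \eqref{eq : eq4}, it vanishes a.e.\ on $O_\varepsilon$, so $\mu_\varepsilon=\mu_\varepsilon 1_{F_\varepsilon}$ a.e.\ and it suffices to identify $\mu_\varepsilon$ at a.e.\ point of $F_\varepsilon$.

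The core of the argument — and the step I expect to require the most care — is to show that $w_\varepsilon''=0$ a.e.\ on $F_\varepsilon=\{w_\varepsilon=0\}$. One cannot simply restrict to the interior of $F_\varepsilon$, because $\partial F_\varepsilon=\partial O_\varepsilon$ need not be Lebesgue-null. Instead I would proceed as in the proof of \autoref{regularityveps}: since $w_\varepsilon\in W^{1,1}(0,L)$, \cite[Lemma 7.7]{GT01} gives $w_\varepsilon'=0$ a.e.\ on $F_\varepsilon$; as $w_\varepsilon'$ is Lipschitz, hence in $W^{1,1}(0,L)$, applying the same lemma once more to $w_\varepsilon'$ on the level set $\{w_\varepsilon'=0\}$, which contains $F_\varepsilon$ up to a null set, yields $w_\varepsilon''=0$ a.e.\ on $F_\varepsilon$. (Equivalently, at a.e.\ point $x_0\in F_\varepsilon$ — a density-one point at which $w_\varepsilon$ is twice differentiable — the nonnegativity of $w_\varepsilon$ together with $w_\varepsilon(x_0)=0$ and a second-order Taylor expansion along a sequence of points of $F_\varepsilon$ converging to $x_0$ forces $w_\varepsilon''(x_0)=0$.)

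Granting this, a.e.\ on $F_\varepsilon$ one has simultaneously $v_\varepsilon=v_{0,\varepsilon}$ and $v_\varepsilon''=v_{0,\varepsilon}''$. I would then substitute this into \eqref{eq : eq4}, replace $v_\varepsilon$ by $v_{0,\varepsilon}$ everywhere, and subtract the first equation of \eqref{criticalpoint0} (which vanishes); a short computation gives $\mu_\varepsilon=v_{0,\varepsilon}\big(|u_\varepsilon'|^2-|u_{0,\varepsilon}'|^2\big)$ a.e.\ on $F_\varepsilon$, and together with $\mu_\varepsilon=\mu_\varepsilon 1_{F_\varepsilon}$ this is the second expression in \eqref{eq : mu_e}. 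To obtain the first expression, I would insert $u_\varepsilon'=c_\varepsilon/(\eta_\varepsilon+v_\varepsilon^2)$ and $u_{0,\varepsilon}'=c_{0,\varepsilon}/(\eta_\varepsilon+v_{0,\varepsilon}^2)$ from \eqref{eq : eq2} and use once more that $v_\varepsilon=v_{0,\varepsilon}$ on $F_\varepsilon$, so the two denominators coincide.

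Finally, for the inequality $|c_\varepsilon|\leqslant c_{0,\varepsilon}$ when $|F_\varepsilon|\ne 0$: a maximum-principle argument on the first equation of \eqref{criticalpoint0} shows $v_{0,\varepsilon}>0$ on $[0,L]$ — at an interior minimum point $x_0$ with $v_{0,\varepsilon}(x_0)\leqslant 0$ the equation forces $\varepsilon v_{0,\varepsilon}''(x_0)=(v_{0,\varepsilon}(x_0)-1)/\varepsilon+v_{0,\varepsilon}(x_0)|u_{0,\varepsilon}'(x_0)|^2<0$, incompatible with a minimum, while the endpoint values are $1$. Hence the factor $v_{0,\varepsilon}/[(v_{0,\varepsilon})^2+\eta_\varepsilon]^2$ is strictly positive on $F_\varepsilon$, so that, if $|F_\varepsilon|>0$, the sign condition $\mu_\varepsilon\leqslant 0$ together with the formula just derived forces $c_\varepsilon^2-c_{0,\varepsilon}^2\leqslant 0$ on a set of positive measure, i.e.\ $|c_\varepsilon|\leqslant|c_{0,\varepsilon}|=c_{0,\varepsilon}$ (with the convention $a_0\geqslant 0$, so $c_{0,\varepsilon}\geqslant 0$).
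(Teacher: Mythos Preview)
Your proof is correct and follows essentially the same route as the paper: use the $C^{1,1}$ regularity to identify $\mu_\varepsilon$ as an $L^\infty$ function supported on $F_\varepsilon$, apply \cite[Lemma 7.7]{GT01} (iterated once, exactly as you do) to obtain $v_\varepsilon''=v_{0,\varepsilon}''$ a.e.\ on $F_\varepsilon$, substitute into \eqref{eq : eq4} and subtract the equation \eqref{criticalpoint0} for $v_{0,\varepsilon}$, then read off the sign of $c_\varepsilon^2-c_{0,\varepsilon}^2$. Your additional maximum-principle justification that $v_{0,\varepsilon}>0$ is a welcome detail that the paper leaves implicit.
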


\begin{proof}
As $v_\varepsilon \in C^{1,1}([0,1])$, $\mu_\varepsilon$ is absolutely continuous with respect to the Lebesgue measure. Moreover, as $v_\varepsilon = v_{0,\varepsilon}$ on $F_\varepsilon$, we have $v_\varepsilon '= (v_{0,\varepsilon})'$ and $v_\varepsilon ''= (v_{0,\varepsilon})''$ a.e on $F_\varepsilon$ (see \cite[Lemma 7.7]{GT01}). Hence 

\[\begin{array}{lllll}
\displaystyle\mu_\varepsilon &=& \displaystyle \left(-\varepsilon v_\varepsilon'' +\frac{v_\varepsilon-1}{\varepsilon} + v_\varepsilon |u_\varepsilon'|^2\right)1_{F_\varepsilon} &=& \displaystyle\left(-\varepsilon v_{0,\varepsilon}'' + \frac{v_{0,\varepsilon} -1}{\varepsilon} + v_{0,\varepsilon} |u_\varepsilon'|^2\right) 1_{F_\varepsilon} \\
&&\\
&=& \displaystyle\left( - v_{0,\varepsilon} (u_{0,\varepsilon}')^2 + v_{0,\varepsilon} |u_\varepsilon'|^2\right) 1_{F_\varepsilon} &=& \displaystyle v_{0,\varepsilon}(|u_\varepsilon'|^2 - |u_{0,\varepsilon}'|^2) 1_{F_\varepsilon} \\
&&\\
&=&\displaystyle v_{0,\varepsilon} \left( \frac{c_{\varepsilon}^2}{(v_\varepsilon)^2 + \eta_\varepsilon} - \frac{c_{0,\varepsilon}^2}{(v_{0,\varepsilon})^2 + \eta_\varepsilon} \right) 1_{F_\varepsilon}
&=& \displaystyle \frac{v_{0,\varepsilon}}{(v_{0,\varepsilon}^2+\eta_\varepsilon)^2} (c_{\varepsilon}^2 - c_{0,\varepsilon}^2) 1_{F_\varepsilon} \\
\end{array} .\]
As $\mu_\varepsilon \leqslant 0$, provided that $|F_\varepsilon| \neq 0$, one has $c_\varepsilon ^2 \leqslant c_{0,\varepsilon} ^2$, and $|c_\varepsilon| \leqslant c_{0,\varepsilon}$. 
\end{proof}

\subsection{Shape of $v_\varepsilon$}

The aim of this section is to take advantage of the ODE \eqref{eq : eq4} satisfied by $v_\varepsilon$ to determine its variations. We prove that $v_\varepsilon$ admits a unique global minimum, denoted by $x_\varepsilon \in [L/4, 3L/4]$ and that $v_\varepsilon$ is non-increasing on $[0,x_\varepsilon]$ and non-decreasing on $[x_\varepsilon, L]$. We start with the following lemma on the possible positions of the critical points :  

\begin{lemma}\label{pointscritiquesF}
Let $x_\varepsilon$ be a critical point of $v_\varepsilon$. If $x_\varepsilon\in F_\varepsilon$, then $x_\varepsilon=L/2$. 
\end{lemma}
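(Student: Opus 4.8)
The plan is to exploit the fact that at a critical point $x_\varepsilon$ of $v_\varepsilon$ lying in the coincidence set $F_\varepsilon$, the function $v_\varepsilon$ touches the obstacle $v_{0,\varepsilon}$ from below, so both functions and their first derivatives agree there; combined with the ODE for $v_\varepsilon$ and the ODE for $v_{0,\varepsilon}$, this forces a rigid relation that can only hold at $L/2$ by the symmetry of $v_{0,\varepsilon}$.

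First I would record that, since $x_\varepsilon \in F_\varepsilon$ is an interior critical point of $v_\varepsilon$, we have $v_\varepsilon'(x_\varepsilon)=0$; and since $w_\varepsilon = v_{0,\varepsilon}-v_\varepsilon \geqslant 0$ attains its minimum value $0$ at $x_\varepsilon$, Fermat's lemma (using $w_\varepsilon \in C^{1,1}$ by \autoref{regularityveps}) gives $w_\varepsilon'(x_\varepsilon)=0$, hence $v_{0,\varepsilon}'(x_\varepsilon)=v_\varepsilon'(x_\varepsilon)=0$. So $x_\varepsilon$ is simultaneously a critical point of $v_\varepsilon$ and of $v_{0,\varepsilon}$, with $v_\varepsilon(x_\varepsilon)=v_{0,\varepsilon}(x_\varepsilon)$. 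The next step is to invoke the known structure of $v_{0,\varepsilon}$: a critical point of $AT_\varepsilon$ of the form $(u_{0,\varepsilon},v_{0,\varepsilon})$ has (by the analysis in \cite{BMRb23}, which the paper relies on, or directly from the ODE \eqref{criticalpoint0} together with the boundary conditions $v_{0,\varepsilon}(0)=v_{0,\varepsilon}(L)=1$) a $v_{0,\varepsilon}$ that is symmetric about $L/2$, non-increasing on $[0,L/2]$ and non-decreasing on $[L/2,L]$, whose only interior critical point is $L/2$. Thus $v_{0,\varepsilon}'(x_\varepsilon)=0$ with $x_\varepsilon \in (0,L)$ immediately forces $x_\varepsilon = L/2$.

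If one does not wish to quote that monotonicity fact as a black box, the alternative is to argue directly: from $u_{0,\varepsilon}'(\eta_\varepsilon+v_{0,\varepsilon}^2)=c_{0,\varepsilon}$ one has $v_{0,\varepsilon}|u_{0,\varepsilon}'|^2 = c_{0,\varepsilon}^2 v_{0,\varepsilon}/(\eta_\varepsilon+v_{0,\varepsilon}^2)^2$, so the first line of \eqref{criticalpoint0} reads $\varepsilon v_{0,\varepsilon}'' = (v_{0,\varepsilon}-1)/\varepsilon + c_{0,\varepsilon}^2 v_{0,\varepsilon}/(\eta_\varepsilon+v_{0,\varepsilon}^2)^2 =: g(v_{0,\varepsilon})$, an autonomous second-order ODE. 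Multiplying by $v_{0,\varepsilon}'$ and integrating gives the conserved ``energy'' (the discrepancy) $\tfrac{\varepsilon}{2}(v_{0,\varepsilon}')^2 - G(v_{0,\varepsilon})$, where $G'=g$; at a critical point this pins down the value of $v_{0,\varepsilon}$. Since $v_{0,\varepsilon}(0)=1$ and the trajectory is monotone until it first reaches a critical point, and then by the reflection symmetry $x \mapsto L-x$ (which preserves the ODE and boundary data, so $v_{0,\varepsilon}(x)=v_{0,\varepsilon}(L-x)$ by uniqueness), the unique interior critical point is the fixed point of the reflection, namely $L/2$. Either way we conclude $x_\varepsilon = L/2$.

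The main obstacle is making sure the first step is fully rigorous: one must be careful that $x_\varepsilon$ is genuinely \emph{interior} (the statement says $x_\varepsilon$ is a critical point, and we should make precise that this means an interior point where $v_\varepsilon'$ vanishes, so that Fermat's lemma for $w_\varepsilon$ applies and yields $v_{0,\varepsilon}'(x_\varepsilon)=0$), and that we may legitimately differentiate $w_\varepsilon$ there — this is exactly what \autoref{regularityveps} provides. Granted that, the deduction that a zero of $v_{0,\varepsilon}'$ in $(0,L)$ can only be $L/2$ is the only remaining point, and it follows from the shape/symmetry of $v_{0,\varepsilon}$ established in \cite{BMRb23}.
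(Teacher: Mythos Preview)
Your proposal is correct and follows essentially the same route as the paper: at an interior $x_\varepsilon\in F_\varepsilon$ the paper compares one-sided difference quotients of $v_\varepsilon$ and $v_{0,\varepsilon}$ to conclude $v_{0,\varepsilon}'(x_\varepsilon)=0$, which is exactly your Fermat argument applied to $w_\varepsilon=v_{0,\varepsilon}-v_\varepsilon$, and then cites \cite[Proposition~2.1]{BMRb23} to deduce $x_\varepsilon=L/2$. The one point the paper treats explicitly and you only flag is the boundary: since $0,L\in F_\varepsilon$, the paper rules them out by the same comparison, obtaining $v_\varepsilon'(0)\leqslant v_{0,\varepsilon}'(0)<0$ and $v_\varepsilon'(L)\geqslant v_{0,\varepsilon}'(L)>0$, so neither endpoint is a critical point of $v_\varepsilon$.
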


\begin{proof}
Note that statement is non empty, by Rolle's Theorem, as $v_\varepsilon (0)= v_\varepsilon (L)$.
Suppose $x_\varepsilon \in F_\varepsilon\setminus \{0,L\}$. Then

\[0= v_\varepsilon '(x_\varepsilon) = \lim\limits_{h\rightarrow 0^+} \frac{v_\varepsilon(x_\varepsilon+h)-v_\varepsilon(x_\varepsilon)}{h} = \lim\limits_{h\rightarrow 0^+} \frac{v_\varepsilon(x_\varepsilon+h)-v_{0,\varepsilon}(x_\varepsilon)}{h} \leqslant \lim\limits_{h\rightarrow 0^+} \frac{v_{0,\varepsilon}(x_\varepsilon+h)-v_{0,\varepsilon}(x_\varepsilon)}{h} = v_{0,\varepsilon} '(x)\]
and 
\[0= v_\varepsilon'(x_\varepsilon) = \lim\limits_{h\rightarrow 0^+} \frac{v_\varepsilon(x_\varepsilon)-v_\varepsilon(x_\varepsilon-h)}{h} = \lim\limits_{h\rightarrow 0^+} \frac{v_{0,\varepsilon}(x_\varepsilon)-v_\varepsilon(x_\varepsilon-h)}{h} \geqslant \lim \limits_{h\rightarrow 0^+}\frac{v_{0,\varepsilon}(x_\varepsilon)-v_{0,\varepsilon}(x_\varepsilon-h)}{h} = v_{0,\varepsilon}'(x)\]
so that $v_{0,\varepsilon}'(x_\varepsilon)=0$, which implies $x_\varepsilon=L/2$ by \cite[Proposition 2.1]{BMRb23}. If $x_\varepsilon=0$, then, using again \cite[Proposition 2.1]{BMRb23}, 

\[0> v_{0\varepsilon}'(0) = \lim\limits_{h\rightarrow 0^+} \frac{v_{0,\varepsilon}(h)-v_{0,\varepsilon}(0)}{h} \geqslant \lim\limits_{h\rightarrow 0^+} \frac{v_\varepsilon(h) - 1}{h} = v_\varepsilon'(0)\]

and if $x_\varepsilon= L$,  

\[0 < v_{0,\varepsilon}'(L) = \lim\limits_{h\rightarrow 0^+} \frac{v_{0,\varepsilon}(L)-v_{0,\varepsilon}(L-h)}{h} \leqslant \lim\limits_{h\rightarrow 0^+} \frac{1 - v_\varepsilon(L-h)}{h} = v_\varepsilon'(L) \]

so that $0$ and $L$ are not critical points of $v_\varepsilon$. 
\end{proof}

The following proposition establishes the variations of $v_\varepsilon$ on $[0,L]$.

\begin{proposition}\label{variations of v}
The function $v_\varepsilon$ admits a unique local minimum at $x_\varepsilon$ on $[0,L]$. This local minimum is also a global minimum and is either located at $L/2$ or it belongs to the connected component of $O_\varepsilon$ containing $L/2$. 
\end{proposition}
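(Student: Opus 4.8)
The plan is to read off the variations of $v_\varepsilon$ from the autonomous ODE it satisfies on $O_\varepsilon$, using the known shape of the obstacle (recalled from \cite[Proposition 2.1]{BMRb23}: $v_{0,\varepsilon}$ is strictly decreasing on $[0,L/2]$, strictly increasing on $[L/2,L]$, symmetric about $L/2$, with $v_{0,\varepsilon}'(0)<0<v_{0,\varepsilon}'(L)$ and $L/2$ its only critical point), together with \autoref{pointscritiquesF}, \autoref{regularityveps} and \autoref{exprmu}. First I would dispose of the case $|F_\varepsilon|=0$: then by \autoref{exprmu} $\mu_\varepsilon=0$ as an $L^\infty$-function, so $(u_\varepsilon,v_\varepsilon)$ is an unconstrained critical point of $AT_\varepsilon$ and the statement is exactly \cite[Proposition 2.1]{BMRb23}. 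So assume $|F_\varepsilon|>0$; then \autoref{exprmu} gives $|c_\varepsilon|\le c_{0,\varepsilon}$, hence $g_\varepsilon(s):=\frac{s-1}{\varepsilon}+\frac{c_\varepsilon^2 s}{(\eta_\varepsilon+s^2)^2}\le\frac{s-1}{\varepsilon}+\frac{c_{0,\varepsilon}^2 s}{(\eta_\varepsilon+s^2)^2}=:g_0(s)$ on $[0,1]$. On each component of $O_\varepsilon$ one has $\varepsilon v_\varepsilon''=g_\varepsilon(v_\varepsilon)$, so by bootstrap $v_\varepsilon$ is smooth there and the first integral $\frac{\varepsilon}{2}(v_\varepsilon')^2-G_\varepsilon(v_\varepsilon)$ (with $G_\varepsilon'=g_\varepsilon$) is constant on each such component, while $\frac{\varepsilon}{2}(v_{0,\varepsilon}')^2-G_0(v_{0,\varepsilon})$ is constant on $(0,L)$. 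Since $v_\varepsilon=v_{0,\varepsilon}=1$ at $0$ and $L$, every component of $O_\varepsilon$ is an open interval $(a,b)\subset(0,L)$ with $a,b\in F_\varepsilon$, and since $w_\varepsilon=v_{0,\varepsilon}-v_\varepsilon\in C^1([0,L])$ is nonnegative and vanishes at $a,b$, it has zero derivative there, i.e. $v_\varepsilon$ and $v_\varepsilon'$ match $v_{0,\varepsilon}$ and $v_{0,\varepsilon}'$ at the endpoints of every component.

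Next I would classify the critical points of $v_\varepsilon$. By compactness, and since $v_\varepsilon\le1=v_\varepsilon(0)=v_\varepsilon(L)$, the global minimum is attained at an interior point $x_\varepsilon$ (the trivial case $v_\varepsilon\equiv1$ aside), which is a critical point. By \autoref{pointscritiquesF} a critical point in $F_\varepsilon$ must be $L/2$. If a critical point $x_*$ of $v_\varepsilon$ lies in a component $(a,b)$ of $O_\varepsilon$ and $g_\varepsilon(v_\varepsilon(x_*))=0$, then $v_\varepsilon$ and the constant $v_\varepsilon(x_*)$ solve $\varepsilon y''=g_\varepsilon(y)$ with the same Cauchy data at $x_*$, so $v_\varepsilon$ is constant on $(a,b)$; the matching then forces $v_{0,\varepsilon}'(a)=v_{0,\varepsilon}'(b)=0$, impossible as $a<b$. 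Hence $\varepsilon v_\varepsilon''(x_*)=g_\varepsilon(v_\varepsilon(x_*))\neq0$, so critical points of $v_\varepsilon$ in $O_\varepsilon$ are strict local extrema; in particular the critical points are isolated, $v_\varepsilon'$ keeps a sign between consecutive ones, and $v_\varepsilon$ is never locally constant (also on $F_\varepsilon$, where it equals the strictly monotone $v_{0,\varepsilon}$).

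The core of the proof is then to show $v_\varepsilon$ has no interior local maximum; this gives that $v_\varepsilon$ is strictly decreasing then strictly increasing, hence $x_\varepsilon$ is the unique local minimum, which is therefore global. Arguing by contradiction, suppose $p\in(0,L)$ is a local maximum of $v_\varepsilon$. It is not $L/2$: on a one-sided neighbourhood of $L/2$ either $v_\varepsilon$ coincides with $v_{0,\varepsilon}$, which has a strict minimum there, or that neighbourhood lies in a component $(a,L/2)$ of $O_\varepsilon$, on which $v_\varepsilon$ would increase up to $L/2$ while $v_\varepsilon(a)=v_{0,\varepsilon}(a)>v_{0,\varepsilon}(L/2)=v_\varepsilon(L/2)$, a contradiction. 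Hence $p$ belongs to a component $(a,b)$ of $O_\varepsilon$; taking for instance $(a,b)\subset(0,L/2]$ (the case $[L/2,L)$ is symmetric, and the case $L/2\in(a,b)$ is the component we want to identify at the end), $v_{0,\varepsilon}$ is strictly decreasing on $[a,b]$, $v_\varepsilon'(a)=v_{0,\varepsilon}'(a)<0$, and $v_\varepsilon<v_{0,\varepsilon}\le v_{0,\varepsilon}(a)$ on $(a,b)$. Setting $M=v_\varepsilon(p)$, the first integral evaluated at $p$ and as $x\to a^+$ yields $G_\varepsilon(v_{0,\varepsilon}(a))-G_\varepsilon(M)=\frac{\varepsilon}{2}v_{0,\varepsilon}'(a)^2>0$, with $M<v_{0,\varepsilon}(a)$; the main obstacle is to turn this — together with the constraint $v_\varepsilon<v_{0,\varepsilon}$, the sign information $g_\varepsilon(v_\varepsilon(p))<0$ and $g_\varepsilon>0$ at the local minima of $v_\varepsilon$ (from the previous step), the comparison $g_\varepsilon\le g_0$, and the first integral of $v_{0,\varepsilon}$ — into a contradiction. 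The difficulty is that $g_\varepsilon$ is not monotone on $[0,1]$ (the parameter $\eta_\varepsilon$ creates a sign change of $g_\varepsilon'$ near $0$), so a priori the autonomous equation admits oscillating trajectories; excluding them on a component of $O_\varepsilon$ requires the phase-plane/potential analysis of \cite[Proposition 2.1]{BMRb23}, adapted from $g_0$ to $g_\varepsilon$ by means of the comparison $g_\varepsilon\le g_0$ and the boundary data inherited from the obstacle $v_{0,\varepsilon}$.

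Finally, once unimodality is established: if $x_\varepsilon\in F_\varepsilon$ then $x_\varepsilon=L/2$ by \autoref{pointscritiquesF}; if $x_\varepsilon$ lies in a component $(a,b)$ of $O_\varepsilon$ with $b<L/2$, then unimodality forces $v_\varepsilon'(b)\ge0$, contradicting $v_\varepsilon'(b)=v_{0,\varepsilon}'(b)<0$, and $a>L/2$ is excluded symmetrically; the borderline cases $a=L/2$ or $b=L/2$ are ruled out by the same first-integral argument as in the previous step (a strict minimum of $v_\varepsilon$ strictly inside $(a,L/2)$, together with $v_\varepsilon'(L/2)=v_{0,\varepsilon}'(L/2)=0$, is incompatible with $v_\varepsilon<v_{0,\varepsilon}$ on $(a,L/2)$). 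Therefore $L/2\in(a,b)$, i.e. $x_\varepsilon$ lies in the connected component of $O_\varepsilon$ containing $L/2$, and the proposition follows.
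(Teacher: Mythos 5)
Your reduction to the case $|F_\varepsilon|>0$, the use of \autoref{pointscritiquesF}, the $C^1$-matching $v_\varepsilon'=v_{0,\varepsilon}'$ at the endpoints of each component of $O_\varepsilon$ (via $w_\varepsilon\ge0$, $w_\varepsilon\in C^1$, $w_\varepsilon=0$ at the endpoint), and the observation that critical points of $v_\varepsilon$ inside $O_\varepsilon$ are isolated strict extrema are all correct and are all present, in one form or another, in the paper's proof. The concluding localization of $x_\varepsilon$ via unimodality is also fine.

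The problem is that the core step --- excluding a local maximum of $v_\varepsilon$ inside a component $(\beta,\gamma)$ of $O_\varepsilon$, equivalently showing each such component contains at most one critical point --- is not actually proved. You assemble the right ingredients (the first integral $\tfrac{\varepsilon}{2}(v_\varepsilon')^2-G_\varepsilon(v_\varepsilon)$, the sign of $g_\varepsilon$ at the extremum values, the comparison $g_\varepsilon\le g_0$, the obstacle constraint), but you explicitly leave the decisive contradiction to an unperformed ``phase-plane/potential analysis'' of $g_\varepsilon$, and correctly note that $g_\varepsilon$ is not monotone on $[0,1]$ so that a priori nothing prevents the autonomous equation from having oscillating solutions. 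As stated, this is a gap, not a proof.

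The paper closes exactly this step with a reflection trick that makes the non-monotonicity of $g_\varepsilon$ irrelevant. Because the ODE $\varepsilon v_\varepsilon''=g_\varepsilon(v_\varepsilon)$ on $(\beta,\gamma)$ is autonomous, if $y\in(\beta,\gamma)$ is a critical point of $v_\varepsilon$ then $x\mapsto v_\varepsilon(2y-x)$ solves the same ODE with the same Cauchy data at $y$; by uniqueness, $v_\varepsilon$ is symmetric about $y$ on the largest symmetric subinterval of $(\beta,\gamma)$. Taking $y$ to be the largest critical point and assuming, say, $v_{0,\varepsilon}(\beta)\le v_{0,\varepsilon}(\gamma)$ (so $\gamma>L/2$ and $\gamma$ is not critical), one first rules out $y>\tfrac{\beta+\gamma}{2}$: otherwise symmetry gives $v_\varepsilon(2y-\gamma)=v_\varepsilon(\gamma)=v_{0,\varepsilon}(\gamma)$, while $2y-\gamma$ is strictly interior to $(\beta,\gamma)$ and so $v_{0,\varepsilon}(2y-\gamma)<v_{0,\varepsilon}(\gamma)$, contradicting $v_\varepsilon\le v_{0,\varepsilon}$. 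Then, if there were a second critical point $z<y$, symmetry of $v_\varepsilon$ about $y$ on $(\beta,2y-\beta)$ would produce a critical point $2y-z>y$, contradicting maximality. This gives Step~1 without any information about $g_\varepsilon$ beyond local Lipschitz continuity, and the dichotomy (either $L/2$ or the component containing $L/2$) then follows from the same symmetry plus the strict V-shape of $v_{0,\varepsilon}$ (Step~2 of the paper). Your first-integral/comparison route is not wrong in spirit, but to be made rigorous it would require a genuine analysis of the potential $G_\varepsilon$ which you do not supply; the symmetry argument is both shorter and more robust, and is the missing idea in your proposal.
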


Observe that while $v_{0,\varepsilon}$ is symmetric with respect to $L/2$, the previous proposition states that it is not necessarily the case for $v_\varepsilon$. 

\begin{proof}
According to \autoref{pointscritiquesF}, a local minimum of $v_\varepsilon$ is either $L/2$ or in $O_\varepsilon$. 
The proof is divided in two steps. In the first step, we prove that any connected component of $O_\varepsilon$ contains at most one critical point $y$. Using this first step, we prove in the second step that the only connected component of $O_\varepsilon$ that contains a local minimum is (if it exists) the connected component containing $L/2$. 

\smallskip

\underline{Step $1$ :} Any connected component of $O_\varepsilon$ contains at most one critical point. 

\smallskip

Take $(\beta, \gamma)$ a non-empty connected component of $O_\varepsilon$ and suppose that it contains a critical point of $v_\varepsilon$. Assume that $v_\varepsilon (\beta) \leqslant v_\varepsilon (\gamma)$ (the case $v_\varepsilon (\beta) \geqslant v_\varepsilon(\gamma)$ can be treated similarly). As $\beta, \gamma \in F_\varepsilon$, we also have $v_{0,\varepsilon} (\beta) \leqslant v_{0,\varepsilon} (\gamma)$. As $v_{0,\varepsilon}$ is decreasing on $[0,L/2]$ and increasing on $[L/2,L]$ (\cite[Proposition 2.1]{BMRb23}), we have in particular that $\gamma > L/2$ and $v_{0,\varepsilon}' (\gamma) >0$.

Assume that $\{v_\varepsilon ' =0\} \cap (\beta, \gamma)$ is non-empty. First, $\gamma$ cannot be critical points of $v_\varepsilon$, as 

\[v_\varepsilon' (\gamma) = \lim\limits_{h\rightarrow 0} \frac{v_\varepsilon (\gamma) - v_\varepsilon (\gamma-h)}{h} \geqslant \lim\limits_{h\rightarrow 0} \frac{v_{0,\varepsilon} (\gamma) - v_{0,\varepsilon} (\gamma-h)}{h} = v_{0,\varepsilon} '(\gamma) >0.\]

Hence, the supremum $y$ of $\{v_\varepsilon ' =0\} \cap (\beta,\gamma )$ belongs to $(\beta,\gamma )$. First, suppose that $y > \frac{\beta+\gamma}{2}$. The Cauchy problem 
\[\left\{\begin{array}{rlll}
\displaystyle -\varepsilon v_\varepsilon '' + \frac{v_\varepsilon -1}{\varepsilon} + v_\varepsilon |u_\varepsilon '|^2 &=& 0 & \text{on } (\beta, \gamma),\\
\displaystyle v_\varepsilon (y) &=& v_\varepsilon (y), &\\
\displaystyle v_\varepsilon '(y) &=& 0, &\\
\end{array}\right.\]
admits a unique solution on the maximal interval $(\beta,\gamma)$ (by Cauchy-Lipschitz Theorem). But the function $v_\varepsilon(2y-\cdot )$, defined on $(2y-\gamma, \gamma)$, is also a solution of the Cauchy problem on $(2y-\gamma, \gamma) \subset (\beta,\gamma)$, so that by uniqueness, $v_\varepsilon$ is symmetric around $y$ on $(2y-\gamma,\gamma)$ (see \autoref{figure2a}). By continuity, $v_\varepsilon$ is symmetric around $y$ on $[2y-\gamma,\gamma]$ and
\[v_{0,\varepsilon} (2y-\gamma) \geqslant v_\varepsilon (2y-\gamma) = v_\varepsilon (\gamma) = v_{0,\varepsilon} (\gamma).\]
Since $2y-\gamma \in (\beta, \gamma)$, it follows from the variations of $v_{0,\varepsilon}$ (see \cite[Proposition 2.1]{BMRb23}) that $v_{0,\varepsilon}(2y-\gamma) < \max(v_{0,\varepsilon}(\beta), v_{0,\varepsilon} (\gamma))= v_{0,\varepsilon} (\gamma)$.  
This contradiction shows that 
\begin{equation}
\label{ineq min global}
y\leqslant \dfrac{\beta+\gamma}{2}.
\end{equation}
Now, suppose that there exists another critical point $z$ of $v_\varepsilon$ on $(\beta,\gamma)$. As $y$ is supposed to be the largest critical point, $z<y$. Similarly to the previous argument of symmetry, $v_\varepsilon (2y - \cdot)$ is a solution of the Cauchy problem on $(\beta, 2y-\beta) \subset (\beta, \gamma)$ so $v_\varepsilon$ is symmetric around $y$ on $[\beta, 2y-\beta]$. It follows that $2y-z$ is also critical point of $v_\varepsilon$ on $(\beta,\gamma)$, which implies $2y-z=y$, hence $y=z$. 

\smallskip

Finally, $y$ is the unique critical point of $v_\varepsilon$ on $(\beta,\gamma)$. 

\medskip

\underline{Step $2$ :} $(\beta, \gamma)$ contains a local minimum if and only if $L/2 \in (\beta, \gamma)$. 

\smallskip

We distinguish two cases. 

\underline{Case 1 :} Suppose that $\dfrac{L}{2}\notin (\beta,\gamma )$ and that $y$ is a local minimum of $v_\varepsilon$. As $v_\varepsilon '$ does not vanish on $(\beta, \gamma) \setminus \{y\}$, $v_\varepsilon$ is decreasing on $(\beta, y)$ and increasing on $(y, \gamma)$. 

On one hand, the symmetry of $v_\varepsilon$ around $y$ yields : 
\[v'_\varepsilon (\beta) = \lim\limits_{x\rightarrow \beta^+} v_\varepsilon '(x) = -\lim\limits_{x\rightarrow (2y - \beta)^-} v_\varepsilon '(x) = -v_\varepsilon '(2y-\beta) < 0.\]
But on the other hand : 
\[v_\varepsilon '(\beta) = \lim\limits_{h\rightarrow 0} \frac{v_\varepsilon (\beta) - v_\varepsilon (\beta -h)}{h} \geqslant \lim\limits_{h\rightarrow 0} \frac{v_{0,\varepsilon} (\beta) - v_{0,\varepsilon} (\beta -h)}{h} = v_{0,\varepsilon} '(\beta) >0.\]
Thus, it follows that $(\beta, \gamma)$ does not contain any local minimum. 

\medskip

\underline{Case 2 :} Suppose that $L/2 \in (\beta, \gamma)$. Since
\[v_{\varepsilon}(L/2) \leqslant v_{0,\varepsilon} (L/2) < \min (v_{0,\varepsilon} (\beta), v_{0,\varepsilon} (\gamma)) = \min (v_\varepsilon (\beta), v_\varepsilon (\gamma)),\] 
$v_\varepsilon$ admits at least one local minimum on $(\beta, \gamma )$.
By uniqueness, this local minimum is $y$.

\smallskip

This concludes the proof. 

\end{proof}

\begin{figure}[!h]
\begin{subfigure}[!h]{0.42\linewidth}
\scalebox{1.15}{
\begin{tikzpicture}
\draw[->] (-2.5, 1) -- (2.5, 1);
\draw[->] (-2.5, 1) -- (-2.5, 5.5) ;
\draw [very thick, domain=0.52 :1.49, dashed,color=blue!80] plot (\x , {2.3*cos(5*pow(cos(\x/2.7 r +0.35 r),3.0) r)+3.95}) node[above] {\scriptsize $\textcolor{blue!80}{v_\varepsilon}$};

\draw [domain=-0.45 :0.52, dashed,color=red] plot (\x , {2.3*cos(5*pow(cos((1.04 -\x)/2.7 r +0.35 r),3.0) r)+3.95});
\node at (-0.45,5) {\scriptsize $\textcolor{red}{v_\varepsilon(2y-\cdot)}$};

\draw [very thick, domain=-0.56 :0.52, dashed,color=blue!80] plot (\x , {2.3*cos(5*cos(\x/2.7 r +0.7 r) r)+3.95});

\draw [very thick, domain=-2.5:2.5, samples=100] plot (\x,{5.0*cos(pow(cos(\x/2 r),4.0) r)}) node[right] {\footnotesize $v_{0,\varepsilon}$};

\tkzDefPoint(-0.65,3.44){B}
\tkzDefPoint(-0.45,4.79){C'}
\tkzDefPoint(-0.45,3.09){C''}
\tkzDefPoint(1.49,4.79){C}
\tkzDefPoint(0.52,1.65){D}
\tkzDefPoint(0.52,5){E}

\tkzDefPoint(-0.65,1){Xb}
\tkzDefPoint(-0.45,0.6){Xc'}
\tkzDefPoint(1.49,0.6){Xc}
\tkzDefPoint(0.52,1){Xd}
\tkzDefPoint(-2.5,4.79){Yc}
\tkzDefPoint(-2.5,3.09){Yc''}

\tkzDrawPoints(B,C,C',C'',D)
\tkzDrawSegment[dashed,color=red](Xd,E)
\tkzDrawSegment[dotted](B,Xb)
\tkzDrawSegment[dotted](C,Yc)
\tkzDrawSegment[dotted](C'',Yc'')
\tkzDrawSegment[dotted](C',Xc')
\tkzDrawSegment[dotted](C,Xc)
\tkzLabelPoint(Xb){\scriptsize $\beta$}
\tkzLabelPoint(Xc){\scriptsize $\gamma$}
\tkzLabelPoint(Xc'){\scriptsize $2y-\gamma$}
\tkzLabelPoint(Xd){\scriptsize $y$}
\tkzLabelPoint[left](Yc){\scriptsize $v_{0,\varepsilon}(\gamma)$}
\tkzLabelPoint[left](Yc''){\scriptsize $v_{0,\varepsilon}(2y-\gamma)$}
\end{tikzpicture}}
\caption{$y\leqslant \dfrac{\gamma +\beta}{2}$}\label{figure2a}
\end{subfigure}
\hfill
\begin{subfigure}[!h]{0.5\linewidth}
\scalebox{1.15}{
\begin{tikzpicture}
\draw[->] (-2.5, 1) -- (2.5, 1);
\draw[->] (-2.5, 1) -- (-2.5, 5.5) ;

\draw [very thick, domain=-0.75 :1.3, dashed,color=blue!80] plot (\x , {2.3*cos(5*pow(cos(\x/2.7 r +0.45 r),3.0) r)+3.5}) node[above] {\footnotesize $\textcolor{blue!80}{v_\varepsilon}$};

\draw [very thick, domain=-2.5:2.5, samples=100] plot (\x,{5.0*cos(pow(cos(\x/2 r),4.0) r)}) node[right] {\footnotesize $v_{0,\varepsilon}$};

\tkzDefPoint(-0.79,3.74){B}
\tkzDefPoint(1.29,4.59){C}
\tkzDefPoint(0.24,1.2){D}
\tkzDefPoint(1.06,3.74){B'}
\tkzDefPoint(0.79,3.74){B''}
\tkzDefPoint(1.06,4.26){G}
\tkzDefPoint(0.24,5){E}
\tkzDefPoint(0,2){H}
\tkzDefPoint(0,5){I}

\tkzDefPoint(-0.79,0.6){Xb}
\tkzDefPoint(0.79,0.6){Xb''}
\tkzDefPoint(1.06,1){Xb'}
\tkzDefPoint(1.29,1){Xc}
\tkzDefPoint(0.24,1){Xd}
\tkzDefPoint(-2.5,4.26){Yg}
\tkzDefPoint(-2.5,3.74){Yb}
\tkzDefPoint(-2.5,4.59){Yc}

\tkzDrawPoints(B,B',B'',G)
\tkzDrawSegment[dashed,color=red](Xd,E)
\tkzDrawSegment[dotted](B,Xb)
\tkzDrawSegment[dotted](B'',Xb'')
\tkzDrawSegment[dotted](B',Yb)
\tkzDrawSegment[dotted](G,Yg)
\tkzDrawSegment[dotted](G,Xb')

\tkzLabelPoint(Xb){\scriptsize $\beta$}
\tkzLabelPoint(Xd){\scriptsize $x_\varepsilon$}
\tkzLabelPoint[below right=0.5pt and -6pt](Xb'){\scriptsize $2x_\varepsilon-\beta$}
\tkzLabelPoint(Xb''){\scriptsize $L-\beta$}
\tkzLabelPoint[left](Yb){\scriptsize $v_{0,\varepsilon}(L-\beta)$}
\tkzLabelPoint[left](Yg){\scriptsize $v_{0,\varepsilon}(2x_\varepsilon-\beta)$}
\end{tikzpicture}}
\caption{$x_\varepsilon \leqslant \dfrac{3}{4}L$}\label{figure2b}
\end{subfigure}
\caption{Critical points of $v_\varepsilon$}\label{figure2}
\end{figure}
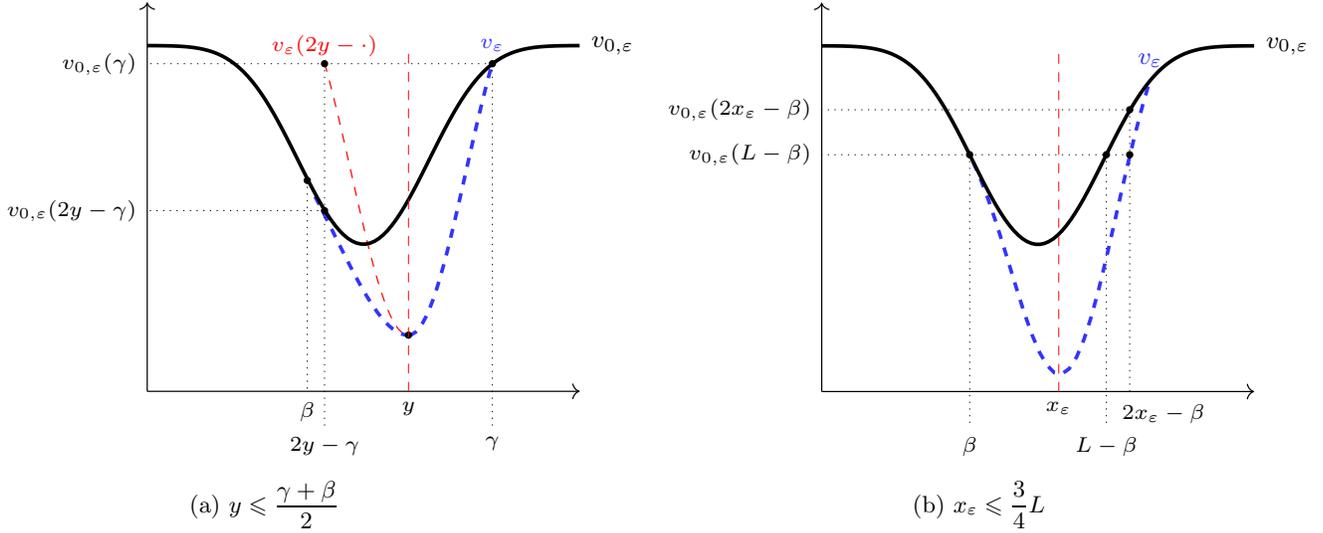

\begin{corollary}\label{variationveps}
The function $v_\varepsilon$ is non-increasing on $[0,x_\varepsilon]$ and non-decreasing on $[x_\varepsilon,L]$. 
\end{corollary}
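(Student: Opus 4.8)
The plan is to deduce the statement directly from \autoref{variations of v}. Once we know that $v_\varepsilon$ has a \emph{unique} local minimum, located at the interior point $x_\varepsilon$, and that $v_\varepsilon$ attains its maximal value $1$ at both endpoints (recall $v_\varepsilon\le v_{0,\varepsilon}\le 1$ and $v_\varepsilon(0)=v_\varepsilon(L)=1$), the prescribed monotonicity is essentially forced by the boundary data. I would argue by contradiction and show that any failure of monotonicity on one side of $x_\varepsilon$ produces a second local minimum of $v_\varepsilon$, which is impossible.

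First, on $[0,x_\varepsilon]$: if $v_\varepsilon$ were not non-increasing there, I would choose $0\le a<b\le x_\varepsilon$ with $v_\varepsilon(a)<v_\varepsilon(b)$ and consider a point $y\in[0,b]$ where the continuous function $v_\varepsilon$ attains its minimum over the compact interval $[0,b]$. One cannot have $y=b$, since then $v_\varepsilon(b)=v_\varepsilon(y)\le v_\varepsilon(a)<v_\varepsilon(b)$; and one cannot have $y=0$ either, since then $1=v_\varepsilon(0)=v_\varepsilon(y)\le v_\varepsilon(a)<v_\varepsilon(b)\le 1$. Hence $y\in(0,b)\subseteq(0,x_\varepsilon)$, so $y$ is a local minimum of $v_\varepsilon$ on $[0,L]$ with $y\ne x_\varepsilon$, contradicting \autoref{variations of v}.

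For $[x_\varepsilon,L]$ I would proceed by the symmetric argument (equivalently, apply the previous case to the map $x\mapsto v_\varepsilon(L-x)$, which is again a critical point of $\overline{AT}_\varepsilon$ because the obstacle $v_{0,\varepsilon}$ is symmetric about $L/2$): if $v_\varepsilon$ were not non-decreasing on $[x_\varepsilon,L]$ there would exist $x_\varepsilon\le a<b\le L$ with $v_\varepsilon(a)>v_\varepsilon(b)$, and a minimizer of $v_\varepsilon$ over $[a,L]$ would lie neither at $a$ (since its value is $\le v_\varepsilon(b)<v_\varepsilon(a)$) nor at $L$ (since its value is $\le v_\varepsilon(b)<1=v_\varepsilon(L)$), hence at an interior point of $(a,L)\subseteq(x_\varepsilon,L)$ — once more a second local minimum.

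I do not expect any genuine obstacle here: all the substantive work has already been carried out in \autoref{pointscritiquesF} and \autoref{variations of v}, and this corollary merely repackages the uniqueness of the local minimum as a monotonicity statement. The only points requiring a little care are the use of the bound $v_\varepsilon\le 1$ with equality exactly at the endpoints $0$ and $L$, and the fact (already contained in \autoref{variations of v}, since $x_\varepsilon$ equals $L/2$ or lies in a connected component of $O_\varepsilon\subseteq(0,L)$) that $x_\varepsilon$ is interior to $[0,L]$.
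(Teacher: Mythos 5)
Your proof is correct and follows essentially the same route as the paper's: assume a failure of monotonicity on $[0,x_\varepsilon]$, observe that the value at the left endpoint is $1$ (so a strictly lower interior value forces an interior minimizer), and conclude there is a second local minimum contradicting \autoref{variations of v}; then argue symmetrically on $[x_\varepsilon,L]$. The paper compresses this into one line ("$v_\varepsilon(a)<\min(v_\varepsilon(0),v_\varepsilon(b))$, so $v_\varepsilon$ admits a local minimum on $(0,b)$"), and your version merely spells out the extreme-value argument more explicitly.
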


\begin{proof}
Let us prove that $v_\varepsilon$ is non-increasing on $[0, x_\varepsilon]$. Let $a,b\in [0,x_\varepsilon]$ such that $a< b$. If $v_\varepsilon (a) < v_\varepsilon (b)$, then $v_\varepsilon (a)<\min (v_\varepsilon (0), v_\varepsilon (b))$, so $v_\varepsilon$ admits a local minimum on $(0,b)$, which contradicts \autoref{variations of v}. So $v_\varepsilon (a) \geqslant v_\varepsilon (b)$ and $v_\varepsilon$ is non increasing on $[0,x_\varepsilon]$. Similarly, $v_\varepsilon$ is non decreasing on $[x_\varepsilon ,1]$. 
\end{proof}

Notice that the sequence $(x_\varepsilon)$ of global minimum of $v_\varepsilon$ is bounded so, up to an extraction, we can suppose that
\begin{equation}\label{xmin}
x_\varepsilon \longrightarrow x_{min} \in [0,L].
\end{equation}
The previous proof implies the following on $x_{min}$ : 

\begin{corollary}\label{corollaire position xmin}
Let $x_\varepsilon$ denote the global minimum of $v_\varepsilon$ and 
let $x_{min}$ be the limit of a converging subsequence of $(x_\varepsilon)$. Then $x_{min} \in [L/4, 3L/4]$. 
\end{corollary}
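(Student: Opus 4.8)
The plan is to pin down $x_\varepsilon$ by localizing it inside the connected component of $O_\varepsilon$ that contains it, and then to reuse the symmetrization argument already carried out in Step~1 of the proof of \autoref{variations of v}. First I would dispose of the trivial case $x_\varepsilon = L/2$, in which the claim is immediate. Otherwise, by \autoref{variations of v} (together with \autoref{pointscritiquesF}), the global minimum $x_\varepsilon$ lies in the open connected component $(\beta,\gamma)$ of $O_\varepsilon$ containing $L/2$, so that $0\leqslant\beta<L/2<\gamma\leqslant L$ and, in particular, $x_\varepsilon>\beta$.

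Next I would normalize the problem by the reflection $x\mapsto L-x$. Since $v_{0,\varepsilon}$ is symmetric about $L/2$ by \cite[Proposition~2.1]{BMRb23}, the pair $\bigl(a_1-u_\varepsilon(L-\cdot),\,v_\varepsilon(L-\cdot)\bigr)$ satisfies the same Dirichlet conditions, still obeys the constraint $v_\varepsilon(L-\cdot)\leqslant v_{0,\varepsilon}(L-\cdot)=v_{0,\varepsilon}$, and the Euler--Lagrange system \eqref{criticalpoint1} transforms into itself (with $\mu_\varepsilon$ replaced by $\mu_\varepsilon(L-\cdot)\leqslant 0$, still supported in the reflected coincidence set); hence it is again a critical point of $\overline{AT}_\varepsilon$. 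Its phase field attains its minimum at $L-x_\varepsilon$, the component of its open set containing $L/2$ is $(L-\gamma,L-\beta)$, and it takes at $L-\gamma$, $L-\beta$ the values $v_\varepsilon(\gamma)$, $v_\varepsilon(\beta)$ respectively. Replacing if necessary $(u_\varepsilon,v_\varepsilon)$ by this reflected pair, I may therefore assume without loss of generality that $v_\varepsilon(\beta)\leqslant v_\varepsilon(\gamma)$, and it suffices to establish $x_\varepsilon\in[L/4,3L/4]$ under this extra hypothesis.

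Under the assumption $v_\varepsilon(\beta)\leqslant v_\varepsilon(\gamma)$, Step~1 of the proof of \autoref{variations of v} already provides the inequality $x_\varepsilon\leqslant\frac{\beta+\gamma}{2}$ and the fact that $v_\varepsilon$ is symmetric about $x_\varepsilon$ on $[\beta,2x_\varepsilon-\beta]\subset[\beta,\gamma]$. Combining $x_\varepsilon\leqslant\frac{\beta+\gamma}{2}$ with $\beta<L/2$ and $\gamma\leqslant L$ immediately yields the upper bound $x_\varepsilon<\frac{L/2+L}{2}=\frac{3L}{4}$. For the lower bound, the symmetry of $v_\varepsilon$ about $x_\varepsilon$ and the constraint $v_\varepsilon\leqslant v_{0,\varepsilon}$ give
\[ v_{0,\varepsilon}(\beta)=v_\varepsilon(\beta)=v_\varepsilon(2x_\varepsilon-\beta)\leqslant v_{0,\varepsilon}(2x_\varepsilon-\beta). \]
Since $2x_\varepsilon-\beta>\beta$ and $v_{0,\varepsilon}$ is strictly decreasing on $[0,L/2]$ by \cite[Proposition~2.1]{BMRb23}, this is impossible unless $2x_\varepsilon-\beta>L/2$; hence $x_\varepsilon>\frac{L/2+\beta}{2}\geqslant\frac{L}{4}$. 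Thus $x_\varepsilon\in[L/4,3L/4]$ for every $\varepsilon$, and passing to the limit along the subsequence of \eqref{xmin} gives $x_{min}\in[L/4,3L/4]$.

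The routine parts of this argument are the verification that the reflected pair is a genuine critical point of $\overline{AT}_\varepsilon$ and the handling of the borderline cases $\beta=0$ or $\gamma=L$ (which only affect whether the displayed inequalities are strict). The one point requiring a little care is that the lower-bound step reuses the Cauchy--Lipschitz symmetrization from Step~1 of \autoref{variations of v}: one must recall that on the whole component $(\beta,\gamma)\subset O_\varepsilon$ the measure $\mu_\varepsilon$ vanishes, so $v_\varepsilon$ solves the ODE $-\varepsilon v_\varepsilon''+\frac{v_\varepsilon-1}{\varepsilon}+v_\varepsilon|u_\varepsilon'|^2=0$ there, and therefore $v_\varepsilon$ and $v_\varepsilon(2x_\varepsilon-\cdot)$ are two solutions of the same Cauchy problem with data $(v_\varepsilon(x_\varepsilon),0)$ at the critical point $x_\varepsilon$, forcing them to coincide on $[\beta,2x_\varepsilon-\beta]$.
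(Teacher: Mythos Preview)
Your proof is correct and follows essentially the same route as the paper: both localize $x_\varepsilon$ in the connected component $(\beta,\gamma)$ of $O_\varepsilon$ containing $L/2$, use the inequality $x_\varepsilon\leqslant(\beta+\gamma)/2$ from Step~1 of \autoref{variations of v} for the upper bound, and the Cauchy--Lipschitz symmetry of $v_\varepsilon$ about $x_\varepsilon$ together with the monotonicity of $v_{0,\varepsilon}$ for the lower bound. The only minor difference is that the paper pushes the lower-bound argument one step further---exploiting also the symmetry $v_{0,\varepsilon}(\beta)=v_{0,\varepsilon}(L-\beta)$ to conclude $x_\varepsilon\geqslant L/2$ (rather than your $x_\varepsilon>L/4$) in the case $v_\varepsilon(\beta)\leqslant v_\varepsilon(\gamma)$---but this sharper bound is not needed for the stated corollary.
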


\begin{proof}
Suppose that $x_\varepsilon \in \,(\beta, \gamma)$ the connected component of $O_\varepsilon$ containing $L/2$. Suppose also that $v_\varepsilon (\beta)\leqslant v_\varepsilon (\gamma)$. According to \eqref{ineq min global}, $x_\varepsilon \leqslant \dfrac{\gamma+ \beta}{2} \leqslant \dfrac{3}{4}L < L$. On the other hand, the symmetries of $v_{0,\varepsilon}$ around $L/2$ imply 
\[v_{0,\varepsilon} (2x_\varepsilon -\beta) \geqslant v_\varepsilon (2x_\varepsilon -\beta) = v_\varepsilon(\beta) =v_{0,\varepsilon}(\beta)= v_{0,\varepsilon} (L-\beta).\] 
From the variations of $v_{0,\varepsilon}$ (see \cite[Proposition 2.1]{BMRb23}) and the inequality $v_{0,\varepsilon} (2x_\varepsilon -\beta) \geqslant v_{0,\varepsilon}(\beta)$, we deduce that $2x_\varepsilon \geqslant L/2$. Then, from the the variations of $v_{0,\varepsilon}$ on $[L/2,L]$ and the inequality $v_{0,\varepsilon} (2x_\varepsilon -\beta) \geqslant v_{0,\varepsilon} (L-\beta)$, we deduce that $2x_\varepsilon -\beta \geqslant L-\beta$, so $x_\varepsilon \geqslant L/2$ (see \autoref{figure2b}). The case where $v_\varepsilon (\beta) \geqslant v_\varepsilon (\gamma)$ can be treated similarly. 
\end{proof}

In particular, $x_{min}$ is not localized at the boundaries of $[0,L]$. 

\section{Asymptotic analysis}

In this section, we prove \autoref{Theorem 1}. The proofs of the several items of \autoref{Theorem 1} are disseminated within the subsections : $(i)$ is proved in subsection 4.3, $(ii)$ in subsection 4.5 and subsection 4.7, $(iii)$ in subsection 4.4 and $(iv)$ and $(v)$ in subsection 4.6. 

\subsection{Preliminary estimates}

The following estimates are classical and can be found in \cite{FLS09}, \cite{BMRb23} and \cite{L10}. 

\smallskip

First, notice that the energy bound \eqref{energyboundtemps1} provides the following inequality on $c_\varepsilon$ : 

\[\mathcal{C} \geqslant \overline{AT}_\varepsilon (u_\varepsilon , v_\varepsilon) \geqslant \int_0^L (\eta_\varepsilon + v_\varepsilon ^2) |u_\varepsilon '|^2 = \int_0^L |c_\varepsilon u_\varepsilon '| = |c_\varepsilon| |u_\varepsilon (L) - u_\varepsilon (0)| = |c_\varepsilon a_1|.\]
where we used that $u_\varepsilon '$ is of constant sign (see \eqref{eq : eq2}). It follows that $|c_\varepsilon| \leqslant C/|a_1|$. The sequence $(c_\varepsilon)$ is bounded, so, up to an extraction, we can suppose that it converges to a constant $c_1$. The following proposition establishes the connection between $c_1$ and the limit function of $(u_\varepsilon)$. 

\begin{proposition}\label{convergenceuv}
One has $v_\varepsilon \rightarrow 1$ in $L^2(0,L)$ and, up to an extraction, there exists $u_1\in BV(0,L)$ such that $u_\varepsilon \rightarrow u_1$ in $L^1(0,L)$. 
Moreover, $u_\varepsilon ' \rightarrow c_1$ a.e in $[0,L]$ with $|Du_1|(0,L) \leqslant |a_1|$. We also have that $c_1 \in [0, a_1/L]$ if $a_1 \geqslant 0$ and $c_1 \in [a_1/L, 0]$ if $a_1 \leqslant 0$.   
\end{proposition}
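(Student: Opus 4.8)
The plan is to exploit the explicit structure provided by the Euler--Lagrange equations \eqref{criticalpoint1} together with the uniform energy bound \eqref{energyboundtemps1}. First I would establish the convergence $v_\varepsilon \to 1$ in $L^2(0,L)$: the term $\int_0^L \tfrac{(1-v_\varepsilon)^2}{\varepsilon}\,dx$ is bounded by $\mathcal{C}$, hence $\int_0^L (1-v_\varepsilon)^2\,dx \le \mathcal{C}\varepsilon \to 0$. Since $0\le v_\varepsilon \le 1$, this gives $v_\varepsilon \to 1$ in $L^2$, and also (up to a subsequence) a.e. convergence to $1$.

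Next I would handle $u_\varepsilon$. From the preliminary estimate just above the statement, $(c_\varepsilon)$ is bounded, so up to extraction $c_\varepsilon \to c_1$. By \eqref{eq : eq2}, $u_\varepsilon ' = c_\varepsilon/(\eta_\varepsilon + v_\varepsilon^2)$, so $|u_\varepsilon'|$ is controlled in $L^1$ (indeed $\int_0^L |u_\varepsilon'| \le$ const, using $(\eta_\varepsilon+v_\varepsilon^2)|u_\varepsilon'|^2 = |c_\varepsilon u_\varepsilon'|$ and the energy bound, or more directly since $u_\varepsilon$ is monotone with fixed endpoint values $0$ and $a_1$, giving $|Du_\varepsilon|(0,L) = |a_1|$). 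Thus $(u_\varepsilon)$ is bounded in $BV(0,L)$ with a uniform bound $|a_1|$ on the total variation, and by compactness of $BV$ in $L^1$, up to a further subsequence $u_\varepsilon \to u_1$ in $L^1(0,L)$ with $u_1 \in BV(0,L)$ and, by lower semicontinuity of total variation, $|Du_1|(0,L) \le \liminf |Du_\varepsilon|(0,L) = |a_1|$. Since $v_\varepsilon \to 1$ a.e., $\eta_\varepsilon + v_\varepsilon^2 \to 1$ a.e., and therefore $u_\varepsilon ' = c_\varepsilon/(\eta_\varepsilon+v_\varepsilon^2) \to c_1$ a.e. in $[0,L]$.

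It remains to pin down the location of $c_1$. Here I would use the sign of $a_1$. Assume $a_1 \ge 0$ (the case $a_1 \le 0$ is symmetric). Since $u_\varepsilon$ is monotone with $u_\varepsilon(0)=0$, $u_\varepsilon(L)=a_1 \ge 0$, the function is non-decreasing, so $u_\varepsilon ' \ge 0$ a.e., whence $c_\varepsilon \ge 0$ and $c_1 \ge 0$. For the upper bound, note that $u_\varepsilon' = c_\varepsilon/(\eta_\varepsilon+v_\varepsilon^2) \ge c_\varepsilon/(1+\eta_\varepsilon)$ pointwise (since $v_\varepsilon \le 1$), and integrating over $[0,L]$ gives $a_1 = \int_0^L u_\varepsilon' \ge \tfrac{c_\varepsilon L}{1+\eta_\varepsilon}$, i.e. $c_\varepsilon \le \tfrac{a_1(1+\eta_\varepsilon)}{L}$; passing to the limit and using $\eta_\varepsilon \to 0$ yields $c_1 \le a_1/L$. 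Hence $c_1 \in [0, a_1/L]$, and symmetrically $c_1 \in [a_1/L, 0]$ when $a_1 \le 0$.

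The main (though modest) obstacle is bookkeeping the successive subsequence extractions so that a single subsequence realizes all claims simultaneously ($c_\varepsilon \to c_1$, $u_\varepsilon \to u_1$ in $L^1$, $v_\varepsilon \to 1$ a.e., $u_\varepsilon' \to c_1$ a.e.); this is routine since only finitely many extractions are needed. The one genuinely structural input is the identity $\int_0^L (\eta_\varepsilon+v_\varepsilon^2)|u_\varepsilon'|^2 = |c_\varepsilon a_1|$, which already appears in the excerpt, so no new estimate is really required beyond assembling these ingredients.
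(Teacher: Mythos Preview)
Your proof is correct and follows essentially the same route as the paper's: the energy bound forces $v_\varepsilon\to 1$ in $L^2$, monotonicity of $u_\varepsilon$ with fixed endpoints gives the uniform $BV$ bound $|Du_\varepsilon|(0,L)=|a_1|$, compactness and lower semicontinuity give $u_1\in BV$ with $|Du_1|(0,L)\le|a_1|$, and \eqref{eq : eq2} combined with $v_\varepsilon\to 1$ a.e.\ yields $u_\varepsilon'\to c_1$ a.e.

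The only (minor) divergence is in the final bound on $c_1$. The paper applies Fatou's lemma to the nonnegative sequence $u_\varepsilon'$ to get $c_1 L\le\liminf\int_0^L u_\varepsilon'=a_1$, whereas you use the pointwise inequality $u_\varepsilon'\ge c_\varepsilon/(1+\eta_\varepsilon)$ coming from $v_\varepsilon\le 1$ and integrate directly. Your argument is slightly more elementary (it avoids invoking the a.e.\ convergence of $u_\varepsilon'$, which itself required a subsequence), while the paper's use of Fatou is perhaps more systematic. Both are perfectly valid and give the same conclusion.
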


\begin{proof}
The convergence of $v_\varepsilon$ comes from the energy bound \eqref{energyboundtemps1} : 
\[\int_0^L (1-v_\varepsilon)^2 \leqslant \varepsilon \overline{AT}_\varepsilon (u_\varepsilon , v_\varepsilon ) \leqslant \mathcal{C}\varepsilon  \rightarrow 0.\]
As $u_\varepsilon$ is monotone, $u_\varepsilon (0) =0$ and $u_\varepsilon (L) =a_1$, it is of constant sign and $\int_0^L |u_\varepsilon| = |\int_0^L u_\varepsilon | \leqslant |a_1| L$. So $(u_\varepsilon)$ is bounded in $L^1(0,L)$. Secondly, as $u_\varepsilon '$ is of constant sign, 
\[|Du_\varepsilon | (0,L) = \int_0^L |u_\varepsilon '| = |\int_0^L u_\varepsilon '| = |a_1|.\]
So the total variation of $u_\varepsilon$ is also bounded. Then, the sequence $(u_\varepsilon )$ is bounded in $BV(0,L)$ and we can extract a converging subsequence in $L^1(0,L)$ to a function $u_1\in BV(0,L)$. The lower semi-continuity of the total variation implies 
\[|Du_1|(0,L) \leqslant \liminf\limits_{\varepsilon \rightarrow 0} |Du_\varepsilon | (0,L) \leqslant |a_1|.\]
Using equation \eqref{eq : eq2}, as one can extract a converging subsequence of $(v_\varepsilon)$ that converges to $1$ almost everywhere, one gets $u_\varepsilon ' \rightarrow c_1$ almost everywhere. If $u_\varepsilon \geqslant 0$, then by Fatou's lemma : 
\[c_1 L = \int_0^L \lim\limits_{\varepsilon \rightarrow 0}  u_\varepsilon ' \leqslant \liminf \limits_{\varepsilon \rightarrow 0} \int_0^L u_\varepsilon ' = a_1.\]
If $u_\varepsilon \leqslant 0$, then similarly 
\[-c_1L =  \int_0^L  \lim\limits_{\varepsilon \rightarrow 0}(-u_\varepsilon ') \leqslant \liminf \limits_{\varepsilon \rightarrow 0} \int_0^L (-u_\varepsilon ') = -a_1.\]
We obtain that if $a_1\geqslant 0$, then $c_1 \in [0, a_1/L]$ and if $a_1 \leqslant 0$, then $c_1 \in [a_1/L, 0]$.   
\end{proof}

\subsection{Estimates on $v_\varepsilon$}

In this section we establish the two main estimates on $v_\varepsilon$, namely : 

\begin{proposition}\label{ellip}
Let $x\in [0,L]$. Suppose that $v_\varepsilon \geqslant \alpha >0$ over $I_r(x)$ for all $\varepsilon$ small enough. Then there exists $\varepsilon_0 >0$ and $b,M>0$ depending on $\alpha$ such that on $I_{r/2}(x)$ 
\begin{equation}\label{1647}
|\mu_\varepsilon | \leqslant M , \;\;\;\;\;\; |u_\varepsilon '| \leqslant M , \;\;\;\;\;\; v_\varepsilon \geqslant 1-b\varepsilon, \;\;\; \forall \varepsilon < \varepsilon_0.
\end{equation}
\end{proposition}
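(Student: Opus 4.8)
The plan is to dispatch the three assertions of \eqref{1647} in increasing order of difficulty, the first two being purely algebraic consequences of the Euler--Lagrange equations and the last one requiring a barrier argument on the shrunk interval.

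\emph{Bounds on $|u_\varepsilon'|$ and $|\mu_\varepsilon|$.} Since $v_\varepsilon\geqslant \alpha$ on $I_r(x)$ we have $\eta_\varepsilon+v_\varepsilon^2\geqslant \alpha^2$ there, so \eqref{eq : eq2} together with the boundedness of $(c_\varepsilon)$ established in Section~4.1 gives $|u_\varepsilon'|\leqslant |c_\varepsilon|/\alpha^2\leqslant M$ on all of $I_r(x)$, with $M$ depending only on $\alpha$ (and the fixed data $a_1,\mathcal{C}$). For $\mu_\varepsilon$ I would use the explicit formula of \autoref{exprmu}, namely $\mu_\varepsilon=v_{0,\varepsilon}(|u_\varepsilon'|^2-|u_{0,\varepsilon}'|^2)1_{F_\varepsilon}$: on $F_\varepsilon\cap I_r(x)$ one has $v_{0,\varepsilon}=v_\varepsilon\geqslant \alpha$, hence $|u_{0,\varepsilon}'|=|c_{0,\varepsilon}|/(\eta_\varepsilon+v_{0,\varepsilon}^2)\leqslant |c_{0,\varepsilon}|/\alpha^2$, and since $(c_{0,\varepsilon})$ is bounded by \eqref{defc0} and $v_{0,\varepsilon}\leqslant1$, we get $|\mu_\varepsilon|\leqslant |u_\varepsilon'|^2+|u_{0,\varepsilon}'|^2\leqslant M$ on $I_r(x)$ (enlarging $M$ if needed). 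In particular these two bounds already hold on $I_r(x)$, not only on $I_{r/2}(x)$.

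\emph{Lower bound on $v_\varepsilon$.} Here I set $g_\varepsilon:=1-v_\varepsilon\geqslant0$ and rewrite \eqref{eq : eq4} as $\varepsilon v_\varepsilon''=\tfrac{v_\varepsilon-1}{\varepsilon}+v_\varepsilon|u_\varepsilon'|^2-\mu_\varepsilon$. On $I_r(x)$ the quantity $v_\varepsilon|u_\varepsilon'|^2-\mu_\varepsilon$ is nonnegative (as $v_\varepsilon\geqslant0$, $|u_\varepsilon'|^2\geqslant0$, $\mu_\varepsilon\leqslant0$) and bounded above by some $M_2=M_2(\alpha)$ by the previous step, so that
\[
0\ \leqslant\ \varepsilon^2 v_\varepsilon''-(v_\varepsilon-1)\ \leqslant\ M_2\varepsilon\qquad\text{a.e. on }I_r(x),
\]
which, since $\varepsilon^2 v_\varepsilon''=-\varepsilon^2 g_\varepsilon''$, is equivalent to $\varepsilon^2 g_\varepsilon''\geqslant g_\varepsilon-M_2\varepsilon$. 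Thus $\tilde g_\varepsilon:=g_\varepsilon-M_2\varepsilon$ is a subsolution of $\varepsilon^2 w''=w$ on $I_r(x)$, and on $\partial I_r(x)$ it satisfies $\tilde g_\varepsilon\leqslant 1-\alpha<1$. Introducing the explicit barrier $W_\varepsilon(y):=\cosh\!\big((y-x)/\varepsilon\big)/\cosh(r/\varepsilon)$, which solves $\varepsilon^2 W_\varepsilon''=W_\varepsilon$ and equals $1$ on $\partial I_r(x)$, the comparison principle yields $\tilde g_\varepsilon\leqslant W_\varepsilon$ on $I_r(x)$. Since for $y\in I_{r/2}(x)$ one has $W_\varepsilon(y)\leqslant \cosh(r/2\varepsilon)/\cosh(r/\varepsilon)\leqslant 2e^{-r/2\varepsilon}$, we obtain $g_\varepsilon(y)\leqslant M_2\varepsilon+2e^{-r/2\varepsilon}$ on $I_{r/2}(x)$; choosing $b:=M_2+1$ and $\varepsilon_0=\varepsilon_0(r,\alpha)$ so that $2e^{-r/2\varepsilon}\leqslant \varepsilon$ for $\varepsilon<\varepsilon_0$ gives $v_\varepsilon\geqslant 1-b\varepsilon$ there.

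\emph{Main obstacle.} The only delicate point is making the comparison step rigorous: a priori $v_\varepsilon\in H^1$ only, so I would either invoke \autoref{regularityveps} (the $L^\infty$ bound on $\mu_\varepsilon$ just proved already upgrades $v_\varepsilon$ to $C^{1,1}_{loc}(I_r(x))$), which makes all the differential inequalities hold a.e., or argue weakly: $\zeta:=\tilde g_\varepsilon-W_\varepsilon$ satisfies $\varepsilon^2\zeta''\geqslant\zeta$ with $\zeta\leqslant0$ on $\partial I_r(x)$, and testing this against $\zeta^+\in H^1_0(I_r(x))$ gives $\varepsilon^2\int|(\zeta^+)'|^2+\int(\zeta^+)^2\leqslant0$, forcing $\zeta\leqslant0$. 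All constants $M,M_2,b$ depend only on $\alpha$, while $\varepsilon_0$ additionally depends on $r$ through the exponential-smallness requirement; one also checks $I_r(x)\subseteq[0,L]$ (or replaces it by $I_r(x)\cap(0,L)$, the endpoint values $g_\varepsilon=0$ only helping).
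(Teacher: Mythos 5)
Your proof is correct but takes a genuinely different route from the paper for the key third estimate. For the bounds on $|u_\varepsilon'|$ and $|\mu_\varepsilon|$ both arguments are essentially identical, relying on \eqref{eq : eq2} and the explicit formula of \autoref{exprmu}, with the lower bound $v_\varepsilon \geqslant \alpha$ controlling the denominators. The difference lies in establishing $v_\varepsilon \geqslant 1 - b\varepsilon$ on $I_{r/2}(x)$: the paper argues by contradiction, fixing $b$, assuming $\inf_{I_{r/2}(x)} v_\varepsilon < 1 - b\varepsilon$, building an $\varepsilon$- and $b$-dependent smooth comparison profile $\zeta_\varepsilon$ (plateauing at $1 - b\varepsilon/2$ and dropping to $\alpha$ at $\partial I_r(x)$, with $\|\zeta_\varepsilon''\|_\infty$ uniformly bounded), then evaluating the second-derivative test for $v_\varepsilon - \zeta_\varepsilon$ at its interior minimum and plugging in \eqref{eq : eq4} to force $b\lesssim M^2$. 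You instead argue directly: you observe that $\tilde g_\varepsilon := (1-v_\varepsilon) - M_2\varepsilon$ is a subsolution of $\varepsilon^2 w'' = w$, compare it against the explicit cosh barrier $W_\varepsilon(y)=\cosh((y-x)/\varepsilon)/\cosh(r/\varepsilon)$ via the $H^1_0$ weak maximum principle, and conclude from the exponential smallness of $W_\varepsilon$ on $I_{r/2}(x)$. Both are barrier/maximum-principle arguments powered by the same $L^\infty$ control of $\mu_\varepsilon$, but your version is constructive (no contradiction), the barrier is fully explicit, and it makes transparent why the estimate lives on the half interval $I_{r/2}(x)$ (that is where the cosh barrier is exponentially small); the paper's route avoids the explicit ODE solution at the cost of an auxiliary test function whose construction must be checked to keep $\|\zeta_\varepsilon''\|_\infty$ bounded. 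Your handling of regularity — noting that either \autoref{regularityveps} or the $H^1_0$ test-function argument with $(\tilde g_\varepsilon - W_\varepsilon)^+$ rigorously justifies the comparison — is correct and closes the only potential gap.
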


\begin{corollary}\label{ellip2}
For any compact set $K \in [0, L] \setminus \{x_{min}\}$, there exists $\varepsilon_0 >0$ and $b,M>0$ depending on $K$ such that \eqref{1647} holds.
\end{corollary}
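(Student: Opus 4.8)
The plan is to deduce \autoref{ellip2} from \autoref{ellip} by a compactness argument, so the main work is already contained in the proposition; what remains is to produce, for a given compact $K\subset[0,L]\setminus\{x_{min}\}$, a uniform lower bound $v_\varepsilon\geqslant\alpha>0$ on a neighbourhood of $K$, valid for all small $\varepsilon$. First I would observe that the unique global minimum $x_\varepsilon$ of $v_\varepsilon$ (\autoref{variations of v}, \autoref{variationveps}) converges along our subsequence to $x_{min}$ (see \eqref{xmin}), so for $\varepsilon$ small, $x_\varepsilon$ stays within distance, say, $\tfrac12 d(K,x_{min})$ of $x_{min}$, hence uniformly away from $K$. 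Using \autoref{variationveps}, $v_\varepsilon$ is monotone on each side of $x_\varepsilon$, so for any $x\in K$ lying to the left of $x_\varepsilon$ we have $v_\varepsilon(x)\geqslant v_\varepsilon(y)$ for the endpoint $y$ of $K$ closest to $x_{min}$ on that side, and symmetrically on the right; thus $\inf_K v_\varepsilon$ is attained at one of the two points of $K$ nearest to $x_{min}$ (from each side), call them $p_-$ and $p_+$.

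The heart of the argument is then to show $\liminf_\varepsilon v_\varepsilon(p_\pm)>0$. Here I would argue by contradiction: suppose along a further subsequence $v_\varepsilon(p)\to 0$ for $p=p_-$ (say), with $p\neq x_{min}$. I would like to derive a contradiction with the energy bound \eqref{energyboundtemps1}. The idea is the standard one-dimensional phase-field estimate: if $v_\varepsilon$ dips close to $0$ at $p$ and is essentially $1$ near the endpoints $0,L$ (which it is, since $v_\varepsilon(0)=v_\varepsilon(L)=1$), then the arithmetico-geometric inequality $\frac{(1-v_\varepsilon)^2}{\varepsilon}+\varepsilon|v_\varepsilon'|^2\geqslant 2|1-v_\varepsilon||v_\varepsilon'|$ integrated from a point where $v_\varepsilon$ is near $1$ to $p$ already produces a phase-field energy contribution close to $1$ localized near $p$; doing this on both the left and right of $x_\varepsilon$ and combining with the analogous dip at $x_\varepsilon$ itself (if $x_{min}\neq p$, these are genuinely distinct concentration points) would force $\liminf_\varepsilon \overline{AT}_\varepsilon(u_\varepsilon,v_\varepsilon)\geqslant 2+2=4$ worth of phase-field energy plus the Dirichlet part, and more generally one can push the count above $\mathcal C$ by choosing enough well-separated dips — contradiction. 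More carefully: a $v$-jump of $(v_\varepsilon)$ (\autoref{v-jump}) contributes at least $2$ to the limiting phase-field energy, and since $\Gamma_0\neq\emptyset$ is possible the obstacle already forces one such jump near $x_{min}$; a second independent $v$-jump at $p$ would overflow the available energy once $\mathcal C<4$, and for general $\mathcal C$ one refines by noting that $v_\varepsilon$ cannot have more than $\lfloor \mathcal C/2\rfloor$ separated dips, all of which must converge — so outside a finite set containing $x_{min}$, $v_\varepsilon$ stays bounded below. Restricting to the case at hand, where the limit analysis of \autoref{Theorem 1}$(i)$–$(ii)$ shows the phase field energy concentrates only at $x_{min}$, any dip away from $x_{min}$ is excluded.

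Once $\liminf_\varepsilon v_\varepsilon(p_\pm) = \alpha_0 > 0$ is secured, I would fix $\alpha = \alpha_0/2$ and a radius $r>0$ small enough that the two intervals $I_r(p_-)$ and $I_r(p_+)$, together with the part of $K$ between them, stay at positive distance from $x_{min}$; by the monotonicity of \autoref{variationveps} and the bound at $p_\pm$, we get $v_\varepsilon\geqslant\alpha$ on an open neighbourhood $U$ of $K$ for all $\varepsilon<\varepsilon_0$. Cover $K$ by finitely many balls $I_{r_i/2}(x_i)$ with $I_{r_i}(x_i)\subset U$, apply \autoref{ellip} on each with this common $\alpha$ to get $\varepsilon_i$ and $b_i,M_i$, and take $\varepsilon_0=\min_i\varepsilon_i$, $b=\max_i b_i$, $M=\max_i M_i$; this gives \eqref{1647} on all of $K$.

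The main obstacle is the contradiction step establishing $\liminf v_\varepsilon(p_\pm)>0$: one must rule out spurious $v$-jumps away from $x_{min}$, which requires the energy-counting argument via the arithmetico-geometric inequality and careful bookkeeping of how many separated dips the bound $\mathcal C$ permits, together with the fact (from the preceding shape analysis of $v_\varepsilon$ and the limit results) that all phase-field concentration sits at the single point $x_{min}$. Everything after that is routine compactness.
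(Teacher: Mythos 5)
Your reduction step (use $x_\varepsilon\to x_{min}$ and the monotonicity of \autoref{variationveps} to locate $\inf_K v_\varepsilon$ at the points $p_\pm$ of $K$ closest to $x_{min}$, then finish by the covering argument via \autoref{ellip}) is correct and is essentially the structure of the paper's proof. The problem lies entirely in your contradiction step establishing $\liminf_\varepsilon v_\varepsilon(p_\pm)>0$, which contains two genuine errors.

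First, the ``two separated dips give energy $\geqslant 4$'' count is wrong. Suppose $p_-<x_\varepsilon$ and $v_\varepsilon(p_-)\to 0$. By \autoref{variationveps}, $v_\varepsilon$ is non-increasing on $[0,x_\varepsilon]$, so $v_\varepsilon\leqslant v_\varepsilon(p_-)$ on the \emph{entire} interval $[p_-,x_\varepsilon]$; there is no recovery of $v_\varepsilon$ to a value near $1$ between $p_-$ and $x_\varepsilon$, so $p_-$ and $x_\varepsilon$ are not distinct concentration points but part of a single broad dip. The arithmetico-geometric estimate then produces at most $\approx 2$ of phase-field energy for the whole excursion, not $4$, and in any case the bound $\mathcal{C}$ in \eqref{energyboundtemps1} is not assumed to be less than $4$, so no contradiction results. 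Second, the fallback appeal to \autoref{Theorem 1}$(i)$--$(ii)$ (``the limit analysis shows the phase field energy concentrates only at $x_{min}$'') is circular: items $(i)$ and $(ii)$ of \autoref{Theorem 1} are proved in Section 4 using \autoref{ellip2} and its consequences (\autoref{vitesse sur compact sans bord}, \autoref{grosse estimée sur vprime}, etc.), so they are not available here. Your residual ``at most $\lfloor\mathcal C/2\rfloor$ dips'' observation would, at best, bound $v_\varepsilon$ from below outside a finite set, which does not exclude that set from meeting $K$.

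The fix is much simpler and avoids any energy count: the very monotonicity structure that makes your ``two dips'' picture fail is exactly what makes the width of the dip incompatible with the $L^2$-convergence $v_\varepsilon\to 1$ (already proved in \autoref{convergenceuv} from \eqref{energyboundtemps1}). If $v_\varepsilon(p_-)\to 0$, then $v_\varepsilon\leqslant v_\varepsilon(p_-)\to 0$ on all of $[p_-,x_\varepsilon]$, whose length stays $\geqslant d(K,x_{min})/2$ for $\varepsilon$ small, so $\int_0^L(1-v_\varepsilon)^2\geqslant \frac14\cdot\frac{d(K,x_{min})}{2}$ for such $\varepsilon$, contradicting $\|1-v_\varepsilon\|_{L^2}\to 0$. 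This is the inequality \eqref{eq : eq12} in the paper's proof. With this replacement, the rest of your argument goes through unchanged.
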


\begin{proposition}\label{O(1/eps)}
If $c_0 >0$, then, 

\[ \|v_\varepsilon ' \|_\infty \leqslant \frac{C}{\varepsilon}.\]

If $|c_1| > 0$, then, 

\[ \|u_\varepsilon ' \|_\infty \leqslant \frac{C}{\varepsilon}, \;\;\;\;\; \underset{(0,L)}{\inf} v_\varepsilon  \geqslant m\sqrt{\varepsilon}.\]
\end{proposition}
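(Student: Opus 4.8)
The plan is to exploit the ODEs satisfied by $v_\varepsilon$ and $u_\varepsilon$ together with the already-established monotonicity (\autoref{variationveps}) and the sign/constancy of $u_\varepsilon'$. For the first assertion, I would start from the first equation of \eqref{criticalpoint1}, namely $-\varepsilon v_\varepsilon'' + \frac{v_\varepsilon-1}{\varepsilon} + v_\varepsilon|u_\varepsilon'|^2 = \mu_\varepsilon \leqslant 0$, which on $O_\varepsilon$ reads $\varepsilon v_\varepsilon'' = \frac{v_\varepsilon-1}{\varepsilon} + v_\varepsilon|u_\varepsilon'|^2$. Since $v_\varepsilon\in C^{1,1}$ by \autoref{regularityveps}, $v_\varepsilon'$ is the quantity to control. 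Multiplying the $v_\varepsilon$-equation by $v_\varepsilon'$ and integrating gives a relation involving the discrepancy $\varepsilon|v_\varepsilon'|^2 - \frac{(1-v_\varepsilon)^2}{\varepsilon}$ (the Hamiltonian alluded to in the introduction), whose variation is controlled in \autoref{estimatediscrepancy} (referenced but not yet proved in the excerpt — I would invoke it). Because $v_\varepsilon(0)=v_\varepsilon(L)=1$ and $v_\varepsilon$ is monotone on each side of $x_\varepsilon$, the discrepancy is bounded at the endpoints, hence bounded everywhere up to the controlled variation; comparing with $c_0 = \lim c_{0,\varepsilon}$ and using $c_0>0$ to get a lower bound on $\int (\eta_\varepsilon+v_{0,\varepsilon}^2)|u_{0,\varepsilon}'|^2$, one converts this into the pointwise bound $|v_\varepsilon'|\leqslant C/\varepsilon$. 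The cleaner route is probably: from $\varepsilon v_\varepsilon'' \geqslant \frac{v_\varepsilon-1}{\varepsilon} \geqslant -\frac{1}{\varepsilon}$ (using $0\leqslant v_\varepsilon\leqslant 1$) and $\varepsilon v_\varepsilon'' \leqslant v_\varepsilon|u_\varepsilon'|^2 \leqslant |u_\varepsilon'|^2$ on $O_\varepsilon$, so that controlling $\|v_\varepsilon''\|_\infty$ reduces to controlling $\|u_\varepsilon'\|_\infty$ — which is exactly the hypothesis-free part of the second assertion — while on $F_\varepsilon$ we have $v_\varepsilon = v_{0,\varepsilon}$ which is smooth with $\varepsilon$-independent-in-the-right-way bounds from \cite{BMRb23}; then $\|v_\varepsilon'\|_\infty \leqslant C/\varepsilon$ follows by interpolation since $v_\varepsilon$ oscillates between values in $[0,1]$ over an interval of length $L$ with $\|\varepsilon v_\varepsilon''\|_\infty = O(1)$ being too weak — one needs the sharper $\|v_\varepsilon''\|_\infty\leqslant C/\varepsilon^2$, which gives $\|v_\varepsilon'\|_\infty\leqslant \sqrt{\|v_\varepsilon\|_\infty \|v_\varepsilon''\|_\infty}\leqslant C/\varepsilon$.

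For the second assertion, assume $|c_1|>0$, so $|c_\varepsilon|\geqslant c_1/2 > 0$ for $\varepsilon$ small. From \eqref{eq : eq2}, $u_\varepsilon' = c_\varepsilon/(\eta_\varepsilon + v_\varepsilon^2)$, hence $\|u_\varepsilon'\|_\infty = |c_\varepsilon|/(\eta_\varepsilon + \inf v_\varepsilon^2)$ and $\inf_{(0,L)} v_\varepsilon^2 = v_\varepsilon(x_\varepsilon)^2$ by \autoref{variationveps}. So the whole thing reduces to the lower bound $\inf v_\varepsilon \geqslant m\sqrt\varepsilon$; once that is in hand, $\|u_\varepsilon'\|_\infty \leqslant |c_\varepsilon|/(m^2\varepsilon) \leqslant C/\varepsilon$. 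To get the lower bound on $v_\varepsilon(x_\varepsilon)$, I would evaluate the $v_\varepsilon$-equation at the minimum point $x_\varepsilon$. If $x_\varepsilon\in O_\varepsilon$, then $v_\varepsilon'(x_\varepsilon)=0$, $v_\varepsilon''(x_\varepsilon)\geqslant 0$, and the equation gives $0 \leqslant \varepsilon v_\varepsilon''(x_\varepsilon) = \frac{v_\varepsilon(x_\varepsilon)-1}{\varepsilon} + v_\varepsilon(x_\varepsilon)|u_\varepsilon'(x_\varepsilon)|^2 = \frac{v_\varepsilon(x_\varepsilon)-1}{\varepsilon} + \frac{c_\varepsilon^2 v_\varepsilon(x_\varepsilon)}{(\eta_\varepsilon+v_\varepsilon(x_\varepsilon)^2)^2}$. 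Writing $m_\varepsilon := v_\varepsilon(x_\varepsilon)$ and using $\eta_\varepsilon \ll \varepsilon$, this says $\frac{1-m_\varepsilon}{\varepsilon} \leqslant \frac{c_\varepsilon^2 m_\varepsilon}{(\eta_\varepsilon + m_\varepsilon^2)^2} \leqslant \frac{c_\varepsilon^2}{m_\varepsilon^3}$, i.e. $m_\varepsilon^3(1-m_\varepsilon)\leqslant c_\varepsilon^2 \varepsilon$. Since $m_\varepsilon \to 1$ would contradict $|c_1|>0$ forcing a jump — actually one must argue $m_\varepsilon$ stays away from $1$ or handle it directly — we get $m_\varepsilon^3 \leqslant C\varepsilon/(1-m_\varepsilon)$; combined with an upper bound $m_\varepsilon \leqslant 1 - c\varepsilon^{1/2}$ or similar coming from the energy (the phase-field term must concentrate), one extracts $m_\varepsilon \geqslant m\sqrt\varepsilon$. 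If instead $x_\varepsilon = L/2 \in F_\varepsilon$, then $v_\varepsilon(x_\varepsilon) = v_{0,\varepsilon}(L/2)$ and one needs the corresponding lower bound for $v_{0,\varepsilon}$, which follows from the $t=0$ analysis in \cite{BMRb23} under the analogous condition $c_0>0$ (noting $|c_\varepsilon|\leqslant c_{0,\varepsilon}$ on $F_\varepsilon\neq\emptyset$ by \autoref{exprmu}, so $c_0 \geqslant |c_1| > 0$).

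The main obstacle I anticipate is making the extraction of the exponent $\sqrt\varepsilon$ rigorous: the inequality $m_\varepsilon^3(1-m_\varepsilon)\leqslant c_\varepsilon^2\varepsilon$ alone is consistent both with $m_\varepsilon\sim\sqrt\varepsilon$ and with $m_\varepsilon\to 1$, so one needs a complementary upper bound on $m_\varepsilon$ that is bounded away from $1$. This should come from the fact that $|c_1|>0$ together with \autoref{convergenceuv} forces $u_1$ to jump (if $u_1$ were affine, $c_1 = a_1/L$; but a jump of the limit is what creates a $v$-jump), hence $v_\varepsilon(x_\varepsilon)\to 0$; plugging $m_\varepsilon = o(1)$ back into $m_\varepsilon^3 \leqslant C\varepsilon/(1-m_\varepsilon) \leqslant 2C\varepsilon$ for small $\varepsilon$ gives $m_\varepsilon \leqslant C\varepsilon^{1/3}$, which is not yet $\sqrt\varepsilon$ — so the correct balance in the Euler–Lagrange equation at $x_\varepsilon$ must be tracked more carefully, keeping $(\eta_\varepsilon + m_\varepsilon^2)^2 \approx m_\varepsilon^4$ rather than bounding crudely, giving $\frac{1-m_\varepsilon}{\varepsilon} \approx \frac{c_\varepsilon^2}{m_\varepsilon^3}$ and hence $m_\varepsilon^3 \approx c_\varepsilon^2 \varepsilon/(1-m_\varepsilon) \to c_1^2\varepsilon$, i.e. $m_\varepsilon \sim (c_1^2\varepsilon)^{1/3}$ — suggesting the sharp exponent might actually require a two-sided ODE comparison near $x_\varepsilon$ (comparing $v_\varepsilon$ with the explicit solution of the limiting profile equation) rather than a pointwise evaluation, and the $\sqrt\varepsilon$ in the statement may be a deliberately non-sharp but sufficient lower bound obtained by a softer barrier argument. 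I would therefore set up a subsolution/supersolution (barrier) comparison on a small interval around $x_\varepsilon$ using the maximum principle for the operator $-\varepsilon(\cdot)'' + \frac{(\cdot)}{\varepsilon}$, which is the standard device in \cite{FLS09}, \cite{BMRb23}, and prove the clean bound $\inf v_\varepsilon \geqslant m\sqrt\varepsilon$ that way.
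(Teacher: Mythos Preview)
Your proposal contains a genuine logical gap in the second assertion, and the first assertion is gestured at but not carried through.

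\textbf{Second assertion: the logic is reversed.} You try to deduce $\|u_\varepsilon'\|_\infty\leqslant C/\varepsilon$ from the lower bound $\inf v_\varepsilon\geqslant m\sqrt\varepsilon$, and then attempt to prove the latter by evaluating the Euler--Lagrange equation at the minimum $x_\varepsilon$. But the inequality you extract, $m_\varepsilon^3(1-m_\varepsilon)\leqslant c_\varepsilon^2\varepsilon$, is an \emph{upper} bound on $m_\varepsilon=v_\varepsilon(x_\varepsilon)$ (indeed you yourself end up with $m_\varepsilon\leqslant C\varepsilon^{1/3}$), not a lower bound. The sign in the second-derivative test at a minimum goes the wrong way for what you want. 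A barrier comparison would not help either: near a point where $v_\varepsilon$ is small, the reaction term $v_\varepsilon|u_\varepsilon'|^2=c_\varepsilon^2 v_\varepsilon/(\eta_\varepsilon+v_\varepsilon^2)^2$ blows up, and without an a priori bound on $|u_\varepsilon'|$ you cannot build a useful subsolution.

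The paper proceeds in the opposite order. From the discrepancy identity
\[
(\eta_\varepsilon+v_\varepsilon^2)|u_\varepsilon'|^2=\frac{(1-v_\varepsilon)^2}{\varepsilon}-\varepsilon|v_\varepsilon'|^2-d_\varepsilon
\]
and the bound $\|d_\varepsilon\|_\infty\leqslant C/\varepsilon$ of \autoref{estimatediscrepancy}, one gets directly $|c_\varepsilon u_\varepsilon'|=(\eta_\varepsilon+v_\varepsilon^2)|u_\varepsilon'|^2\leqslant C/\varepsilon$. Dividing by $|c_\varepsilon|\geqslant|c_1|/2>0$ gives $\|u_\varepsilon'\|_\infty\leqslant C/\varepsilon$. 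Only \emph{then} does the lower bound follow, by inverting \eqref{eq : eq2}: $\eta_\varepsilon+v_\varepsilon^2=|c_\varepsilon|/|u_\varepsilon'|\geqslant C\varepsilon$, and since $\eta_\varepsilon\ll\varepsilon$ this yields $v_\varepsilon\geqslant m\sqrt\varepsilon$. So the $\sqrt\varepsilon$ is not obtained by any local analysis at the minimum; it falls out of the global discrepancy estimate.

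\textbf{First assertion.} You correctly name the discrepancy as the key object and cite \autoref{estimatediscrepancy}, but then abandon it for interpolation and second-derivative bounds that, as you note, do not close. The paper's argument is a one-line rearrangement of the discrepancy definition: $\varepsilon|v_\varepsilon'|^2=\frac{(1-v_\varepsilon)^2}{\varepsilon}-d_\varepsilon-(\eta_\varepsilon+v_\varepsilon^2)|u_\varepsilon'|^2\leqslant \frac{1}{\varepsilon}+\|d_\varepsilon\|_\infty\leqslant C/\varepsilon$. The hypothesis $c_0>0$ enters only inside the proof of \autoref{estimatediscrepancy}, through the lower bound $v_{0,\varepsilon}(L/2)\geqslant C\sqrt\varepsilon$ from \cite{BMRb23}, which is needed to control $\int_0^L|d_\varepsilon'|$ via the explicit formula $d_\varepsilon'=2v_{0,\varepsilon}'\mu_\varepsilon$.
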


Note that the first estimate relies on a condition concerning $c_0$ and not $c_1$. The proof of \autoref{ellip} and its corollary is inspired by the proof of the Proposition 3.12 of \cite{L10} and the proof of Proposition 3.2 of \cite{HT00}.

\begin{proof}[Proof of \autoref{ellip}]
Suppose that the assertion is false for every $b$. We will reach a contradiction for $b$ large enough. To this end, fix $b>0$ and suppose on the contrary that, for infinitely many $\varepsilon$, $\inf_{I_{r/2}(x)} v_\varepsilon < 1-b\varepsilon$. 

First we can already bound $u_\varepsilon '$ and $\mu_\varepsilon$ uniformly in $\varepsilon$ on $I_r(x)$, namely : 
\begin{equation}
\label{inequ'}
|u'_\varepsilon| \leqslant \frac{| c_\varepsilon|}{\alpha^2 + \eta_\varepsilon} \leqslant M
\end{equation}
Concerning $\mu_\varepsilon$, since $v_{0,\varepsilon}= v_\varepsilon$ on $F_\varepsilon$, one has by \autoref{exprmu} on $I_r(x)$ : 
\begin{equation}
\label{ineqmu}
-\mu_\varepsilon = |\mu_\varepsilon| = \frac{v_{0,\varepsilon}}{(v_{0,\varepsilon}^2+\eta_\varepsilon)^2} | c_\varepsilon^2-c_{0,\varepsilon}^2| 1_{F_\varepsilon} \leqslant \frac{1}{(\alpha^2 +\eta_\varepsilon)^2} M 1_{F_\varepsilon} \leqslant M.
\end{equation}
Let $\zeta_\varepsilon$ be a smooth function such that $\zeta_\varepsilon(x-r)=\zeta_\varepsilon(x+r)=\alpha$, $\zeta_\varepsilon = 1-b/2 \varepsilon$ on $I_{r/2}(x)$, $\zeta_\varepsilon \leqslant 1$ on $I_r(x)$ and $ \|\zeta_\varepsilon'' \| = O(1)$.  

\smallskip

Set  $g_\varepsilon= v_\varepsilon - \zeta_\varepsilon$. Then $g_\varepsilon(x+r),g_\varepsilon (x-r) \geqslant 0$, while $\displaystyle \underset{I_{r/2}(x)}{\inf} g_\varepsilon \leqslant (1-b\varepsilon) -(1-(b\varepsilon)/2) = -(b\varepsilon )/2 <0$ since $\underset{I_{r/2}(x)}{\inf} v_\varepsilon \leqslant 1-b\varepsilon$. In particular, $g_\varepsilon$ reaches its minimum at a point $x_1$ of $(x-r, x+r)$ and this minimum satisfies $g_\varepsilon (x_1) \leqslant -(b\varepsilon )/2$. Then : 
\[\begin{array}{lll}
0 &\leqslant & \displaystyle\varepsilon g_\varepsilon''(x_1) \, =\, \varepsilon v_\varepsilon ''(x_1)  - \varepsilon \zeta_\varepsilon ''(x_1) \, =\, \displaystyle \frac{v_\varepsilon (x_1) -1}{\varepsilon} + v_\varepsilon (x_1) | u_\varepsilon '(x_1)| ^2 - \mu_\varepsilon (x_1) - \varepsilon \zeta_\varepsilon ''(x_1)\\
&&\\
&=& \displaystyle g_\varepsilon(x_1)\left( \frac{1}{\varepsilon} + |u_\varepsilon '(x_1)|^2 \right) + \zeta_\varepsilon (x_1) \left( \frac{1}{\varepsilon} + |u_\varepsilon '(x_1)|^2 \right) - \frac{1}{\varepsilon} - \varepsilon \zeta_\varepsilon ''(x_1) -\mu_\varepsilon (x_1)\\
\end{array}.\]
We deduce, using \eqref{inequ'} and \eqref{ineqmu}, 
\[-b/2 \geqslant -\frac{b\varepsilon}{2} \left( \frac{1}{\varepsilon} + |u_\varepsilon '(x_1)|^2 \right) \geqslant \frac{1-\zeta_\varepsilon(x_1)}{\varepsilon}+ \varepsilon \zeta_\varepsilon ''(x_1) - \zeta_\varepsilon (x_1) |u_\varepsilon '(x_1)|^2 + \mu_\varepsilon \geqslant - \varepsilon \|\zeta_\varepsilon'' \|_\infty - M^2-M \geqslant -2M^2\]
which is absurd for $b$ large enough.  
\end{proof}

\begin{proof}[Proof of \autoref{ellip2}]
Let $K$ be a compact set of $[0,L]\setminus \{x_{min}\}$ and take $\delta >0$ such that $(x_{min} - \delta, x_{min} + \delta ) \subset [0,L] \setminus K$. Then, for $\varepsilon >0$ small enough, $x_\varepsilon \in (x_{min} - \delta/2, x_{min} + \delta/2)$ so using \autoref{variationveps}, $v_\varepsilon$ is non-increasing on $(x_{min}-\delta, x_{min} -\delta/2)$. The $L^2$ convergence of $v_\varepsilon$ then implies  
\begin{equation}
\label{eq : eq12}
\frac{\delta}{2} (1-v_\varepsilon (x_{min} - \delta))^2 \leqslant \int_{x_{min} -\delta}^{x_{min} - \delta/2} (1-v_\varepsilon )^2 \rightarrow 0.
\end{equation}
It follows that for $\varepsilon$ small enough, $v_\varepsilon(x_{min} - \delta) \geqslant 1/2$ and then $v_\varepsilon \geqslant 1/2$ on $[0, x_{min} -\delta]$. Similarly, $v_\varepsilon \geqslant 1/2$ on $[x_{min} + \delta , L]$. Hence $v_\varepsilon \geqslant 1/2$ on $K$. 

\smallskip

Let $x\in K$ and $r_x$ such that $I_{r_x}(x) \subset (0, x_{min} -\delta ) \cup (x_{min} + \delta , L)$. Then $v_\varepsilon \geqslant 1/2$ on $I_{r_x}(x)$. According to \autoref{ellip}, there exists $\varepsilon_x >0$ and constants $b_x, M_x$ such that 
\[v_\varepsilon \geqslant 1-b_x\varepsilon \;\;\; , \;\;\; |u_\varepsilon '| \leqslant M_x \;\;\; , \;\;\; |\mu_\varepsilon | \leqslant M_x  \;\;\; \forall \varepsilon < \varepsilon_x.\]
As $K$ is compact, there exists a finite covering of $K$ of the form $K\subset \bigcup_{i=1}^{n} I_{r_{x_i}/2}(x_i)$. Taking $\varepsilon_0 = \underset{1\leqslant i\leqslant n}{\min} \varepsilon_{x_i}, b=\underset{1\leqslant i\leqslant n}{\max} b_{x_i}$ and $M=\underset{1\leqslant i\leqslant n}{\min} M_{x_i}$ yields \autoref{ellip2}. 
\end{proof}

Before proving \autoref{O(1/eps)}, we introduce the so-called discrepancy of $\overline{AT}_\varepsilon$. 

\begin{definition}
We denote by $d_\varepsilon$, also named as the \textit{discrepancy} of $\overline{AT}_\varepsilon$ the following quantity : 
\begin{equation}
\label{discrepancy}
d_\varepsilon := \dfrac{(1-v_\varepsilon)^2}{\varepsilon} - \varepsilon |v_\varepsilon '|^2 - (\eta_\varepsilon + v_\varepsilon^2)|u_\varepsilon '|^2.
\end{equation}
\end{definition}
Remark that 
\[\int_0^L |d_\varepsilon| = \int_0^L \left| \dfrac{(1-v_\varepsilon)^2}{\varepsilon} - \varepsilon (v_\varepsilon ')^2 - (\eta_\varepsilon + v_\varepsilon^2)|u_\varepsilon'|^2\right| \leqslant AT_\varepsilon (v_\varepsilon, u_\varepsilon) <C.\]
As in \cite{FLS09}, \cite{BMRb23} and \cite{L10}, we use the variations of the discrepancy to infer estimates on $u_\varepsilon$ and $v_\varepsilon$. The following proposition gives the expression of the derivative of the discrepancy : 

\begin{proposition}\label{dprime} There holds
\begin{equation}
\label{derivative discrepancy}
d_\varepsilon'= 2 v_{0,\varepsilon}' \mu_\varepsilon.
\end{equation}
Consequently, $d_\varepsilon$ is non-decreasing on $[0, L/2]$ and non-increasing on $[L/2, L]$. 
\end{proposition}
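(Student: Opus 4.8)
The plan is to compute $d_\varepsilon'$ directly using the Euler--Lagrange equations, exploiting the fact that $v_\varepsilon \in C^{1,1}([0,L])$ (by \autoref{regularityveps}), so that $v_\varepsilon''$ and hence $\mu_\varepsilon$ are genuine $L^\infty$ functions, and the computation can be carried out pointwise almost everywhere. First I would differentiate the discrepancy
\[
d_\varepsilon = \frac{(1-v_\varepsilon)^2}{\varepsilon} - \varepsilon (v_\varepsilon')^2 - (\eta_\varepsilon + v_\varepsilon^2)|u_\varepsilon'|^2,
\]
noting that the last term equals $c_\varepsilon u_\varepsilon'$ by \eqref{eq : eq2}, whose derivative is $c_\varepsilon u_\varepsilon'' $; alternatively one differentiates $(\eta_\varepsilon+v_\varepsilon^2)|u_\varepsilon'|^2$ as $2v_\varepsilon v_\varepsilon' |u_\varepsilon'|^2 + (\eta_\varepsilon+v_\varepsilon^2)\cdot 2u_\varepsilon' u_\varepsilon''$. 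Differentiating the first two terms gives $-\tfrac{2}{\varepsilon}(1-v_\varepsilon)v_\varepsilon' - 2\varepsilon v_\varepsilon' v_\varepsilon''$. Collecting, $d_\varepsilon' = 2v_\varepsilon'\left(-\varepsilon v_\varepsilon'' + \tfrac{v_\varepsilon - 1}{\varepsilon} - v_\varepsilon |u_\varepsilon'|^2\right) - 2(\eta_\varepsilon+v_\varepsilon^2)u_\varepsilon' u_\varepsilon''$.

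Next I would substitute the first equation of \eqref{criticalpoint1}, namely $-\varepsilon v_\varepsilon'' + \tfrac{v_\varepsilon-1}{\varepsilon} + v_\varepsilon|u_\varepsilon'|^2 = \mu_\varepsilon$, which rewrites the parenthesis as $\mu_\varepsilon - 2v_\varepsilon|u_\varepsilon'|^2$. This yields $d_\varepsilon' = 2v_\varepsilon'\mu_\varepsilon - 4v_\varepsilon v_\varepsilon'|u_\varepsilon'|^2 - 2(\eta_\varepsilon+v_\varepsilon^2)u_\varepsilon'u_\varepsilon''$. The remaining two terms should cancel: differentiating the identity $u_\varepsilon'(\eta_\varepsilon+v_\varepsilon^2) = c_\varepsilon$ gives $u_\varepsilon''(\eta_\varepsilon+v_\varepsilon^2) + 2u_\varepsilon' v_\varepsilon v_\varepsilon' = 0$, so $2(\eta_\varepsilon+v_\varepsilon^2)u_\varepsilon'u_\varepsilon'' = -4 v_\varepsilon v_\varepsilon' (u_\varepsilon')^2$, exactly matching. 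Hence $d_\varepsilon' = 2v_\varepsilon'\mu_\varepsilon$. Finally, to conclude $\mu_\varepsilon = 2 v_{0,\varepsilon}' \mu_\varepsilon$ — wait, the claim is $d_\varepsilon' = 2v_{0,\varepsilon}'\mu_\varepsilon$, so I must replace $v_\varepsilon'$ by $v_{0,\varepsilon}'$: this is legitimate because $\mu_\varepsilon$ is supported on $F_\varepsilon = \{v_\varepsilon = v_{0,\varepsilon}\}$, and on $F_\varepsilon$ one has $v_\varepsilon' = v_{0,\varepsilon}'$ a.e.\ (by \cite[Lemma 7.7]{GT01}, as used already in the proof of \autoref{exprmu}). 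Thus $v_\varepsilon'\mu_\varepsilon = v_{0,\varepsilon}'\mu_\varepsilon$ as measures (equivalently, a.e.), giving \eqref{derivative discrepancy}.

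For the monotonicity statement, recall from \cite[Proposition 2.1]{BMRb23} that $v_{0,\varepsilon}$ is decreasing on $[0,L/2]$ and increasing on $[L/2,L]$, so $v_{0,\varepsilon}' \leqslant 0$ on $[0,L/2]$ and $v_{0,\varepsilon}' \geqslant 0$ on $[L/2,L]$; combined with $\mu_\varepsilon \leqslant 0$ from \eqref{eq : eq4}, the product $2v_{0,\varepsilon}'\mu_\varepsilon$ is $\geqslant 0$ on $[0,L/2]$ and $\leqslant 0$ on $[L/2,L]$, which is precisely the asserted monotonicity of $d_\varepsilon$. I do not expect a serious obstacle here; the only point requiring care is the justification that all the manipulations are valid at the level of a.e.\ pointwise identities rather than merely distributionally — this is exactly what the $C^{1,1}$ regularity of \autoref{regularityveps} buys us, since then $v_\varepsilon', u_\varepsilon' \in W^{1,\infty}$ (recall $u_\varepsilon' = c_\varepsilon/(\eta_\varepsilon + v_\varepsilon^2)$ is then also Lipschitz), so $d_\varepsilon \in W^{1,\infty}(0,L)$ and the product/chain rules apply a.e.
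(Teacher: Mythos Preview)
Your proof is correct and follows essentially the same route as the paper's: differentiate $d_\varepsilon$ term by term, substitute the Euler--Lagrange equation for $v_\varepsilon$ to produce $2v_\varepsilon'\mu_\varepsilon$, observe that the remaining terms cancel via $[(\eta_\varepsilon+v_\varepsilon^2)u_\varepsilon']'=0$, and then replace $v_\varepsilon'$ by $v_{0,\varepsilon}'$ using that $\mu_\varepsilon$ is supported on $F_\varepsilon$ where the two derivatives agree a.e. The monotonicity conclusion via the sign of $v_{0,\varepsilon}'$ and $\mu_\varepsilon\leqslant 0$ is likewise identical.
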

\begin{proof}
Note that $d_\varepsilon \in C^{0,1}([0,L])$, so that we can define its derivative almost everywhere but here in the sense of $\mathcal{D}'$, which reads as :

\[\begin{array}{lll}
\left\{\dfrac{(1-v_\varepsilon)^2}{\varepsilon} - \varepsilon (v_\varepsilon ')^2 - (\eta_\varepsilon + v_\varepsilon^2)|u_\varepsilon'|^2 \right\}'
&=&\displaystyle 2v_\varepsilon '[ \frac{v_\varepsilon - 1}{\varepsilon} - \varepsilon v_\varepsilon '' ] - 2 (v_\varepsilon^2 +\eta_\varepsilon) u_\varepsilon'' u_\varepsilon ' - 2v_\varepsilon v_\varepsilon ' |u_\varepsilon'|^2 \\ 
&&\\
&=&\displaystyle 2v_\varepsilon '[\mu_\varepsilon - v_\varepsilon |u_\varepsilon'|^2]  - 2 (v_\varepsilon^2 +\eta_\varepsilon) u_\varepsilon'' u_\varepsilon ' - 2v_\varepsilon v_\varepsilon ' |u_\varepsilon'|^2  \\
&&\\
&=& \displaystyle 2 v_\varepsilon ' \mu_\varepsilon - 2 u_\varepsilon '\underbrace{[(\eta_\varepsilon + v_\varepsilon^2) u_\varepsilon ']'}_{=0}
= \displaystyle 2 v_\varepsilon ' \mu_\varepsilon
= \displaystyle 2 v_{0,\varepsilon}' \mu_\varepsilon\\ 
\end{array}\]
where the last equality uses the fact that $v_\varepsilon = v_{0,\varepsilon}$ on the support of $\mu_\varepsilon$. Since $\mu_\varepsilon \leqslant 0$ and $v_{0,\varepsilon}'\leqslant 0$ on $[0,L/2]$ while $v_{0,\varepsilon}' \geqslant 0$ on $[L/2,L]$, we deduce that $d_\varepsilon$ is non-decreasing on $[0,L/2]$ and non-increasing on $[L/2,L]$. 
\end{proof}

We use this expression to prove the following estimate on the discrepancy. 
 
\begin{lemma}\label{estimatediscrepancy}
Let $d_\varepsilon$ be the discrepancy associated to $(u_\varepsilon, v_\varepsilon)$, defined by \eqref{discrepancy}. If $c_0 >0$, then  :

\[ \|d_\varepsilon \|_\infty \leqslant \frac{C}{\varepsilon}\] 
\end{lemma}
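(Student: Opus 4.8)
The plan is to bound $\|d_\varepsilon\|_\infty$ by integrating the formula $d_\varepsilon' = 2v_{0,\varepsilon}'\mu_\varepsilon$ from \autoref{dprime} and controlling the resulting error. Since $d_\varepsilon$ is non-decreasing on $[0,L/2]$ and non-increasing on $[L/2,L]$, its maximum over $[0,L]$ is attained at $L/2$; hence it suffices to bound $d_\varepsilon(L/2)$ from above and to bound $d_\varepsilon$ from below at a point where we have good control, namely near the endpoints, where $v_\varepsilon$ is close to $1$. The hypothesis $c_0 > 0$ (note: on the obstacle sequence, not on $(u_\varepsilon,v_\varepsilon)$) is exactly what triggers the estimates of \cite{BMRb23} on $v_{0,\varepsilon}$: when $c_0>0$ the limit of the obstacle is the affine function, so $v_{0,\varepsilon}\to 1$ uniformly and $v_{0,\varepsilon}$ stays bounded below by some positive constant, which means $v_\varepsilon$ is also bounded below near $0$ and $L$, and there \autoref{ellip} applies.

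First I would evaluate $d_\varepsilon$ near the boundary. Fix a small $\delta>0$; by the above, $v_\varepsilon \geqslant \alpha > 0$ on $[0,\delta]$ for $\varepsilon$ small, so by \autoref{ellip} we get $v_\varepsilon \geqslant 1-b\varepsilon$, $|u_\varepsilon'|\leqslant M$, $|\mu_\varepsilon|\leqslant M$ on $[0,\delta/2]$; together with the energy bound $\int_0^L \varepsilon|v_\varepsilon'|^2 \leqslant \mathcal C$ this forces $d_\varepsilon$ to be small (in fact $O(1)$, even $o(1)$) at some point $x_0 \in (0,\delta/2)$ chosen so that $\varepsilon|v_\varepsilon'(x_0)|^2$ is small — by averaging, such a point exists with $\varepsilon |v_\varepsilon'(x_0)|^2 \leqslant 2\mathcal C/\delta$. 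At that point, $\frac{(1-v_\varepsilon(x_0))^2}{\varepsilon}\leqslant b^2\varepsilon \to 0$ and $(\eta_\varepsilon + v_\varepsilon^2)|u_\varepsilon'|^2 \leqslant (1+\eta_\varepsilon)M^2$, so $|d_\varepsilon(x_0)| \leqslant C$. Then, using monotonicity, $d_\varepsilon(x) \geqslant d_\varepsilon(x_0) \geqslant -C$ for all $x\in[x_0, L/2]$, and symmetrically on the right half, so $d_\varepsilon \geqslant -C$ on all of $[0,L]$ (with a matching lower bound coming from the two sides).

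Next I would bound $d_\varepsilon(L/2)$ from above by integrating the ODE. We have
\[
d_\varepsilon(L/2) - d_\varepsilon(x_0) = \int_{x_0}^{L/2} 2 v_{0,\varepsilon}' \mu_\varepsilon\, dx.
\]
Both $v_{0,\varepsilon}'$ and $\mu_\varepsilon$ are controlled: from \cite[Proposition 2.1]{BMRb23}, when $c_0>0$ one has $\|v_{0,\varepsilon}'\|_\infty \leqslant C/\varepsilon$ (this is the analogue of the first estimate in \autoref{O(1/eps)} applied to the obstacle sequence), while $\mu_\varepsilon$ is an $L^1$ measure with $\int_0^L |\mu_\varepsilon| \leqslant C$ — indeed integrating \eqref{eq : eq4} against a fixed cutoff, or using that $\mu_\varepsilon = v_{0,\varepsilon}(|u_\varepsilon'|^2-|u_{0,\varepsilon}'|^2)1_{F_\varepsilon}$ together with the energy bounds on both sequences, gives a uniform $L^1$ bound on $\mu_\varepsilon$. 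Hence
\[
|d_\varepsilon(L/2)| \leqslant |d_\varepsilon(x_0)| + 2\|v_{0,\varepsilon}'\|_\infty \int_0^L |\mu_\varepsilon|\, dx \leqslant C + \frac{C}{\varepsilon}\cdot C = \frac{C}{\varepsilon}.
\]
Combining with the lower bound $d_\varepsilon \geqslant -C$ and the monotonicity (so $d_\varepsilon \leqslant d_\varepsilon(L/2)$ everywhere), we conclude $\|d_\varepsilon\|_\infty \leqslant C/\varepsilon$.

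The main obstacle is making rigorous the two inputs I am borrowing: (a) that $c_0>0$ forces $v_{0,\varepsilon}$ to be uniformly bounded below on a fixed neighborhood of each endpoint and to satisfy $\|v_{0,\varepsilon}'\|_\infty \leqslant C/\varepsilon$ — this should follow from \cite[Proposition 2.1]{BMRb23} and \autoref{theorempourtemps0} (if $c_0>0$ then $u_0 = u_{aff}$, the "good" case with no $v$-jump), but the dependence of constants on $\delta$ must be tracked so that the final bound is genuinely uniform in $\varepsilon$; and (b) the uniform $L^1$ bound on $\mu_\varepsilon$, which I expect to extract from \autoref{exprmu} and the energy bounds \eqref{energybound}–\eqref{energyboundtemps1}, using $|u_\varepsilon'|, |u_{0,\varepsilon}'| \leqslant c_\varepsilon/\eta_\varepsilon$ only where needed and the sharper pointwise control on $F_\varepsilon$ elsewhere. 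If instead one only has $\int|\mu_\varepsilon| \leqslant C/\varepsilon^{k}$ for some $k$, the estimate degrades; so pinning down the correct order of $\int|\mu_\varepsilon|$ is the delicate point.
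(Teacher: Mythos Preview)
Your overall skeleton --- control $d_\varepsilon$ at one point, then control its variation via $d_\varepsilon'=2v_{0,\varepsilon}'\mu_\varepsilon$ --- is the same as the paper's, but the key step fails as written. The problem is exactly the one you flag at the end: you cannot afford to factor
\[
\int_0^L |d_\varepsilon'| \leqslant 2\|v_{0,\varepsilon}'\|_\infty \int_0^L |\mu_\varepsilon|.
\]
While $\|v_{0,\varepsilon}'\|_\infty\leqslant C/\varepsilon$ is fine, the uniform $L^1$ bound on $\mu_\varepsilon$ you hope for is not available. From \autoref{exprmu}, $|\mu_\varepsilon|=\dfrac{v_{0,\varepsilon}}{(v_{0,\varepsilon}^2+\eta_\varepsilon)^2}(c_{0,\varepsilon}^2-c_\varepsilon^2)\,1_{F_\varepsilon}$, and near $L/2$ one has (when $c_0>0$) only $v_{0,\varepsilon}\geqslant m\sqrt{\varepsilon}$; combining this with $|\{v_{0,\varepsilon}\leqslant 1/2\}|\leqslant C\varepsilon$ gives at best $\int|\mu_\varepsilon|\leqslant C/\sqrt{\varepsilon}$, so your product yields $C/\varepsilon^{3/2}$, not $C/\varepsilon$. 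The energy-based argument you sketch (integrating \eqref{eq : eq4}) runs into the same wall: the term $\int\frac{1-v_\varepsilon}{\varepsilon}$ is only controlled by $C/\sqrt{\varepsilon}$ via Cauchy--Schwarz, and the boundary term $\varepsilon v_\varepsilon'$ is precisely what you are trying to bound.

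The fix in the paper is to keep $v_{0,\varepsilon}'$ and $\mu_\varepsilon$ together and observe that, after inserting the explicit formula for $\mu_\varepsilon$ and dropping $1_{F_\varepsilon}$, the integrand $|v_{0,\varepsilon}'|\dfrac{v_{0,\varepsilon}}{(v_{0,\varepsilon}^2+\eta_\varepsilon)^2}$ is (up to sign on each half) an exact derivative, namely $\mp\frac12\bigl(\frac{1}{v_{0,\varepsilon}^2+\eta_\varepsilon}\bigr)'$. Integrating over $[0,L/2]$ and $[L/2,L]$ and evaluating at $L/2$ with $v_{0,\varepsilon}(L/2)\geqslant C\sqrt{\varepsilon}$ (this is the only place $c_0>0$ is used) gives $\int|d_\varepsilon'|\leqslant C/\varepsilon$ directly. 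The anchor-point estimate is also simpler than yours: since $L/2$ is the global maximum of $d_\varepsilon$, one bounds $|d_\varepsilon(L/2)|$ trivially by $1/\varepsilon$ when $d_\varepsilon(L/2)\geqslant 0$, and by $\|d_\varepsilon\|_{L^1}/L\leqslant C$ when $d_\varepsilon(L/2)\leqslant 0$ (because then $|d_\varepsilon|\geqslant |d_\varepsilon(L/2)|$ everywhere). This also sidesteps the small gap in your lower-bound argument on $[0,x_0]$, which monotonicity does not cover.
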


The hypothesis on $c_0$ comes from the necessity, in the proof, to bound from below $v_{0,\varepsilon}(L/2)$. As stated in Lemma 2.2 of \cite{BMRb23}, if $c_0 >0$, then $v_{0,\varepsilon} (L/2) \geqslant C\sqrt{\varepsilon}$.

\begin{proof}
The proof of \autoref{estimatediscrepancy} consists in estimating $d_\varepsilon$ at its local extremum $L/2$ and controlling the integral of the derivative $d_\varepsilon '$ to prove that the difference $|d_\varepsilon(x) - d_\varepsilon(L/2)|$ is bounded by $C/\varepsilon$. 

\smallskip

We start by estimating $d_\varepsilon (L/2)$. We distinguish two cases. 
Suppose first that $d_\varepsilon(L/2)\geqslant 0$. Then we have 

\[0\leqslant d_\varepsilon (L/2) \leqslant \frac{(1-v_\varepsilon(L/2))^2}{\varepsilon} \leqslant \frac{1}{\varepsilon}.\]
Suppose now that $d_\varepsilon(L/2) \leqslant 0$. Recall that $ \|d_\varepsilon \|_1$ is bounded by a constant $C$ independent from $\varepsilon$. Then, because $L/2$ is a global maximum of $d_\varepsilon$, $d_\varepsilon$ is non-positive and 

\[C \geqslant \int_0^{L} |d_\varepsilon| = -\int_0^L d_\varepsilon \geqslant L (-d_\varepsilon (L/2)) \]
so that $|d_\varepsilon (L/2)| \leqslant C$. Both cases imply that $|d_\varepsilon (L/2)|\leqslant C/\varepsilon$.  

\smallskip 
 
Now we prove that the total variation of $d_\varepsilon$ is bounded by $C/\varepsilon$. By \autoref{exprmu} :

\[\begin{array}{lll}
\displaystyle\int_0^L |d_\varepsilon '| &=& \displaystyle 2\int_0^L \left|v_{0,\varepsilon}' \mu_\varepsilon \right| = \displaystyle 2\int_0^L \left|v_{0,\varepsilon}' \frac{v_{0,\varepsilon}}{[(v_{0,\varepsilon})^2+\eta_\varepsilon]^2} (( c_\varepsilon)^2 - (c_{0,\varepsilon})^2) 1_{F_\varepsilon} \right|\\
&&\\
&=& \displaystyle 2 ((c_{0,\varepsilon})^2 - ( c_\varepsilon)^2)\int_0^L |v_{0,\varepsilon}'|\frac{v_{0,\varepsilon}}{[(v_{0,\varepsilon})^2+\eta_\varepsilon]^2} \leqslant  \displaystyle 2 C \int_0^L |v_{0,\varepsilon}'|\frac{v_{0,\varepsilon}}{[(v_{0,\varepsilon})^2+\eta_\varepsilon]^2}. \\
\end{array} \]
Using that $v_{0,\varepsilon}' \leqslant 0$ on $[0,L/2]$ and $v_{0,\varepsilon}' \geqslant 0$ on $[L/2,L]$, along with the estimate $v_{0,\varepsilon} (L/2) \geqslant C\sqrt{\varepsilon}$, we have : 
\[\int_0^{L/2} |v_{0,\varepsilon}'|\frac{v_{0,\varepsilon}}{[(v_{0,\varepsilon})^2+\eta_\varepsilon]^2} = - \int_0^{L/2} v_{0,\varepsilon}'\frac{v_{0,\varepsilon}}{[(v_{0,\varepsilon})^2+\eta_\varepsilon]^2}  = \frac{1}{2}\left[\frac{1}{(v_{0,\varepsilon})^2+\eta_\varepsilon}\right]_0^{L/2} = \frac{1}{2(v_{0,\varepsilon}^2 (L/2) + \eta_\varepsilon)} - \frac{1}{2(1+\eta_\varepsilon)} \leqslant \frac{C}{\varepsilon}.\]
Similarly, we obtain that $\displaystyle\int_{L/2}^L |v_{0,\varepsilon}'|\dfrac{v_{0,\varepsilon}}{[(v_{0,\varepsilon})^2+\eta_\varepsilon]^2}\leqslant \dfrac{C}{\varepsilon}$, so that $\displaystyle\int_0^L |d_\varepsilon'| \leqslant C/\varepsilon$. 

\smallskip

We deduce that for a.e $x$ in $[0,L]$, one has 

\[|d_\varepsilon(x)| \leqslant |d_\varepsilon(L/2)| + \left|\int_{L/2}^x d_\varepsilon' \right| \leqslant |d_\varepsilon(L/2)| + \int_0^L |d_\varepsilon'| \leqslant C/\varepsilon,\]
which gives the desired result. 
\end{proof}

We are now in position to complete the proof of \autoref{O(1/eps)} : 

\begin{proof}[Proof of \autoref{O(1/eps)}]
First we focus on the estimate on $v_\varepsilon$. For a.e $x$ in $[0,L]$, one has  

\[0\leqslant \varepsilon (v_\varepsilon ')^2 = \frac{(1-v_\varepsilon)^2}{\varepsilon} - d_\varepsilon - (\eta_\varepsilon + v_\varepsilon^2) |u_\varepsilon'|^2 \leqslant \frac{1}{\varepsilon} - d_\varepsilon \leqslant \frac{C}{\varepsilon},\]
hence the result. Now we focus on the estimate on $u_\varepsilon '$. One has 
\[| c_\varepsilon u_\varepsilon'| = (\eta_\varepsilon + v_\varepsilon^2) |u_\varepsilon '|^2 = \frac{(1-v_\varepsilon)^2}{\varepsilon} - \varepsilon |v_\varepsilon'|^2 - d_\varepsilon \leqslant \frac{C}{\varepsilon}.\]
Dividing by $|c_\varepsilon|$, we obtain the desired result. Concerning $v_\varepsilon$, we have 
\[\eta_\varepsilon + v_\varepsilon^2 = \frac{ c_\varepsilon}{u_\varepsilon '} \geqslant C \varepsilon .\]
Since $\eta_\varepsilon << \varepsilon$, we conclude that $v_\varepsilon \geqslant m\varepsilon$.
\end{proof}

In \cite{FLS09} and \cite{BMRb23}, the first estimate of \autoref{O(1/eps)} is true without any assumption on the slope $c_0$. This comes from the fact that the discrepancy $d_\varepsilon$ is constant in their case, so an estimate as in \autoref{estimatediscrepancy} follows. However, the hypothesis $c_0 >0$ is not that restrictive in our case, since $c_0=0$ implies that $c_1 =0$ as well, as long as $|F_\varepsilon| \neq 0$ (see \autoref{slopenull}). 

\subsection{Possible values for the limit slope}

The goal of this subsection is to establish the following property :

\begin{proposition}\label{slope}
$c_1 \in \{0, \frac{a_1}{L} \}$. 
\end{proposition}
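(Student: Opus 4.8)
The goal is to show that the limit slope $c_1 = \lim_\varepsilon c_\varepsilon$ belongs to $\{0, a_1/L\}$, i.e.\ that the intermediate values $c_1 \in (0, a_1/L)$ (assuming WLOG $a_1 > 0$) are excluded. By \autoref{convergenceuv}, we already know $c_1 \in [0, a_1/L]$ and $u_\varepsilon' \to c_1$ a.e., with $u_1 \in BV(0,L)$ the $L^1$-limit of $u_\varepsilon$. The plan is to argue by contradiction: assume $0 < c_1 < a_1/L$. Since $u_\varepsilon' \to c_1$ a.e.\ and the $u_\varepsilon'$ are of constant sign, Fatou/lower-semicontinuity arguments already embedded in \autoref{convergenceuv} show $u_1$ has a nontrivial jump part: indeed $\int_0^L u_1' \,d\mathcal L \le c_1 L < a_1 = u_1(L^-) - u_1(0^+) + (\text{jumps})$, so $u_1$ must have at least one jump point $x_* \in (0,L)$.

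**Key steps.** First I would invoke \autoref{O(1/eps)}: since we are assuming $|c_1| > 0$, we get $\|u_\varepsilon'\|_\infty \le C/\varepsilon$ and $\inf_{(0,L)} v_\varepsilon \ge m\sqrt\varepsilon$ (the proof shows $m\varepsilon$, but combined with $\eta_\varepsilon \ll \varepsilon$ one gets the $\sqrt\varepsilon$-type lower bound on $v_\varepsilon$; in any case $v_\varepsilon$ cannot be too small). Next, the existence of a jump point $x_*$ for $u_1$ forces a $v$-jump: one shows that near $x_*$ the phase field $v_\varepsilon$ must dip down, because otherwise by \autoref{ellip}/\autoref{ellip2}, on a fixed interval where $v_\varepsilon \ge \alpha > 0$ one has $|u_\varepsilon'| \le M$ uniformly, hence $u_\varepsilon$ would be uniformly Lipschitz there and could not develop a jump in the limit. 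So $x_* = x_{min}$ (the unique candidate $v$-jump, by \autoref{variations of v} and the surrounding discussion). Then, away from $x_{min}$, \autoref{ellip2} gives $v_\varepsilon \ge 1 - b\varepsilon$ and $|u_\varepsilon'| \le M$ on every compact $K \subset [0,L]\setminus\{x_{min}\}$; in particular $u_\varepsilon' = c_\varepsilon/(\eta_\varepsilon + v_\varepsilon^2) \to c_\varepsilon \cdot (1 + O(\varepsilon)) \to c_1$ uniformly on such $K$. This means $u_1$ is actually affine with slope $c_1$ on $(0, x_{min})$ and on $(x_{min}, L)$, i.e.\ $u_1(x) = c_1 x$ for $x < x_{min}$ and $u_1(x) = c_1 x + (a_1 - c_1 L)$ for $x > x_{min}$, with a jump of height $a_1 - c_1 L > 0$ at $x_{min}$.

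**The crux.** The main obstacle — and the heart of the argument — is to rule out this configuration with $0 < c_1 < a_1/L$ by a local energy/discrepancy balance near $x_{min}$. The idea is to integrate the discrepancy identity \eqref{derivative discrepancy}, $d_\varepsilon' = 2 v_{0,\varepsilon}' \mu_\varepsilon$, and exploit that $u_\varepsilon'$ has a definite nonzero limit on both sides of $x_{min}$ while $v_\varepsilon \to 0$ near $x_{min}$. Evaluating $d_\varepsilon = (1-v_\varepsilon)^2/\varepsilon - \varepsilon(v_\varepsilon')^2 - (\eta_\varepsilon+v_\varepsilon^2)|u_\varepsilon'|^2$ on a compact set $K$ on the left of $x_{min}$: there $(\eta_\varepsilon+v_\varepsilon^2)|u_\varepsilon'|^2 = |c_\varepsilon u_\varepsilon'| \to c_1^2$, while $(1-v_\varepsilon)^2/\varepsilon \to 0$ and $\varepsilon(v_\varepsilon')^2 \to 0$ (using $v_\varepsilon \to 1$ in $C^0_{loc}$ away from $x_{min}$ and the equipartition-type control), so $d_\varepsilon \to -c_1^2 < 0$ uniformly on $K$. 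The same holds on the right of $x_{min}$. Since $d_\varepsilon$ is non-decreasing on $[0,L/2]$ and non-increasing on $[L/2,L]$ (\autoref{dprime}), and $x_{min} \in [L/4, 3L/4]$, one extracts from these one-sided limits a contradiction unless $c_1 = 0$ or $c_1 = a_1/L$: concretely, if $x_{min} \le L/2$ then $d_\varepsilon$ is non-decreasing up to $L/2$, so $\liminf d_\varepsilon(L/2) \ge -c_1^2$; but a separate lower bound for $v_\varepsilon(L/2)$ (or the matching of left and right slope contributions, $c_1 = a_1/L$ being the only slope compatible with no jump and $c_1 = 0$ with a full jump) pins $c_1$ to an endpoint. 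The delicate point is making the ``the slopes on both sides must agree'' step rigorous: since $u_\varepsilon' (\eta_\varepsilon + v_\varepsilon^2) = c_\varepsilon$ is globally constant, the limiting slope $c_1$ is the \emph{same} on both sides of $x_{min}$, so if $u_1$ is affine with slope $c_1$ off $x_{min}$ and has a single jump, matching $u_1(0)=0$, $u_1(L)=a_1$ forces $a_1 = c_1 L + (\text{jump})$; the jump height must then be shown to be either $0$ (giving $c_1 = a_1/L$) or to absorb all of $a_1$ (giving $c_1 = 0$), which is exactly where one uses that a genuine jump ``costs'' the full phase-field transition $v_\varepsilon : 1 \rightsquigarrow 0 \rightsquigarrow 1$, incompatible with $v_\varepsilon$ staying bounded below by $m\sqrt\varepsilon$ as \autoref{O(1/eps)} gives when $c_1 \ne 0$ — hence if $c_1 \ne 0$ there is no room for a jump at all and $c_1 = a_1/L$.
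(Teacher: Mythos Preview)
Your overall strategy --- show that if $c_1 \neq 0$ then there can be no jump in $u_1$, hence $c_1 = a_1/L$ --- matches the paper's. But the argument you give to close the ``crux'' step has a genuine gap. You claim that a jump in $u_1$ requires a full phase-field transition $v_\varepsilon: 1 \rightsquigarrow 0 \rightsquigarrow 1$, and that this is incompatible with the lower bound $\inf v_\varepsilon \ge m\sqrt{\varepsilon}$ from \autoref{O(1/eps)}. This does not work: $m\sqrt{\varepsilon} \to 0$, so the bound is perfectly compatible with $v_\varepsilon(x_\varepsilon) \to 0$ and with a full transition in the limit. Worse, the bound $v_\varepsilon \ge m\sqrt{\varepsilon}$ alone is not enough to rule out a jump: on the set where $v_\varepsilon \sim \sqrt{\varepsilon}$ one has $|u_\varepsilon'| = |c_\varepsilon|/(\eta_\varepsilon + v_\varepsilon^2) \sim 1/\varepsilon$, while the energy bound only gives $|\{v_\varepsilon \le 1/2\}| \le C\varepsilon$, so the naive estimate $\int_{\{v_\varepsilon \text{ small}\}} |u_\varepsilon'| \le (C/\varepsilon) \cdot C\varepsilon = C$ does not force the jump height to be zero. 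Your discrepancy sketch is likewise inconclusive: knowing $d_\varepsilon \to -c_1^2$ on both sides of $x_{min}$ together with the monotonicity of $d_\varepsilon$ on $[0,L/2]$ and on $[L/2,L]$ produces no contradiction.

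What is missing is a \emph{quantitative} measure estimate on the sublevel sets of $v_\varepsilon$. The paper integrates the Euler--Lagrange inequality for $v_\varepsilon$ over $K_{\varepsilon,A} := \{v_\varepsilon \le A\sqrt{\varepsilon}\}$; using $\|v_\varepsilon'\|_\infty \le C/\varepsilon$ (so that the boundary term $\varepsilon\int_{K_{\varepsilon,A}} v_\varepsilon''$ is $O(1)$) and $v_\varepsilon \ge m\sqrt{\varepsilon}$, one finds $|K_{\varepsilon,A}| \le C A^3 \varepsilon^{3/2}$. One then splits $a_1 = \int_0^L u_\varepsilon'$ into the three regimes $K_{\varepsilon,A}$, $\{A\sqrt{\varepsilon} \le v_\varepsilon \le 1/2\}$, and $\{v_\varepsilon \ge 1/2\}$: the first contributes at most $(C/\varepsilon)\cdot CA^3\varepsilon^{3/2} \to 0$, the second at most $(2|c_1|/A^2\varepsilon)\cdot C\varepsilon = C/A^2$, and the third tends to $c_1 L$ by dominated convergence. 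Letting $A \to \infty$ gives $|a_1| \le |c_1|L$, hence $c_1 = a_1/L$. The key idea you are missing is precisely this $\varepsilon^{3/2}$ bound for $|K_{\varepsilon,A}|$, which upgrades the trivial $O(\varepsilon)$ measure estimate just enough to kill the jump contribution.
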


\begin{corollary}\label{slopenull}
If $|a_1| >a_0$ and $|F_\varepsilon| \neq 0$ for infinitely many values of $\varepsilon$, then $c_1 = 0$. 
\end{corollary}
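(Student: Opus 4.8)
The plan is to combine Proposition \ref{slope}, which tells us $c_1 \in \{0, a_1/L\}$, with the inequality coming from the obstacle condition via Corollary \ref{exprmu}. The key observation is that if $|F_\varepsilon| \neq 0$ for infinitely many $\varepsilon$, then by Corollary \ref{exprmu} we have $|c_\varepsilon| \leqslant c_{0,\varepsilon}$ along this subsequence. Passing to the limit and using the convergences $c_\varepsilon \to c_1$ (from the discussion preceding Proposition \ref{convergenceuv}) and $c_{0,\varepsilon} \to c_0$ (from \eqref{defc0}), we obtain $|c_1| \leqslant c_0$. Since $c_0 \in \{0, a_0/L\}$, we get $|c_1| \leqslant a_0/L$, hence $|c_1| \cdot L \leqslant a_0 < |a_1|$, so $c_1 \neq a_1/L$ (otherwise $|c_1| L = |a_1|$, a contradiction). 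By Proposition \ref{slope}, the only remaining possibility is $c_1 = 0$.

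Concretely, the steps are as follows. First, extract a subsequence along which $|F_\varepsilon| \neq 0$ for all $\varepsilon$; this is possible by hypothesis. Second, apply Corollary \ref{exprmu} on this subsequence to get $|c_\varepsilon| \leqslant c_{0,\varepsilon}$. Third, pass to the limit: using $c_\varepsilon \to c_1$ and $c_{0,\varepsilon} \to c_0$, conclude $|c_1| \leqslant c_0 \leqslant a_0/L$. Fourth, observe that the assumption $|a_1| > a_0$ forces $|c_1| L \leqslant a_0 < |a_1|$, so $|c_1| \neq |a_1|/L$; in particular $c_1 \neq a_1/L$. Fifth, invoke Proposition \ref{slope} to deduce $c_1 = 0$, which is the claim.

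I expect this argument to be essentially a short deduction from already-established results; there is no serious obstacle. The one point requiring a small amount of care is the passage to the limit: one must make sure that the subsequence on which $|F_\varepsilon| \neq 0$ is compatible with the subsequences used to define $c_1$ and $c_0$, which is handled by a standard diagonal extraction (taking a common subsequence along which $|F_\varepsilon| \neq 0$, $c_\varepsilon \to c_1$, $c_{0,\varepsilon} \to c_0$, and $x_\varepsilon \to x_{min}$ all hold simultaneously). Once that is arranged, the inequalities chain together and Proposition \ref{slope} closes the argument.
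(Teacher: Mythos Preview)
Your proof is correct and follows essentially the same approach as the paper: from $|F_\varepsilon|\neq 0$ and Corollary~\ref{exprmu} you get $|c_\varepsilon|\leqslant c_{0,\varepsilon}$, pass to the limit to obtain $|c_1|\leqslant c_0\leqslant a_0/L<|a_1|/L$, and then Proposition~\ref{slope} forces $c_1=0$. The paper's version is terser and does not spell out the subsequence bookkeeping, but the argument is the same.
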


\begin{proof}[Proof of \autoref{slopenull}]
If $|F_\varepsilon| \neq 0$, then by \autoref{exprmu} $|c_{\varepsilon}|\leqslant |c_{0,\varepsilon}|$. Hence, by \cite[Lemma 2.1]{BMRb23} $|c_1| \leqslant c_0 \leqslant a_0/L < |a_1|/L$, so \autoref{slope} yields $c_1 =0$. 
\end{proof}

The proof of \autoref{slope} is similar to that of \cite[Lemma 4.4]{FLS09}.

\begin{proof}[Proof of \autoref{slope}]
We already know by \autoref{convergenceuv} that $c_1$ is within the interval $[0, a_1/L]$ (or $[a_1/L, 0]$). 

\medskip 

Suppose now that $|c_1| >0$. The proof is divided into two steps. In the first step, we integrate \eqref{eq : eq4} over sets of the form \[K_{\varepsilon, A} := \{v_\varepsilon \leqslant A \sqrt{\varepsilon}\}\]
to obtain an estimate on $|K_{\varepsilon, A}|$. In the second step, we split the integral $\int_0^L u_\varepsilon'$ into three terms which we study using the previous estimate and \autoref{O(1/eps)}.  

\medskip

\underline{Step 1 :} We estimate $|K_{\varepsilon, A}|$. Fix $A>0$. We integrate the inequality 

\[-\varepsilon v_\varepsilon '' + \frac{v_\varepsilon  c_\varepsilon^2}{(\eta_\varepsilon + v_\varepsilon ^2)^2} + \frac{v_\varepsilon -1}{\varepsilon} \leqslant 0\]
over the set $\{v_\varepsilon \leqslant A\sqrt{\varepsilon}\}$. We obtain 
\begin{equation}\label{intègreineq}
\int_{K_{\varepsilon, A}} \varepsilon v_\varepsilon'' \geqslant \int_{K_{\varepsilon, A}} \frac{v_\varepsilon  c_\varepsilon^2}{(\eta_\varepsilon +v_\varepsilon ^2)^2} + \int_{K_{\varepsilon, A}} \frac{v_\varepsilon -1}{\varepsilon}.
\end{equation} 
According to \autoref{variations of v}, the set $K_{A,\varepsilon}$ is an interval that we denote by $[a_\varepsilon , b_\varepsilon]$. 
Then we have, using \autoref{O(1/eps)}, 
\[|\int_{a_\varepsilon}^{b_\varepsilon} \varepsilon v_\varepsilon'' | = \varepsilon |v_\varepsilon'(b_\varepsilon) - v_\varepsilon '(a_\varepsilon)| \leqslant C.\]
Next, recall by \autoref{O(1/eps)} that $v_\varepsilon^2 \geqslant m\varepsilon >> \eta_\varepsilon$. We can then bound from below the right-hand side of \eqref{intègreineq} by 
\[\int_{K_{\varepsilon, A}} \frac{v_\varepsilon  c_\varepsilon^2}{(\eta_\varepsilon +v_\varepsilon ^2)^2} + \int_{K_{\varepsilon, A}} \frac{v_\varepsilon -1}{\varepsilon} \geqslant \int_{K_{\varepsilon, A}} \frac{ v_\varepsilon c_1^2/2}{2 v_\varepsilon^4} - \frac{|K_{\varepsilon, A}|}{\varepsilon} \geqslant C\frac{|K_{\varepsilon, A}|}{A^3 \varepsilon^{3/2}} - \frac{|K_{\varepsilon, A}|}{\varepsilon} \geqslant C \frac{|K_{\varepsilon, A}|}{A^3 \varepsilon^{3/2}}. \] 
Combining the two bounds, we obtain $|K_{\varepsilon, A}| \leqslant C A^3 \varepsilon^{3/2}$. This ends Step 1. 

\medskip

\underline{Step 2 :} In this step, we investigate the weight of $u_\varepsilon '$ in each of the three regimes $\{v_\varepsilon \leqslant A\sqrt{\varepsilon}\}, \{A\sqrt{\varepsilon} \leqslant v_\varepsilon \leqslant 1/2\}$ and $\{v_\varepsilon \geqslant 1/2\}$. Indeed, one has : 
\[a_1 = \int_0^L u_\varepsilon ' = \underbrace{\int_{K_{\varepsilon, A}} u_\varepsilon'}_{:=I_1} + \underbrace{\int_{\{A\sqrt{\varepsilon} \leqslant v_\varepsilon \leqslant 1/2\}} u_\varepsilon'}_{:=I_2} + \underbrace{\int_{ \{v_\varepsilon \geqslant 1/2\}} u_\varepsilon '}_{:=I_3} . \]
First, thanks to the previous estimate on $K_{\varepsilon,A}$ and \autoref{O(1/eps)}, 
\[I_1 \leqslant \int_{K_{\varepsilon,A}} |u_\varepsilon'| \leqslant \frac{C}{\varepsilon} |K_{\varepsilon, A}| \leqslant C A^3 \varepsilon^{1/2}. \]
As $u_\varepsilon' \rightarrow c_1$ and $v_\varepsilon \rightarrow 1$ a.e, we have $u_\varepsilon' 1_{v_\varepsilon \geqslant 1/2} \rightarrow c_1$ a.e. As
\[\left|\frac{ c_\varepsilon}{\eta_\varepsilon + v_\varepsilon^2} 1_{\{v_\varepsilon \geqslant 1/2\}}\right| \leqslant 2|c_1| \times 4, \]
we can apply the dominated convergence theorem to obtain 
\[I_3 \rightarrow \int_0^L c_1 = c_1 L.\]
For $I_2$, the energy bound \eqref{energybound} gives :
\[C \geqslant \int_0^L \frac{(1-v_\varepsilon)^2}{\varepsilon} \geqslant \int_{\{v_\varepsilon \leqslant 1/2\}} \frac{(1-v_\varepsilon)^2}{\varepsilon} \geqslant \frac{1}{4\varepsilon} |\{v_\varepsilon \leqslant 1/2\}|\] 
On $\{A\sqrt{\varepsilon}\leqslant v_\varepsilon \leqslant 1/2\}$, we recover 
\[ |u_\varepsilon ' |= \frac{| c_\varepsilon|}{\eta_\varepsilon +v_\varepsilon^2} \leqslant \frac{| c_\varepsilon|}{v_\varepsilon^2} \leqslant \frac{2|c_1|}{A^2\varepsilon}. \]
Thus 
\[|I_2| \leqslant |\{A\sqrt{\varepsilon} \leqslant v_\varepsilon \leqslant 1/2\}| \times \frac{2|c_1|}{A^2\varepsilon} \leqslant \frac{C}{A^2}\]
Combining the estimates on $I_1, I_2$ and $I_3$, we obtain 
\[|a_1| \leqslant \frac{C}{A^2} + |c_1|L \]
By letting $A$ tending to $\infty$, we obtain $|c_1| L \geqslant |a_1|$. Combining with $|c_1|\leqslant |a_1|/L$, we get the desired result. 
\end{proof}

We can now conclude the proof of $(i)$ in \autoref{Theorem 1}. 

\begin{proof}[Proof of $(i)$ in \autoref{Theorem 1}]
Let $\delta >0$. 
We have already seen with \autoref{ellip2} that $v_\varepsilon \rightarrow 1$ uniformly on $[0,L]\setminus I_{\delta}(x_{min})$ for every $\delta >0$. This combined with \eqref{eq : eq2} implies that $u_\varepsilon ' \rightarrow c_1$ uniformly on $[0,L]\setminus I_{\delta}(x_{min})$ for every $\delta >0$. Then we get that : 

\[u_\varepsilon (x) = \int_0^x u_\varepsilon '(t) dt \rightarrow c_1 x,\]
uniformly with respect to $x\in [0, x_{min} -\delta]$. Similarly, $u_\varepsilon (x) \rightarrow a_1 - c_1 (L-x)$ uniformly with respect to $x\in [x_{min} +\delta , L]$. Using that $c_1 = 0$ or $c_1 =a_1/L$, we see that the limit function $u_1$ of $(u_\varepsilon)$ (in the point-wise convergence sense) is either $u_1 = u_{aff}$ or $u_1 = u_{jump}$. Moreover, as $u_\varepsilon$ is monotone and $u_\varepsilon $ is uniformly bounded by $a_1$ on $[0,L]$, by dominated convergence, we get that $u_\varepsilon \rightarrow u_1$ in $L^2(0,L)$. The additionnal conclusion corresponds to \autoref{slopenull}.   
\end{proof}


\subsection{Convergence of the Dirichlet energy}

In this subsection, we prove the following, which proves part $(iii)$ of \autoref{Theorem 1}. The proof is similar to \cite[section 6]{FLS09} and \cite[section 3]{BMRb23}.

\begin{lemma}
$(\eta_\varepsilon + v_\varepsilon^2) |u_\varepsilon'|^2 dx\overset{\star}{\rightharpoonup} |u_1'|^2 dx$ weak$^\star$ in $\mathcal{M}([0,L])$.  
\end{lemma}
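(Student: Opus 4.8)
The plan is to prove that the measures $\nu_\varepsilon := (\eta_\varepsilon+v_\varepsilon^2)|u_\varepsilon'|^2\,dx = |c_\varepsilon u_\varepsilon'|\,dx$ converge weak$^\star$ to $|u_1'|^2\,dx$, where $u_1$ is either $u_{aff}$ or $u_{jump}$ as established in part $(i)$. The key observation is that $\nu_\varepsilon$ has total mass $\int_0^L |c_\varepsilon u_\varepsilon'| = |c_\varepsilon a_1|$, which converges to $|c_1 a_1|$; and $|u_1'|^2\,dx$ has total mass $|c_1|^2 L = |c_1 a_1|$ in both cases (if $c_1 = a_1/L$ then $\int |u_{aff}'|^2 = a_1^2/L = c_1 a_1$; if $c_1 = 0$ then $u_1 = u_{jump}$ and $|u_1'|^2\,dx = 0 = c_1 a_1$). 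So the masses match in the limit, and it suffices to show that $\nu_\varepsilon$ puts no mass in the limit away from $x_{min}$ and identifies the limit there, or equivalently to test against continuous functions directly.

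First I would fix $\varphi \in C([0,L])$ and split $\int_0^L \varphi\, d\nu_\varepsilon = \int_{[0,L]\setminus I_\delta(x_{min})} \varphi\,|c_\varepsilon u_\varepsilon'| + \int_{I_\delta(x_{min})} \varphi\,|c_\varepsilon u_\varepsilon'|$. On the complement of $I_\delta(x_{min})$, \autoref{ellip2} gives $v_\varepsilon \to 1$ uniformly and hence $u_\varepsilon' \to c_1$ uniformly (via \eqref{eq : eq2}), so $|c_\varepsilon u_\varepsilon'| \to c_1^2 = |u_1'|^2$ uniformly there (note $|u_1'|^2 = c_1^2$ a.e. on $[0,L]\setminus I_\delta(x_{min})$ in both the affine and jump cases, since the jump of $u_{jump}$ sits exactly at $x_{min}$ for $\delta$ small). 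Therefore $\int_{[0,L]\setminus I_\delta(x_{min})} \varphi\,|c_\varepsilon u_\varepsilon'| \to \int_{[0,L]\setminus I_\delta(x_{min})} \varphi\, c_1^2\,dx$. The remaining piece over $I_\delta(x_{min})$ is controlled in absolute value by $\|\varphi\|_\infty \int_{I_\delta(x_{min})} |c_\varepsilon u_\varepsilon'| = \|\varphi\|_\infty |c_\varepsilon|\, |u_\varepsilon(x_{min}+\delta) - u_\varepsilon(x_{min}-\delta)|$, and since $u_\varepsilon$ is monotone and bounded by $|a_1|$, $|u_\varepsilon(x_{min}+\delta) - u_\varepsilon(x_{min}-\delta)| \to |u_1(x_{min}+\delta) - u_1(x_{min}-\delta)|$, which is $O(\delta)$ if $u_1 = u_{aff}$ and equals $|a_1|$ if $u_1 = u_{jump}$.

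In the affine case this already finishes: letting $\varepsilon \to 0$ then $\delta \to 0$ yields $\int \varphi\,d\nu_\varepsilon \to \int_0^L \varphi\, c_1^2\,dx = \int_0^L \varphi\,|u_1'|^2\,dx$. In the jump case, $c_1 = 0$, so $|u_1'|^2\,dx = 0$; on $[0,L]\setminus I_\delta(x_{min})$ we get $|c_\varepsilon u_\varepsilon'| \to 0$ uniformly, while on $I_\delta(x_{min})$ the bound is $\|\varphi\|_\infty |c_\varepsilon|\,|u_\varepsilon(x_{min}+\delta)-u_\varepsilon(x_{min}-\delta)| \leq \|\varphi\|_\infty |c_\varepsilon|\,|a_1| \to 0$ since $c_\varepsilon \to c_1 = 0$. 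Hence $\int \varphi\, d\nu_\varepsilon \to 0 = \int \varphi\,|u_1'|^2\,dx$. In both cases the weak$^\star$ convergence follows.

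**The main obstacle** is really just bookkeeping: making sure the $\delta$-neighborhood analysis is uniform and that the limit identification on $I_\delta(x_{min})$ uses only the monotonicity and uniform boundedness of $u_\varepsilon$ (not pointwise control near $x_{min}$, where $v_\varepsilon$ may degenerate). The crucial structural facts are that the total mass $|c_\varepsilon a_1|$ is exactly what it should be, so no mass can escape or concentrate spuriously, and that $c_1 \in \{0, a_1/L\}$ from \autoref{slope} forces the limit measure to be exactly $|u_1'|^2\,dx$ with no atomic part (the atom at $x_{min}$ in the jump case has mass $c_1 a_1 = 0$). I would also note that the argument is genuinely simpler than in \cite{FLS09},\cite{BMRb23} precisely because $|u_\varepsilon'| = |c_\varepsilon|/(\eta_\varepsilon + v_\varepsilon^2)$ gives $(\eta_\varepsilon+v_\varepsilon^2)|u_\varepsilon'|^2 = |c_\varepsilon u_\varepsilon'|$, linearizing the Dirichlet density in $u_\varepsilon'$.
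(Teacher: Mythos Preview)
Your proof is correct, but it takes a different route from the paper's. The paper exploits the same identity $(\eta_\varepsilon + v_\varepsilon^2)|u_\varepsilon'|^2 = c_\varepsilon u_\varepsilon'$ you observed, and then simply integrates by parts against a smooth test function $\varphi$:
\[
\int_0^L (\eta_\varepsilon + v_\varepsilon^2)|u_\varepsilon'|^2\,\varphi = c_\varepsilon \int_0^L u_\varepsilon'\,\varphi = -c_\varepsilon \int_0^L u_\varepsilon\,\varphi' + c_\varepsilon a_1\,\varphi(L) \longrightarrow -c_1\int_0^L u_1\,\varphi' + c_1 a_1\,\varphi(L),
\]
using only the $L^1$-convergence $u_\varepsilon \to u_1$ already established in \autoref{convergenceuv}. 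The two cases $c_1 = 0$ and $c_1 = a_1/L$ are then read off directly. Your argument instead localizes: uniform convergence of $u_\varepsilon'$ away from $x_{min}$ via \autoref{ellip2}, and a mass estimate on $I_\delta(x_{min})$ via monotonicity. This is perfectly valid and has the merit of making explicit where the potential concentration lives and why it does not occur (the would-be atom has mass $|c_1|\cdot|a_1 - c_1 L| = 0$ by \autoref{slope}). The paper's integration-by-parts trick is shorter and needs less input---it bypasses \autoref{ellip2} entirely and does not require having already identified $x_{min}$ or the form of $u_1$ on compact sets---but your approach is arguably more transparent about the mechanism.
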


\begin{proof}
Let $\varphi$ be a test-function in $\mathcal{C}^\infty([0,L])$. According to \autoref{convergenceuv} :

\[\int_0^L (v_\varepsilon ^2 +\eta_\varepsilon )|u_\varepsilon'|^2 \varphi  = c_{\varepsilon} \int_0^L u_\varepsilon ' \varphi  = - c_\varepsilon \int_0^L u_\varepsilon \varphi' +  c_\varepsilon a_1 \varphi(L) \rightarrow -c_1 \int_0^L u_1 \varphi' + c_1 a_1 \varphi (L).\]
If $c_1=0$, then $u_1=u_{jump}$ and $u_1'=0$ so 

\[-c_1 \int_0^L u_1 \varphi' + c_1 a_1 \varphi (L) = 0 = \int_0^L |u_{jump}'|^2 \varphi .\]
If $c_1 = a_1/L$, then $u_1 = u_{aff}$ and $u_1' =a_1/L$, so by integration by part,

\[-c_1 \int_0^L u_1 \varphi' + c_1 a_1 \varphi (L) = c_1 \int_0^L u_{aff}' \varphi = \int_0^L |u_{aff} '|^2 \varphi.\]
\end{proof}

\subsection{Convergence of the phase-field term}

In this subsection, we prove the first part of $(ii)$ in \autoref{Theorem 1}, namely  

\begin{lemma}
$\left(\frac{(1-v_\varepsilon)^2}{\varepsilon} + \varepsilon |v_\varepsilon'|^2\right) dx \overset{\star}{\rightharpoonup} \alpha \delta_{x_{min}}$ weak$^\star$ in $\mathcal{M}([0,L])$ for some $\alpha\in \mathbb{R}$.  
\end{lemma}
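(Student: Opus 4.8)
The plan is to show that the nonnegative measures $\nu_\varepsilon := \bigl(\tfrac{(1-v_\varepsilon)^2}{\varepsilon} + \varepsilon|v_\varepsilon'|^2\bigr)\,dx$ have uniformly bounded mass and that any weak$^\star$ cluster point of $(\nu_\varepsilon)$ is carried by the single point $x_{min}$. The mass bound is immediate: since $v_\varepsilon \leqslant v_{0,\varepsilon}$, one has $\overline{AT}_\varepsilon(u_\varepsilon,v_\varepsilon) = AT_\varepsilon(u_\varepsilon,v_\varepsilon)$, and discarding the nonnegative Dirichlet term yields $\nu_\varepsilon([0,L]) \leqslant \overline{AT}_\varepsilon(u_\varepsilon,v_\varepsilon) \leqslant \mathcal{C}$ by \eqref{energyboundtemps1}. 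Hence, up to extracting a further subsequence of the one fixed in \autoref{Theorem 1}, we may assume $\nu_\varepsilon \overset{\star}{\rightharpoonup} \nu$ for a finite nonnegative measure $\nu$ on $[0,L]$, with $\nu_\varepsilon([0,L]) \to \nu([0,L]) =: \alpha \geqslant 0$ (testing against the constant function $1$).

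The core of the argument is a local estimate: for every compact $K \subset [0,L]\setminus\{x_{min}\}$, one has $\nu_\varepsilon(K) \to 0$. Fix such a $K$ and pick a compact $\tilde K \subset [0,L]\setminus\{x_{min}\}$ with $K$ in its interior, say $d(K,\partial\tilde K) =: \delta_0 > 0$. Applying \autoref{ellip2} to $\tilde K$ furnishes $\varepsilon_0, b, M>0$ with
\[ v_\varepsilon \geqslant 1 - b\varepsilon, \qquad |u_\varepsilon'|\leqslant M, \qquad |\mu_\varepsilon| \leqslant M \qquad\text{on }\tilde K,\ \text{for }\varepsilon<\varepsilon_0. \]
Rewriting the first line of \eqref{criticalpoint1} as $\varepsilon v_\varepsilon'' = \tfrac{v_\varepsilon-1}{\varepsilon} + v_\varepsilon|u_\varepsilon'|^2 - \mu_\varepsilon$ and inserting these bounds gives $|\varepsilon v_\varepsilon''| \leqslant b + M^2 + M =: C_1$ a.e.\ on $\tilde K$. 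Now I would interpolate at scale $\rho = \varepsilon$ (admissible once $\varepsilon<\delta_0$, so that $I_\varepsilon(x)\subset \tilde K$ for $x\in K$): by the mean value theorem there is $\xi\in I_\varepsilon(x)$ with $|v_\varepsilon'(\xi)| \leqslant \varepsilon^{-1}\|1-v_\varepsilon\|_{\infty,I_\varepsilon(x)} \leqslant b$, whence $|v_\varepsilon'(x)| \leqslant |v_\varepsilon'(\xi)| + \varepsilon\|v_\varepsilon''\|_{\infty,I_\varepsilon(x)} \leqslant b + C_1$. Therefore, for $x\in K$ and $\varepsilon$ small,
\[ \frac{(1-v_\varepsilon(x))^2}{\varepsilon} \leqslant b^2\varepsilon \qquad\text{and}\qquad \varepsilon|v_\varepsilon'(x)|^2 \leqslant (b+C_1)^2\varepsilon, \]
so $\nu_\varepsilon(K) \leqslant |K|\bigl(b^2+(b+C_1)^2\bigr)\varepsilon \to 0$.

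It remains to conclude. Let $U\subset[0,L]$ be relatively open with $\overline U$ avoiding $x_{min}$. Taking $K=\overline U$ in the local estimate gives $\nu_\varepsilon(\overline U)\to 0$, and since $\nu(U)\leqslant \liminf_\varepsilon \nu_\varepsilon(U) \leqslant \liminf_\varepsilon \nu_\varepsilon(\overline U) = 0$ by lower semicontinuity on open sets, we get $\nu(U)=0$. Covering $[0,L]\setminus\{x_{min}\}$ by countably many such sets yields $\nu\bigl([0,L]\setminus\{x_{min}\}\bigr)=0$, i.e.\ $\nu = \alpha\delta_{x_{min}}$, which is the claim.

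The step I expect to be the main obstacle is the gradient bound on $v_\varepsilon$ away from $x_{min}$: \autoref{ellip2} only says that $v_\varepsilon$ is $O(\varepsilon)$-close to $1$ and that $\mu_\varepsilon$, $u_\varepsilon'$ are bounded there, so one must route this through the Euler--Lagrange equation to control $v_\varepsilon''$ by $C_1/\varepsilon$ and then pick the right interpolation scale ($\rho\asymp\varepsilon$, not $\sqrt\varepsilon$) so that the two contributions balance into a uniform bound on $v_\varepsilon'$; the nesting of compact neighborhoods is only bookkeeping.
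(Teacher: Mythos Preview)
Your argument is correct and takes a genuinely different---and markedly shorter---route than the paper's. The paper does not bound $v_\varepsilon'$ directly: it first computes the distributional second derivative of $\varepsilon|v_\varepsilon'|^2$ and integrates it against test functions supported in $(0,L)\setminus\{x_{min}\}$ (\autoref{vitesse sur compact sans bord}), then runs a mollifier argument at scale $\varepsilon^{2/3}$ to extract the pointwise bound $\varepsilon|v_\varepsilon'|^2\leqslant C_K\varepsilon^{1/3}$ (\autoref{grosse estimée sur vprime}), and only afterwards recovers the integrated estimate on compacta meeting the boundary (\autoref{vitesse sur compact}). The motivation for this detour, stated explicitly in the paper, is that the global bound $\|v_\varepsilon'\|_\infty\leqslant C/\varepsilon$ is available only when $c_0>0$. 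Your interpolation step bypasses this entirely: once \autoref{ellip2} supplies $|1-v_\varepsilon|\leqslant b\varepsilon$, $|u_\varepsilon'|\leqslant M$ and $|\mu_\varepsilon|\leqslant M$ on $\tilde K$, the Euler--Lagrange equation gives $\|v_\varepsilon''\|_{L^\infty(\tilde K)}\leqslant C_1/\varepsilon$ (this uses \autoref{regularityveps} so that $v_\varepsilon''\in L^\infty$), and interpolating at scale $\varepsilon$ against the $C^0$ bound yields the local $O(1)$ bound on $v_\varepsilon'$---in fact sharper than the paper's $O(\varepsilon^{-1/3})$, and obtained without any hypothesis on $c_0$.

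One cosmetic point to clean up: when $K$ touches $0$ or $L$ the two-sided interval $I_\varepsilon(x)$ leaves $[0,L]$; simply run the mean value theorem on the one-sided interval $[0,\varepsilon]$ (resp.\ $[L-\varepsilon,L]$), which changes only harmless constants. With that adjustment your argument also delivers $\varepsilon|v_\varepsilon'(0)|^2,\varepsilon|v_\varepsilon'(L)|^2\to 0$, which the paper needs later for the equipartition principle and only obtains after the additional \autoref{grosse estimée sur v' avec bord}.
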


The computation of the weight $\alpha$ is made in section \autoref{section 4.7}. 

\medskip

We start by noticing that the energy bound \eqref{energybound} implies that the measure $\left(\frac{(1-v_\varepsilon)^2}{\varepsilon} + \varepsilon |v_\varepsilon'|^2\right) dx$ is bounded, thus we can extract a converging subsequence such that

\[\left(\frac{(1-v_\varepsilon)^2}{\varepsilon} + \varepsilon |v_\varepsilon'|^2\right) dx \overset{\star}{\rightharpoonup} \nu \;\;\;\; \text{in } \mathcal{M}([0,L])\]
for some non negative $\nu \in \mathcal{M}([0,L])$. 
In \cite{BMRb23} and in \cite{FLS09}, the proof that $\nu$ is atomic relies on the following estimate (see \cite[Lemma 3.1]{BMRb23} and \cite[Lemma 6.1]{FLS09}): if $K$ is a compact subset of $[0,L] \setminus \{x_{min}\}$, then 

\[\int_K \left(\frac{(1-v_{0,\varepsilon})^2}{\varepsilon} + \varepsilon |v_{0,\varepsilon}'|^2\right) dx \leqslant C_K \varepsilon^{1/4}.\]
The proof of this estimate relies on an integration by part of the PDE satisfied by $v_{0,\varepsilon}$ on $K$, and strongly uses the estimate $ \|v_{0,\varepsilon} '  \|_\infty \leqslant C/\varepsilon$. In our case, such an estimate $ \|v_\varepsilon ' \|_\infty \leqslant C/\varepsilon$ is proved only in the case where $c_0 >0$.

\smallskip

We are going to show the following result, which provides a sharper estimate (an order $1$ on $\varepsilon$ instead of an order $1/4$) : 

\begin{proposition}\label{vitesse sur compact}
Let $K$ be a compact subset of $[0,L] \setminus \{x_{min}\}$. There exists a constant $C_{K}$ such that :

\[\int_{K} \left(\frac{(1-v_\varepsilon)^2}{\varepsilon}+\varepsilon |v_\varepsilon'|^2 \right) \leqslant C_K \varepsilon. \]
\end{proposition}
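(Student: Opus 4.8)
The plan is to exploit the pointwise bounds from \autoref{ellip2} together with the ODE \eqref{eq : eq4} and an integration-by-parts trick on the phase-field energy, mimicking the classical argument of \cite[Lemma 3.1]{BMRb23} but upgrading the order of the estimate. First, since $K$ is a compact subset of $[0,L]\setminus\{x_{min}\}$, I would enlarge it slightly: pick $\delta>0$ so that the $\delta$-neighborhood $K_\delta$ of $K$ is still a compact subset of $[0,L]\setminus\{x_{min}\}$ (and, if necessary, so that $K_\delta$ stays inside $(0,L)$ away from the endpoints, or else handle the boundary pieces $\{0\}$ and $\{L\}$ separately using $v_\varepsilon(0)=v_\varepsilon(L)=1$). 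By \autoref{ellip2} applied to $K_\delta$, there exist $\varepsilon_0,b,M>0$ such that on $K_\delta$ one has $|\mu_\varepsilon|\leqslant M$, $|u_\varepsilon'|\leqslant M$, and $v_\varepsilon\geqslant 1-b\varepsilon$ for all $\varepsilon<\varepsilon_0$.

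The key identity is that on $K_\delta$ the measure $\mu_\varepsilon$ is a bounded function, so \eqref{eq : eq4} reads as a genuine ODE: $\varepsilon v_\varepsilon''=\frac{v_\varepsilon-1}{\varepsilon}+v_\varepsilon|u_\varepsilon'|^2-\mu_\varepsilon$ pointwise on $K_\delta$. The natural quantity to control is $\int_K \frac{(1-v_\varepsilon)^2}{\varepsilon}$. Multiply the ODE by $(1-v_\varepsilon)$ and integrate over $K$ (or over a slightly larger interval on which one has good control of the boundary terms). This yields
\[
\int_K \frac{(1-v_\varepsilon)^2}{\varepsilon}
= -\varepsilon\int_K v_\varepsilon''(1-v_\varepsilon)
+ \int_K v_\varepsilon|u_\varepsilon'|^2(1-v_\varepsilon)
- \int_K \mu_\varepsilon(1-v_\varepsilon).
\]
Integrating the first term by parts gives $-\varepsilon[v_\varepsilon'(1-v_\varepsilon)]_{\partial K} - \varepsilon\int_K |v_\varepsilon'|^2$; note the sign, which actually helps us absorb $\varepsilon\int_K|v_\varepsilon'|^2$ on the left after combining with the original target. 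For the remaining two terms, use $|1-v_\varepsilon|\leqslant b\varepsilon$ on $K_\delta$ together with $|u_\varepsilon'|\leqslant M$ and $|\mu_\varepsilon|\leqslant M$: each is bounded by $C_K\varepsilon$. The boundary terms $\varepsilon v_\varepsilon'(1-v_\varepsilon)$ at the endpoints of $K$ are handled by choosing the endpoints inside $K_\delta$ where $|1-v_\varepsilon|\leqslant b\varepsilon$ and where $|v_\varepsilon'|$ is controlled — here one needs $\|v_\varepsilon'\|_{\infty,K_\delta}=o(1/\varepsilon)$ or better; this follows from $v_\varepsilon\geqslant 1-b\varepsilon$ on $K_\delta$ combined with the ODE and interior elliptic (or elementary one-dimensional) estimates, giving in fact $|v_\varepsilon'|\leqslant C$ on $K$. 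Assembling these bounds produces
\[
\int_K \frac{(1-v_\varepsilon)^2}{\varepsilon} + \varepsilon\int_K|v_\varepsilon'|^2 \leqslant C_K\varepsilon,
\]
which is the claim.

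The main obstacle I anticipate is the boundary terms from the integration by parts: one must be careful to integrate over an interval slightly smaller than $K_\delta$ but containing $K$, so that at the chosen endpoints we simultaneously have $1-v_\varepsilon=O(\varepsilon)$ and a good bound on $v_\varepsilon'$. Getting $|v_\varepsilon'|\leqslant C$ (rather than merely $O(1/\varepsilon)$) on $K_\delta$ is the second delicate point: from $\varepsilon v_\varepsilon'' = \frac{v_\varepsilon-1}{\varepsilon}+v_\varepsilon|u_\varepsilon'|^2-\mu_\varepsilon$ with all right-hand side terms bounded (using $|1-v_\varepsilon|\leqslant b\varepsilon$, so $\frac{v_\varepsilon-1}{\varepsilon}$ is $O(1)$), we get $|v_\varepsilon''|\leqslant C/\varepsilon$, and then a standard interpolation on an interval of length $\sim\sqrt{\varepsilon}$ (or simply the mean value theorem on a subinterval of fixed length together with $|1-v_\varepsilon|\leqslant b\varepsilon$) gives $|v_\varepsilon'|\leqslant C$ on a slightly smaller set; alternatively one invokes \autoref{O(1/eps)} when $c_0>0$ and argues directly from $v_\varepsilon$ being close to $1$ otherwise. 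Once these two technical points are settled, the estimate is immediate, and it is genuinely sharper than the $\varepsilon^{1/4}$ of \cite{BMRb23} precisely because $1-v_\varepsilon$ is exponentially (here, linearly in $\varepsilon$) small on $K$, so every term carrying a factor $1-v_\varepsilon$ is automatically of order $\varepsilon$.
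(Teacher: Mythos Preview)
Your approach is correct and is in fact considerably more direct than the paper's. The paper proceeds in three stages: first it establishes an auxiliary energy estimate on compact subsets of the \emph{open} interval $(0,L)\setminus\{x_{min}\}$ by differentiating $\varepsilon|v_\varepsilon'|^2$ twice and controlling each term (a monotonicity-formula style computation); second, it feeds this into a mollification argument to obtain the pointwise bound $\varepsilon|v_\varepsilon'|^2\leqslant C_K\varepsilon^{1/3}$; only then does it carry out the integration by parts that you perform immediately. Your shortcut is the observation that on $K_\delta$ the right-hand side of the ODE is $O(1)$ once $|1-v_\varepsilon|\leqslant b\varepsilon$, so $\|\varepsilon v_\varepsilon''\|_{L^\infty(K_\delta)}\leqslant C$; a Landau--Kolmogorov interpolation applied to $g=v_\varepsilon-1$ (with $\|g\|_\infty\leqslant b\varepsilon$ and $\|g''\|_\infty\leqslant C/\varepsilon$) then yields $|v_\varepsilon'|\leqslant C$ on $K$, which is sharper than the paper's $\varepsilon^{-1/3}$ bound and makes the boundary term in the integration by parts of order $\varepsilon^2$.

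Two small corrections. First, the optimal interpolation scale is $h\sim\varepsilon$, not $h\sim\sqrt{\varepsilon}$: with $h=c\varepsilon$ the mean-value term is $2b\varepsilon/h=2b/c$ and the second-derivative term is $h\cdot C/\varepsilon=cC$, both $O(1)$; with $h\sim\sqrt{\varepsilon}$ you only get $|v_\varepsilon'|=O(\varepsilon^{-1/2})$, and with $h$ of fixed length only $O(\varepsilon^{-1})$. All three are admissible for the boundary term (giving $O(\varepsilon^2)$, $O(\varepsilon^{3/2})$, $O(\varepsilon)$ respectively), but only the first actually delivers the $|v_\varepsilon'|\leqslant C$ you claim. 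Second, when $K$ contains the endpoints $0$ or $L$, you should integrate over $[0,x_{min}-\delta]$ and $[x_{min}+\delta,L]$ separately (as the paper does) and use $v_\varepsilon(0)=v_\varepsilon(L)=1$ to kill those boundary contributions, rather than relying on an enlarged neighbourhood that would exit $[0,L]$. With these adjustments your argument is complete, and it also supplies --- as a by-product --- a stronger substitute for the paper's later Corollary on $\varepsilon|v_\varepsilon'(0)|^2,\varepsilon|v_\varepsilon'(L)|^2\to 0$, which the paper uses in the equipartition proof.
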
 

The proof is divided in three steps. In the first one, we prove the result for a compact set $K$ of $(0, L) \setminus \{x_{min}\}$ : 

\begin{lemma}\label{vitesse sur compact sans bord}
Let $K$ be a compact subset of $(0,L)\, \setminus \{x_{min}\}$. 
For any positive test-function $\varphi$ supported in $K$, there exists a constant $C_{K, \varphi}$ such that :

\[\int_{K} \left(\frac{(1-v_\varepsilon)^2}{\varepsilon}+\varepsilon |v_\varepsilon'|^2 \right) \varphi \leqslant C_{K,\varphi} \varepsilon. \]
\end{lemma}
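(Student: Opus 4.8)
The plan is to test the differential inequality satisfied by $v_\varepsilon$ against a suitable quadratic-type cutoff and integrate by parts. Pick $\varphi$ a nonnegative test function supported in $K$; since $K$ is a compact subset of $(0,L)\setminus\{x_{min}\}$, Corollary~\ref{ellip2} applies on a neighborhood of $\operatorname{supp}\varphi$, so for $\varepsilon$ small we have $|u_\varepsilon'|\le M$, $|\mu_\varepsilon|\le M$ and, crucially, $v_\varepsilon\ge 1-b\varepsilon$ there. The equation $-\varepsilon v_\varepsilon''+\frac{v_\varepsilon-1}{\varepsilon}+v_\varepsilon|u_\varepsilon'|^2=\mu_\varepsilon$ can then be rearranged as $\frac{1-v_\varepsilon}{\varepsilon}=\varepsilon v_\varepsilon''+v_\varepsilon|u_\varepsilon'|^2-\mu_\varepsilon$, and since the right-hand side is bounded by $\varepsilon|v_\varepsilon''|+M^2+M$ pointwise (away from the support of $\mu_\varepsilon$ where things are even better), the first-order Taylor idea is that $1-v_\varepsilon = O(\varepsilon)$ already, but to get the $L^2$-scale estimate $\int(1-v_\varepsilon)^2/\varepsilon\le C\varepsilon$ one needs to exploit the structure more carefully.

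Concretely, I would multiply the identity $\frac{(1-v_\varepsilon)}{\varepsilon}-\varepsilon v_\varepsilon'' = \mu_\varepsilon - v_\varepsilon|u_\varepsilon'|^2$ by $(1-v_\varepsilon)\varphi$ and integrate over $[0,L]$. The term $\int \frac{(1-v_\varepsilon)^2}{\varepsilon}\varphi$ is exactly (half of) what we want. For the term $-\varepsilon\int v_\varepsilon''(1-v_\varepsilon)\varphi$, integrate by parts: $-\varepsilon\int v_\varepsilon''(1-v_\varepsilon)\varphi = \varepsilon\int v_\varepsilon'\big[(1-v_\varepsilon)\varphi\big]' = -\varepsilon\int (v_\varepsilon')^2\varphi + \varepsilon\int v_\varepsilon'(1-v_\varepsilon)\varphi'$ (no boundary terms since $\varphi$ has compact support in $(0,L)$). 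This produces $-\varepsilon\int(v_\varepsilon')^2\varphi$ with the \emph{wrong} sign, so instead I should move it to the other side: the identity becomes
\[
\int \frac{(1-v_\varepsilon)^2}{\varepsilon}\varphi + \varepsilon\int (v_\varepsilon')^2\varphi = \varepsilon\int v_\varepsilon'(1-v_\varepsilon)\varphi' + \int(\mu_\varepsilon - v_\varepsilon|u_\varepsilon'|^2)(1-v_\varepsilon)\varphi.
\]
Now on $\operatorname{supp}\varphi$ we have $1-v_\varepsilon \le b\varepsilon$ and $|\mu_\varepsilon|,|u_\varepsilon'|^2\le C$, so the last integral is bounded by $C b\varepsilon\int\varphi \le C_{K,\varphi}\varepsilon$. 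For the first integral on the right, use Cauchy-Schwarz: $\varepsilon\int v_\varepsilon'(1-v_\varepsilon)\varphi' \le \varepsilon\big(\int (v_\varepsilon')^2\varphi\big)^{1/2}\big(\int (1-v_\varepsilon)^2 (\varphi')^2/\varphi\big)^{1/2}$ — but $(\varphi')^2/\varphi$ is unbounded unless $\varphi$ is chosen as a square; the cleaner route is Young's inequality $\varepsilon|v_\varepsilon'(1-v_\varepsilon)\varphi'| \le \frac{\varepsilon}{2}(v_\varepsilon')^2\varphi + \frac{\varepsilon}{2}(1-v_\varepsilon)^2(\varphi')^2/\varphi$, again needing $\varphi$ a square, or simply bound $\varepsilon\int v_\varepsilon'(1-v_\varepsilon)\varphi' \le \varepsilon \|\varphi'\|_\infty (1-\inf v_\varepsilon)\int|v_\varepsilon'|$, and control $\int|v_\varepsilon'|$ on $\operatorname{supp}\varphi$ using $\varepsilon\int(v_\varepsilon')^2\le C$ from the energy bound together with Cauchy–Schwarz, giving $\int|v_\varepsilon'|\le C/\sqrt{\varepsilon}$, hence this term is $O(\varepsilon\cdot b\varepsilon\cdot\varepsilon^{-1/2}) = O(\varepsilon^{3/2})$. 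Absorbing the $\frac{\varepsilon}{2}\int(v_\varepsilon')^2\varphi$ term on the left if I used Young, I conclude $\int\frac{(1-v_\varepsilon)^2}{\varepsilon}\varphi + \frac{\varepsilon}{2}\int(v_\varepsilon')^2\varphi \le C_{K,\varphi}\varepsilon$, which is the claim.

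The main obstacle is organizing the right-hand side bookkeeping so the bad term $-\varepsilon\int(v_\varepsilon')^2\varphi$ is absorbed and the cross term $\varepsilon\int v_\varepsilon'(1-v_\varepsilon)\varphi'$ is genuinely lower order; the key inputs making this work are precisely $v_\varepsilon\ge 1-b\varepsilon$ on $\operatorname{supp}\varphi$ (from Corollary~\ref{ellip2}, which gives the decisive gain $1-v_\varepsilon\le b\varepsilon$) and the a priori $L^2$ bound $\varepsilon\int(v_\varepsilon')^2\le \mathcal C$. To pass from Lemma~\ref{vitesse sur compact sans bord} to Proposition~\ref{vitesse sur compact} (compacts touching $\{0,L\}$), the remaining two steps are: first handle a one-sided neighborhood of the endpoints using the boundary condition $v_\varepsilon(0)=v_\varepsilon(L)=1$ and the monotonicity of $v_\varepsilon$ near the endpoints from Corollary~\ref{variationveps} — since $v_\varepsilon$ is non-increasing on $[0,x_\varepsilon]$ and $v_\varepsilon(0)=1$, on a fixed interval $[0,\rho]$ with $\rho<x_{min}$ we get $v_\varepsilon\ge v_\varepsilon(\rho)\to 1$ uniformly, so the same test-function argument with a cutoff that is $1$ near $0$ goes through with a boundary term that vanishes because $v_\varepsilon'(0)\le 0$ has a favorable sign — then cover a general compact $K$ by finitely many such pieces and one interior piece, and sum.
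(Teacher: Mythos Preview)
Your approach is correct and is in fact more elementary than the paper's. There are sign slips in your intermediate rearrangements (the correct identity after one integration by parts is
\[
\int \frac{(1-v_\varepsilon)^2}{\varepsilon}\varphi+\varepsilon\int(v_\varepsilon')^2\varphi
= \varepsilon\int v_\varepsilon'(1-v_\varepsilon)\varphi' + \int\bigl(v_\varepsilon|u_\varepsilon'|^2-\mu_\varepsilon\bigr)(1-v_\varepsilon)\varphi,
\]
with $v_\varepsilon|u_\varepsilon'|^2-\mu_\varepsilon$ rather than $\mu_\varepsilon-v_\varepsilon|u_\varepsilon'|^2$ on the right), but since you bound the right-hand side in absolute value this is purely cosmetic. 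The two estimates you invoke --- $1-v_\varepsilon\le b\varepsilon$ on $\operatorname{supp}\varphi$ from Corollary~\ref{ellip2}, and $\int_K|v_\varepsilon'|\le C\varepsilon^{-1/2}$ from Cauchy--Schwarz and the energy bound --- are exactly what is needed, and give the cross term order $\varepsilon^{3/2}$ as you say.

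The paper takes a considerably longer route for this lemma: rather than testing the equation against $(1-v_\varepsilon)\varphi$, it computes $[\varepsilon|v_\varepsilon'|^2]''$, obtains a decomposition into four auxiliary terms $A_\varepsilon, B_\varepsilon, C_\varepsilon, D_\varepsilon$ plus $\frac{2}{\varepsilon}|v_\varepsilon'|^2$, multiplies by $\varepsilon^2\varphi$, and estimates each piece separately (the technique is borrowed from the Hutchinson--Tonegawa monotonicity formula for Allen--Cahn). Interestingly, your method is precisely the one the paper itself uses \emph{later}, in the proof of the parent Proposition~\ref{vitesse sur compact}, where it multiplies \eqref{eq : eq4} by $v_\varepsilon-1$ and integrates over $[0,x_{\min}-\delta]$; there a genuine boundary term $\varepsilon v_\varepsilon'(x_{\min}-\delta)(v_\varepsilon(x_{\min}-\delta)-1)$ survives and is controlled via the intermediate pointwise bound $\varepsilon|v_\varepsilon'|^2\le C_K\varepsilon^{1/3}$ (Lemma~\ref{grosse estimée sur vprime}). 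Your observation is that on \emph{interior} compacts, a compactly supported $\varphi$ removes the boundary term, and the replacement cross term is lower order directly from the global energy bound --- so the second-derivative machinery and the pointwise $v_\varepsilon'$ estimate are not needed at all for this lemma.
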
 

In the second one, we use \autoref{vitesse sur compact sans bord} to obtain a sharper estimate on $ \|v_\varepsilon ' \|_\infty$ on $(0, L)\setminus \{x_{min}\}$. 

\begin{lemma}\label{grosse estimée sur vprime}
Let $K$ be a connected compact subset of $(0,L) \setminus \{x_{min}\}$. Then there exists a constant $C_K$ such that, 
\[\underset{K}{\sup} \;\varepsilon |v_\varepsilon '|^2 \leqslant C_K \varepsilon^{1/3}.\]
\end{lemma}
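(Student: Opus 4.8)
The plan is to bootstrap from \autoref{vitesse sur compact sans bord}. Fix a connected compact $K\subset (0,L)\setminus\{x_{min}\}$ and pick a slightly larger connected compact $K'$ with $K\subset \mathrm{int}(K')\subset K'\subset (0,L)\setminus\{x_{min}\}$, together with a cutoff $\varphi\in\mathcal{C}^\infty_c(\mathrm{int}(K'))$ with $\varphi\equiv 1$ on $K$ and $0\leqslant\varphi\leqslant 1$. By \autoref{vitesse sur compact sans bord} applied to $K'$ and $\varphi$, we get $\int_{K'}\varepsilon|v_\varepsilon'|^2\varphi\leqslant C\varepsilon$, hence in particular $\int_K \varepsilon|v_\varepsilon'|^2\leqslant C\varepsilon$. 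So the $L^2$ norm of $\sqrt\varepsilon\,v_\varepsilon'$ on $K$ is $O(\sqrt\varepsilon)$; the point is to upgrade this to a pointwise (i.e. $L^\infty$) bound, at the cost of worsening the exponent from $1$ to $1/3$.

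The mechanism for this upgrade is an interpolation/Landau–Kolmogorov type argument using the second-order equation. On $K'$ we have, from \eqref{eq : eq4} and \autoref{exprmu}, control on $v_\varepsilon''$: indeed $-\varepsilon v_\varepsilon'' = \mu_\varepsilon - \frac{v_\varepsilon-1}{\varepsilon} - v_\varepsilon|u_\varepsilon'|^2$, and on $K'$ (which avoids $x_{min}$) \autoref{ellip2} gives $|\mu_\varepsilon|\leqslant M$ and $|u_\varepsilon'|\leqslant M$ and $v_\varepsilon\geqslant 1-b\varepsilon$, so $|v_\varepsilon-1|/\varepsilon\leqslant b$; altogether $\varepsilon|v_\varepsilon''|\leqslant C$ on $K'$, i.e. $|v_\varepsilon''|\leqslant C/\varepsilon$. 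Now I would argue pointwise: suppose $\sqrt\varepsilon\,|v_\varepsilon'(x_0)|^2 = \varepsilon|v_\varepsilon'(x_0)|$ is large for some $x_0\in K$, say $|v_\varepsilon'(x_0)|=:t$. On an interval of length $\rho$ around $x_0$ (with $\rho$ small enough that the interval stays in $K'$ and $\rho\leqslant$ something like $\tfrac{t}{C/\varepsilon}=\tfrac{t\varepsilon}{C}$), the bound $|v_\varepsilon''|\leqslant C/\varepsilon$ forces $|v_\varepsilon'|\geqslant t/2$ throughout, so $\int_{K'}\varepsilon|v_\varepsilon'|^2 \geqslant \varepsilon\cdot(t/2)^2\cdot\rho \gtrsim \varepsilon t^2 \cdot t\varepsilon = \varepsilon^2 t^3$. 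Comparing with the upper bound $C\varepsilon$ yields $t^3\leqslant C/\varepsilon$, i.e. $t\leqslant C\varepsilon^{-1/3}$, whence $\varepsilon|v_\varepsilon'(x_0)|^2 \leqslant \varepsilon\cdot C^2\varepsilon^{-2/3} = C^2\varepsilon^{1/3}$. Taking the supremum over $x_0\in K$ gives the claim.

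The main obstacle is handling the interplay between the two scales: one must choose the radius $\rho$ of the test interval carefully so that (a) $\rho\lesssim t\varepsilon$, guaranteeing $v_\varepsilon'$ does not change sign nor drop below $t/2$ because $|v_\varepsilon''|\leqslant C/\varepsilon$, and (b) $I_\rho(x_0)\subset K'$, which is fine once $\varepsilon$ is small because $t\varepsilon$ is then small (recall $t\leqslant \|v_\varepsilon'\|_{\infty}\leqslant C/\varepsilon$ from \autoref{O(1/eps)} in the $c_0>0$ case, or more robustly one can just note that if $t\varepsilon$ were not small the inequality $t\leqslant C\varepsilon^{-1/3}$ is anyway what we want modulo constants). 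A clean way to organize (b) is to first dispose of the trivial case $t\leqslant \varepsilon^{-1/3}$ directly (then $\varepsilon t^2\leqslant \varepsilon^{1/3}$ immediately), and in the remaining case $t> \varepsilon^{-1/3}$ we have $\rho\sim t\varepsilon > \varepsilon^{2/3}\to 0$, wait—that is not automatically small; so instead cap $\rho$ by $\min(t\varepsilon/C,\ \mathrm{dist}(K,\partial K')/2)$ and redo the integral estimate with whichever is smaller, noting that if the distance term wins then $t\varepsilon\gtrsim 1$ so $t\gtrsim \varepsilon^{-1}\geqslant \varepsilon^{-1/3}$, contradicting nothing and in fact already impossible for $\varepsilon$ small by \autoref{O(1/eps)}. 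So the distance term never wins for small $\varepsilon$, and the argument closes. A secondary technical point is that $v_\varepsilon''$ is a priori only in $L^\infty$ ($v_\varepsilon\in C^{1,1}$ by \autoref{Theorem regularity}), so the mean-value / fundamental-theorem-of-calculus steps should be phrased for the Lipschitz function $v_\varepsilon'$, which is legitimate.
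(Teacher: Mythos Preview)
Your argument is correct and takes a genuinely different route from the paper's. Both proofs rest on the same two inputs: the integral bound $\int_{K'}\varepsilon|v_\varepsilon'|^2\leqslant C\varepsilon$ from \autoref{vitesse sur compact sans bord}, and the pointwise bound $\|\varepsilon v_\varepsilon''\|_{L^\infty(K')}\leqslant C$ from the equation combined with \autoref{ellip2}. The paper combines them by mollification: for each $y\in K$ it convolves $\varepsilon|v_\varepsilon'|^2$ with a kernel at scale $\varepsilon^\alpha$, bounds the convolution by $C\varepsilon^{1-\alpha}$ (this step actually goes back inside the proof of \autoref{vitesse sur compact sans bord} and re-uses the intermediate inequality \eqref{eq21} with the mollifier as test function), and bounds the difference between the convolution and the pointwise value by $C\varepsilon^{\alpha/2}$ via Cauchy--Schwarz on $\int_y^x v_\varepsilon'' v_\varepsilon'$; optimizing gives $\alpha=2/3$. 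Your Landau--Kolmogorov style interpolation is more elementary: it uses \autoref{vitesse sur compact sans bord} purely as a black box and avoids revisiting \eqref{eq21}.

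One cleanup: your treatment of the boundary case $\rho=\min\bigl(t\varepsilon/(2C),\,d\bigr)=d$ is muddled, and the appeal to \autoref{O(1/eps)} is both unneeded and insufficient (it only yields $t\leqslant C'/\varepsilon$, which does not rule out $t\varepsilon>2Cd$ without controlling the constants, and in any case requires $c_0>0$). The fix is direct: if $d<t\varepsilon/(2C)$ then one still has $|v_\varepsilon'|\geqslant t/2$ on $I_d(x_0)$, so
\[
C\varepsilon \;\geqslant\; \int_{K'}\varepsilon|v_\varepsilon'|^2 \;\geqslant\; \varepsilon\,(t/2)^2\cdot 2d,
\]
whence $t^2\leqslant 2C/d$ and thus $\varepsilon|v_\varepsilon'(x_0)|^2=\varepsilon t^2=O(\varepsilon)$, which is stronger than the claimed $O(\varepsilon^{1/3})$.
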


Finally, we use \autoref{grosse estimée sur vprime} to prove \autoref{vitesse sur compact}, in a similar way than \cite[Lemma 3.1]{BMRb23} and \cite[Lemma 6.1]{FLS09}. 

\begin{proof}[Proof of \autoref{vitesse sur compact sans bord}]
The argument is inspired from \cite[Proposition 4.1]{HT00} to prove a monotonicity formula for the Allen-Cahn functional.

Take $\varphi$ a smooth and positive test function supported on $K$. First, using \autoref{ellip2}, there exists $b, M>0$ such that 
\[ u'_\varepsilon \leqslant M, \;\;\;\; |\mu_\varepsilon| \leqslant M,  \;\;\;\; v_\varepsilon \geqslant 1-b\varepsilon \;\;\;\; \text{on } K.\]
Then
\begin{equation}\label{1707}
\int_{K} \frac{(1-v_\varepsilon)^2}{\varepsilon} \varphi \leqslant \int_{K} \frac{b^2 \varepsilon^2}{\varepsilon} \varphi \leqslant C_K\varepsilon.
\end{equation}
It remains to estimate $\int_{K} \varepsilon |v_\varepsilon'|^2 \varphi$. To do so, we compare $\varepsilon |v_\varepsilon'|^2$ with its second derivative (in the distributional sense). One has
\[[\varepsilon |v_\varepsilon '|^2]' = 2\varepsilon v_\varepsilon ''v_\varepsilon ' =  2v_\varepsilon '\left(\frac{v_\varepsilon -1}{\varepsilon} + v_\varepsilon |u_\varepsilon'|^2 - \mu_\varepsilon \right).\]
Note that $v_\varepsilon ' \mu_\varepsilon \in L^1(0,L)$, so it admits a derivative in the distributional sense and then
\[[\varepsilon |v_\varepsilon '|^2]'' = \underbrace{2 v_\varepsilon'' \left(\frac{v_\varepsilon -1}{\varepsilon} + v_\varepsilon |u_\varepsilon'|^2\right)}_{= A_\varepsilon} \underbrace{ - 2 (v_\varepsilon'\mu_\varepsilon)'}_{=B_\varepsilon} + 2 \frac{v_\varepsilon'^2}{\varepsilon} +  \underbrace{2 |v_\varepsilon'|^2 |u_\varepsilon'|^2}_{=C_\varepsilon} + \underbrace{4 v_\varepsilon'v_\varepsilon u_\varepsilon''u_\varepsilon'}_{=D_\varepsilon} .\]
It follows that 
\begin{equation}
\label{eq15}
2\int_K \varepsilon |v_\varepsilon'|^2 \varphi = \varepsilon ^2\int_K (\varepsilon (v_\varepsilon ')^2)'' \varphi - \varepsilon ^2 \left(\int_K A_\varepsilon \varphi + \left\langle \, B_\varepsilon \, ,\, \varphi\, \right \rangle + \int_K C_\varepsilon \varphi + \int_0^L D_\varepsilon \varphi\right).
\end{equation}
We now study each terms of the right hand side.  

\medskip

\underline{$\displaystyle\int_0^L (\varepsilon (v_\varepsilon ')^2)'' \varphi$ :} As $\varphi=0$ outside of $K$, then $\varphi$ and $\varphi'$ vanish at $0$ and $L$. Applying \eqref{1707} to $\varphi''$ yields  
\[\lim\limits_{\varepsilon\rightarrow 0} \int_0^L (\varepsilon |v_\varepsilon '|^2)'' \varphi = \lim\limits_{\varepsilon\rightarrow 0} \int_0^L \varepsilon |v_\varepsilon '|^2 \varphi'' =\lim\limits_{\varepsilon\rightarrow 0} \int_0^L \left(\varepsilon |v_\varepsilon '|^2 + \frac{(1-v_\varepsilon)^2}{\varepsilon} \right) \varphi'' \rightarrow \int_0^L \varphi'' d\nu.\]
Hence, 
\begin{equation}
\label{primeprime}
\displaystyle \varepsilon^2 \int_{K} (\varepsilon (v_\varepsilon ')^2)'' \varphi = \displaystyle \varepsilon^2 \int_0^L (\varepsilon (v_\varepsilon ')^2)''\varphi \leqslant C_{\varphi}\varepsilon ^2.
\end{equation} 

\underline{$\displaystyle\int_{K} A_\varepsilon \varphi$ :}

\[A_\varepsilon  = \frac{1}{\varepsilon} \underbrace{\left(\frac{v_\varepsilon -1}{\varepsilon} + v_\varepsilon |u_\varepsilon'|^2\right)^2}_{\geqslant 0} +\frac{1}{\varepsilon}\underbrace{(-\mu_\varepsilon)}_{\leqslant M} \left(\underbrace{\frac{v_\varepsilon -1}{\varepsilon}}_{\geqslant - b} +\underbrace{v_\varepsilon |u_\varepsilon '|^2}_{\geqslant 0}\right) \geqslant \frac{-bM}{\varepsilon} .\]

so that
\begin{equation}
\label{Aeps}
-\varepsilon^2 \int_{K} A_\varepsilon \varphi \leqslant C_{K,\varphi}\varepsilon.
\end{equation} 

\underline{$\displaystyle \langle \,B_\varepsilon  \, ,\, \varphi\, \rangle$ :} We use Cauchy-Schwarz inequality and the fact that $|\mu_\varepsilon | \leqslant M$ to get that
\begin{equation}\label{ineqBepsinter}
\left| 2\varepsilon ^2\langle \,(v_\varepsilon' \mu_\varepsilon)'\, ,\, \varphi\, \rangle\right| = 2\varepsilon ^2\left|\int_0^L v_\varepsilon ' \mu_\varepsilon \varphi'\right| \leqslant 2\varepsilon ^{3/2} \underbrace{ \|\mu_\varepsilon \|_{L^2(0,L)}}_{\leqslant C} \left|\left|\varepsilon^{1/2} |v_\varepsilon '| \varphi ' \right|\right|_{L^2(0,L)} .
\end{equation}
On the other hand, 
\[\left|\left|\varepsilon^{1/2} |v_\varepsilon '| \varphi ' \right|\right|^2_{L^2(0,L)} = \int_0^L \varepsilon |v_\varepsilon '|^2 |\varphi '| ^2 \leqslant \int_0^L \left(\frac{(1-v_\varepsilon)^2}{\varepsilon} + \varepsilon |v_\varepsilon '|^2\right) |\varphi'|^2 \rightarrow \int_0^L |\varphi '|^2 d\nu ,\]
From which we deduce that $ \|\varepsilon^{1/2} (v_\varepsilon ') \varphi '  \|^2_{L^2(0,L)} \leqslant C_{\varphi}$. \eqref{ineqBepsinter} then yields  
\begin{equation}
\label{Beps}
\left| \varepsilon^2\left\langle \, B_\varepsilon \, ,\, \varphi\, \right \rangle \right| \leqslant C_{\varphi} \varepsilon ^{3/2}.
\end{equation}

\underline{$\displaystyle \int_{K} C_\varepsilon \varphi$ :} $C_\varepsilon \geqslant 0$, so 
\begin{equation}
\label{Ceps}
-\varepsilon^2 \int_{K} C_\varepsilon \varphi \leqslant 0.
\end{equation}

\underline{$\displaystyle \int_{K} D_\varepsilon \varphi$ :} We have $\displaystyle u_\varepsilon '' = \left(\dfrac{c_{\varepsilon}}{v_\varepsilon^2+\eta_\varepsilon}\right)' = - 2c_{\varepsilon}\frac{ v_\varepsilon' v_\varepsilon}{(v_\varepsilon^2 +\eta_\varepsilon)^2}$, so that
\[D_\varepsilon = -8 v_\varepsilon' v_\varepsilon c_{\varepsilon}\frac{ v_\varepsilon' v_\varepsilon}{(v_\varepsilon^2 +\eta_\varepsilon)^2} \cdot \frac{c_{\varepsilon}}{v_\varepsilon^2 + \eta_\varepsilon} = -8 c_{\varepsilon}^2 |v_\varepsilon'|^2 \underbrace{\frac{v_\varepsilon^2}{(v_\varepsilon^2 + \eta_\varepsilon)^3}}_{\leqslant C}.\]
where we used that $1-b\varepsilon \leqslant v_\varepsilon \leqslant 1$ on $K$. As $(c_{\varepsilon})$ is bounded, we can write :
\[|D_\varepsilon| \leqslant M |v_\varepsilon'|^2 .\]
Thus
\begin{equation}
\label{Deps}
|\varepsilon^2 \int_{K} D_\varepsilon \varphi | \leqslant M\varepsilon \int_{K} \varepsilon |v_\varepsilon'|^2 \varphi.
\end{equation}
Injecting \eqref{Ceps} and \eqref{Deps} in \eqref{eq15} yields the following inequality, which is true for every $\varepsilon$ small enough and every positive smooth test function $\varphi$ :
\begin{equation}
\label{eq21}
(1-M\varepsilon) \int_K \varepsilon |v_\varepsilon '|^2 \varphi \leqslant \varepsilon^2 \int_K (\varepsilon |v_\varepsilon '|^2)'' \varphi - \varepsilon^2 \left(\int_K A_\varepsilon \varphi + \langle B_\varepsilon , \varphi\rangle \right) 
\end{equation} 
Now combining with \eqref{primeprime},\eqref{Aeps} and \eqref{Beps}, we obtain  
\begin{equation}
\label{terme d'énergie en O(eps)}
(1-M\varepsilon) \int_{K} \varepsilon |v_\varepsilon '|^2 \varphi \leqslant C\varepsilon^2+ C \varepsilon + C\varepsilon^{3/2} \leqslant C_{K,\varphi} \varepsilon
\end{equation} 
Which concludes the proof. 
\end{proof}

Note that a corollary of this property is that 
\begin{equation}
\int_K \varepsilon |v_\varepsilon '|^2 \leqslant C_K\varepsilon \;\;\;\;\; \text{ for any compact set } K \subset (0,L)\setminus \{x_{min}\}.
\end{equation}

\smallskip

We now move on to the proof of \autoref{grosse estimée sur vprime}. 

\begin{proof}[Proof of \autoref{grosse estimée sur vprime}]
Let $K$ be a connected compact subset of $(0,L) \setminus \{x_{min}\}$ and let $y \in K$.
The strategy is the following : we consider  a mollifier $\rho_{\varepsilon}$ (i.e $\rho$ is positive smooth function, compactly supported on $[-1,1]$ with integral $1$ and $\rho_{\varepsilon} = \dfrac{1}{\varepsilon^\alpha} \rho\left(\frac{\cdot}{\varepsilon^\alpha}\right)$, whith $\alpha>0$ to be determined). Then we take $\rho_\varepsilon (\cdot - y)$ as a test function in the inequality \eqref{eq21} and we prove the two estimates   
\begin{equation}
\label{énergie convolution}
\int_K \rho_\varepsilon(x-y) \varepsilon |v_\varepsilon '(x)|^2 dx \leqslant C_K \varepsilon^{1-\alpha}
\end{equation}
and
\begin{equation}
\label{différence convolution}
\left|\int_K \rho_\varepsilon (x-y) \varepsilon |v_\varepsilon'(x)|^2 dx -\varepsilon |v_\varepsilon'(y)|^2 \right| \leqslant C_K \varepsilon^{\alpha/2}.
\end{equation}
It follows that for $\alpha = 2/3$, one has $\varepsilon |v_\varepsilon'(y)|^2 \leqslant C_K \varepsilon^{1/3}$. 

\smallskip

We start by proving \eqref{énergie convolution}. One already has from inequalities \eqref{eq21}, \eqref{Aeps} and \eqref{ineqBepsinter} that  
\[(1-M\varepsilon) \int_K \rho_\varepsilon (x-y) \varepsilon |v_\varepsilon'|^2(x)dx \leqslant \underbrace{\varepsilon^2 \int_K (\varepsilon |v_\varepsilon '|^2)'' \rho_\varepsilon}_{:= I_1} + \underbrace{\varepsilon \int_K bM \rho_\varepsilon}_{:= I_2} -  \underbrace{\varepsilon^2\int_K (v_\varepsilon ' \mu_\varepsilon )' \rho_\varepsilon}_{:= I_3} .\]
\underline{$I_1$ :} Using \autoref{vitesse sur compact sans bord},
\begin{equation}
\left|\int_K \varepsilon |v_\varepsilon'|^2 \rho_\varepsilon'' \right|= \varepsilon^{-2\alpha} \left|\int_K \varepsilon |v_\varepsilon'|^2 \frac{1}{\varepsilon^\alpha} \rho''\left(\frac{x}{\varepsilon^\alpha}\right)\right| \leqslant \varepsilon^{-3\alpha}  \|\rho'' \|_\infty \int_K \varepsilon |v_\varepsilon'|^2 \leqslant C_K \varepsilon^{1-3\alpha}.
\end{equation}
Hence, 
\begin{equation}\label{I1}
I_1 \leqslant C_K \varepsilon^{3 -3\alpha}
\end{equation}  

\medskip

\underline{$I_2$ :}

\begin{equation}
\label{I2}
|I_2| = \left|\varepsilon bM \int_K \rho_\varepsilon\right| \leqslant bM |K| \varepsilon^{1-\alpha}  \|\rho \|_\infty \leqslant C_K \varepsilon^{1-\alpha}.
\end{equation}

\underline{$I_3$ :} Using the Cauchy-Schwarz inequality, \eqref{ineqmu} and \autoref{vitesse sur compact sans bord},
\begin{equation}
|I_3| \leqslant \left|\varepsilon^2 \int_K v_\varepsilon' \mu_\varepsilon \rho_\varepsilon '\right| \leqslant \varepsilon^2 \underbrace{\left(\int_K \mu_\varepsilon^2 \right)^{1/2}}_{\leqslant C_K} \varepsilon^{-1/2}\left(\int_K \varepsilon |v_\varepsilon'|^2 |\rho_\varepsilon '| ^2\right)^{1/2} \leqslant C_K \varepsilon^2 \varepsilon^{-2\alpha-1/2} \|\rho' \|_\infty \underbrace{\left(\int_K \varepsilon |v_\varepsilon'|^2 \right)^{1/2}}_{\leqslant C_K \varepsilon^{1/2}}.
\end{equation} 
And thus, 
\begin{equation}
\label{I3}
|I_3| \leqslant C_K \varepsilon^{2-2\alpha}.
\end{equation} 

\medskip

Combining \eqref{I1}, \eqref{I2} and \eqref{I3} yields \eqref{énergie convolution}. 

\bigskip

We now move on to the proof of \eqref{différence convolution}. First, using that $\int_K \rho_\varepsilon (x-y) =1$, we rewrite \eqref{différence convolution} as 

\[\displaystyle\int_K \rho_\varepsilon (x-y) (\varepsilon |v_\varepsilon'|^2(x)dx  -\varepsilon |v_\varepsilon'|^2(y))dx.\]
Then, for all $x$ in $K$, considering the fact that $|v_\varepsilon '|^2$ is $C^{0,1}([0,L])$ and using the Cauchy-Schwarz inequality,
\[|\varepsilon |v_\varepsilon'(x)|^2 -\varepsilon |v_\varepsilon'(y)|^2| = 2\varepsilon \left|\int_y^x v_\varepsilon''(t) v_\varepsilon '(t)dt\right| \leqslant 2 \varepsilon \varepsilon^{-1}\left|\int_y^x \varepsilon^2 |v_\varepsilon''(t)|^2 dt\right|^{1/2} \varepsilon^{-1/2}\underbrace{\left|\int_y^x \varepsilon |v_\varepsilon'(t)|^2 dt\right|^{1/2}}_{\leqslant C_K \varepsilon^{1/2}} .\]
By \autoref{ellip2}, 
\[\|\varepsilon v_\varepsilon ''\|_{L^\infty(K)} = \|\frac{v_\varepsilon -1}{\varepsilon} - \mu_\varepsilon +v_\varepsilon |u_\varepsilon'|^2 \|_{L^\infty (K)} \leqslant b + M +M^2 \leqslant C\]
so that 
\[\left|\int_y^x \varepsilon^2 v_\varepsilon''(t)^2 \right|^{1/2} \leqslant C|x-y|^{1/2}.\]
We deduce 
\[ \int_K \rho_\varepsilon (x-y) \left|\varepsilon |v_\varepsilon'(x)|^2 -\varepsilon |v_\varepsilon'(y)|^2\right|dx \leqslant C_K \int_K |x-y|^{1/2} \frac{1}{\varepsilon^\alpha} \rho_\varepsilon \left(\frac{x-y}{\varepsilon^\alpha}\right)dx.\]
Changing variable $s= \dfrac{x-y}{\varepsilon^\alpha}$, we rewrite the integral as 
\[\int_{(K-y)/\varepsilon^\alpha} \varepsilon^{\alpha/2} |s|^{1/2} \rho (s) ds \leqslant C \varepsilon^{\alpha/2}.\]
Finally, 
\[\int_K \rho_\varepsilon (x-y) \left|\varepsilon |v_\varepsilon'(x)|^2 -\varepsilon |v_\varepsilon'(y)|^2\right| dx \leqslant C_K\varepsilon^{\alpha/2}.\]
Combining \eqref{énergie convolution} and \eqref{différence convolution} finishes the proof. 
\end{proof}

We are now in position to prove \autoref{vitesse sur compact}. 

\begin{proof}[Proof of \autoref{vitesse sur compact}]
It is sufficient to prove the inequality for $K= [\, 0,x_{min} - \delta] \cup [x_{min} + \delta, L]$ for any $\delta >0$. In the sequel, we fix $\delta >0$.  
We adapt the argument used by \cite[Lemma 3.1]{BMRb23} and \cite[Lemma 6.3]{FLS09}. Multiplying both sides of the equation \eqref{eq : eq4} by $v_\varepsilon - 1$ and integrating by parts over $[0, x_{min} - \delta]$ yields : 
\[\int_0^{x_{min} -\delta} \left( \frac{(1-v_\varepsilon)^2}{\varepsilon} + \varepsilon |v_\varepsilon '|^2\right) = \varepsilon v_\varepsilon '(x_{min} - \delta) (v_\varepsilon (x_{min} - \delta) - 1) + \int_0^{x_{min}-\delta} \mu_\varepsilon (v_\varepsilon - 1) + \int_0^{x_{min} - \delta} c_\varepsilon^2 \frac{v_\varepsilon (1-v_\varepsilon) }{(\eta_\varepsilon + v_\varepsilon ^2)^2}.\]

We now bound each term on the right-hand side. Let $K' = K \cap [L/8, 7L/8]$ which is a compact subset of $[0,L]\setminus \{x_{min}\}$. According to \autoref{grosse estimée sur vprime}, $|v_\varepsilon '| \leqslant C_{K'}\varepsilon ^{-1/3}$ on $K'$. Using \autoref{ellip2}, there exists a constant $b>0$ such that $v_\varepsilon \geqslant 1-b\varepsilon$ on $K$. This implies that $\mu_\varepsilon$ is uniformly bounded on $K$ (see the inequality \eqref{ineqmu}). Consequently :   
\[\varepsilon\underbrace{| v_\varepsilon '(x_{min} - \delta)|}_{\leqslant C_{K'} \varepsilon^{-1/3}}\underbrace{|v_\varepsilon (x_{min} - \delta) - 1|}_{\leqslant b \varepsilon} \leqslant C_\delta \varepsilon^{5/3}.\]
We also have 
\[\left|\int_0^{x_{min}-\delta}\underbrace{\mu_\varepsilon}_{\leqslant M} \underbrace{(v_\varepsilon - 1)}_{\geqslant -b\varepsilon}\right| \leqslant C_\delta\varepsilon,\]
as well as
\[\left|\int_0^{x_{min} - \delta} c_\varepsilon^2 (1-v_\varepsilon) \frac{v_\varepsilon}{(\eta_\varepsilon + v_\varepsilon ^2)^2}\right| \leqslant c_\varepsilon ^2 \int_0^{x_{min} - \delta} \underbrace{|1-v_\varepsilon|}_{\leqslant b\varepsilon} \frac{1}{\underbrace{v_\varepsilon^3}_{\leqslant C}}\leqslant C_\delta \varepsilon.\]
We deduce that 
\[\int_0^{x_{min} -\delta} \left( \frac{(1-v_\varepsilon)^2}{\varepsilon} + \varepsilon |v_\varepsilon '|^2\right) \leqslant C_\delta \varepsilon.\]
We proceed similarly on $[x_{min} +\delta, L]$ to conclude. 
\end{proof}

Once we get \autoref{vitesse sur compact}, we can adapt the proof of \autoref{grosse estimée sur vprime} to obtain the following :

\begin{corollary}\label{grosse estimée sur v' avec bord}
Let $K$ be a connected compact set of $[\, 0,L\, ] \setminus \{x_{min}\}$. Then there exists a constant $C_K$ such that for all $y\in K$ : 

\[\underset{K}{\sup}\,  \varepsilon |v_\varepsilon '|^2 \leqslant C_K \varepsilon^{1/3}.\]
\end{corollary}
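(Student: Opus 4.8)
The plan is to adapt the proof of \autoref{grosse estimée sur vprime} almost verbatim, now that \autoref{vitesse sur compact} gives the energy estimate $\int_K \varepsilon|v_\varepsilon'|^2 \leqslant C_K\varepsilon$ on compact subsets of $[0,L]\setminus\{x_{min}\}$ that are \emph{allowed to touch the boundary} $\{0,L\}$. The only thing to check is that the two ingredients used in that earlier proof — the elliptic estimates ($|u_\varepsilon'|\leqslant M$, $|\mu_\varepsilon|\leqslant M$, $v_\varepsilon\geqslant 1-b\varepsilon$) and the energy bound $\int_K\varepsilon|v_\varepsilon'|^2\leqslant C_K\varepsilon$ — are still available when $K$ abuts $0$ or $L$, and that the convolution argument can still be run there.

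First I would fix a connected compact $K\subset[0,L]\setminus\{x_{min}\}$ and $y\in K$; without loss of generality $K=[0,x_{min}-\delta]$ for some $\delta>0$ (the symmetric case $[x_{min}+\delta,L]$ being identical). By \autoref{ellip2} there are $b,M>0$ with $v_\varepsilon\geqslant 1-b\varepsilon$, $|u_\varepsilon'|\leqslant M$, $|\mu_\varepsilon|\leqslant M$ on a slightly larger compact $K'\subset[0,L]\setminus\{x_{min}\}$ containing $K$, and by \autoref{vitesse sur compact} we have $\int_{K'}\varepsilon|v_\varepsilon'|^2\leqslant C_{K'}\varepsilon$. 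Next I would re-use inequality \eqref{eq21}, which holds for every positive smooth test function supported in the interior of $K'$; the subtlety is that at a boundary point $y=0$ a standard mollifier $\rho_\varepsilon(\cdot-y)$ is not supported in $(0,L)$. This is handled exactly as in the proof of \autoref{vitesse sur compact} itself: one works with $y$ in the interior part $K\cap[L/8,7L/8]$ using \autoref{grosse estimée sur vprime} directly, and near the endpoints one uses instead the already-proven pointwise control coming out of the Schauder/mean-value machinery — more concretely, the bound $\|\varepsilon v_\varepsilon''\|_{L^\infty(K')}\leqslant b+M+M^2\leqslant C$ from \autoref{ellip2} together with the identity $\varepsilon|v_\varepsilon'(x)|^2-\varepsilon|v_\varepsilon'(y)|^2 = 2\varepsilon\int_y^x v_\varepsilon''v_\varepsilon'$, which by Cauchy--Schwarz and $\int_{K'}\varepsilon|v_\varepsilon'|^2\leqslant C_{K'}\varepsilon$ gives $|\varepsilon|v_\varepsilon'(x)|^2-\varepsilon|v_\varepsilon'(y)|^2|\leqslant C_{K'}|x-y|^{1/2}$ for all $x,y\in K'$.

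From this last Hölder-type oscillation bound the result follows by an averaging argument that needs no mollifier supported in the open interval: choosing any $x\in K'$ with $|x-y|\leqslant \varepsilon^{2/3}$, one has $\varepsilon|v_\varepsilon'(y)|^2\leqslant \varepsilon|v_\varepsilon'(x)|^2+C_{K'}\varepsilon^{1/3}$; averaging over such $x$ (using that the interval $(y-\varepsilon^{2/3},y+\varepsilon^{2/3})\cap K'$ has measure $\gtrsim\varepsilon^{2/3}$, which holds uniformly for $y\in K$ since $K$ is a nondegenerate subinterval of $K'$) gives $\varepsilon|v_\varepsilon'(y)|^2\leqslant \varepsilon^{-2/3}\int_{K'}\varepsilon|v_\varepsilon'|^2 + C_{K'}\varepsilon^{1/3}\leqslant C_{K'}\varepsilon^{1/3}$. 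Taking the supremum over $y\in K$ yields the claim.

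The main obstacle is purely a bookkeeping one: making sure that all the constants ($b$, $M$, $C_{K'}$) in \autoref{ellip2} and \autoref{vitesse sur compact} can be taken uniform on one fixed compact neighborhood $K'$ of $K$ inside $[0,L]\setminus\{x_{min}\}$, and that the averaging window $(y-\varepsilon^{2/3},y+\varepsilon^{2/3})$ stays inside $K'$ for $\varepsilon$ small, even when $y$ is an endpoint $0$ or $L$ — which is fine because $K'$ can be chosen to contain a one-sided neighborhood of each endpoint of $K$. No genuinely new estimate is required beyond those already established; the content is that \autoref{vitesse sur compact} upgraded the energy bound up to the boundary, and the oscillation argument of \autoref{grosse estimée sur vprime} is insensitive to whether $y$ is interior.
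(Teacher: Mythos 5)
Your proof is correct and follows essentially the same two-step structure as the paper's: (1) use the energy bound $\int_{K'}\varepsilon|v_\varepsilon'|^2\leqslant C_{K'}\varepsilon$ from \autoref{vitesse sur compact}, and (2) use the Hölder-type oscillation bound $|\varepsilon|v_\varepsilon'(x)|^2-\varepsilon|v_\varepsilon'(y)|^2|\leqslant C_{K'}|x-y|^{1/2}$ coming from $\|\varepsilon v_\varepsilon''\|_{L^\infty(K')}\leqslant C$, then combine them by averaging over a window of width $\varepsilon^{2/3}$. The only cosmetic difference is that you use an unweighted box average over $(y-\varepsilon^{2/3},y+\varepsilon^{2/3})\cap K'$ while the paper convolves against a mollifier $\rho_\varepsilon$ with parameter $\varepsilon^{2/3}$; in fact your version is slightly cleaner near the endpoints $\{0,L\}$, since the paper's reuse of \eqref{différence convolution} implicitly assumes $\int_K\rho_\varepsilon(\cdot-y)=1$, which can fail when $y$ is close to the boundary, whereas your one-sided window normalizes correctly by construction.
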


\begin{proof}
Fix $y\in K$. 
Take $\rho_{\varepsilon}$ a mollifier, with parameter $\varepsilon^\alpha$ as in the proof of \autoref{grosse estimée sur vprime} and $\alpha >0$ to be determined. The same argument used to establish \eqref{différence convolution} yields
\[\left|\int_K \rho_\varepsilon (x-y) \varepsilon |v_\varepsilon'(x)|^2 dx -\varepsilon |v_\varepsilon'(y)|^2 \right|\leqslant C_K \varepsilon^{\alpha/2}.\]
On the other hand, using \autoref{vitesse sur compact}, 
\[\begin{array}{lll}
\displaystyle\int_K \rho_\varepsilon(x-y) \varepsilon |v_\varepsilon '(x)|^2 dx &\leqslant &\displaystyle\int_K \rho_\varepsilon(x-y) \left(\frac{(1-v_\varepsilon)^2}{\varepsilon} + \varepsilon |v_\varepsilon '(x)|^2 \right)dx\\
&&\\
&\leqslant &\displaystyle\frac{\|\rho \|_\infty}{\varepsilon^\alpha}\int_K \left(\frac{(1-v_\varepsilon)^2}{\varepsilon} + \varepsilon |v_\varepsilon '(x)|^2 \right)dx\leqslant  C_K \varepsilon^{1-\alpha}.\\
\end{array}\]
Combining both estimates for $\alpha = 2/3$ yields the desired result. 
\end{proof}

The \autoref{vitesse sur compact} implies that $\nu$ is supported only on $\{x_{min}\}$ and thus $\nu = \alpha \delta_{x_{min}}$ for some $\alpha$. 
%
%
%

\subsection{Equipartition of the energy}

In this subsection, our goal is to prove the following result, which corresponds to the item $(iv)$ of \autoref{Theorem 1}, and which implies $(v)$. 

\begin{proposition}\label{equipartition}
We have 

\begin{equation}\label{integralequi}
\lim\limits_{\varepsilon \rightarrow 0} \int_0^L \left|\frac{(1-v_\varepsilon)^2}{\varepsilon} - \varepsilon (v_\varepsilon ')^2 \right| = 0.
\end{equation}
\end{proposition}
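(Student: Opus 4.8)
The plan is to split the integral over the compact region away from $x_{min}$, where Proposition~\ref{vitesse sur compact} gives a quantitative $O(\varepsilon)$ bound, and a small neighbourhood of $x_{min}$, where we use the discrepancy $d_\varepsilon$ together with the fact that the localized energy converges to $\alpha\delta_{x_{min}}$.

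**Step 1: decompose.** Fix $\delta>0$ and write $\int_0^L = \int_{[0,L]\setminus I_\delta(x_{min})} + \int_{I_\delta(x_{min})}$. On the compact set $K_\delta := [0,L]\setminus I_\delta(x_{min})$, the pointwise estimate
$$\left|\frac{(1-v_\varepsilon)^2}{\varepsilon}-\varepsilon(v_\varepsilon')^2\right| \leqslant \frac{(1-v_\varepsilon)^2}{\varepsilon}+\varepsilon(v_\varepsilon')^2$$
combined with Proposition~\ref{vitesse sur compact} gives $\int_{K_\delta}\bigl|\tfrac{(1-v_\varepsilon)^2}{\varepsilon}-\varepsilon(v_\varepsilon')^2\bigr| \leqslant C_\delta\varepsilon \to 0$. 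So the contribution away from $x_{min}$ vanishes for each fixed $\delta$.

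**Step 2: near $x_{min}$.** Here I would use the identity $\frac{(1-v_\varepsilon)^2}{\varepsilon}-\varepsilon(v_\varepsilon')^2 = d_\varepsilon + (\eta_\varepsilon+v_\varepsilon^2)|u_\varepsilon'|^2$, where $d_\varepsilon$ is the discrepancy from \eqref{discrepancy}. On $I_\delta(x_{min})$ the term $(\eta_\varepsilon+v_\varepsilon^2)|u_\varepsilon'|^2\,dx$ converges weak-$\star$ to $|u_1'|^2\,dx$ by part $(iii)$ of Theorem~\ref{Theorem 1}, and since $u_1$ is either $u_{aff}$ or $u_{jump}$, this measure of $I_\delta(x_{min})$ is $O(\delta)$; hence $\int_{I_\delta(x_{min})}(\eta_\varepsilon+v_\varepsilon^2)|u_\varepsilon'|^2 \to \int_{I_\delta(x_{min})}|u_1'|^2 = O(\delta)$. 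It remains to control $\int_{I_\delta(x_{min})}|d_\varepsilon|$. Because $d_\varepsilon$ is monotone on each side of $L/2$ (Proposition~\ref{dprime}), and because $\int_0^L|d_\varepsilon| \leqslant AT_\varepsilon(u_\varepsilon,v_\varepsilon)\leqslant C$, I would argue that away from $x_{min}$ (where the full AT-energy density is $O(\varepsilon)$ by Proposition~\ref{vitesse sur compact}) one has $d_\varepsilon \to 0$ pointwise, so the monotonicity forces $d_\varepsilon$ to be small (in fact, using $d_\varepsilon(x)=d_\varepsilon(y)+\int_y^x 2v_{0,\varepsilon}'\mu_\varepsilon$ with $y$ a boundary point of $I_\delta(x_{min})$ where $|d_\varepsilon(y)|\leqslant C_\delta\varepsilon$) on all of $I_\delta(x_{min})$ up to the variation of $d_\varepsilon$, which is $\int |d_\varepsilon'| = 2\int|v_{0,\varepsilon}'\mu_\varepsilon|$. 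This last integral concentrates near $L/2$; if $x_{min}\neq L/2$ one can take $\delta$ small enough that $I_\delta(x_{min})\not\ni L/2$ and $d_\varepsilon$ is monotone there with small boundary values, giving $\int_{I_\delta(x_{min})}|d_\varepsilon| = O(\delta) + o(1)$; if $x_{min}=L/2$ one splits $I_\delta$ at $L/2$ and uses $|d_\varepsilon(L/2)| \leqslant C$ (from Lemma~\ref{estimatediscrepancy}'s first-case argument, which needs only the $L^1$ bound, not $c_0>0$) together with monotonicity to bound $\int_{I_\delta}|d_\varepsilon|$ by $2\delta\,|d_\varepsilon(L/2)| + \int|d_\varepsilon'|\cdot 2\delta$; but $|d_\varepsilon(L/2)|$ need not be bounded, so instead I would note $d_\varepsilon \leqslant \frac{(1-v_\varepsilon)^2}{\varepsilon} \leqslant \frac{1}{\varepsilon}$ and $d_\varepsilon \geqslant -\varepsilon(v_\varepsilon')^2 - (\eta_\varepsilon+v_\varepsilon^2)|u_\varepsilon'|^2$, and use the weak-$\star$ convergence $\bigl(\frac{(1-v_\varepsilon)^2}{\varepsilon}+\varepsilon(v_\varepsilon')^2\bigr)dx \rightharpoonup \alpha\delta_{x_{min}}$ to show $\int_{I_\delta(x_{min})}\bigl(\frac{(1-v_\varepsilon)^2}{\varepsilon}+\varepsilon(v_\varepsilon')^2\bigr) \to \alpha + O(\delta)$, hence $\limsup_\varepsilon \int_{I_\delta(x_{min})}|d_\varepsilon| \leqslant \alpha + C\delta$.)

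**Step 3: conclude.** Putting the pieces together, $\limsup_{\varepsilon\to 0}\int_0^L\bigl|\tfrac{(1-v_\varepsilon)^2}{\varepsilon}-\varepsilon(v_\varepsilon')^2\bigr|$ is dominated, for every $\delta>0$, by $0 + C\delta + (\text{terms} \to 0)$; letting $\delta\to 0$ kills the bad part \emph{provided} one first improves the near-$x_{min}$ estimate to genuinely go to zero. The cleanest way to do that — and the main obstacle — is to prove that $d_\varepsilon \to 0$ uniformly on $I_\delta(x_{min})$ as well, i.e. the \emph{equality case of the arithmetic–geometric inequality} forces the two phase-field terms to coincide even at the concentration point. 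I expect this to follow by showing $\liminf$ of $\varepsilon(v_\varepsilon')^2 + (\eta_\varepsilon+v_\varepsilon^2)|u_\varepsilon'|^2$ cannot strictly exceed $\liminf$ of $\frac{(1-v_\varepsilon)^2}{\varepsilon}$ near $x_{min}$ via a blow-up/rescaling argument at scale $\varepsilon$ (change of variables $x = x_{min} + \varepsilon t$, so that $v_\varepsilon(x_{min}+\varepsilon t)$ converges to a profile solving the limiting ODE $-V'' + (V-1) = 0$ with $V\geqslant 0$, whose discrepancy $\frac{(1-V)^2}{1} - (V')^2$ is constant and, being integrable, must be zero), exactly as in the constant-discrepancy analysis of \cite{FLS09}, \cite{BMRb23}. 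Once $d_\varepsilon \to 0$ in $L^1_{loc}$ including near $x_{min}$ — which one gets because the variation $\int|d_\varepsilon'|$, while possibly unbounded globally, is controlled on $I_\delta(x_{min})$ by the same blow-up once $x_{min}\neq L/2$, and for $x_{min}=L/2$ the symmetry argument gives $d_\varepsilon(L/2) = O(1)$ after rescaling — the proof closes: $\int_0^L|d_\varepsilon| \to 0$, and since $\int_0^L (\eta_\varepsilon+v_\varepsilon^2)|u_\varepsilon'|^2 \to \int_0^L |u_1'|^2$ while $\int_0^L\frac{(1-v_\varepsilon)^2}{\varepsilon} - \int_0^L\varepsilon(v_\varepsilon')^2 = \int_0^L d_\varepsilon + \int_0^L(\eta_\varepsilon+v_\varepsilon^2)|u_\varepsilon'|^2 - 2\int_0^L \varepsilon(v_\varepsilon')^2$ is not quite the $L^1$ statement, one concludes directly from $\bigl|\frac{(1-v_\varepsilon)^2}{\varepsilon}-\varepsilon(v_\varepsilon')^2\bigr| = |d_\varepsilon + (\eta_\varepsilon+v_\varepsilon^2)|u_\varepsilon'|^2|$ with the first term $\to 0$ in $L^1$ and the second supported (asymptotically) off $x_{min}$ where, by Proposition~\ref{vitesse sur compact}, the left side is $O(\varepsilon)$, while near $x_{min}$ both $\frac{(1-v_\varepsilon)^2}{\varepsilon}$ and $\varepsilon(v_\varepsilon')^2$ tend to $\frac{\alpha}{2}\delta_{x_{min}}$ by the blow-up, making their difference vanish in $L^1$.
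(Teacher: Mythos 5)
Your proposal differs substantially from the paper's argument, and as written it has a genuine gap that you yourself notice but do not repair. The paper does not split near/far from $x_{min}$ at all: it writes the integrand as $d_\varepsilon + c_\varepsilon u_\varepsilon'$, observes from \autoref{grosse estimée sur v' avec bord} that the boundary values satisfy $d_\varepsilon(0)+c_\varepsilon u_\varepsilon'(0)=-\varepsilon|v_\varepsilon'(0)|^2\to 0$ (similarly at $L$), splits the integral at $x_\varepsilon$ to control the sign of $u_\varepsilon'(x)-u_\varepsilon'(\text{endpoint})$ and at $L/2$ to control the sign of $d_\varepsilon'$, and then makes two \emph{explicit} computations: for the $d_\varepsilon'$ part it uses the formula $d_\varepsilon'=2v_{0,\varepsilon}'\mu_\varepsilon$ and \autoref{exprmu} to reduce to an integral of $c_{0,\varepsilon} u_{0,\varepsilon}'$, yielding $c_{0,\varepsilon}(a_0-Lc_{0,\varepsilon}/(1+\eta_\varepsilon))\to c_0(a_0-Lc_0)=0$; for the $c_\varepsilon u_\varepsilon'$ part it uses the monotonicity of $v_\varepsilon$ and the identity $c_\varepsilon(a_1-Lc_\varepsilon/(1+\eta_\varepsilon))\to c_1(a_1-Lc_1)=0$. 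These two algebraic cancellations, both relying on the fact that the limit slopes are in $\{0,a_0/L\}$ and $\{0,a_1/L\}$ respectively, are the heart of the proof and are entirely absent from your proposal.

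Concretely, here is where your proof breaks. In Step~2 the best bound you actually obtain on the contribution near $x_{min}$ is $\limsup_\varepsilon\int_{I_\delta(x_{min})}|d_\varepsilon|\leqslant \alpha+C\delta$, and $\alpha$ may equal $2$; this does not vanish as $\delta\to 0$, so Step~3's ``putting the pieces together'' fails. Your proposed repair is a blow-up at scale $\varepsilon$ showing $d_\varepsilon\to 0$ in $L^1_{\mathrm{loc}}$ near $x_{min}$, but this is not carried out and is suspect for two reasons. First, the final sentence invokes item $(v)$ of \autoref{Theorem 1} (``near $x_{min}$ both $\tfrac{(1-v_\varepsilon)^2}{\varepsilon}$ and $\varepsilon(v_\varepsilon')^2$ tend to $\tfrac{\alpha}{2}\delta_{x_{min}}$''), which is \emph{deduced from} the equipartition you are trying to prove; the argument is circular. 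Second, the blow-up profile would solve not the unconstrained ODE $-V''+(V-1)=0$ but a limit involving the rescaled measure $\mu_\varepsilon$, whose support $F_\varepsilon$ can cluster near $x_\varepsilon$ when $x_{min}=L/2$; the assertion that the limiting discrepancy is constant and hence zero does not carry over without a careful analysis of the constraint, which is precisely what the paper's explicit use of $\mu_\varepsilon=v_{0,\varepsilon}(|u_\varepsilon'|^2-|u_{0,\varepsilon}'|^2)\mathbbm{1}_{F_\varepsilon}$ circumvents. To close the gap you would need to (a) abandon the local estimate near $x_{min}$, (b) instead propagate the smallness of $d_\varepsilon+c_\varepsilon u_\varepsilon'$ from the boundary points $0,L$ inward using the one-signed structure of $d_\varepsilon'$ on each half, and (c) invoke the slope identities $c_0(a_0-Lc_0)=0$ and $c_1(a_1-Lc_1)=0$ to cancel the remaining integrals.
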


We recover here a classical result in the case of the minimizers of $AT_\varepsilon$, as it corresponds to the equality case of the arithmetico-geometric inequality used in the proof of the $\Gamma$-convergence of $AT_\varepsilon$ to $MS$. It is used in \cite{FLS09} and \cite{BMRb23} to compute the value of the weight $\alpha$ associated to the limit Dirac-measure of the phase field term of $AT_\varepsilon$. An equipartition of the energy result also exists for the Ambrosio-Tortorelli functional with an additional fidelity term, see \cite{L10}.

\smallskip

In \cite{FLS09} and \cite{L10}, the Neumann conditions on \eqref{eq : eq4} imply that the difference $\dfrac{(1-v_\varepsilon)^2}{\varepsilon} - \varepsilon |v_\varepsilon '|^2$ is positive. In \cite{BMRb23} and \cite{FLS09}, the proof of the equipartition of the energy hinges on the fact that $d_\varepsilon$ is constant and tends to $-c_0^2$. 

\medskip

In our case, recalling the equality :
\[\frac{(1-v_\varepsilon)^2}{\varepsilon} - \varepsilon |v_\varepsilon '|^2 = d_\varepsilon + c_\varepsilon u_\varepsilon ',\]
the first ingredient is to prove that $d_\varepsilon + c_\varepsilon u_\varepsilon ' \rightarrow 0$ at the boundary point $0$ and $L$ of $(0,L)$. It rests on \autoref{grosse estimée sur v' avec bord}. The second ingredient is to bound $d_\varepsilon '$ to control the variation of $v_\varepsilon$ on the full interval. To deal with the signs of each terms and get rid of the absolute values, we need to separate our integrals along $L/2$ and $x_\varepsilon$.

\begin{proof}[Proof of \autoref{equipartition}]

\autoref{grosse estimée sur v' avec bord} implies in particular that $\varepsilon |v_\varepsilon '|^2(0), \varepsilon |v_\varepsilon '|^2(L) \rightarrow 0$ as $\varepsilon \rightarrow 0$, by recalling that $x_{min} \in [L/4, 3L/4]$ (\autoref{corollaire position xmin}). Then we have 
\begin{equation}\label{discrepancy en 0}
d_\varepsilon (0) + c_\varepsilon u_\varepsilon '(0) = -\varepsilon |v_\varepsilon '|^2(0) \rightarrow 0
\end{equation}
and similarly, $d_\varepsilon (L) + c_\varepsilon u_\varepsilon '(L) \rightarrow 0$. 

By \autoref{dprime}, $d_\varepsilon \in C^{0,1}([0,L])$, so we can write $d_\varepsilon (x) = d_\varepsilon (0) + \int_0^x d_\varepsilon '$ on $[0,L/2]$ and $d_\varepsilon (x) = d_\varepsilon (L) + \int_L^x d_\varepsilon '$ on $[L/2, L]$, with $d_\varepsilon '\geqslant 0$ on $[0,L/2]$ and $d_\varepsilon ' \leqslant 0$ on $[L/2,L]$. However, the sign of $u_\varepsilon '(x) -u_\varepsilon '(0)$ depends on the position of $x$ with respect to $x_\varepsilon$. So we split the integral in the left-hand side of \eqref{integralequi} along $x_\varepsilon$ and later along $L/2$. 

\[\begin{array}{llll}
\displaystyle\int_0^L \left|\frac{(1-v_\varepsilon)^2}{\varepsilon} - \varepsilon |v_\varepsilon '|^2\right| &=&& \displaystyle \int_0^{x_\varepsilon} \left|d_\varepsilon (0) + c_\varepsilon u_\varepsilon '(0) + \int_0^x d_\varepsilon '(t)dt + c_\varepsilon (u_\varepsilon '(x) - u_\varepsilon '(0))\right|dx \\
&&\\
&&+&\displaystyle \int_{x_\varepsilon}^L \left|d_\varepsilon (L) +c_\varepsilon u_\varepsilon '(L) + \int_L^x d_\varepsilon '(t) dt + c_\varepsilon (u_\varepsilon '(x) - u_\varepsilon '(L) \right| dx \\
&&\\
&\leqslant&& \displaystyle \underbrace{x_\varepsilon |d_\varepsilon (0) + c_\varepsilon u_\varepsilon '(0)|}_{:=I} + \underbrace{\displaystyle\int_0^{x_\varepsilon} \left| \int_0^x d_\varepsilon '\right|}_{:=II} + \underbrace{\int_0^{x_\varepsilon} |c_\varepsilon (u_\varepsilon '(x) - u_\varepsilon '(0))| dx}_{:=III}\\
&&\\
&& \displaystyle + & \displaystyle \underbrace{(L-x_\varepsilon) |d_\varepsilon (L) + c_\varepsilon u_\varepsilon '(L)|}_{:=IV} + \displaystyle\underbrace{\int_{x_\varepsilon}^L \left| \int_x^L d_\varepsilon '\right|}_{:=V} + \underbrace{\int_{x_\varepsilon}^L |c_\varepsilon (u_\varepsilon '(x) - u_\varepsilon '(L))| dx}_{:=VI}\\
\end{array}\]

We deal separately with $I+IV$, $II+V$ and $III+ VI$. 

\medskip

\underline{$I+IV$ :} This tends to $0$ according to \eqref{discrepancy en 0}. 

\medskip

\underline{$II +V$ :} We prove that 

\[II+V \leqslant \int_0^{L/2} \left(\int_0^x d_\varepsilon '\right) dx - \int_{L/2}^L \left(\int_x^L d_\varepsilon '\right) dx + \left| \frac{L}{2} -x_\varepsilon \right| \left|\int_0^L d_\varepsilon '\right|\]

Suppose that $x_\varepsilon \geqslant L/2$ (the case $x_\varepsilon \leqslant L/2$ can be treated in a similar way). Recall that $d_\varepsilon ' \geqslant 0$ on $[0,L/2]$ and $d_\varepsilon ' \leqslant 0$ on $[L/2, L]$. Then $II+V$ can be rewritten as 

\[\begin{array}{lll}
\displaystyle\int_0^{x_\varepsilon} \left|\int_0^x d_\varepsilon '\right|dx + \int_{x_\varepsilon}^L \left|\int_x^Ld_\varepsilon '\right|dx &=& \displaystyle\int_0^{L/2} \left|\int_0^x \underbrace{d_\varepsilon '}_{\geqslant 0}\right|dx + \int_{L/2}^{x_\varepsilon} \left|\int_0^L d_\varepsilon' - \int_x^L \underbrace{d_\varepsilon '}_{\leqslant 0}\right|dx + \int_{x_\varepsilon}^L  \left|\int_x^L\underbrace{ d_\varepsilon ' }_{\leqslant 0} \right| dx \\
&&\\
&\leqslant& \displaystyle\int_0^{L/2} \left(\int_0^x d_\varepsilon '\right) dx - \int_{L/2}^{x_\varepsilon} \left(\int_x^L d_\varepsilon '\right) dx + \int_{L/2}^{x_\varepsilon} \left|\int_0^L d_\varepsilon'\right| -\int_{x_\varepsilon}^L \left(\int_x^L d_\varepsilon '\right) dx \\
&&\\
&=& \displaystyle \int_0^{L/2} \left(\int_0^x d_\varepsilon '\right) dx - \int_{L/2}^L \left(\int_x^L d_\varepsilon '\right) dx + \left| \frac{L}{2} -x_\varepsilon \right| \left|\int_0^L d_\varepsilon '\right|.\\
\end{array}\]

Now, as $u_\varepsilon' (0) =u_\varepsilon '(L) = \dfrac{c_\varepsilon}{\eta_\varepsilon + 1}$, by \eqref{discrepancy en 0} $d_\varepsilon (0)$ and $d_\varepsilon (L)$ both converge to $-c_1^2$. Then 
\[\left| \frac{L}{2} -x_\varepsilon \right| \left|\int_0^L d_\varepsilon '\right| = \left| \frac{L}{2} -x_\varepsilon \right| |d_\varepsilon (L) - d_\varepsilon (0)| \rightarrow 0.\] 
We study the remaining terms separately, recalling the expressions \eqref{derivative discrepancy} and \eqref{eq : mu_e}.  

\[\begin{array}{lllll}
\displaystyle \int_{0}^{L/2} \int_{0}^x d_\varepsilon ' dx &\leqslant & \displaystyle 2(c_{0,\varepsilon}^2 -c_\varepsilon^2)\int_{0}^{L/2} \int_{0}^x -\frac{v_{0,\varepsilon}' v_{0,\varepsilon}}{(v_{0,\varepsilon}^2 +\eta_\varepsilon)^2}  dx &=& \displaystyle (c_{0,\varepsilon}^2 -c_\varepsilon^2)\int_{0}^{L/2} \left[\frac{1}{v_{0,\varepsilon}^2 +\eta_\varepsilon}\right]_{0}^x dx \\
&&\\
&\leqslant& \displaystyle c_{0,\varepsilon}^2 \int_{0}^{L/2} \left(\frac{1}{v_{0,\varepsilon}^2(x) +\eta_\varepsilon} - \frac{1}{v_{0,\varepsilon}^2(0) +\eta_\varepsilon}\right) dx &=&\displaystyle \int_{0}^{L/2} \frac{c_{0,\varepsilon}^2}{v_{0,\varepsilon}^2(x)+ \eta_\varepsilon} dx -  \frac{c_{0,\varepsilon}^2}{v_{0,\varepsilon}^2(0) +\eta_\varepsilon} \cdot \frac{L}{2}\\
&&\\
&=& \displaystyle \int_{0}^{L/2} c_{0,\varepsilon} u_{0,\varepsilon} '(x) dx -  \frac{c_{0,\varepsilon}^2}{1+\eta_\varepsilon} \cdot \frac{L}{2} && \\   
\end{array}\]
Similarly, one has 
\[ - \int_{L/2}^L \left(\int_x^L d_\varepsilon '\right) dx \leqslant  \int_{L/2}^{L} c_{0,\varepsilon} u_{0,\varepsilon} '(x) dx - c_{0,\varepsilon}^2 \frac{1}{1+\eta_\varepsilon} \cdot \frac{L}{2}.\]
Summing both inequalities yields
\[\int_0^{L/2} \left(\int_0^x d_\varepsilon '\right) dx - \int_{L/2}^L \left(\int_x^L d_\varepsilon '\right) dx \leqslant  c_{0,\varepsilon} \underbrace{\int_0^L u_{0,\varepsilon} '(x) dx}_{u_{0,\varepsilon} (L) - u_{0,\varepsilon} (0)} - Lc_{0,\varepsilon}^2 \frac{1}{1+\eta_\varepsilon} = c_{0,\varepsilon} \left(a_0 - L \frac{c_{0,\varepsilon}}{1+\eta_\varepsilon}\right) \rightarrow c_0 (a_0 - Lc_0) =0,\]
where we used that $c_0\in  \{0, a_0/L\}$ (see \cite[Lemma 2.4]{BMRb23}).
We conclude that $II+V \rightarrow 0$. 

\medskip

\underline{$III+ VI$ :} Recall that $v_\varepsilon$ is non-increasing on $[0,x_\varepsilon]$ and non-decreasing on $[x_\varepsilon , L]$. Then, $c_\varepsilon u_\varepsilon '$ is itself non-decreasing on $[0,x_\varepsilon]$ and non-increasing on $[x_\varepsilon , L]$. This allows us to get rid of the absolute values in the expressions, so that $III+VI$ rewrites as  : 
\[\begin{array}{lll}
III+VI &=& \displaystyle \int_0^{x_\varepsilon} c_\varepsilon (u_\varepsilon '(x) - u_\varepsilon '(0)) dx + \int_{x_\varepsilon}^L c_\varepsilon (u_\varepsilon '(x) - u_\varepsilon '(L)) dx\\
&&\\
&=& \displaystyle \int_0^L c_\varepsilon u_\varepsilon '(x)dx - c_\varepsilon (x_\varepsilon u_\varepsilon '(0) + (L-x_\varepsilon) u_\varepsilon '(L)) \; =\;  c_\varepsilon (u_\varepsilon (L) - u_\varepsilon (0)) - L c_\varepsilon \frac{c_\varepsilon}{1+\eta_\varepsilon}\\
&&\\
&\rightarrow& c_1 (a_1 - c_1 L) =0\\
\end{array}\]
where we used \autoref{slope}. In conclusion, we get $III+VI \rightarrow 0$. 

\medskip

This completes the proof of the equipartition of the energy.
\end{proof}  

\subsection{Computation of the weight of the limit measure}\label{section 4.7}

In this section, we compute the weight $\alpha$ of the Dirac mass $\delta_{x_{min}}$, which completes the proof of the item $(ii)$ in \autoref{Theorem 1}. 

To do so, we use the equipartition principle, as it is done in \cite[section 6]{FLS09} and \cite[section 3]{BMRb23}. 

\begin{proof}[Proof of $(ii)$]
The equipartition principle ensures that : 
\[\left|\int_0^L \left( \frac{(1-v_\varepsilon)^2}{\varepsilon} + \varepsilon |v_\varepsilon '|^2\right) - 2|v_\varepsilon '| (1-v_\varepsilon) \right| = \int_0^L \left |\frac{1-v_\varepsilon}{\sqrt{\varepsilon}}-\sqrt{\varepsilon} v_\varepsilon ' \right|^2 \leqslant \int_0^L \left |\frac{(1-v_\varepsilon)^2}{\varepsilon}-\varepsilon |v_\varepsilon '|^2 \right| \rightarrow 0,\]
so that it is enough to evaluate $\displaystyle\int_0^L 2|v_\varepsilon '| (1-v_\varepsilon)$. Recall that $v_\varepsilon$ is non-increasing on $[0,x_\varepsilon]$ and non-decreasing on $[x_\varepsilon , L]$, where $x_\varepsilon$ denotes the global minimum of $v_\varepsilon$. We have : 
\[
\displaystyle\int_0^L 2|v_\varepsilon '| (1-v_\varepsilon) = \displaystyle  \int_0^{x_\varepsilon} -2v_\varepsilon ' (1-v_\varepsilon) +  \int_{x_\varepsilon}^L 2v_\varepsilon ' (1-v_\varepsilon) = \displaystyle \left[(1-v_\varepsilon)^2\right]_0^{x_\varepsilon} + \left[-(1-v_\varepsilon)^2\right]_{x_\varepsilon}^L = 2(1-v_\varepsilon(x_\varepsilon))^2.\]
So the value of $\alpha$ depends on the limit of $v_\varepsilon (x_\varepsilon)$. 

\medskip

First, note that $\alpha \geqslant 0$ as the measures $\left(\frac{(1-v_\varepsilon)^2}{\varepsilon} + \varepsilon |v_\varepsilon'|^2\right) dx$ are non negative. 
Suppose that $\alpha >0$. We prove that $v_\varepsilon(x_\varepsilon) \rightarrow 0$. Suppose that it is not the case and that there exists a subsequence such that $v_\varepsilon (x_\varepsilon) \geqslant \beta >0$. Then $v_\varepsilon \geqslant \beta$ on $[0,L]$. According to \autoref{ellip}, it implies that $v_\varepsilon (x_\varepsilon ) \geqslant 1-b\varepsilon$. But then 
\[\left[(1-v_\varepsilon)^2\right]_0^{x_\varepsilon} + \left[-(1-v_\varepsilon)^2\right]_{x_\varepsilon}^L \rightarrow 0\]
which contradicts that $\alpha >0$. 
Hence, $v_\varepsilon (x_\varepsilon) \rightarrow 0$ and 
\[\left[(1-v_\varepsilon)^2\right]_0^{x_\varepsilon} + \left[-(1-v_\varepsilon)^2\right]_{x_\varepsilon}^L \rightarrow 2\]
We conclude that if $\alpha \neq 0$, then $\alpha = 2$. 
\end{proof}

As a last remark, we explicit the link between the convergence of $\varepsilon |v_{0,\varepsilon} '|^2$ and the value of $x_{min}$. According to \cite[Theorem 1.1]{BMRb23}, there are to cases : 

\begin{itemize}
\item[$\bullet$] If $\varepsilon |v_{0,\varepsilon} '|^2 \overset{\star}{\rightharpoonup} \delta_{L/2}$, then $L/2$ is a $v-$jump for $v_{0,\varepsilon}$. The obstacle condition implies that $L/2$ is also a $v-$jump for $v_\varepsilon$, which means that $x_{min} =L/2$. Note that $\varepsilon |v_{0,\varepsilon} '|^2 \overset{\star}{\rightharpoonup} \delta_{L/2}$ can happen both when $u_0= u_{aff}$ and $u_0= u_{jump}$. 

\item[$\bullet$] If $\varepsilon |v_{0,\varepsilon} '|^2 \overset{\star}{\rightharpoonup} 0$, then $v_{0,\varepsilon}$ admits no $v-$jump and $x_{min}$ is not always equal to $L/2$ (but still belongs to $[L/4, 3L/4]$, see \autoref{corollaire position xmin}). Note that in this case, we always have $u_0= u_{aff}$. 
\end{itemize}


\section{Appendix : Proof of \autoref{Theorem 2}}

The proof of the $\Gamma-$convergence of $\overline{AT}_\varepsilon$ is an adaptation the proof of the classical $\Gamma$-convergence result of $AT_\varepsilon$ to $MS$ (see \cite{AT92}). The main difficulty is to take into account the obstacle $\Gamma_0$ in the proof of the $\Gamma$-limsup. The argument is inspired by \cite[Lemma 6.2]{G05}. 

\subsection{Proof of the $\Gamma$-liminf}

\begin{lemma}
Let $(u_\varepsilon, v_\varepsilon)$ be a sequence in $L^2(0,L)^2$ converging to $(u,v)$ in $L^2(0,L)^2$. Then 

\[\overline{MS}(u,v) \leqslant \liminf\limits_{\varepsilon \rightarrow +\infty} \overline{AT_\varepsilon}(u_\varepsilon, v_\varepsilon).\]
\end{lemma}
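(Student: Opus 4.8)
The plan is to deduce the inequality from the classical Ambrosio--Tortorelli $\Gamma$-liminf and then, in the case $\Gamma_0=\{L/2\}$, to add the extra weight $2$ created by the obstacle. First I extract a (non-relabelled) subsequence along which $\overline{AT}_\varepsilon(u_\varepsilon,v_\varepsilon)$ converges to $\ell:=\liminf_{\varepsilon\to0}\overline{AT}_\varepsilon(u_\varepsilon,v_\varepsilon)$; if $\ell=+\infty$ there is nothing to prove, so assume $\ell<\infty$. Then for $\varepsilon$ small $(u_\varepsilon,v_\varepsilon)\in H^1(0,L)^2$, $v_\varepsilon\le v_{0,\varepsilon}$, and $\overline{AT}_\varepsilon(u_\varepsilon,v_\varepsilon)=AT_\varepsilon(u_\varepsilon,v_\varepsilon)$, so $\sup_\varepsilon AT_\varepsilon(u_\varepsilon,v_\varepsilon)<\infty$. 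From $\int_0^L(1-v_\varepsilon)^2\le\varepsilon\,AT_\varepsilon(u_\varepsilon,v_\varepsilon)\le C\varepsilon\to0$ we get $v_\varepsilon\to1$ in $L^2$, hence $v=1$, and the classical localized $\Gamma$-liminf for $AT_\varepsilon$ (\cite{AT92}) gives $u\in SBV^2(0,L)$ together with, for every open $A\subseteq(0,L)$,
\[\liminf_{\varepsilon\to0}AT_\varepsilon(u_\varepsilon,v_\varepsilon;A)\ \ge\ \int_A|u'|^2+2\mathcal H^0(J_u\cap A).\]
If $\Gamma_0=\emptyset$, taking $A=(0,L)$ and using $\overline{AT}_\varepsilon\ge AT_\varepsilon$ already yields $\ell\ge MS(u)=\overline{MS}(u,v)$, and we are done.

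There remains the case $\Gamma_0=\{L/2\}$, i.e. $u_0=u_{jump}$ in \eqref{crack}. Here I would use that the obstacle concentrates at $L/2$: by \cite[Theorem 1.1]{BMRb23} the point $L/2$ is a $v$-jump for $(v_{0,\varepsilon})$, and since $v_{0,\varepsilon}$ is symmetric about $L/2$ and attains there its minimum, $v_{0,\varepsilon}(L/2)=\min_{[0,L]}v_{0,\varepsilon}\to0$; consequently $v_\varepsilon(L/2)\le v_{0,\varepsilon}(L/2)\to0$ as well. Fix $\delta>0$ small enough that $I_\delta(L/2)\subset(0,L)$, that $L/2\pm\delta\notin J_u$, and that $J_u\cap I_\delta(L/2)\subseteq\{L/2\}$ (possible since $\mathcal H^0(J_u)<\infty$). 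The heart of the argument is the lower bound
\[\liminf_{\varepsilon\to0}\int_{I_\delta(L/2)}\Big(\varepsilon|v_\varepsilon'|^2+\tfrac{(1-v_\varepsilon)^2}{\varepsilon}\Big)\ \ge\ 2.\]

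To prove this, set $\Phi(t)=t-\tfrac{t^2}{2}$, so $\Phi'(t)=1-t\ge0$ on $(-\infty,1]$ and, by the arithmetic--geometric inequality, $\varepsilon|v_\varepsilon'|^2+\tfrac{(1-v_\varepsilon)^2}{\varepsilon}\ge2(1-v_\varepsilon)|v_\varepsilon'|=2\big|(\Phi\circ v_\varepsilon)'\big|$. Since $\int_0^L(1-v_\varepsilon)^2\le C\varepsilon$, a mean value argument on the subintervals $(L/2-\delta,L/2-\delta/2)$ and $(L/2+\delta/2,L/2+\delta)$ produces points $y_\varepsilon^-$ and $y_\varepsilon^+$ in them with $v_\varepsilon(y_\varepsilon^\pm)\to1$. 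Integrating the previous inequality over $(y_\varepsilon^-,L/2)$ and over $(L/2,y_\varepsilon^+)$, and using that $\Phi(v_\varepsilon(L/2))\le\Phi(v_{0,\varepsilon}(L/2))\to\Phi(0)=0$ while $\Phi(v_\varepsilon(y_\varepsilon^\pm))\to\Phi(1)=\tfrac12$, one gets
\[\int_{I_\delta(L/2)}\Big(\varepsilon|v_\varepsilon'|^2+\tfrac{(1-v_\varepsilon)^2}{\varepsilon}\Big)\ \ge\ 2\big|\Phi(v_\varepsilon(L/2))-\Phi(v_\varepsilon(y_\varepsilon^-))\big|+2\big|\Phi(v_\varepsilon(y_\varepsilon^+))-\Phi(v_\varepsilon(L/2))\big|,\]
whose $\liminf$ as $\varepsilon\to0$ is at least $2\cdot\tfrac12+2\cdot\tfrac12=2$, which is the claim.

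Finally I split $AT_\varepsilon(u_\varepsilon,v_\varepsilon)=AT_\varepsilon(u_\varepsilon,v_\varepsilon;A_1)+AT_\varepsilon(u_\varepsilon,v_\varepsilon;A_2)+AT_\varepsilon(u_\varepsilon,v_\varepsilon;A_3)$ with $A_1=(0,L/2-\delta)$, $A_2=I_\delta(L/2)$, $A_3=(L/2+\delta,L)$, and combine the superadditivity of $\liminf$ with the localized classical bound on $A_1,A_3$ and the key estimate on $A_2$:
\[\ell\ \ge\ \int_{A_1\cup A_3}|u'|^2+2\mathcal H^0\big(J_u\cap(A_1\cup A_3)\big)+2.\]
Since $J_u\cap(A_1\cup A_3)=J_u\setminus\{L/2\}$ and $\mathcal H^0(J_u\cup\{L/2\})=\mathcal H^0(J_u\setminus\{L/2\})+1$, the right-hand side equals $\int_0^L|u'|^2+2\mathcal H^0(J_u\cup\Gamma_0)-\int_{I_\delta(L/2)}|u'|^2$; letting $\delta\to0$ kills the last term (absolute continuity of the integral) and gives $\ell\ge\int_0^L|u'|^2+2\mathcal H^0(J_u\cup\Gamma_0)=\overline{MS}(u,v)$. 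The main obstacle is exactly this case $\Gamma_0=\{L/2\}$: one must extract the transition cost of the obstacle at $L/2$ without double-counting a genuine jump of $u$ there, which is what the three-piece localization (with $J_u\cap I_\delta(L/2)\subseteq\{L/2\}$) together with the primitive-$\Phi$ estimate accomplish, the input $v_{0,\varepsilon}(L/2)\to0$ from \cite{BMRb23} being what makes the obstacle active.
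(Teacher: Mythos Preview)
Your argument is correct and takes a genuinely different route from the paper. The paper redoes the classical coarea/level-set argument from scratch: it picks a level $t_\varepsilon\in(a,b)$ via a mean-value argument, sets $X_\varepsilon=\{v_\varepsilon<t_\varepsilon\}$ and $Y_\varepsilon=\{v_{0,\varepsilon}<t_\varepsilon\}$, and uses the obstacle condition $v_\varepsilon\le v_{0,\varepsilon}$ to get $Y_\varepsilon\subset X_\varepsilon$. The finitely many components of $X_\varepsilon$ shrink to points $\{z_i\}$, and the inclusion $Y_\varepsilon\subset X_\varepsilon$ forces $\Gamma_0\subset\{z_i\}$ in addition to $J_u\subset\{z_i\}$; thus $\mathcal H^0(\partial^\star X_\varepsilon)=2m\ge 2\mathcal H^0(J_u\cup\Gamma_0)$ in one stroke, with no case distinction on $\Gamma_0$.

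Your approach is more modular: you invoke the classical localized $AT$ liminf as a black box away from $\Gamma_0$, and then, in the case $\Gamma_0=\{L/2\}$, you extract the extra weight $2$ near $L/2$ by the Modica--Mortola primitive $\Phi(t)=t-t^2/2$, using the pointwise input $v_\varepsilon(L/2)\le v_{0,\varepsilon}(L/2)\to 0$ (the latter following from \cite{BMRb23} since $u_0=u_{jump}$ forces a $v$-jump at $L/2$). The three-piece localization with $J_u\cap I_\delta(L/2)\subset\{L/2\}$ avoids double counting. What your approach buys is transparency about where the obstacle actually enters (only through the single value $v_{0,\varepsilon}(L/2)$) and reuse of the standard result; what the paper's approach buys is uniformity (no case split) and independence from any fine information on $(v_{0,\varepsilon})$ beyond the constraint itself.
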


\begin{proof}
We suppose that $\liminf\limits_{\varepsilon \rightarrow \infty} \overline{AT}_\varepsilon(u_\varepsilon, v_\varepsilon) < \infty$. We extract a subsequence for which the energy converges to the liminf. From now on, we keep the notations $(u_\varepsilon, v_\varepsilon)$ for simplicity but we only work with this subsequence.

Since $u_\varepsilon \in H^1(0,L)\subset L^{\infty}(0,L)$, we can write $ \|u_\varepsilon \|_\infty \leqslant M$. We also have by a maximum principle (i.e replacing $v_\varepsilon$ by a better competitor) that $0\leqslant v_\varepsilon \leqslant 1$. Now, applying the arithmetico-geometric inequality to the second member of the energy yields 
\begin{equation}\label{IAG}
\mathcal{C} \geqslant \overline{AT}_\varepsilon (u_\varepsilon, v_\varepsilon) \geqslant \int_0^L (\eta_\varepsilon + v_\varepsilon ^2) |u_\varepsilon '|^2 + 2\int_0^L |v_\varepsilon ' |(1-v_\varepsilon).
\end{equation}
We first focus on the second term of the right-hand-side. Fix $0 < a \leqslant b < 1$. The coarea formula allows us to compute 
\begin{equation}\label{coarea}
\int_0^L |v_\varepsilon '|(1-v_\varepsilon) = \int_0^1 (1-t) \mathcal{H}^0(\partial ^\star \{v_\varepsilon < t\}) dt \geqslant \int_a^b (1-t) \mathcal{H}^0(\partial ^\star \{v_\varepsilon < t\}) dt .
\end{equation}
Set $f(t)= t-t^2/2$. Then there exists $t_\varepsilon \in (a,b)$ such that
\begin{equation}\label{moyenne}
\int_a^b f'(t) \mathcal{H}^0(\partial ^\star \{v_\varepsilon < t\}) dt \geqslant (f(b)-f(a)) \mathcal{H}^0(\partial ^\star \{v_\varepsilon < t_\varepsilon\}).
\end{equation}
Set $X_\varepsilon = \{v_\varepsilon < t_\varepsilon\}$ and $Y_\varepsilon = \{v_{0,\varepsilon} < t_\varepsilon\}$. Note that since $v_\varepsilon \leqslant v_{0,\varepsilon}$, one has $Y_\varepsilon \subset X_\varepsilon$. 
Combining \eqref{IAG}, \eqref{coarea} and \eqref{moyenne} yields 
\begin{equation}\label{border}
\mathcal{H}^0(\partial^\star X_\varepsilon) \leqslant C.
\end{equation}
Using the variations of $v_{0,\varepsilon}$ (see \cite[Proposition 2.1]{BMRb23}, $\mathcal{H}^0(\partial^\star Y_\varepsilon) \leqslant 2$ and $Y_\varepsilon$ is an open interval $(c^\varepsilon, d^\varepsilon )$. Moreover, since $t_\varepsilon < b$, one has 
\begin{equation}\label{measureto0}
|Y_\varepsilon| \subset |X_\varepsilon| \leqslant |\{1-v_\varepsilon > 1-b\}| \leqslant \frac{1}{(1-b)^2} \int_0^L (1-v_\varepsilon)^2 \leqslant \frac{C}{(1-b)^2} \varepsilon \rightarrow 0.
\end{equation}
Because $v_\varepsilon \in H^1(0,L)$, $v_\varepsilon$ is continuous and $X_\varepsilon$ is open set and can be written as the union of a countable number of open intervals and $\partial^\star X_\varepsilon = \partial X_\varepsilon$. By \eqref{border}, up to an extraction, there exists an integer $m$ such that for all $\varepsilon$, 
\[X_\varepsilon = \bigcup_{i=1}^m (a_i^\varepsilon, b_i^\varepsilon).\]
There exists an index $i_0$ such that $(c^\varepsilon, d^\varepsilon) \subset (a_{i_0}^\varepsilon, b_{i_0}^\varepsilon)$. Since $|X_\varepsilon|, |Y_\varepsilon| \rightarrow 0$, one has $b_i^\varepsilon - a_i^\varepsilon \rightarrow 0$ and since $[0,L]$ is compact, up to another extraction, the sequences $(a_i^\varepsilon), (b_i^\varepsilon)$ converge to the same limit $z_i$. In particular, $c^\varepsilon , d^\varepsilon \rightarrow z_{i_0}$. 

\smallskip

We will prove that $\Gamma_0 \cup J_u \subset \cup_i \{z_i\}$, with $\Gamma_0$ being empty or $\{L/2\}$. We first prove that $J_u\subset \cup_i \{z_i\}$, which is given if we prove that $u\in H^1(z_i, z_{i+1})$. Take an index $i\in \{1,\ldots , m\}$ and take $z_i < \alpha < \beta < z_{i+1}$. Let us show that $u_\varepsilon \rightharpoonup u$ in $H^1(\alpha, \beta)$. 

First, $ \|u_\varepsilon \|_2$ is bounded. Second, thanks to the energy bound \eqref{energybound}, and since for $\varepsilon$ large enough, one has $[\alpha, \beta] \subset [0,L]\setminus X_\varepsilon\subset [0,L]\setminus \{v_\varepsilon < b\}$ : 
\[C \geqslant \int_\alpha^\beta (\eta_\varepsilon + v_\varepsilon^2) |u_\varepsilon '|^2 \geqslant b \int_\alpha^\beta |u_\varepsilon'|^2 = b \|u'_\varepsilon \|_2^2.\]
So $(u_\varepsilon)$ is bounded in $H^1(0,L)$. Thus, up to an extraction, $u_\varepsilon \rightharpoonup \hat{u}$ in $H^1(\alpha, \beta)$. But according to Rellich embedding, up to a subsequence, $u_\varepsilon \rightarrow \hat{u}$ in $L^2(\alpha, \beta)$. Thus $\hat{u}=u$ and
\begin{equation}
\int_\alpha^\beta |u'|^2 \leqslant \liminf\limits_{\varepsilon \rightarrow 0} \int_\alpha^\beta |u_\varepsilon'|^2 \leqslant \liminf\limits_{\varepsilon \rightarrow 0} \int_{b_i^\varepsilon}^{a_{i+1}^\varepsilon} |u_\varepsilon'|^2.
\end{equation}
Letting $\alpha$ and $\beta$ tend to $z_i$ and $z_{i+1}$ respectively shows that $u\in H^1(z_i, z_{i+1})$ with 
\begin{equation}\label{dirichleti}
\int_{z_i}^{z_{i+1}} |u'|^2 \leqslant \liminf\limits_{\varepsilon \rightarrow 0} \int_{b_i^\varepsilon}^{a_{i+1}^\varepsilon} |u_\varepsilon'|^2.
\end{equation}
Similarly, we obtain
\begin{equation}\label{dirichletbord}
\int_{0}^{z_1} |u'|^2 \leqslant \liminf\limits_{\varepsilon \rightarrow 0} \int_{0}^{a_1^\varepsilon} |u_\varepsilon'|^2 \;\;\;\;\; \text{ and } \;\;\;\;\; \int_{z_m}^{L} |u'|^2 \leqslant \liminf\limits_{\varepsilon \rightarrow 0} \int_{b_m^\varepsilon}^{L} |u_\varepsilon'|^2.
\end{equation}
Combining \eqref{dirichletbord} with \eqref{dirichleti} for $i=1, \ldots , m$, we deduce that $u\in SBV^2(0,L)$ with $J_u\subset \cup_{i=1}^m \{z_i\}$ and with 
\begin{equation}\label{u'}
\int_0^L |u'|^2 \leqslant \liminf\limits_{\varepsilon\rightarrow 0} \int_{[0,L] \setminus X_\varepsilon} |u_\varepsilon '|^2.
\end{equation}
The same argument applied to the sequences $(u_{0,\varepsilon}, v_{0,\varepsilon})$ proves that $J_{u_0} \subset \{z_{i_0}\}$. Recalling \eqref{crack}, this proves that $\Gamma_0 \subset \cup_i \{z_i\}$. We then have 
\[2 \mathcal{H}^0(J_u \cup \Gamma_0) \leqslant 2m = \mathcal{H}^0(\partial^\star \{v_\varepsilon < t_\varepsilon\})\]
Combining this inequality with \eqref{coarea} and \eqref{moyenne} yields  
\begin{equation}\label{HJgamma}
\liminf\limits_{\varepsilon \rightarrow 0} 2\int_0^L |v_\varepsilon |(1-v_\varepsilon) \geqslant 2(f(b)- f(a)) \lim\limits_{\varepsilon \rightarrow 0} \mathcal{H}^0(\partial^\star X_\varepsilon) = 4(f(b)-f(a)) \mathcal{H}^0(J_u \cup \Gamma_0).
\end{equation}
Combining \eqref{HJgamma} with \eqref{IAG} and letting $a$ and $b$ tending to $0$ and $1$ respectivelygives 
\begin{equation}\label{ModicavsMS}
\liminf\limits_{\varepsilon \rightarrow 0}\int_0^L\left(\frac{(1-v_\varepsilon)^2}{\varepsilon} + \varepsilon |v_\varepsilon'|^2\right) \geqslant 2 \mathcal{H}^0(J_u \cup \Gamma_0).
\end{equation}
Now focus on the first term of $\overline{AT}_\varepsilon$. Indeed,
\[\int_0^L (\eta_\varepsilon + v_\varepsilon^2) |u_\varepsilon'|^2 \geqslant \int_{[0,L]\setminus X_\varepsilon} v_\varepsilon^2 |u_\varepsilon'|^2 \geqslant a^2 \int_{[0,L]\setminus X_\varepsilon} |u_\varepsilon'|^2.\]
and using \eqref{u'}, 
\[\liminf\limits_{\varepsilon \rightarrow 0} \int_0^L (\eta_\varepsilon + v_\varepsilon^2) |u_\varepsilon'|^2 \geqslant a^2\liminf\limits_{\varepsilon \rightarrow 0} \int_{[0,L]\setminus X_\varepsilon} |u_\varepsilon'|^2 \geqslant a^2 \int_0^L |u'|^2.\]
Since this inequality is true for every $a<1$, letting $a\rightarrow 1$ leads to
\begin{equation}\label{liminfdirichlet}
\liminf\limits_{\varepsilon \rightarrow 0} \int_0^L (\eta_\varepsilon + v_\varepsilon^2) |u_\varepsilon'|^2 \geqslant \int_0^L |u'|^2.
\end{equation}
Summing \eqref{ModicavsMS} and \eqref{liminfdirichlet}, gives $\liminf\limits_{\varepsilon \rightarrow 0} \overline{AT_\varepsilon} (u_\varepsilon, v_\varepsilon)\geqslant \overline{MS}(u).$
\end{proof}

\subsection{Proof of the $\Gamma$-limsup}

\begin{lemma}\label{gamma-limsup}
Let $u \in L^2((0,L))$. Then there exists a sequence $(u_\varepsilon, v_\varepsilon)$ converging in $L^2 \times L^2$ to $(u,1)$ such that 

\[ \limsup\limits_{\varepsilon \rightarrow 0} \overline{AT_\varepsilon} (u_\varepsilon, v_\varepsilon) \leqslant \overline{MS}(u).\]
\end{lemma}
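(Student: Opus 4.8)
The plan is to run the classical Ambrosio--Tortorelli recovery construction and accommodate the obstacle $v\le v_{0,\varepsilon}$ as in \cite[Lemma 6.2]{G05}: away from the relevant jump set one is forced to take $v_\varepsilon=v_{0,\varepsilon}$ (the largest admissible competitor that is close to $1$), and near $\Gamma_0$ the point is that $v_{0,\varepsilon}$ is already an essentially optimal transition profile and can be used as it is. If $\overline{MS}(u)=+\infty$ there is nothing to prove: take $u_\varepsilon=u*\rho_\varepsilon$ for a mollifier $\rho_\varepsilon$ and $v_\varepsilon=v_{0,\varepsilon}$, so that $v_\varepsilon\le v_{0,\varepsilon}$, $(u_\varepsilon,v_\varepsilon)\to(u,1)$ in $L^2\times L^2$ by \autoref{theorempourtemps0}, and $\overline{AT}_\varepsilon(u_\varepsilon,v_\varepsilon)\le 2\int_0^L|u_\varepsilon'|^2+AT_\varepsilon(u_{0,\varepsilon},v_{0,\varepsilon})<\infty$ by \eqref{energybound}. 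So assume $u\in SBV^2(0,L)$ and put $P:=J_u\cup\Gamma_0=\{p_1<\dots<p_N\}$, a finite set with $P\setminus J_u\subseteq\{L/2\}$. Here and below $\omega(\delta)$ denotes a quantity going to $0$ as $\delta\to0$.

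Fix $\delta>0$ small and split $[0,L]=R_0\cup\bigcup_i B_\delta(p_i)$, $R_0:=[0,L]\setminus\bigcup_i B_\delta(p_i)$. On $R_0$ set $v_\varepsilon:=v_{0,\varepsilon}$ and let $u_\varepsilon$ be a smooth approximation of $u$ with $\int_{R_0}|u_\varepsilon'|^2\to\int_{R_0}|u'|^2$; since $v_{0,\varepsilon}\to1$ uniformly on $R_0$ (\cite{BMRb23}) the Dirichlet term on $R_0$ tends to $\int_{R_0}|u'|^2$, and by \cite[Lemma 3.1]{BMRb23} the phase-field energy of $v_{0,\varepsilon}$ on $R_0$ is $\le C_\delta\varepsilon^{1/4}\to0$ — here one uses that the phase-field energy of $v_{0,\varepsilon}$ concentrates only on $\Gamma_0\subseteq P$, so $R_0$ stays at distance $\ge\delta$ from it. Near $p_i\ne L/2$ (there $v_{0,\varepsilon}\to1$ uniformly) insert the usual optimal profile: a function $\tilde v^i_\varepsilon\le v_{0,\varepsilon}$, equal to $v_{0,\varepsilon}$ near $\partial B_\delta(p_i)$, with $\tilde v^i_\varepsilon(p_i)\to0$ and $\int_{B_\delta(p_i)}\big(\tfrac{(1-\tilde v^i_\varepsilon)^2}{\varepsilon}+\varepsilon|(\tilde v^i_\varepsilon)'|^2\big)\le 2+\omega(\delta)$, and let $u_\varepsilon$ be constant on $B_\delta(p_i)$ (with a negligible interpolation at the ends), so that $B_\delta(p_i)$ contributes nothing to the Dirichlet part. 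If instead $p_i=L/2$, then $\Gamma_0=\{L/2\}$, hence $u_0=u_{jump}$, $c_0=0$ (\cite{BMRb23}) and $\big(\tfrac{(1-v_{0,\varepsilon})^2}{\varepsilon}+\varepsilon|v_{0,\varepsilon}'|^2\big)\rightharpoonup2\delta_{L/2}$; set $v_\varepsilon:=v_{0,\varepsilon}$ on $B_\delta(L/2)$ (the obstacle holds with equality), contributing $\int_{B_\delta(L/2)}\big(\tfrac{(1-v_{0,\varepsilon})^2}{\varepsilon}+\varepsilon|v_{0,\varepsilon}'|^2\big)\to2$, and take $u_\varepsilon$ on $B_\delta(L/2)$ to be an affine image of $u_{0,\varepsilon}$ matching the outer values, so that $\int_{B_\delta(L/2)}(\eta_\varepsilon+v_\varepsilon^2)|u_\varepsilon'|^2\le C_\delta\,c_{0,\varepsilon}\big(u_{0,\varepsilon}(L/2+\delta)-u_{0,\varepsilon}(L/2-\delta)\big)\to C_\delta\,c_0a_0=0$ (if moreover $L/2\notin J_u$, take $u_\varepsilon$ a smooth approximation of $u$ there, with Dirichlet energy $\le\int_{B_\delta(L/2)}|u'|^2=\omega(\delta)$).

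Gluing these pieces yields $(u_\varepsilon,v_\varepsilon)$ with $v_\varepsilon\le v_{0,\varepsilon}$ and $(u_\varepsilon,v_\varepsilon)\to(u,1)$ in $L^2\times L^2$; adding up the above estimates gives $\limsup_{\varepsilon\to0}\overline{AT}_\varepsilon(u_\varepsilon,v_\varepsilon)\le\int_0^L|u'|^2+2N+\omega(\delta)=\overline{MS}(u)+\omega(\delta)$. Letting $\delta\to0$ along a sequence and performing a diagonal extraction produces a recovery sequence with $\limsup\overline{AT}_\varepsilon\le\overline{MS}(u)$, which is the claim.

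The main obstacle is the construction near $L/2$ when $L/2\in J_u$ and $\Gamma_0=\{L/2\}$ simultaneously: one must transport the macroscopic jump of $u$ across the narrow valley of $v_{0,\varepsilon}$ while keeping $(\eta_\varepsilon+v_\varepsilon^2)|u_\varepsilon'|^2$ controlled and the constraint satisfied; taking $u_{0,\varepsilon}$ itself as the template — which already transitions through that valley — together with the identity $(\eta_\varepsilon+v_{0,\varepsilon}^2)|u_{0,\varepsilon}'|^2=c_{0,\varepsilon}u_{0,\varepsilon}'$ and $c_0=0$, is exactly what makes the Dirichlet energy there negligible, in the spirit of \cite[Lemma 6.2]{G05}. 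The remaining ingredients — that the local modifications of $u_\varepsilon$ preserve $L^2$-convergence and contribute only $\omega(\delta)$ to the energy, and that the diagonal extraction goes through — are routine.
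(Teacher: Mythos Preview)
Your approach is essentially correct but diverges from the paper's in how the region around $\Gamma_0$ is handled. The paper follows \cite[Lemma~6.2]{G05} closely: it builds an auxiliary phase field $w_{2,\varepsilon}$ by truncating $v_{0,\varepsilon}$ to zero on the sublevel set $\{v_{0,\varepsilon}\le s_\varepsilon\}$ (for carefully chosen $s_\varepsilon\to0$), together with a cutoff $\varphi_\varepsilon$ whose transition is supported where $w_{2,\varepsilon}=0$; then $v_\varepsilon=\min(w_{1,\varepsilon},w_{2,\varepsilon})$ and $u_\varepsilon=\varphi_\varepsilon z_\varepsilon$. The Dirichlet cost of the cutoff is absorbed by the factor $\eta_\varepsilon$ (since $v_\varepsilon=0$ on that transition layer), and the extra phase-field cost of $w_{2,\varepsilon}$ over $v_{0,\varepsilon}$ is shown to vanish. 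This only uses the energy bound on $v_{0,\varepsilon}$ and the convergence $MM_\varepsilon(v_{0,\varepsilon})\to 2\mathcal H^0(\Gamma_0)$. Your alternative --- keep $v_\varepsilon=v_{0,\varepsilon}$ near $L/2$ and use an affine reparametrisation of $u_{0,\varepsilon}$ to carry the jump of $u$ through the valley --- is more direct and exploits the one-dimensional identity $(\eta_\varepsilon+v_{0,\varepsilon}^2)|u_{0,\varepsilon}'|^2=c_{0,\varepsilon}u_{0,\varepsilon}'$ together with $c_0=0$. It avoids the truncation lemma entirely, but it is specific to this setting where an explicit template $u_{0,\varepsilon}$ with the right asymptotics is available; the paper's route is the one that generalises.

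One small gap: your case split should be $p_i\notin\Gamma_0$ versus $p_i\in\Gamma_0$, not $p_i\ne L/2$ versus $p_i=L/2$. If $\Gamma_0=\emptyset$ but $L/2\in J_u$, your inference ``$p_i=L/2$, then $\Gamma_0=\{L/2\}$, hence $c_0=0$'' fails. In that situation, however, $MM_\varepsilon(v_{0,\varepsilon})\to 0$ forces $v_{0,\varepsilon}(L/2)\to1$ and then (by the elliptic estimate) $v_{0,\varepsilon}\to1$ uniformly on all of $[0,L]$, so $L/2$ can simply be treated with the standard optimal profile as in your first case. Also, ``$u_\varepsilon$ constant on $B_\delta(p_i)$ with a negligible interpolation at the ends'' is a bit loose: you need the interpolation either in the small zone where $\tilde v^i_\varepsilon=0$ (the classical choice), or else to check that a boundary interpolation near $p_i\pm\delta$ costs at most $\omega(\delta)$, which it does since $|u(p_i^\pm)-u(p_i\pm\delta)|^2\le \delta\int_{B_\delta(p_i)}|u'|^2$.
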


In the sequel, we denote by $MM_\varepsilon^V (v) := \displaystyle \int_V \frac{(1-v)^2}{\varepsilon} + \varepsilon |v'|^2$ and $MM_\varepsilon (v) := MM_\varepsilon ^{[0,L]} (v)$. 

\medskip

We suppose that $u$ is in $SBV^2(0,L)$, so that $\overline{MS}(u) < \infty$. The proof goes the following way : we approach $J_u\setminus \Gamma_0$ with the classical recovery sequence $(w_\varepsilon, z_\varepsilon)$ used the proof of the $\Gamma-$limsup of $AT_\varepsilon$ to $MS$ (\autoref{recoverysequence}). This sequence might not satisfy the obstacle condition $w_\varepsilon \leqslant v_{0,\varepsilon}$ so we apply the modification $w_{1,\varepsilon} = \min (w_\varepsilon, v_{0,\varepsilon})$ (\autoref{w1eps}). In a neighbourhood $W$ of $\Gamma_0$, we might not have $\limsup\limits_{\varepsilon \rightarrow 0} MM_\varepsilon^V (w_{1,\varepsilon}) \leqslant 2\mathcal{H}^0(\Gamma_0)$. For this reason we construct a sequence $w_{2,\varepsilon}$ and a cutoff function $\varphi_\varepsilon$ that both vanish around $\Gamma_0$(\autoref{w2eps}). Finally, setting $v_\varepsilon = \min(w_{1,\varepsilon}, w_{2,\varepsilon})$ and $u_\varepsilon= \varphi_\varepsilon z_\varepsilon$, we prove that $(u_\varepsilon, v_\varepsilon)$ satisfies the requirements of \autoref{gamma-limsup}. 

\begin{proof}
Let us start by recalling the construction of a recovery sequence $(w_\varepsilon, z_\varepsilon)$ for $J_u\setminus \Gamma_0$. This construction can be found in \cite{AT92}.

\begin{proposition}\label{recoverysequence}
Let $d(x)$ denote the distance from $x$ to the set $J_u\setminus \Gamma_0$ and $\alpha_\varepsilon = \sqrt{\eta_\varepsilon \varepsilon}$. We set  
\[h_\varepsilon(t) = 1-e^{-\frac{t-\alpha_\varepsilon}{\varepsilon}},\]
Then the pair $(w_\varepsilon , z_\varepsilon)$  given by
\begin{equation}\label{weps}
w_\varepsilon = \left\{ \begin{array}{ll}
h_\varepsilon(d(x)) & \text{if } d(x) > \alpha_\varepsilon\\
0 & \text{if } 0\leqslant d(x) \leqslant \alpha_\varepsilon\\
\end{array}\right.
\end{equation}
and 
\begin{equation}\label{zeps}
z_\varepsilon = \left\{ \begin{array}{ll}
u(x) & \text{if } d(x) > \alpha_\varepsilon\\
\left(\frac{2d(x)}{\alpha_\varepsilon} -1\right) u(x) & \text{if } \frac{\alpha_\varepsilon}{2} < d(x) \leqslant \alpha_\varepsilon \\
0 & \text{if } d(x) \leqslant \frac{\alpha_\varepsilon}{2} \\ 
\end{array}\right.
\end{equation}
satisfies 
\begin{equation}\label{wzright1}
(w_\varepsilon, z_\varepsilon) \overset{L^2(0,L)^2}{\longrightarrow} (1,u)
\end{equation}
and, for all neighbourhood $W$ of $J_u\setminus \Gamma_0$
\begin{equation}\label{modicarecovery}
\limsup\limits_{\varepsilon \rightarrow 0} MM_\varepsilon^{W} (w_\varepsilon) \leqslant 2 \mathcal{H}^0 (J_u\setminus \Gamma_0)
\end{equation}
and, 
\begin{equation}\label{MSrecovery}
\limsup\limits_{\varepsilon \rightarrow 0} \int_{W} (\eta_\varepsilon + w_\varepsilon ^2) |z_\varepsilon '|^2 \leqslant \int_{W} |u'|^2.
\end{equation}
\end{proposition}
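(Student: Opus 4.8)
The plan is to run the classical one–dimensional Ambrosio--Tortorelli recovery argument, writing $J_u\setminus\Gamma_0=\{p_1,\dots,p_k\}$ (a finite subset of $(0,L)$, so that $d(x)=\min_j|x-p_j|$) and exploiting that $h_\varepsilon$ is the optimal one-dimensional profile together with the scaling $\alpha_\varepsilon=\sqrt{\eta_\varepsilon\varepsilon}$.

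\emph{$L^2$ convergence \eqref{wzright1}.} Since $d(x)>0$ for a.e.\ $x$, we have $w_\varepsilon(x)=1-e^{-(d(x)-\alpha_\varepsilon)/\varepsilon}\to 1$ pointwise a.e., and $0\le w_\varepsilon\le 1$, so dominated convergence gives $w_\varepsilon\to 1$ in $L^2(0,L)$. For $z_\varepsilon$, note $z_\varepsilon=u$ on $\{d>\alpha_\varepsilon\}$, $|z_\varepsilon|\le|u|$ everywhere, and $|\{d\le\alpha_\varepsilon\}|\to 0$; since $u\in SBV^2(0,L)$ has a finite jump set it is bounded near each $p_j$, so again by dominated convergence $z_\varepsilon\to u$ in $L^2(0,L)$. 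One also checks that $w_\varepsilon,z_\varepsilon\in H^1(0,L)$: the three pieces of each function match continuously at $\{d=\alpha_\varepsilon\}$ and $\{d=\alpha_\varepsilon/2\}$, and $z_\varepsilon\equiv 0$ on a full neighbourhood of $J_u\setminus\Gamma_0$, removing those jumps.

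\emph{Modica--Mortola term \eqref{modicarecovery}.} Bound $MM_\varepsilon^W(w_\varepsilon)\le MM_\varepsilon^{[0,L]}(w_\varepsilon)$ and split $[0,L]$ into $\{d\le\alpha_\varepsilon\}$ and $\{d>\alpha_\varepsilon\}$. On the first set $w_\varepsilon=0$, so its contribution is at most $|\{d\le\alpha_\varepsilon\}|/\varepsilon\le 2k\alpha_\varepsilon/\varepsilon=2k\sqrt{\eta_\varepsilon/\varepsilon}\to 0$. On $\{d>\alpha_\varepsilon\}$, using $|d'|=1$ a.e., one computes $\frac{(1-w_\varepsilon)^2}{\varepsilon}+\varepsilon|w_\varepsilon'|^2=\frac{2}{\varepsilon}e^{-2(d(x)-\alpha_\varepsilon)/\varepsilon}$; the coarea formula together with $\mathcal H^0(\{d=t\})\le 2k$ then gives $\int_{\{d>\alpha_\varepsilon\}}\frac{2}{\varepsilon}e^{-2(d-\alpha_\varepsilon)/\varepsilon}\le 2k\int_{\alpha_\varepsilon}^\infty\frac{2}{\varepsilon}e^{-2(t-\alpha_\varepsilon)/\varepsilon}\,dt=2k$. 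Hence $\limsup_{\varepsilon}MM_\varepsilon^{W}(w_\varepsilon)\le 2k=2\mathcal H^0(J_u\setminus\Gamma_0)$.

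\emph{Dirichlet term \eqref{MSrecovery}.} Split $W$ into $\{d>\alpha_\varepsilon\}$, $\{\alpha_\varepsilon/2<d\le\alpha_\varepsilon\}$ and $\{d\le\alpha_\varepsilon/2\}$. On the last set $z_\varepsilon\equiv0$; on the first $z_\varepsilon=u$, $z_\varepsilon'=u'$ and $\eta_\varepsilon+w_\varepsilon^2\le 1+\eta_\varepsilon$, contributing at most $(1+\eta_\varepsilon)\int_W|u'|^2$. On the transition annulus $w_\varepsilon=0$, so the weight equals $\eta_\varepsilon$, while $z_\varepsilon'=g_\varepsilon'u+g_\varepsilon u'$ with $g_\varepsilon=\tfrac{2d}{\alpha_\varepsilon}-1$, $|g_\varepsilon|\le1$ and $|g_\varepsilon'|=2/\alpha_\varepsilon$; using that $u$ is bounded near the $p_j$ and $|\{\alpha_\varepsilon/2<d\le\alpha_\varepsilon\}|\le k\alpha_\varepsilon$, this contributes at most $C\eta_\varepsilon\big(\alpha_\varepsilon^{-2}\,k\alpha_\varepsilon+\int_W|u'|^2\big)=C\big(k\sqrt{\eta_\varepsilon/\varepsilon}+\eta_\varepsilon\int_W|u'|^2\big)\to 0$, where we used $\alpha_\varepsilon^2=\eta_\varepsilon\varepsilon$. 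Letting $\varepsilon\to0$ (so $\eta_\varepsilon\to0$) yields $\limsup_{\varepsilon}\int_W(\eta_\varepsilon+w_\varepsilon^2)|z_\varepsilon'|^2\le\int_W|u'|^2$.

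This is the textbook construction, so I do not expect a genuine obstacle; the two points to handle with care are that the distance is taken to $J_u\setminus\Gamma_0$ rather than to all of $J_u$ (so this sequence deliberately does not resolve the pre-existing crack $\Gamma_0$, which is dealt with afterwards through $w_{1,\varepsilon}$ and $w_{2,\varepsilon}$), and that the choice $\alpha_\varepsilon=\sqrt{\eta_\varepsilon\varepsilon}$ is precisely what simultaneously makes the core/transition error terms $O(\sqrt{\eta_\varepsilon/\varepsilon})$ negligible and keeps the core $\{d\le\alpha_\varepsilon\}$ wide enough to cut $z_\varepsilon$ down to $0$ while staying in $H^1$.
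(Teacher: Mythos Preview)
Your argument is correct and is precisely the standard Ambrosio--Tortorelli recovery computation. The paper does not actually prove this proposition: it merely states the construction and refers to \cite{AT92}, so you have supplied the details the paper omits by citation; your decomposition into core/transition/exterior regions and the use of $|d'|=1$ together with coarea is exactly how \cite{AT92} proceeds.

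One small side remark: your parenthetical claim that $z_\varepsilon\in H^1(0,L)$ is not quite right in general, since if $J_u\cap\Gamma_0\neq\emptyset$ the jump of $u$ at $\Gamma_0$ is \emph{not} cut out by the definition of $z_\varepsilon$ (the distance $d$ is taken to $J_u\setminus\Gamma_0$). This does not affect the three claims of the proposition, because the paper only uses \eqref{MSrecovery} on a neighbourhood $W_1$ of $J_u\setminus\Gamma_0$ disjoint from $\Gamma_0$, and the remaining jump at $\Gamma_0$ is subsequently killed by the cutoff $\varphi_\varepsilon$ in \autoref{w2eps}.
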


Now we adapt this recovery sequence to satisfy the obstacle condition : 

\begin{lemma}\label{w1eps}
Let 
\begin{equation}\label{w1epsilon}
w_{1,\varepsilon} = \min (w_\varepsilon , v_{0,\varepsilon})
\end{equation}
Then $w_{1,\varepsilon} \leqslant v_{0,\varepsilon}$, 
\begin{equation}\label{w1right1}
w_{1,\varepsilon}\overset{L^2(0,L)}{\longrightarrow} 1,
\end{equation}
and for any neighbourhood $W_1$ of $J_u \setminus \Gamma_0$ not containing $\Gamma_0$, 
\begin{equation}\label{modicaw1}
\limsup\limits_{\varepsilon \rightarrow 0} MM_\varepsilon^{W_1} (w_{1,\varepsilon}) \leqslant 2 \mathcal{H}^0 (J_u\setminus \Gamma_0)
\end{equation}
and, 
\begin{equation}\label{MSw1}
\limsup\limits_{\varepsilon \rightarrow 0} \int_{W_1} (\eta_\varepsilon + w_{1,\varepsilon}^2) |z_\varepsilon '|^2 \leqslant \int_{W_1} |u'|^2.
\end{equation}
\end{lemma}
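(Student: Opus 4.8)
The plan is to deduce \eqref{w1right1}, \eqref{modicaw1} and \eqref{MSw1} from the corresponding properties \eqref{wzright1}, \eqref{modicarecovery} and \eqref{MSrecovery} of the recovery pair $(w_\varepsilon,z_\varepsilon)$ of \autoref{recoverysequence}, the only new feature being the truncation of $w_\varepsilon$ by the obstacle $v_{0,\varepsilon}$. Throughout I will use that $0\leqslant w_\varepsilon\leqslant 1$, $0\leqslant v_{0,\varepsilon}\leqslant 1$, and the calculus rule for derivatives of minima of $H^1$ functions: $w_{1,\varepsilon}'=w_\varepsilon'$ a.e.\ on $\{w_\varepsilon<v_{0,\varepsilon}\}$, $w_{1,\varepsilon}'=v_{0,\varepsilon}'$ a.e.\ on $\{w_\varepsilon>v_{0,\varepsilon}\}$, and $w_\varepsilon'=v_{0,\varepsilon}'$ a.e.\ on $\{w_\varepsilon=v_{0,\varepsilon}\}$ (see \cite[Lemma~7.7]{GT01}). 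The inclusion $w_{1,\varepsilon}\leqslant v_{0,\varepsilon}$ holds by the very definition \eqref{w1epsilon}, and $w_{1,\varepsilon}\in H^1(0,L)$ as a pointwise minimum of two $H^1$ functions. For \eqref{w1right1} I would write $0\leqslant 1-w_{1,\varepsilon}=\max\{1-w_\varepsilon,\,1-v_{0,\varepsilon}\}\leqslant (1-w_\varepsilon)+(1-v_{0,\varepsilon})$ and note that the right-hand side tends to $0$ in $L^2(0,L)$, since $w_\varepsilon\to 1$ in $L^2$ by \eqref{wzright1} and $\int_0^L(1-v_{0,\varepsilon})^2\leqslant \varepsilon\,AT_\varepsilon(u_{0,\varepsilon},v_{0,\varepsilon})\leqslant C\varepsilon\to 0$ by \eqref{energybound}.

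The Dirichlet bound \eqref{MSw1} is immediate: since $w_{1,\varepsilon}\leqslant w_\varepsilon$, one has $(\eta_\varepsilon+w_{1,\varepsilon}^2)|z_\varepsilon'|^2\leqslant(\eta_\varepsilon+w_\varepsilon^2)|z_\varepsilon'|^2$ pointwise, so integrating over $W_1$ and passing to the $\limsup$ it follows from \eqref{MSrecovery}. For the Modica--Mortola bound \eqref{modicaw1} I would first establish the pointwise density inequality
\[\frac{(1-w_{1,\varepsilon})^2}{\varepsilon}+\varepsilon|w_{1,\varepsilon}'|^2\ \leqslant\ \left(\frac{(1-w_\varepsilon)^2}{\varepsilon}+\varepsilon|w_\varepsilon'|^2\right)+\left(\frac{(1-v_{0,\varepsilon})^2}{\varepsilon}+\varepsilon|v_{0,\varepsilon}'|^2\right)\qquad\text{a.e. on }(0,L),\]
which is checked case by case on $\{w_\varepsilon<v_{0,\varepsilon}\}$, $\{w_\varepsilon>v_{0,\varepsilon}\}$ and $\{w_\varepsilon=v_{0,\varepsilon}\}$ using the calculus rule above and $w_\varepsilon,v_{0,\varepsilon}\leqslant 1$. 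Integrating over $W_1$ gives $MM_\varepsilon^{W_1}(w_{1,\varepsilon})\leqslant MM_\varepsilon^{W_1}(w_\varepsilon)+MM_\varepsilon^{W_1}(v_{0,\varepsilon})$, and since $\limsup_\varepsilon MM_\varepsilon^{W_1}(w_\varepsilon)\leqslant 2\mathcal{H}^0(J_u\setminus\Gamma_0)$ by \eqref{modicarecovery}, the proof of \eqref{modicaw1} reduces to showing $MM_\varepsilon^{W_1}(v_{0,\varepsilon})\to 0$.

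This last fact is the heart of the argument and the only genuine difficulty: one must know that the phase-field energy of the obstacle $v_{0,\varepsilon}$ does not charge $W_1$. Since $W_1$ is a neighbourhood of $J_u\setminus\Gamma_0$ not containing $\Gamma_0$, its closure is a compact subset of $[0,L]\setminus\Gamma_0$, disjoint from the set on which the phase-field energy of $v_{0,\varepsilon}$ concentrates; on such compacts the time-$0$ decay estimate recalled in Section~4.6 (see \cite[Lemma~3.1]{BMRb23}) gives $\int_{K}\big(\frac{(1-v_{0,\varepsilon})^2}{\varepsilon}+\varepsilon|v_{0,\varepsilon}'|^2\big)\,dx\to 0$, the concentration of $(v_{0,\varepsilon})$ being governed by $\Gamma_0$ via \autoref{theorempourtemps0}. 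Hence $MM_\varepsilon^{W_1}(v_{0,\varepsilon})\to 0$, which yields \eqref{modicaw1}. Everything besides this localisation of the obstacle's phase-field energy away from $W_1$ is routine manipulation with truncations of Sobolev functions.
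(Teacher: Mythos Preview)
Your proof is correct and follows essentially the same route as the paper's: both argue that $w_{1,\varepsilon}\leqslant v_{0,\varepsilon}$ is trivial, that $w_{1,\varepsilon}\to 1$ follows from $w_\varepsilon\to 1$ and $v_{0,\varepsilon}\to 1$ in $L^2$, that \eqref{MSw1} is a pointwise monotonicity since $w_{1,\varepsilon}\leqslant w_\varepsilon$, and that \eqref{modicaw1} comes from the splitting $MM_\varepsilon^{W_1}(w_{1,\varepsilon})\leqslant MM_\varepsilon^{W_1}(w_\varepsilon)+MM_\varepsilon^{W_1}(v_{0,\varepsilon})$ together with the vanishing of the obstacle's phase-field energy away from $\Gamma_0$. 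The paper invokes \cite[Theorem~1.1~(iv)]{BMRb23} for this last point where you cite \cite[Lemma~3.1]{BMRb23}, and you are more explicit about the case-by-case derivative rule justifying the pointwise density inequality, but the arguments are otherwise identical.
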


\begin{proof}[Proof of \autoref{w1eps}]
First, by definition of \eqref{w1epsilon}, $w_{1,\varepsilon} \leqslant v_{0,\varepsilon}$.

\smallskip

Second, according to \eqref{wzright1}, $w_\varepsilon \rightarrow 1$ in $L^2(0,L)$ and according to \cite[Theorem 1.1 (i)]{BMRb23}, $v_{0,\varepsilon} \rightarrow 1$ in $L^2(0,L)$, so that $w_{1,\varepsilon} \rightarrow 1$ in $L^2(0,L)$ as well. 

\smallskip

Third, take $W_1$ a neighbourhood of $J_u\setminus \Gamma_0$ not containing $\Gamma_0$. Using \cite[Theorem 1.1 (iv)]{BMRb23}, $\limsup\limits_{\varepsilon \rightarrow 0} MM_\varepsilon^{W_1} (v_{0,\varepsilon}) \rightarrow 0$, so using \eqref{modicarecovery} and \eqref{MSrecovery} :

\[\limsup\limits_{\varepsilon \rightarrow 0} MM_\varepsilon^{W_1} (w_{1,\varepsilon}) \leqslant \limsup\limits_{\varepsilon \rightarrow 0} \left( \underbrace{MM_\varepsilon^{W_1} (v_{0,\varepsilon})}_{\rightarrow 0} + MM_\varepsilon ^{W_1} (w_\varepsilon) \right)\leqslant  2 \mathcal{H}^0 (J_u \setminus \Gamma_0)\]
and 
\[\limsup\limits_{\varepsilon \rightarrow 0} \int_{W_1} (\eta_\varepsilon + w_{1,\varepsilon}^2) |z_\varepsilon '|^2 \leqslant \limsup\limits_{\varepsilon \rightarrow 0}\int_{W_1} (\eta_\varepsilon + w_\varepsilon^2) |z_\varepsilon '|^2 \leqslant \int_{W_1} |u'|^2. \]
\end{proof}

Now we deal with $\Gamma_0$. Let $\mathcal{C} >0$ be such that 

\begin{equation}\label{energyboundv0}
\limsup MM_\varepsilon (v_{0,\varepsilon}) \leqslant \mathcal{C}.
\end{equation}

We construct a function $w_{2,\varepsilon}$ and a cut-off function $\varphi_\varepsilon$ that vanish around $\Gamma_0$. The construction is given in the following lemma (which is the analogue of \cite[Lemma 6.2]{G05}) :

\begin{lemma}\label{w2eps}
Set $r_\varepsilon = \sqrt[4]{\eta_\varepsilon/\varepsilon}, s_\varepsilon = 2 r_\varepsilon, t_\varepsilon = \sqrt{s_\varepsilon}$ and $k_\varepsilon= t_\varepsilon/s_\varepsilon$. 

There exists a cut-off function $\varphi_\varepsilon$ such that

\begin{equation}\label{phieps}
\varphi_\varepsilon =  \left\{ \begin{array}{lll}
1 & \text{if } & s_\varepsilon\leqslant v_{0,\varepsilon} \\
0 & \text{if } & v_{0,\varepsilon} \leqslant r_\varepsilon\\
\end{array}\right.
\end{equation}
and 
\begin{equation}\label{cvvarphi}
\int_{\{r_\varepsilon\leqslant v_{0,\varepsilon} \leqslant s_\varepsilon\}} \eta_\varepsilon |\varphi_\varepsilon '| ^2 \rightarrow 0
\end{equation}
Second, set 
\begin{equation}\label{w2epsilon}
w_{2,\varepsilon} = \left\{ \begin{array}{lll}
v_{0,\varepsilon} & \text{if } & t_\varepsilon\leqslant v_{0,\varepsilon} \\
\frac{k_\varepsilon}{k_\varepsilon-1}(v_{0,\varepsilon} - t_\varepsilon) + t_\varepsilon & \text{if } & s_\varepsilon\leqslant v_{0,\varepsilon} \leqslant t_\varepsilon\\
0 & \text{if } & v_{0,\varepsilon} \leqslant s_\varepsilon\\
\end{array}\right.
\end{equation}
Then $w_{2,\varepsilon} \leqslant v_{0,\varepsilon}$
\begin{equation}\label{w2right1}
w_{2,\varepsilon}\overset{L^2(0,L)}{\rightarrow} 1,
\end{equation}
and, for some constant $\mathcal{C} >0$,  
\begin{equation}\label{modicaw2eps}
\limsup\limits_{\varepsilon\rightarrow 0} MM_\varepsilon (w_{2,\varepsilon}) -MM_\varepsilon (v_{0,\varepsilon}) \leqslant \mathcal{C}\left(\frac{2 k_\varepsilon}{(k_\varepsilon-1)^2} + \frac{2t_\varepsilon}{(k_\varepsilon-1)(1-t_\varepsilon)^2} + \frac{2t_\varepsilon}{(1-t_\varepsilon)^2}\right).
\end{equation} 
\end{lemma}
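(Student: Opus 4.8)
The plan is to realize both $w_{2,\varepsilon}$ and $\varphi_\varepsilon$ as one–dimensional truncations composed with $v_{0,\varepsilon}$, so that every claimed property reduces to an elementary estimate fed by the energy bound \eqref{energyboundv0} and by $\eta_\varepsilon/\varepsilon\to 0$. The chain rule for $H^1$ functions will be used freely, noting that at the ``kinks'' of the truncation profiles the derivative of $v_{0,\varepsilon}$ vanishes $\mathcal{L}^1$-a.e.\ on the corresponding level sets (cf.\ \cite[Lemma 7.7]{GT01}), so the precise value assigned to the profile's derivative there is irrelevant.

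For $\varphi_\varepsilon$, I would take $\varphi_\varepsilon:=\psi_\varepsilon(v_{0,\varepsilon})$, where $\psi_\varepsilon:[0,1]\to[0,1]$ is continuous, equals $0$ on $[0,r_\varepsilon]$, equals $1$ on $[s_\varepsilon,1]$, and is affine (hence has slope $1/(s_\varepsilon-r_\varepsilon)=1/r_\varepsilon$) on $[r_\varepsilon,s_\varepsilon]$. Then $\varphi_\varepsilon$ is Lipschitz, \eqref{phieps} is immediate, and since $\varphi_\varepsilon'=v_{0,\varepsilon}'/r_\varepsilon$ on $\{r_\varepsilon\le v_{0,\varepsilon}\le s_\varepsilon\}$,
\[
\int_{\{r_\varepsilon\le v_{0,\varepsilon}\le s_\varepsilon\}}\eta_\varepsilon|\varphi_\varepsilon'|^2
=\frac{\eta_\varepsilon}{r_\varepsilon^2}\int_{\{r_\varepsilon\le v_{0,\varepsilon}\le s_\varepsilon\}}|v_{0,\varepsilon}'|^2
\le\frac{\eta_\varepsilon}{\varepsilon r_\varepsilon^2}\,MM_\varepsilon(v_{0,\varepsilon})
\le \mathcal{C}\sqrt{\eta_\varepsilon/\varepsilon}\longrightarrow 0,
\]
using $r_\varepsilon^2=\sqrt{\eta_\varepsilon/\varepsilon}$, which gives \eqref{cvvarphi}.

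For $w_{2,\varepsilon}$, I would set $w_{2,\varepsilon}:=\chi_\varepsilon(v_{0,\varepsilon})$ with $\chi_\varepsilon:[0,1]\to[0,1]$ continuous, equal to $0$ on $[0,s_\varepsilon]$, equal to the identity on $[t_\varepsilon,1]$, and affine on $[s_\varepsilon,t_\varepsilon]$; using $t_\varepsilon=k_\varepsilon s_\varepsilon$ one checks that the affine piece is $\tau\mapsto \frac{k_\varepsilon}{k_\varepsilon-1}(\tau-t_\varepsilon)+t_\varepsilon$ (in particular it takes the value $0$ at $\tau=s_\varepsilon$ and $t_\varepsilon$ at $\tau=t_\varepsilon$), so $w_{2,\varepsilon}$ is precisely \eqref{w2epsilon}. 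Since $k_\varepsilon>1$ the middle slope $\frac{k_\varepsilon}{k_\varepsilon-1}$ exceeds $1$, whence $\chi_\varepsilon(\tau)\le\tau$ on $[0,1]$ and $w_{2,\varepsilon}\le v_{0,\varepsilon}$; moreover $w_{2,\varepsilon}$ agrees with $v_{0,\varepsilon}$ on $\{v_{0,\varepsilon}\ge t_\varepsilon\}$ and $|w_{2,\varepsilon}-v_{0,\varepsilon}|\le t_\varepsilon$ everywhere, so $\|w_{2,\varepsilon}-v_{0,\varepsilon}\|_{L^2}\le t_\varepsilon\sqrt{L}\to0$ and \eqref{w2right1} follows from $v_{0,\varepsilon}\to1$ in $L^2$ (\cite[Theorem 1.1]{BMRb23}).

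It remains to prove \eqref{modicaw2eps}. Since $w_{2,\varepsilon}=v_{0,\varepsilon}$ (hence $w_{2,\varepsilon}'=v_{0,\varepsilon}'$ a.e.) on $\{v_{0,\varepsilon}\ge t_\varepsilon\}$, the difference $MM_\varepsilon(w_{2,\varepsilon})-MM_\varepsilon(v_{0,\varepsilon})$ is an integral over $\{v_{0,\varepsilon}<t_\varepsilon\}=A_\varepsilon\sqcup B_\varepsilon$ with $A_\varepsilon:=\{s_\varepsilon\le v_{0,\varepsilon}<t_\varepsilon\}$ and $B_\varepsilon:=\{v_{0,\varepsilon}<s_\varepsilon\}$. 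On $A_\varepsilon$ one has $w_{2,\varepsilon}'=\frac{k_\varepsilon}{k_\varepsilon-1}v_{0,\varepsilon}'$ and $v_{0,\varepsilon}-w_{2,\varepsilon}=\frac{t_\varepsilon-v_{0,\varepsilon}}{k_\varepsilon-1}\le\frac{t_\varepsilon}{k_\varepsilon-1}$, so the gradient part is bounded by $\big(\frac{k_\varepsilon^2}{(k_\varepsilon-1)^2}-1\big)\int_{A_\varepsilon}\varepsilon|v_{0,\varepsilon}'|^2\le \frac{2k_\varepsilon}{(k_\varepsilon-1)^2}\mathcal{C}$ and the potential part by $\frac{2}{\varepsilon}\int_{A_\varepsilon}(v_{0,\varepsilon}-w_{2,\varepsilon})\le\frac{2t_\varepsilon}{k_\varepsilon-1}\,\frac{|A_\varepsilon|}{\varepsilon}\le\frac{2t_\varepsilon}{(k_\varepsilon-1)(1-t_\varepsilon)^2}\mathcal{C}$, where $(1-t_\varepsilon)^2\frac{|A_\varepsilon|}{\varepsilon}\le\int_{A_\varepsilon}\frac{(1-v_{0,\varepsilon})^2}{\varepsilon}\le\mathcal{C}$; on $B_\varepsilon$, $w_{2,\varepsilon}\equiv0$, so the contribution is at most $\frac{|B_\varepsilon|}{\varepsilon}\big(1-(1-s_\varepsilon)^2\big)\le\frac{2s_\varepsilon}{(1-s_\varepsilon)^2}\mathcal{C}\le\frac{2t_\varepsilon}{(1-t_\varepsilon)^2}\mathcal{C}$, using $\frac{|B_\varepsilon|}{\varepsilon}\le\frac{\mathcal{C}}{(1-s_\varepsilon)^2}$ and $s_\varepsilon\le t_\varepsilon$. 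Adding the three bounds gives \eqref{modicaw2eps}. The main (and only) point requiring care is this last bookkeeping step: one must split $\{v_{0,\varepsilon}<t_\varepsilon\}$ at the level $s_\varepsilon$ and, in each region, use the appropriate lower bound for $(1-v_{0,\varepsilon})^2$ to convert the energy bound \eqref{energyboundv0} into the measure controls $|A_\varepsilon|/\varepsilon,\,|B_\varepsilon|/\varepsilon=O(1)$; the rest is elementary.
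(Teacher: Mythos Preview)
Your proof is correct. The only genuine difference from the paper is in the construction of $\varphi_\varepsilon$: the paper (following \cite{G05}) subdivides $[r_\varepsilon,s_\varepsilon]$ into $h_\varepsilon$ subintervals and uses a pigeonhole argument to select one, say $[\alpha_\varepsilon,\beta_\varepsilon]$, on which $\int\varepsilon|v_{0,\varepsilon}'|^2\le\mathcal{C}/h_\varepsilon$, and then sets $\varphi_\varepsilon=\min\big(\frac{(v_{0,\varepsilon}-\alpha_\varepsilon)^+}{\beta_\varepsilon-\alpha_\varepsilon},1\big)$. You instead cut off linearly on the full interval $[r_\varepsilon,s_\varepsilon]$. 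Your shortcut works precisely because the scale $r_\varepsilon=(\eta_\varepsilon/\varepsilon)^{1/4}$ has been chosen so that $\eta_\varepsilon/(\varepsilon r_\varepsilon^2)=\sqrt{\eta_\varepsilon/\varepsilon}\to0$, which makes the pigeonhole step redundant here; the paper's more elaborate selection would be needed if the width $s_\varepsilon-r_\varepsilon$ were not coupled to $\eta_\varepsilon/\varepsilon$ in this way. The estimates for $w_{2,\varepsilon}$ (obstacle constraint, $L^2$ convergence, and the three-term bound \eqref{modicaw2eps} obtained by splitting $\{v_{0,\varepsilon}<t_\varepsilon\}$ at the level $s_\varepsilon$) are carried out essentially identically in both proofs.
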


\begin{remark}\label{quantity0}
Observe that the right hand side of \eqref{modicaw2eps} tends to $0$ with $\varepsilon$ as $t_\varepsilon \rightarrow 0$ and $k_\varepsilon \rightarrow +\infty$.
\end{remark} 

\begin{remark} 
The transition from the two states of $\varphi_\varepsilon$ is done on a domain on which $w_{2,\varepsilon}$ is equal to zero. 
\end{remark}

\begin{remark} 
The set $\{v_{0,\varepsilon} < t_\varepsilon\}$ might be empty. In this case, $w_{2,\varepsilon} = v_{0,\varepsilon}$ and $\varphi_\varepsilon =1$. 
\end{remark} 

\medskip

We postpone the proof of \autoref{w2eps} to the end of the proof of \autoref{gamma-limsup} and now we construct the sequence $(u_\varepsilon, v_\varepsilon)$ satisfying the requirements of \autoref{gamma-limsup}, using \autoref{w1eps} and \autoref{w2eps}. 

\medskip

Set 
\begin{equation}\label{veps}
v_\varepsilon = \min (w_{1,\varepsilon}, w_{2,\varepsilon}),
\end{equation}
and 
\begin{equation}\label{ueps}
u_\varepsilon = \varphi_\epsilon z_\varepsilon.
\end{equation}

\smallskip

\underline{$v_\varepsilon \leqslant v_{0,\varepsilon}$ :} This is a consequence of \autoref{w1eps} and \autoref{w2eps}. 

\underline{$(u_\varepsilon , v_\varepsilon) \rightarrow (u,1)$ in $L^2(0,L)^2$ :} The fact that $v_\varepsilon \rightarrow 1$ in $L^2(0,L)$ is a consequence of \eqref{w1right1} and \eqref{w2right1}. Concerning the convergence of $u_\varepsilon$, one has 

\[ \|u_\varepsilon - u \|_{L^2} \leqslant  \|u_\varepsilon - z_\varepsilon \|_{L^2} +  \|z_\varepsilon - u \|_{L^2}.\]
One already has from \eqref{wzright1} that $ \|z_\varepsilon - u \|_{L^2} \rightarrow 0$. On the other hand, as $u$ belongs to $SBV^2((0,L))$, it is a bounded function and using the definition \eqref{zeps} of $z_\varepsilon$ and the definition \eqref{phieps} of $\varphi_\varepsilon$,  

\[\int_0^L |\varphi_\varepsilon z_\varepsilon - z_\varepsilon |^2 = \int_0^L |z_\varepsilon|^2 |\varphi_\varepsilon - 1|^2 \leqslant \int_0^L |u|^2 |\varphi_\varepsilon - 1|^2 \leqslant \int_{\{v_{0,\varepsilon} \leqslant s_\varepsilon\}} |u|^2\leqslant  \|u \|_\infty^2 |\{v_{0,\varepsilon} \leqslant s_\varepsilon\}|\]
But the energy bound \eqref{energyboundv0} yields 
\begin{equation}\label{measure set v0}
\mathcal{C} \geqslant \int_{\{v_{0,\varepsilon} \leqslant s_\varepsilon\}} \frac{(1-v_{0,\varepsilon})^2}{\varepsilon} \geqslant \frac{(1-t_\varepsilon)^2}{\varepsilon} |\{v_{0,\varepsilon} \leqslant s_\varepsilon\}|
\end{equation}
Thus $|\{v_{0,\varepsilon} \leqslant s_\varepsilon\}| \rightarrow 0$ and $ \|u_\varepsilon - z_\varepsilon \|_{L^2} \rightarrow 0$. 

\medskip

\underline{$\limsup\limits_{\varepsilon \rightarrow 0} \overline{AT}_\varepsilon (u_\varepsilon , v_\varepsilon) \leqslant \overline{MS}(u)$ :}

\medskip

Fix $W_1$ a neighbourhood of $J_u\setminus \Gamma_0$ and $W_2$ a neighbourhood of $\Gamma_0$ so that $W_1 \cap W_2 = \emptyset$. Then one has 
\begin{equation}\label{ueps2}
v_\varepsilon = w_{1,\varepsilon} \, \text{on } W_1 \;\;\; ; \;\;\; u_\varepsilon =  \left\{ \begin{array}{ll}
z_\varepsilon & \text{on } W_1,\\
\varphi_\epsilon u & \text{on } W_2,\\
u & \text{outside of } W_1\sqcup W_2.\\
\end{array}\right.
\end{equation}
Indeed, outside of $W_2$, $v_{0,\varepsilon} \geqslant 1-M\varepsilon > t_\varepsilon$ (see \autoref{ellip2} applied to $\mu_\varepsilon =0$ and $x_{min} = L/2$) so, outside of $W_2$, $w_{2,\varepsilon} = v_{0,\varepsilon}$ and $v_\varepsilon = \min(w_{1,\varepsilon}, v_{0,\varepsilon}) = w_{1,\varepsilon}$ and $\varphi_\varepsilon$ is equal to $1$ outside of $W_2$. Similarly, for $\varepsilon$ small enough, $\{d\leqslant \alpha_\varepsilon\} \subset W_1$, so $z_\varepsilon = u$ outside of $W_1$.

\smallskip

Concerning the convergence of $MM_\varepsilon (v_\varepsilon)$, by \eqref{modicaw1} and \eqref{modicaw2eps}. 
\[\begin{array}{lll}
\displaystyle\limsup\limits_{\varepsilon \rightarrow 0} MM_\varepsilon (v_\varepsilon)&\leqslant& \displaystyle\limsup\limits_{\varepsilon \rightarrow 0} MM_\varepsilon^{W_1} (w_{1,\varepsilon}) + \limsup\limits_{\varepsilon \rightarrow 0} MM_\varepsilon (w_{2,\varepsilon})\\
&&\\
&\leqslant& \displaystyle 2 \mathcal{H}^0 (J_u \setminus \Gamma_0) + \limsup\limits_{\varepsilon \rightarrow 0} \left( MM_\varepsilon (v_{0,\varepsilon}) + \mathcal{C}\left(\frac{2 k_\varepsilon}{(k_\varepsilon-1)^2} + \frac{2t_\varepsilon}{(k_\varepsilon-1)(1-t_\varepsilon)^2} + \frac{2t_\varepsilon}{(1-t_\varepsilon)^2}\right)\right)\\
\end{array}\]
Then, according to \cite[Theorem 1.1 (iii),(iv)]{BMRb23}, $MM_\varepsilon (v_{0,\varepsilon}) \rightarrow 2 \mathcal{H}^0 (\Gamma_0)$. Thus, using \autoref{quantity0},
\[\displaystyle\limsup\limits_{\varepsilon \rightarrow 0} MM_\varepsilon (v_\varepsilon)\leqslant \displaystyle 2 \mathcal{H}^0(J_u \setminus \Gamma_0) + 2 \mathcal{H}^0 (\Gamma_0)
=  2 \mathcal{H}^0 (J_u \cup \Gamma_0).\]
Now we deal with the convergence of the elastic energy $\displaystyle \int_0^L (\eta_\varepsilon + v_\varepsilon^2) |u_\varepsilon '|^2$. According to \eqref{MSw1},
\[\limsup\limits_{\varepsilon\rightarrow 0} \int_{W_1} (\eta_\varepsilon + v_\varepsilon ^2 ) |u_\varepsilon '|^2 \leqslant \limsup\limits_{\varepsilon\rightarrow 0} \int_{W_1}(\eta_\varepsilon + w_{1,\varepsilon}^2 ) |z_\varepsilon '|^2 \leqslant \int_{W_1} |u'|^2,\]
and using the expression \eqref{ueps2} of $u_\varepsilon$,
\[\limsup\limits_{\varepsilon\rightarrow 0} \int_{[0,L] \setminus (W_1\sqcup W_2)} (\eta_\varepsilon + v_\varepsilon ^2 ) |u_\varepsilon '|^2 \leqslant \limsup\limits_{\varepsilon\rightarrow 0} \int_{[0,L] \setminus (W_1\sqcup W_2)} (\eta_\varepsilon + 1 ) |u'|^2  = \int_{[0,L] \setminus (W_1 \sqcup W_2)} |u'|^2.\]
It remains to deal with 
\[\limsup\limits_{\varepsilon\rightarrow 0} \int_{W_2} (\eta_\varepsilon + v_\varepsilon ^2 ) |u_\varepsilon '|^2.\]
To compute $\displaystyle\int_{W_2} (\eta_\varepsilon + v_\varepsilon ^2 ) |u_\varepsilon '|^2$, we recall the definition \eqref{phieps} of $\varphi_\varepsilon$ and we apply the inequality $|p+q|^2 \leqslant 2 (|p|^2+|q|^2)$ to $p=\varphi_\varepsilon ' z$ and $q=\varphi_\varepsilon z'$ on the set $\{r_\varepsilon \leqslant v_{0,\varepsilon} \leqslant s_\varepsilon\}$. Namely, 
\[\begin{array}{lll}
\displaystyle\int_{W_2} (\eta_\varepsilon + v_\varepsilon ^2 ) |(\varphi_\varepsilon u) '|^2 &=& \displaystyle\int_{\{s_\varepsilon\leqslant v_{0,\varepsilon}\}\cap W_2} (\eta_\varepsilon + v_\varepsilon)^2 |\varphi_\varepsilon ' u + \varphi_\varepsilon u' |^2 +\int_{{\{r_\varepsilon\leqslant v_{0,\varepsilon} \leqslant s_\varepsilon\}}} (\eta_\varepsilon + v_\varepsilon)^2 |\varphi_\varepsilon ' u + \varphi_\varepsilon u'|^2\\
&&\\
&\leqslant & \displaystyle\int_{\{s_\varepsilon\leqslant v_\varepsilon\}\cap W_2} (\eta_\varepsilon + 1) |u'|^2 + 2\int_{\{r_\varepsilon\leqslant v_\varepsilon \leqslant s_\varepsilon\}} \eta_\varepsilon \underbrace{|\varphi_\varepsilon|^2}_{\leqslant 1} |u'|^2 + 2\int_{\{r_\varepsilon\leqslant v_\varepsilon \leqslant s_\varepsilon\}} \eta_\varepsilon |\varphi_\varepsilon '|^2 |u|^2\\
&&\\
&\leqslant & \displaystyle\int_{\{s_\varepsilon\leqslant v_\varepsilon\} \cap W_2} (\eta_\varepsilon + 1) |u'|^2 + 2\eta_\varepsilon  \|u' \|_{L^2} + 2  \|u \|_\infty^2 \int_{\{r_\varepsilon\leqslant v_\varepsilon \leqslant s_\varepsilon\}} \eta_\varepsilon |\varphi_\varepsilon '|^2.
\end{array}\]
The limsup of this last term is smaller than $\int_{W_2} |u'|^2$, the second term goes to $0$ and the third term goes to $0$ according \eqref{cvvarphi}, which concludes the proof of the $\Gamma -$limsup. 
\end{proof}

We now turn to the proof of the lemma \autoref{w2eps} : 

\begin{proof}[Proof of \autoref{w2eps}]
We start with the construction of a cut-off function $\varphi_\varepsilon$ satisfying \eqref{phieps} and \eqref{cvvarphi}.
Divide the interval $[r_\varepsilon,s_\varepsilon]$ into $h_\varepsilon$ intervals $I_1, \ldots , I_{h_\varepsilon}$ of the same size, such that $\dfrac{\eta_\varepsilon}{\varepsilon} \cdot \dfrac{h_\varepsilon}{(s_\varepsilon - r_\varepsilon)^2} \rightarrow 0$ (which is possible as $\dfrac{\eta_\varepsilon}{\varepsilon} \cdot \dfrac{1}{(s_\varepsilon-r_\varepsilon)^2} \rightarrow 0$, take for example $h_\varepsilon = \left\lfloor\left(\dfrac{\eta_\varepsilon}{\varepsilon} \cdot \dfrac{1}{(s_\varepsilon - r_\varepsilon )^2} \right)^{-1/2}\right\rfloor)$. Note that by the energy bound \eqref{energyboundv0} : 
\[\mathcal{C} \geqslant \sum_{j=1}^{h_\varepsilon} \int_{I_j} \varepsilon |v_{0,\varepsilon} '|^2.\]
So there exists an interval $I_{\varepsilon}= [\alpha_\varepsilon, \beta_\varepsilon]$ of length $(s_\varepsilon - r_\varepsilon)/h_\varepsilon$ such that 
\[\int_{I_\varepsilon} \varepsilon |v_{0,\varepsilon} '|^2 \leqslant \frac{\mathcal{C}}{h_\varepsilon}.\]
Define $\varphi_\varepsilon = \min\left(\frac{(v_{0,\varepsilon} - \alpha_\varepsilon )^+}{\beta_\varepsilon - \alpha_\varepsilon} , 1 \right)$.
Then $\varphi_\varepsilon$ takes the desired values on $\{v_{0,\varepsilon} \leqslant r_\varepsilon\}$ and $\{v_{0,\varepsilon} \geqslant s_\varepsilon\}$ and 
\[\eta_\varepsilon \int_{\{r_\varepsilon\leqslant v_{0,\varepsilon} \leqslant s_\varepsilon\}} |\varphi_\varepsilon '|^2 = \eta_\varepsilon \int_{\{\alpha_\varepsilon \leqslant v_{0,\varepsilon} \leqslant \beta_\varepsilon\}} \frac{|v_{0,\varepsilon} '|^2}{(\beta_\varepsilon - \alpha_\varepsilon)^2} \leqslant \frac{\eta_\varepsilon}{\varepsilon} \frac{\mathcal{C}}{h_\varepsilon} \frac{h_\varepsilon ^2}{(s_\varepsilon-r_\varepsilon)^2} \rightarrow 0\]
which proves \eqref{cvvarphi}.

\smallskip

Now we turn to the proof of \eqref{w2right1} and \eqref{modicaw2eps}. Observe that $w_{2,\varepsilon} \leqslant v_{0,\varepsilon}$ as $v_{0,\varepsilon}$ is positive and on $\{s_\varepsilon \leqslant v_{0,\varepsilon} \leqslant t_\varepsilon\}$, according to \eqref{w2epsilon} :

\[w_{2,\varepsilon} = v_{0,\varepsilon} + \frac{1}{k_\varepsilon -1} (v_{0,\varepsilon} -t_\varepsilon ) \leqslant v_{0,\varepsilon}.\]
To prove \eqref{w2right1}, observe that $w_{2,\varepsilon}$ and $v_{0,\varepsilon}$ are taking values in $[0,1]$, so that

\[\int_0^L |w_{2,\varepsilon} - v_{0,\varepsilon}|^2 = \int_{\{v_{0,\varepsilon} \leqslant t_\varepsilon\}} |w_{2,\varepsilon} - v_{0,\varepsilon}|^2 \leqslant 4 |\{v_{0,\varepsilon} \leqslant t_\varepsilon\}|.\]
Then, using the energy bound \eqref{energyboundv0}, we see that 

\begin{equation}\label{setv0}
\mathcal{C} \geqslant \int_{\{v_{0,\varepsilon} \leqslant t_\varepsilon\}} \frac{(1-v_{0,\varepsilon})^2}{\varepsilon} \geqslant \frac{(1-t_\varepsilon)^2}{\varepsilon} |\{v_{0,\varepsilon} \leqslant t_\varepsilon\}|.
\end{equation}

So $|\{v_{0,\varepsilon} \leqslant t_\varepsilon\}| \rightarrow 0$ and $ \|w_{2,\varepsilon} - v_{0,\varepsilon} \|_2 \rightarrow 0$, which proves \eqref{w2right1}. 

We turn to the proof of \eqref{modicaw2eps}. We compute the difference in energy, starting with the difference between the Dirichlet energies. The expression \eqref{w2epsilon} of $w_{2,\varepsilon}$ and the energy bound \eqref{energyboundv0} yields    
\[\begin{array}{lll}
\displaystyle\int_0^L \varepsilon |w_{2,\varepsilon}'|^2 - \varepsilon |v_{0,\varepsilon} '|^2 &=& \displaystyle\int_{\{s_\varepsilon\leqslant v_{0,\varepsilon} \leqslant t_\varepsilon\}} \varepsilon \left(\frac{k_\varepsilon^2}{(k_\varepsilon-1)^2}-1\right) |v_{0,\varepsilon} '|^2 - \int_{\{v_{0,\varepsilon} \leqslant s_\varepsilon\}} \varepsilon |v_{0,\varepsilon} '|^2\\
&&\\
&\leqslant& \displaystyle\left(\frac{k_\varepsilon^2}{(k_\varepsilon-1)^2}-1\right)\int_{\{s_\varepsilon\leqslant v_{0,\varepsilon} \leqslant t_\varepsilon\}} \varepsilon |v_{0,\varepsilon}'|^2 \leqslant \mathcal{C} \left(\frac{k_\varepsilon^2}{(k_\varepsilon-1)^2}-1\right)\\
\end{array}.\]
On the other hand 
\[\begin{array}{lll}
\displaystyle \int_0^L \frac{(1-w_{2,\varepsilon})^2}{\varepsilon} - \frac{(1-v_{0,\varepsilon})^2}{\varepsilon} &=& \displaystyle \frac{1}{\varepsilon} \int_{\{v_{0,\varepsilon} \leqslant t_\varepsilon\}} (v_{0,\varepsilon} - w_{2,\varepsilon} ) (2-v_{0,\varepsilon} - w_{2,\varepsilon} ) \\
&&\\
&=& \displaystyle \frac{1}{\varepsilon}\int_{\{s_\varepsilon\leqslant v_{0,\varepsilon} \leqslant t_\varepsilon\}} \frac{1}{k_\varepsilon-1} \underbrace{(t_\varepsilon-v_{0,\varepsilon})}_{\leqslant t_\varepsilon}\underbrace{(2-v_{0,\varepsilon} - w_{2,\varepsilon})}_{\leqslant 2} + \frac{1}{\varepsilon}\int_{\{v_{0,\varepsilon} \leqslant s_\varepsilon\}} \underbrace{v_{0,\varepsilon}}_{\leqslant s_\varepsilon} \underbrace{(2-v_{0,\varepsilon})}_{\leqslant 2} \\
&&\\
&\leqslant & \displaystyle \frac{2t_\varepsilon}{k_\varepsilon-1} \frac{|\{v_{0,\varepsilon} \leqslant t_\varepsilon\}|}{\varepsilon} + \frac{2s_\varepsilon}{k_\varepsilon-1} \frac{|\{v_{0,\varepsilon} \leqslant s_\varepsilon\}|}{\varepsilon} \\
\end{array} \]
But the energy bound \eqref{energyboundv0} yields, in a similar fashion than \eqref{setv0}, for any $\alpha >0$,
\[\mathcal{C} \geqslant \int_{\{v_ {0,\varepsilon} \leqslant \alpha\}} \frac{(1-v_{0,\varepsilon})^2}{\varepsilon} \geqslant \frac{(1-\alpha)^2}{\varepsilon} |\{v_{0,\varepsilon} \leqslant \alpha\}|.\]
Hence, the last term is smaller than for  
\[\mathcal{C}\frac{2t_\varepsilon}{(k_\varepsilon-1)(1-t_\varepsilon)^2} + \mathcal{C} \frac{2s_\varepsilon}{(1-s_\varepsilon)^2} \leqslant \mathcal{C} \left(\frac{2t_\varepsilon}{(k_\varepsilon-1)(1-t_\varepsilon)^2} + \frac{2t_\varepsilon}{(1-t_\varepsilon)^2}\right).\] 
Summing both estimates yields the desired result. This achieves the proof of \autoref{w2eps}. 
\end{proof}

\subsection{Critical Points of $\overline{MS}$}

\autoref{Theorem 2} ensures that the minimizers of $\overline{AT}_\varepsilon$ converge to a pair $(1,u)$ that minimizes $\overline{MS}$ if we have the additional assumption of an energy bound on $(AT_\varepsilon)$. 

A natural extension is to ask whether the critical points of $\overline{AT}_\varepsilon$ also converge to critical points of $\overline{MS}$. Here, we prove that the critical points of $\overline{MS}$ are either affine or piecewise constant. 

Take $u$ a minimizer of $\overline{MS}$. First, $u$ has a finite number of discontinuity points, that we denote by $\{x_1, \ldots , x_d\}$. Let $\varphi$ in $SBV^2 (0,L)$ with $J_\varphi\subset J_u$ and take $u+t\varphi$ as a competitor for $t$ a real number. Then, $J_{u+t\varphi} = J_u$ so one has 
\[\int_0^L |u'|^2 \leqslant \int_0^L |u' +t \varphi '|^2 ,\]
which rewrites, after dividing by $t$, letting $t$ tend successively to $0^+$ and $0^-$, as : 
\[\int_0^L u' \varphi' = 0.\]
So $u$ is affine on each interval $(x_i, x_{i+1})$. In the case where $u$ admits at least one discontinuity point, say $x_i$, and take $\varphi$ equal to $0$ outside $(x_i, x_{i+1})$ and equal to $\dfrac{x-x_{i+1}}{x_i-x_{i+1}}$ on $(x_i, x_{i+1})$ (where $x_{i+1}$ can be replaced by $L$). Then $\varphi \in SBV^2(0,L)$ with $\varphi' = 1_{(x_i, x_{i+1})}, J_\varphi =\{x_i\}$ and  
\[u(x_{i+1})-u(x_i) = \int_{x_i}^{x_{i+1}} u' = \int_0^L u'\varphi' = 0,\]
so that $u(x_i) = u(x_{i+1})$, i.e $u$ is piecewise constant. 

\smallskip

Hence, critical points of $\overline{AT}_\varepsilon$ converge to critical points of $\overline{MS}$. However, there is a selection criterion as limit functions of critical points of $AT_\varepsilon$ must be non decreasing and admit at most one discontinuity point.  

\bibliographystyle{alpha}
\bibliography{bibliography}

\end{document}